\documentclass[11pt]{amsart}

\usepackage{amssymb,amsfonts,amsmath}
\usepackage{scalerel}
\usepackage{makecell}
\usepackage{amsthm, stmaryrd}
\usepackage{graphicx}
\usepackage[all,arc]{xy}
\usepackage{hyperref}
\hypersetup{colorlinks,allcolors=violet}
\usepackage{enumerate}
\usepackage{bbm}
\usepackage{dsfont}
\usepackage{mathrsfs}
\usepackage{tikz-cd}
\usetikzlibrary{shapes}
\usepackage{import}
\usepackage{float}
\restylefloat{table}
\usepackage{multirow}
\usepackage{pdflscape}
\usepackage{listofitems}
\usepackage{standalone}
\usepackage[T1]{fontenc}
\usepackage[toc,page]{appendix}

\usepackage[margin=1in]{geometry}
\linespread{1.1}

\usepackage{mathtools}
\usetikzlibrary{decorations.pathmorphing}
\usepackage{mathrsfs}
\usepackage{cancel}
\usepackage{relsize}
\newtheorem{thm}{Theorem}[section]
\newtheorem*{thm*}{Theorem}
\newtheorem*{metathm*}{Meta Theorem}

\newtheorem*{setup*}{Setup}

\newenvironment{customthm}[1]
  {\innercustomthm}
  {\endinnercustomthm}

\newtheorem{cor}[thm]{Corollary}
\newtheorem{prop}[thm]{Proposition}
\newtheorem{lem}[thm]{Lemma}

\theoremstyle{definition}
\newtheorem{defn}[thm]{Definition}

\newtheorem{exmp}[thm]{Example}

\newtheorem{notn}[thm]{Notation}

\newtheorem{rem}[thm]{Remark}

\newtheorem*{thm1.2}{\textrm{Theorem 1.2}}

\theoremstyle{remark}
\newcommand{\Mbar}{\overline{\mathcal{M}}}
\newcommand{\M}{\mathcal{M}}

\newcommand{\Z}{\mathbb{Z}}

\newcommand{\QQ}{\mathbb{Q}}

\renewcommand{\P}{\mathbb{P}}
\newcommand{\bbS}{\mathbb{S}}

\newcommand{\Aut}{\operatorname{Aut}}

\newcommand{\val}{\operatorname{val}}

\newcommand{\ch}{\operatorname{ch}}

\newcommand{\cat}[1]{\mathsf{#1}}

\newcommand{\tup}[1]{\underline{#1}}

\setcounter{tocdepth}{1}

\newcommand{\nocontentsline}[3]{}
\let\origcontentsline\addcontentsline
\newcommand\stoptoc{\let\addcontentsline\nocontentsline}
\newcommand\resumetoc{\let\addcontentsline\origcontentsline}

\def\C{\mathbb{C}}

\def\G{\mathbf{G}}

\def\P{\mathbb{P}}

\def\Z{\mathbb{Z}}


\def\calC{\mathcal{C}}

\def\calM{\mathcal{M}}

\def\calQ{\mathcal{Q}}

\def\calU{\mathcal{U}}
\def\calV{\mathcal{V}}
\def\calW{\mathcal{W}}
\def\calX{\mathcal{X}}
\def\calY{\mathcal{Y}}
\def\calZ{\mathcal{Z}}
\def\M{\mathcal{M}}




\DeclareMathOperator{\tcirc}{\tilde{\circ}}

\newcommand{\Cat}{\mathcal{C}}


\newcommand{\Ind}{\operatorname{Ind}}
\newcommand{\Res}{\operatorname{Res}}
\newcommand{\Spec}{\operatorname{Spec}}
\newcommand{\Exp}{\operatorname{Exp}}

\newcommand{\triv}{\operatorname{triv}}

\newcommand{\Graph}{\mathsf{Graph}}
\newcommand{\sfgamma}{\mathsf{\Gamma}}
\newcommand{\Ob}{\operatorname{Ob}}

\renewcommand{\G}{\mathbf{G}}
\newcommand{\End}{\mathrm{End}}

\newcommand{\Part}{\mathsf{Part}}
\newcommand{\Partt}{\mathsf{Part}^{[2]}}

\newcommand\cycle[2][\,]{%
  \readlist\thecycle{#2}%
  (\foreachitem\i\in\thecycle{\ifnum\icnt=1\else#1\fi\i})%
}

\newcommand{\WRing}{\Lambda^{[2]}}

\newcommand{\LLambda}{\hat{\Lambda}(\!(t)\!)}

\newcommand{\catC}{\mathsf{C}}

\newcommand{\pfunc}[3][]{{\mathbf{P}_{#1}\left[#2, #3\right]}}

\renewcommand{\-}{\text{-}}


\DeclareMathOperator*{\midboxtimes}{\scalerel*{\boxtimes}{\mathlarger{\mathlarger{\mathlarger{\oplus}}}}}

\makeatletter
\let\c@equation\c@thm
\makeatother
\numberwithin{equation}{section}

\bibliographystyle{asmalpha}

\graphicspath{ {images/} }
\title{Graph enumeration for moduli spaces of curves and maps}

\author[S. Kannan]{Siddarth Kannan}\address{Department of Mathematics, Massachusetts Institute of Technology}
\email{\url{spkannan@mit.edu}}
\author[T. Song]{Terry Dekun Song}\address{Department of Pure Mathematics and Mathematical Statistics, University of Cambridge, Cambridge, CB3 0WA}\email{\url{ds2016@cam.ac.uk}}

\begin{document}
\maketitle\thispagestyle{empty}

\begin{abstract}
We develop a calculus based on graph enumeration for $S_n$-equivariant motivic invariants of graphically stratified moduli spaces. We apply our theory to the Deligne--Mumford moduli space $\Mbar_{g, n}$ and to the space of torus-fixed stable maps $\Mbar_{g, n}(X, \beta)^{\C^\star}$ when the target $X$ admits an appropriate $\C^\star$-action, deriving new formulas in each case. A key role is played by the \textit{P\'olya--Petersen character} of a graph, which enriches P\'olya's classical cycle index polynomial. This character is valued in an algebra $\WRing$ of wreath product symmetric functions, which we study from combinatorial and representation-theoretic perspectives. We prove that this algebra may be viewed as the Grothendieck ring of the category of polynomial functors which take symmetric sequences of vector spaces to vector spaces, building on foundational work of Macdonald. This leads to a plethystic action of $\WRing$ on the ring $\Lambda$ of ordinary symmetric functions. Using this action, we derive our formulas, which ultimately involve only ordinary symmetric functions and the Grothendieck ring of mixed Hodge structures. 
\end{abstract}
\tableofcontents

\section{Introduction}
Graphs encode the combinatorics of degenerating algebraic curves, and are thus fundamental to their moduli problems. The combinatorial types of different graphs give recursive stratifications of moduli spaces in the following manner.

\begin{setup*}
    Associated to the moduli space $\Mbar$ is a collection of graphs $\{G\}$ and a collection of moduli spaces $\mathcal{M} = \bigsqcup_{n\geq 0}\mathcal{M}_{n}$, such that there is a decomposition $\Mbar = \bigsqcup_{G}\mathcal{M}_G$ into locally closed strata where $\mathcal{M}_G\cong \prod_{v\in V(G)}\mathcal{M}_{\mathrm{val}(v)}/\mathrm{Aut}(G)$, and $\mathrm{val}(v)$ denotes the valence of a vertex $v$.
\end{setup*} 

In this paper we develop a general calculus for the application of P\'olya's enumeration techniques \cite{PolyaRead} to topological invariants of this class of moduli spaces.


The prototypical example of $\Mbar$ is the Deligne--Mumford compactification\footnote{In general, $\Mbar$ has connected components indexed by discrete numerical parameters. We work with the disjoint union with suitable grading, because the recursive structure is most natural when we consider the whole collection of connected components.} $\bigsqcup_{g,n}\Mbar_{g,n},$ where $\mathcal{M}_n = \bigsqcup_{g\geq 0}\mathcal{M}_{g,n}.$ In this setting, a beautiful formula relating the $S_n$-equivariant motivic invariants of $\Mbar$ and $\M$ has been derived by Getzler and Kapranov \cite{GetzlerKapranov}; we will compare and contrast their approach with ours in more detail in  \S\ref{subsubsec:modular_operads} below. For now we emphasize that our work is independent from theirs, and offers new formulas even for the Deligne--Mumford compactification. In this article, we will apply our formalism to the following examples:
\begin{itemize}
\item $\Mbar_{g,n}$ and its combinatorial subspaces such as stable curves of compact type $\mathcal{M}_{g,n}^{\mathrm{ct}}$;
\item the moduli spaces $\Mbar_{g, n}(X, \beta)^{\C^\star}$ of torus-fixed stable maps to a projective variety with a good $\C^\star$-action (Definition \ref{def:good-action}).
\end{itemize}
Several potential future applications of the theory are discussed in \S\ref{subsec:more-applications} below. The moduli spaces of interest carry natural symmetric group actions that permute marked points. Symmetric function theory coming from the permutation actions is essential in all steps of our calculations. Our work makes many new calculations of Euler characteristics possible. Sample calculations are given in Tables  \ref{table:stable_maps_no_markings}, \ref{table:degree3-frob-chars}, and \ref{table:genus3ct} below.

\subsection{Calculations and applications}\label{sec:calcapp}
All of our results are phrased in terms of the \textit{Serre characteristic}. If $X$ is a variety over $\C$, the Serre characteristic of $X$ is defined as
\[ \cat{e}(X) := \sum_{i} (-1)^i[H^i_c(X; \QQ)] \in K_0(\cat{MHS)}, \]
where $\cat{MHS}$ is the abelian category of mixed Hodge structures over $\QQ$. The Serre characteristic specializes to the $E$-polynomial\footnote{This is sometimes called the Hodge--Deligne polynomial.} in the sense of e.g. \cite[Definition 2.1.4]{mixedhodge}, which agrees with the Hodge polynomial when $X$ is proper with at worst finite quotient singularities.

If $X$ admits an $S_n$-action, then each $H^i_c(X;\QQ)$ is a $S_n$-representation in $\cat{MHS}$, so we may define the $S_n$-equivariant Serre characteristic as follows: first let
\[ \Lambda := \QQ[p_1, p_2, \ldots] \]
be the ring of symmetric functions over $\QQ$, with grading determined by $\deg(p_i) = i$. Then the $S_n$-equivariant Serre characteristic is given by
\[\cat{e}^{S_n}(X) := \sum_{i} (-1)^i[H^i_c(X; \QQ)] \in K_0(\cat{MHS}_{S_n}) \otimes_{\Z}\QQ \cong K_0(\cat{MHS}) \otimes_{\Z} \Lambda_n, \]
where $K_0(\cat{MHS}_{S_n})$ denotes the Grothendieck group of $S_n$-equivariant mixed Hodge structures and $\Lambda_n \subset \Lambda$ is the subspace of symmetric functions which are homogeneous of degree $n$. Note that $\cat{e}^{S_n}(X)$ can easily be specialized to the ordinary Serre characteristic $\cat{e}(X)$. See Remark \ref{rem:numerical} below.
\subsubsection{Fixed genus formula for $\Mbar_{g,n}$} Our first application is to the moduli space $\Mbar_{g, n}$. Our  theorem applies to the following generating function for $S_n$-equivariant Serre characteristics:
\begin{equation}\label{eqn:abarg_defn} \overline{\cat{a}}_g := \sum_{n > 2-2g} \mathsf{e}^{S_n}(\Mbar_{g, n }) \in K_0(\cat{MHS}) \otimes_{\Z} \hat{\Lambda},
\end{equation}
where
\[\hat{\Lambda}:= \QQ[\![p_1, p_2, \ldots]\!] \]
is the degree completion of $\Lambda$. When $g \geq 2$, we give a formula for $\overline{\cat{a}}_g$ in terms of $\overline{\cat{a}}_0$ and $\cat{a}_h$ for $h \leq g$, where
\begin{equation}\label{eqn:ah_defn}
\cat{a}_h:= \sum_{n >  2-2h} \mathsf{e}^{S_n}(\M_{h, n }) \in K_0(\cat{MHS}) \otimes_{\Z} \hat{\Lambda}.
\end{equation}
Our formula involves a sum over graphs: let $\Gamma_g$ denote the groupoid of stable graphs of genus $g$, as defined in e.g. \cite[\S2.2]{cgp} or \cite[\S2]{GetzlerKapranov}, and let $\hat{\Gamma}_g$ be the groupoid obtained by barycentrically subdividing each graph in $\Gamma_g$ (see Definition \ref{def:bary_stable_graphs}). To state our first main theorem, we set some notation for partitions and associated operations on symmetric functions.

\begin{notn}
Write $\Part^\star$ for the set of all integer partitions, and use the notation $\varnothing \in \Part^\star$ for the empty partition.
\begin{itemize}
    \item Given $\mu \in \Part^\star$, we use the notation $\mu =(1^{\mu_1}2^{\mu_2}\dots)$, where $\mu_j$ is the number of parts of size $j$. We set $|\mu|:=\sum_{i\geq 1} i\cdot \mu_i$ for the size of $\mu$, and write $\Part^\star_n \subset \Part^\star$ for the subset of partitions of size $n$.
    \item  For $i \geq 1$, we use $\psi_i: \hat{\Lambda}\to \hat{\Lambda}$ to denote the $i$th Adams operation. This is the ring homomorphism determined by $p_j\mapsto p_{ij}$ for all $j$, and which extends in a natural way to $K_0(\cat{MHS})\otimes_{\Z} \hat{\Lambda}$ using \cite[\S 5]{GetzlerPandharipande}. If $\mu \in \Part^\star$, we define $\psi_\mu : \hat{\Lambda}\to \hat{\Lambda}$ by
    \[\psi_{\mu}(f) = \prod_{i \geq 1} \psi_i(f)^{\mu_i}.\] In particular, $\psi_\varnothing(f) = 1$ for all $f \in \hat{\Lambda}$.
    \item We write
    \[ \overline{\partial}_{\mu}: K_0(\cat{MHS}) \otimes_{\Z}\hat{\Lambda} \to K_0(\cat{MHS}) \otimes_{\Z}\hat{\Lambda} \]
    for the operator\footnote{This operation is sometimes called skewing with respect to $p_\mu = \prod_{i >0} p_i^{\mu_i}$.}
    \[ \overline{\partial}_\mu f = \left(\prod_{i \geq 1} i^{\mu_i}\right) \frac{\partial^{\mu_1 + \mu_2 + \cdots} f}{\partial p_1^{\mu_1} \partial p_2^{\mu_2}\cdots}. \]
    \item Given $f\in \hat{\Lambda},$ we use $f'$, $f''$, and $\dot{f}$ to denote $\frac{\partial f}{\partial p_1},\frac{\partial^2 f}{\partial p_1^2},$ and $\frac{\partial f}{\partial p_2}$, respectively.
\end{itemize}
\end{notn}

Our formula is stated in the language of \textit{$2$-partitions}. A $2$-partition is a function $\Theta: \Part^\star \to \Part^\star$ such that $\Theta(\mu) = \varnothing$ for all but finitely many $\mu$. Each 2-partition indexes a conjugacy class in a direct product of wreath products of symmetric groups: see Remark \ref{rem:prelim_2_part} below, or Lemma \ref{lem:conjugacy-classes}. We write $\Partt$ for the set of $2$-partitions.

\begin{customthm}{A}\label{thm:fixed-genus-thm}
Fix an integer $g \geq 2$. Given an integer $h \geq 0$ and a partition $\pi$, set
\[ \cat{w}_{h}^{\mu}:= \begin{cases}
\overline{\partial}_{\mu}\mathsf{a}_h &\text{if }2h-2 + |\mu| > 0,\\[10pt] 
 \displaystyle{\frac{1}{1 - \mathsf{a}_0''}} &\text{if } h = 0,\,\mu = 1^2,\\[12pt]
 \displaystyle{\frac{1 + 2 \dot{\mathsf{a}}_0}{1 - \psi_2(\mathsf{a}_0'')}} &\text{if } h=0,\,\mu = 2^1.
 \end{cases}
\]
Then we have
\[\overline{\cat{a}}_g = \left(\sum_{\Theta_0, \ldots \Theta_g \in \Partt}  K(\Theta_0, \ldots,  \Theta_g) \prod_{h = 0}^{g} \prod_{\mu \in \Part^\star} \psi_{\Theta_h(\mu)}\left(\cat{w}_h^{\mu}  \right) \right) \circ \left(p_1 + \overline{\cat{a}}_0'\right),\] 
where $K(\Theta_0, \ldots, \Theta_g) \in \QQ$ is defined as follows: for any 
$\G \in \mathrm{Ob}(\hat{\Gamma}_g),$ the tuple $\Theta_0, \ldots, \Theta_g$ of 2-partitions specifies a union of conjugacy classes $\Aut^{\Theta_0, \ldots, \Theta_g}(\G) \subset \Aut(\G),$ 
and
\[ K(\Theta_0, \ldots, \Theta_g) := \sum_{\G \in \mathrm{Iso}(\hat{\Gamma}_g)} \frac{|\Aut^{\Theta_0, \ldots, \Theta_g}(\G)|}{|\Aut(\G)|}.  \]

\end{customthm}


\begin{rem}\label{rem:prelim_2_part}
We briefly explain the appearance of $2$-partitions in Theorem \ref{thm:fixed-genus-thm}. For $\G \in \hat{\Gamma}_g$, write $\tup{\nu}(\G)_i^{(h)}$ for the number of vertices of $\G$ which have valence $i$ and weight $h$. Then there is an embedding of groups
\begin{equation}\label{eqn:intro-embedding-weights}
\Aut(\G) \hookrightarrow \bbS_{\tup{\nu}(\G)}: = \prod_{h = 0}^{g}\left( \prod_{i \geq 0} S_i \wr S_{\tup{\nu}(\G)_i^{(h)}}\right),
\end{equation}
well-defined up to conjugacy; here for integers $m, n \geq 0$ we define $S_m \wr S_n := (S_m)^n \rtimes S_n$. The conjugacy classes of $\bbS_{\tup{\nu}(\G)}$ are in natural bijection with $(g + 1)$-tuples $(\Theta_0, \ldots, \Theta_g)$ of $2$-partitions such that
\begin{equation}\label{eqn:condtition-2-part-weights}
\sum_{\mu \in \Part^\star_i} |\Theta_h(\mu)| = \tup{\nu}(\G)^{(h)}_i.  \end{equation}
for each $i \geq 0$ and $h \in \{0, \ldots, g\}$. Then \[\Aut^{\Theta_0, \ldots, \Theta_g}(\G) \subset \Aut(\G) \]
is defined as the preimage of the corresponding conjugacy class under (\ref{eqn:intro-embedding-weights}). Condition (\ref{eqn:condtition-2-part-weights}) and the fact that $\hat{\Gamma}_g$ has finitely many isomorphism classes imply that $K(\Theta_0, \ldots, \Theta_g) = 0$ for all but finitely many tuples $\Theta_0, \ldots, \Theta_g$. In particular, up to plethysm with $p_1 + \overline{\cat{a}}_0'$, Theorem \ref{thm:fixed-genus-thm} gives a formula for $\overline{\cat{a}}_g$ as a sum of finitely many terms.
\end{rem}

\begin{exmp}
    The formula in Theorem \ref{thm:fixed-genus-thm} is derived by summing contributions from each individual graph in $\hat{\Gamma}_g$, and then taking plethysm with $p_1 + \overline{\cat{a}}_0'$. See Figure \ref{fig:genus-two-fig} for the case $g = 2$.
\end{exmp}

\subsubsection{All-genus formula for $\Mbar_{g,n}$} Our work also gives a formula for the total generating function
\begin{equation}\label{eqn:abar_defn}
\overline{\cat{a}} := \sum_{g \geq 0} \overline{\cat{a}}_g \cdot t^{g-1} \in K_0(\cat{MHS}) \otimes_{\Z} \hat{\Lambda}(\!(t)\!) 
\end{equation}
in terms of
\begin{equation}\label{eqn:a_defn}
\cat{a} := \sum_{g \geq 0} \cat{a}_g \cdot t^{g-1} \in K_0(\cat{MHS}) \otimes_{\Z} \hat{\Lambda}(\!(t)\!),
\end{equation}
where $\overline{\cat{a}}_g$ and $\cat{a}_g$ are as in (\ref{eqn:abarg_defn}) and (\ref{eqn:ah_defn}), respectively. For this formula, we write $\Graph$ for the groupoid of all connected graphs. Given $G\in \mathrm{Ob}(\Graph)$, a $2$-partition $\Theta \in \Partt$ determines a union of conjugacy classes
\[ \Aut^{\Theta}(G) \subset \Aut(G). \]
Given $\Theta \in \Partt$, define
\[||\Theta||:= \sum_{\mu \in \Part^\star} |\Theta(\mu)|\cdot |\mu| \in \Z. \]

\begin{customthm}{B}\label{thm:all_graphs}
We have
\[ \overline{\cat{a}} = \sum_{\Theta \in \Partt} O(\Theta)\cdot t^{||\Theta||/2}  \prod_{\mu \in \Part^\star} \psi_{\Theta(\mu)} (\overline{\partial}_\mu \cat{a}), \]
where $O(\Theta)\in \mathbb{Q}$ is defined by
\[O(\Theta) := \sum_{G \in \mathrm{Iso}(\Graph)} \frac{|\Aut^{\Theta}(G)|}{|\Aut(G)|}. \]
\end{customthm}

\begin{figure}[h]
    \centering
    \includegraphics[scale=1]{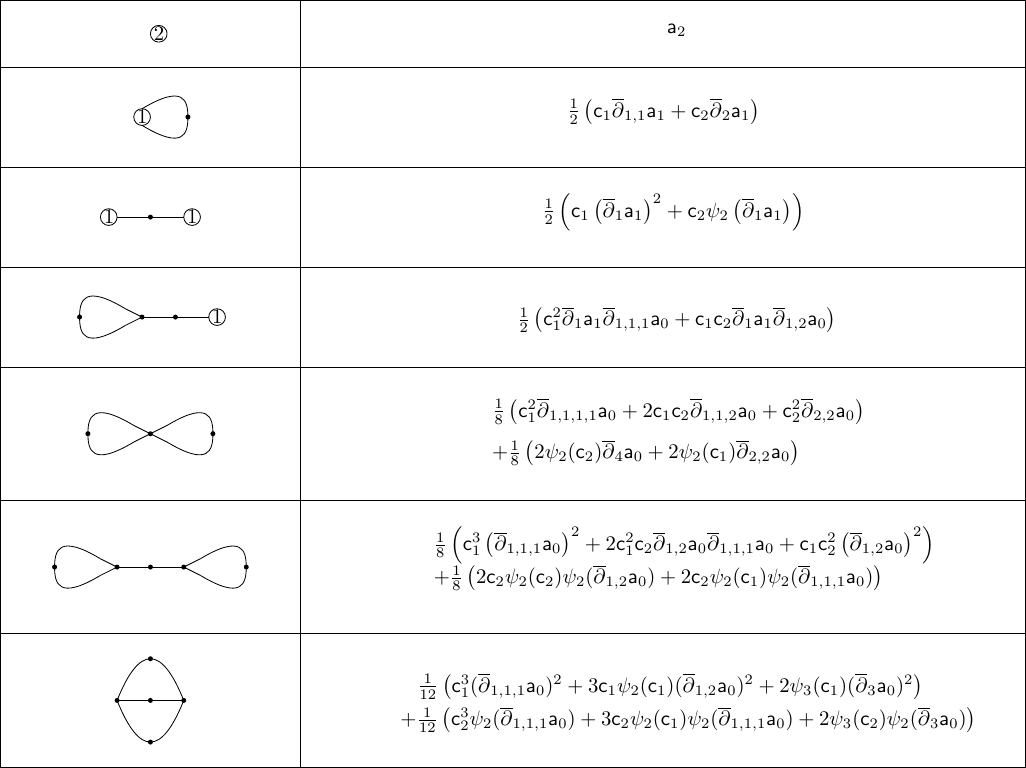}
    \caption{Computing $\overline{\mathsf{a}}_2$ as a sum over graphs. The formula in Theorem \ref{thm:fixed-genus-thm} is obtained by summing each graph contribution, and then taking plethysm with $p_1 + \overline{\cat{a}}_0'$. In the table we have set $\cat{c_1}:= \frac{1}{1 - \cat{a}_0''}$ and $\cat{c}_2:= \frac{1 + 2 \dot{\cat{a}}_0}{1 - \psi_2(\cat{a}_0'')}$.}
    \label{fig:genus-two-fig}
\end{figure}
\begin{rem}
    As $\Aut^{\Theta}(G) \neq \varnothing$ only for finitely many graphs $G$, the constants $O(\Theta)$ are well-defined. Also, $\Aut^{\Theta}(G) \neq \varnothing$ implies that $||\Theta||$ is even, so the factor of $t^{||\Theta||/2}$ is an integral power of $t$.
\end{rem}

\begin{rem}
    The formula in Theorem \ref{thm:all_graphs} can be interpreted as an expansion of the right-hand side of the Getzler--Kapranov formula \cite[Theorem 8.13]{GetzlerKapranov}, which we recall in our notation as Theorem \ref{thm:GKformula} below. Our proof of Theorem \ref{thm:all_graphs} is independent from their derivation. See \S\ref{subsubsec:modular_operads} below for a more in-depth discussion.
\end{rem}

\subsubsection{The Euler characteristic of the moduli space of stable maps} Our techniques also apply to torus-fixed stable maps. Suppose $X$ is a variety with a nontrivial $\C^\star$-action, such that
\begin{itemize}
\item the subset $X^{\C^\star}\subset X$ of $\C^\star$ fixed points consists of a finite set of isolated points, and
\item there are finitely many $\C^\star$-invariant curves in $X$, and each one is smooth and proper, hence isomorphic to $\P^1$.
\end{itemize}
These hypotheses are satisfied for e.g. complete simplicial toric varieties and flag varieties. In this setting, fix a homology class $\beta \in H_2(X;\Z)$, and define the generating function
\[\overline{\cat{a}}_{X, \beta}^{\C^\star}:=  \sum_{g, n \geq 0} \cat{e}^{S_n}(\Mbar_{g, n}(X, \beta)^{\C^\star}) t^{g - 1} \in K_0(\cat{MHS}) \otimes_{\Z} \LLambda. \]
We associate to the $\C^\star$-action on $X$ a graph $G_X$, whose vertices represent $\C^\star$-fixed points and whose edges represent $\C^\star$-invariant curves; in particular, each ${e \in E(G_X)}$ defines a homology class ${\beta_e \in H_2(X;\Z)}$. We let $\cat{\Gamma}_{X, \beta}$ denote the groupoid of triples $(G, f, \delta)$ where $G$ is a connected graph, $f: G \to G_X$ is a graph homomorphism, and $\delta: E(G) \to \Z_{>0}$ is a weighting such that $\sum_{e \in E(G)} \delta(e) \beta_{f(e)} = \beta$. Following work of Graber and Pandharipande \cite{graberpand}, we derive a formula for $\overline{\cat{a}}_{X, \beta}^{\C^\star}$.
Set
\[ \widehat{{\cat{a}}} := t^{-1}(h_1 + h_2) + \overline{\cat{a}}, \]
where $h_n \in \Lambda$ denotes the $n$th homogeneous symmetric function \cite{Macdonald}.
\begin{customthm}{C}\label{thm:torus_fixed_all_graphs}
The formula
\[\overline{\cat{a}}^{\C^\star}_{X, \beta} = \sum_{\Theta \in \Partt} O_{X, \beta}(\Theta)\cdot t^{||\Theta||/2}  \prod_{\mu \in \Part^\star} \psi_{\Theta(\mu)}\left(\overline{\partial}_{\mu}\widehat{\cat{a}}\right)  \]
holds, where $O_{X, \beta}(\Theta) \in \QQ$ is defined by
\[O_{X, \beta}(\Theta) := \sum_{(G, f, \delta) \in \mathrm{Iso}(\cat{\Gamma}_{X, \beta})} \frac{|\Aut^{\Theta}(G, f, \delta)|}{|\Aut(G, f, \delta)|}. \]
\end{customthm}

Via $\C^\star$-localization, Theorem \ref{thm:torus_fixed_all_graphs} determines the $S_n$-equivariant topological Euler characteristic
\[ \chi^{S_n}(\Mbar_{g, n}(X, \beta)) := \sum_{i} (-1)^i \ch(H^i(\Mbar_{g, n}(X, \beta);\QQ)) \in \Lambda \]
in terms of the $S_n$-equivariant topological Euler characteristics of $\Mbar_{g, n}$ and finite graph sums; above we have set $\ch(V) \in \Lambda$ for the Frobenius characteristic of an $S_n$-representation $V$. The $S_n$-equivariant topological Euler characteristics of $\Mbar_{g, n}$ can be computed by combining work of Gorsky \cite{Gorsky} on the $S_n$-equivariant topological Euler characteristics of $\calM_{g,n}$ with either Theorem \ref{thm:fixed-genus-thm} or the Getzler--Kapranov formula (recalled as Theorem \ref{thm:GKformula} below). We used this perspective together with combinatorial techniques for evaluating the relevant graph sums in \cite{genusonechar} in order to compute $\chi^{S_n}(\Mbar_{1, n}(\P^r, d))$.

See Tables \ref{table:stable_maps_no_markings} and \ref{table:degree3-frob-chars} for sample calculations of Euler characteristics via Theorem \ref{thm:torus_fixed_all_graphs} when $X = \P^r$ and $\beta = 3\cdot[L]$, where $[L]$ is the class of a line in $\P^r$.

\begin{rem}
The groupoid $\cat{\Gamma}_{X, \beta}$ has finitely many isomorphism classes, so Theorem \ref{thm:torus_fixed_all_graphs} is a formula for $\overline{\cat{a}}_{X, \beta}^{\C^\star}$ as the sum of finitely many terms. In fact, $\overline{\cat{a}}_{X, \beta}^{\C^\star}$ is a polynomial expression of finitely many terms of the form $\psi_k(\overline{\partial}_\mu\overline{\cat{a}})$. When $X = \P^r$ and $\beta = d\cdot[L]$, this polynomial is of degree $d + 1$. If instead we view $\overline{\cat{a}}_{\P^r, d}^{\C^\star}$ as a function of $r$, it is also a polynomial of degree $d + 1$; this can be deduced from basic properties of the chromatic polynomial \cite[Theorem C]{genusonechar}. See Figure \ref{fig:stable_map_exmp} for the calculation of $\overline{\cat{a}}_{\P^r, d}^{\C^\star}$ when $d = 3$.
\end{rem}

\begin{table}[h]
\def\arraystretch{1.5}
\begin{tabular}{|c|c|}
\hline
$g$ & $\chi \left(\Mbar_{g, 0}(\P^r, 3)\right)$                                                           \\ \hline
$0$ & 

$16\binom{r+1}{4}+21\binom{r+1}{3}+6\binom{r+1}{2}$                                 \\ \hline
$1$ & 
$216\binom{r+1}{4}+247\binom{r+1}{3}+55\binom{r+1}{2}$                                    \\ \hline
$2$ & $3160\binom{r+1}{4}+3342\binom{r+1}{3}+645\binom{r+1}{2}$ \\ \hline

$3$ & $44800\binom{r+1}{4}+45114\binom{r+1}{3}+8088\binom{r+1}{2}$ \\ \hline

$4$ & $630352\binom{r+1}{4}+613213\binom{r+1}{3}+104208\binom{r+1}{2}
$ \\ \hline
\end{tabular}

\caption{The topological Euler characteristic of $\Mbar_{g, 0}(\P^r, 3)$, for $g \leq 4$.}
\label{table:stable_maps_no_markings}
\end{table} 

\begin{table}[h]
\def\arraystretch{1.5}
\begin{tabular}{|c|c|}
\hline
$n$ & $\chi^{S_n} \left(\Mbar_{2, n}(\P^r, 3)\right)$                                                           \\ \hline
$0$ & 

$3160 \binom{r+1}{4}+ 3342 \binom{r+1}{3}+645 \binom{r+1}{2}$                                 \\ \hline
$1$ & 
$\left[23804 \binom{r+1}{4} +23376 \binom{r+1}{3} +4056 \binom{r+1}{2}\right]s_1$                                    \\ \hline
$2$ & $\left[122244\binom{r+1}{4}+115425\binom{r+1}{3}+18876\binom{r+1}{2}\right]s_2 + \left[79420 \binom{r+1}{4} + 72534 \binom{r+1}{3}+11302 \binom{r+1}{2}\right]s_{11}$ \\ \hline

$3$ & \begin{tabular}{@{}c@{}}$\left[538856\binom{r+1}{4}+494952\binom{r+1}{3}+77658\binom{r+1}{2} \right]s_3 + \left[614532\binom{r+1}{4}+549792\binom{r+1}{3}+82844\binom{r+1}{2}\right]s_{21}$ \\ 
 $+ \left[154148\binom{r+1}{4}+131406\binom{r+1}{3}+18408\binom{r+1}{2}\right]s_{111}$\end{tabular}    \\ \hline

\end{tabular}

\caption{The $S_n$-equivariant topological Euler characteristic of $\Mbar_{2, n}(\P^r, 3)$, for $n \leq 3$.}
\label{table:degree3-frob-chars}
\end{table} 

\subsubsection{Moduli spaces of curves of compact type} Our techniques also apply to any subspace of the above moduli spaces defined by a graph-theoretic condition. One example of particular interest is the moduli space
\[ \M_{g, n}^{\mathrm{ct}} \subset \Mbar_{g,n} \]
of curves of compact type; a stable curve is of compact type if and only if its dual graph is a tree. Theorem \ref{thm:fixed-genus-thm} holds for these spaces, if we replace $\hat{\Gamma}_g$ by the subgroupoid $\hat{\Gamma}_g^{(0)} \subset \hat{\Gamma}_g$ where the underlying graph is a tree. See Theorem \ref{thm:graph-genus-gen-fun}. As a basic example of the theory, we work out the generating function 
\[ \cat{a}_g^{(0)} := \sum_{n \geq 0} \cat{e}^{S_n}(\M_{g, n}^{\mathrm{ct}}) \]
when $g = 3$. See Table \ref{table:genus3ct} for the first $4$ terms, and Figure \ref{fig:genus3ct} in \S\ref{sec:comb_subspaces} for the full generating function. Since the formula for the generating function $\cat{a}_g^{(0)}$ depends on $\cat{e}^{S_n}(\M_{g, n})$, which when $g = 3$ is known up to $n = 11$ \cite{BergstromFaber, CLP}, explicit calculations of $\cat{e}^{S_n}(\M_{3, n}^{\mathrm{ct}})$ as in Table \ref{table:genus3ct} are possible for $n \leq 11$.

Our techniques also allow us to prove a functional equation computing $\overline{\cat{a}}$ in terms of $\cat{a}$ and the generating functions $\cat{a}^{(0)}_g$, see Theorem \ref{thm:core_gen_fun}.

\begin{table}[]
\begin{tabular}{|c|c|}
\hline
$n$ & $\cat{e}^{S_n}(\M_{3, n}^{\mathrm{ct}})$                               \\ \hline
$0$ & $\mathbb{L}^6+2\mathbb{L}^5+2\mathbb{L}^4+\mathbb{L}^3+1$                                         \\ \hline
$1$ & $(\mathbb{L}^7+4\mathbb{L}^6+7\mathbb{L}^5+4\mathbb{L}^4+\mathbb{L}^3+1)s_{1}$                    \\ \hline
$2$ & \makecell{$(2\mathbb{L}^7+7\mathbb{L}^6+7\mathbb{L}^5+\mathbb{L}^4-4\mathbb{L}^3-3\mathbb{L}^2)s_{1 1}$\\ $+ (\mathbb{L}^8+6\mathbb{L}^7+17\mathbb{L}^6+17\mathbb{L}^5+6\mathbb{L}^4-2\mathbb{L}^2+1)s_{2}$}    \\ \hline
$3$ &\makecell{$(2\mathbb{L}^7+6\mathbb{L}^6-11\mathbb{L}^4-8\mathbb{L}^3- 3\mathbb{L}^2+1)s_{1 1 1}$\\ $+(4\mathbb{L}^8+24\mathbb{L}^7+43\mathbb{L}^6+22\mathbb{L}^5-9\mathbb{L}^4-16\mathbb{L}^3-9\mathbb{L}^2+1)s_{2 1}$ \\$+(\mathbb{L}^9+8\mathbb{L}^8+32\mathbb{L}^7+52\mathbb{L}^6+32\mathbb{L}^5+6\mathbb{L}^4-6\mathbb{L}^3-5\mathbb{L}^2)s_{3}$}              
\\ \hline
$4$ & \makecell{$(2\mathbb{L}^7+\mathbb{L}^6-10\mathbb{L}^5-16\mathbb{L}^4-8\mathbb{L}^3+\mathbb{L}^2+2\mathbb{L}+1)s_{1 1 1 1}$\\ $+(9\mathbb{L}^8+40\mathbb{L}^7+36\mathbb{L}^6-25\mathbb{L}^5-55\mathbb{L}^4-31\mathbb{L}^3-4\mathbb{L}^2+5\mathbb{L}+2)s_{2 1 1}$ \\$+(2\mathbb{L}^9+22\mathbb{L}^8+67\mathbb{L}^7+66\mathbb{L}^6+9\mathbb{L}^5-26\mathbb{L}^4-24\mathbb{L}^3-5\mathbb{L}^2+2\mathbb{L}+1)s_{2 2}$ \\$+(6\mathbb{L}^9+50\mathbb{L}^8+144\mathbb{L}^7+141\mathbb{L}^6+22\mathbb{L}^5-54\mathbb{L}^4-50\mathbb{L}^3-15\mathbb{L}^2+2\mathbb{L}+1)s_{3 1}$ \\$+ (\mathbb{L}^{10}+10\mathbb{L}^9+53\mathbb{L}^8+125\mathbb{L}^7+125\mathbb{L}^6+48\mathbb{L}^5-10\mathbb{L}^4-22\mathbb{L}^3-7\mathbb{L}^2)s_{4}$} \\ \hline
\end{tabular}
\caption{The $S_n$-equivariant Serre characteristic of $\M_{3, n}^{\mathrm{ct}}$ for $n \leq 4$, computed using Theorem \ref{thm:graph-genus-gen-fun}, via the graph contributions in Figure \ref{fig:genus3ct}. We use the notation $\mathbb{L} := [H^2_c(\mathbb{A}^1;\QQ)] \in K_0(\cat{MHS})$.}
\label{table:genus3ct}
\end{table}

\begin{rem}\label{rem:numerical}
    If $X$ is a variety with $S_n$-action, then $\cat{e}^{S_n}(X)$ also determines the ordinary Serre characteristic, and hence the numerical $E$-polynomial. Indeed, if one defines \[\mathrm{rk}:K_0(\cat{MHS)\otimes_{\Z}}\hat{\Lambda} \to K_0(\cat{MHS) \otimes_{\Z}}\QQ[\![x]\!]\] by $p_1 \mapsto x$ and $p_n \mapsto 0$ for $n > 1$, then we have
    \[ \mathrm{rk}(\cat{e}^{S_n}(X)) = \cat{e}(X) \cdot \frac{x^n}{n!}.  \]
    Therefore all of our theorems on generating functions for $S_n$-equivariant Serre characteristics specialize to formulas for ordinary Serre characteristics.
\end{rem}

\subsection{P\'olya--Petersen characters}
 Our approach separates the equivariant Serre characteristic into a combinatorial part and a geometric part. The geometric part is the calculation of the series $\cat{a}$, which is essentially the information of the $S_n$-equivariant virtual Hodge numbers of $\calM_{g, n}$. The combinatorial part is what we call the \textit{P\'olya--Petersen character} of a finite graph. This construction is a common refinement of P\'olya's cycle index polynomial \cite{PolyaRead} and Petersen's type-$B$ character associated to a cycle \cite{semiclassicalremark}: in addition to vertex permutations recorded by the cycle index polynomial, the new character also keeps track of permutations of the half-edges $H(G)$ which are compatible with the root map $H(G)\to V(G)$. The automorphism group of the set map $\Aut(H(G) \to V(G))$ is a product of wreath products of symmetric groups as we explain in \S\ref{sec:intpart}. 

To keep track of the character theory of products of wreath symmetric groups, we construct in \ref{sec:ringwring} the ring $\Lambda^{[2]}$ of 2-symmetric functions.\footnote{This is distinct from the ring $\Lambda \otimes \Lambda$ of bisymmetric functions.} The ring $\Lambda^{[2]}$ has the following descriptions:\begin{itemize}
    \item the polynomial ring
    \[ \QQ[p_1(\mu), p_2(\mu), \ldots \mid \mu \in \Part^\star], \]
    \item the tensor product $\bigotimes_{i\geq 0} \Lambda(S_i)$, where $\Lambda(S_i)$ denotes the $S_i$-wreath symmetric functions of Macdonald \cite{Macdonald,MacDonald1980},
    \item the Grothendieck ring of the category of bounded $\mathbb{S}^{[2]}$-modules, where an $\bbS^{[2]}$-module is the data of a finite-dimensional rational representation for each finite product of wreath products of symmetric groups (\S\ref{sec:S2modules}),
    \item the Grothendieck ring of polynomial functors from $\mathbb{S}$-modules to vector spaces, where a $\bbS$-module is a finite-dimensional rational representation of $S_n$ for each $n \geq 0$.\footnote{$\bbS$-modules are sometimes called \textit{symmetric sequences} of vector spaces.}
\end{itemize}

The last characterization gives rise to a a plethystic action of $\Lambda^{[2]}$ on $\Lambda\otimes \Lambda$, namely a map \[\tilde{\circ}:\Lambda^{[2]}\times (\Lambda\otimes \Lambda)\to \Lambda\] that we describe and characterize in \S\ref{sec:tcirc}. The P\'olya--Petersen character is valued in the ring $\WRing$, and graph contributions are expressed in terms of the $\tcirc$ operator.


\begin{defn}
Let $G$ be a possibly decorated graph and let $\mathrm{Aut}(G)$ denote the decoration-preserving automorphism of the graph. Its P\'olya--Petersen character is defined as \[\zeta_G:=\mathrm{ch}\left(\mathrm{Ind}_{\mathrm{Aut}(G)}^{\mathrm{Aut}(H(G)\to V(G))}\mathrm{triv}_{\mathrm{Aut}(G)}\right)\in R(\mathrm{Aut}(H(G)\to V(G)))\subset \WRing,\]
where $\ch$ is the character of a representation, in the form of the generalized Frobenius characteristic defined in \S\ref{subsec:frob_char}.
\end{defn}


The following result is an example of how P\'olya--Petersen characters appear in our work. It implies Theorem \ref{thm:all_graphs} after the calculation of explicit formulas for the characters $\zeta_G$, and makes precise how the calculation of $\overline{\cat{a}}$ is separated into combinatorial and geometric parts.
\begin{thm*}
Let $\cat{Graph}$ be the groupoid of all connected graphs, and let $\Delta: \Lambda \to \Lambda \otimes \Lambda$ be the coproduct on the ring of symmetric functions. Then
   \begin{equation}\label{eqn:genformula}
       \overline{\cat{a}} = \sum_{G \in \mathrm{Iso}(\Graph)} t^{|E(G)|}(\zeta_G \tcirc \Delta\cat{a}) 
   \end{equation}
\end{thm*}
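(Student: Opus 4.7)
The plan is to split the problem into three conceptually separate steps: a geometric stratification, a combinatorial interpretation of the $\tilde{\circ}$ operation, and a representation-theoretic identification via induction. Throughout, I will work with the Setup from the introduction applied to $\Mbar = \bigsqcup_{g, n} \Mbar_{g, n}$, where the associated combinatorial data is the groupoid $\Gamma$ of stable graphs (with genus labels $h(v)$ at each vertex) and $\M = \bigsqcup_{h, n} \M_{h, n}$.

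First I would invoke the stratification $\Mbar_{g, n} = \bigsqcup_{G} \M_G$ indexed by isomorphism classes of genus-$g$ stable graphs with $n$ legs, with $\M_G \cong \bigl(\prod_{v \in V(G)} \M_{h(v), \val(v)}\bigr)/\Aut(G)$. Additivity of the Serre characteristic on locally closed decompositions, combined with the definition \eqref{eqn:abar_defn}, rewrites
\[\overline{\cat{a}} = \sum_{g \geq 0} t^{g-1} \sum_{G \in \mathrm{Iso}(\Gamma_g)} \cat{e}^{S_n}(\M_G).\]
The genus bookkeeping $g = |E(G)| - |V(G)| + 1 + \sum_v h(v)$ lets me absorb the $t$-grading: if each vertex contributes $t^{h(v) - 1}$ from $\cat{a}$, then the overall power is $t^{g-1}$ times an explicit $t^{|E(G)|}$, which accounts for the factor appearing on the right-hand side of \eqref{eqn:genformula}. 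The only thing left to prove is that the per-graph summand equals $\zeta_G \tcirc \Delta\cat{a}$, uniformly in $g$, allowing the unrestricted sum over $\Graph$.

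Next I would unpack the per-graph summand. The stratum $\M_G$ is the $\Aut(G)$-quotient of a product $\prod_v \M_{h(v), \val(v)}$. The labels in this product are the half-edges $H(G)$, viewed with their root map $H(G) \to V(G)$, together with the remaining $n$ legs which will serve as external marked points. The natural symmetry group of the product (before quotient) is therefore $\Aut(H(G) \to V(G))$, which acts by permuting vertices within the same $(h, \val)$-stratum and by permuting half-edges attached to each vertex. This is exactly the product of wreath products that indexes conjugacy classes of $\Lambda^{[2]}$. Now the coproduct $\Delta\cat{a} \in \Lambda \otimes \Lambda$ records the generating function for $\cat{e}^{S_a \times S_b}(\M_{h, a+b})$ as $a, b$ vary, the two tensor factors tracking \emph{external} legs (remaining marked points) and \emph{internal} legs (half-edges). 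By the characterization of $\tilde{\circ}$ established in \S\ref{sec:tcirc}, plethystically substituting $\Delta\cat{a}$ into a class function $\varphi \in \WRing$ of $\Aut(H(G) \to V(G))$ produces precisely the $\Aut(H(G) \to V(G))$-equivariant Serre characteristic of $\prod_v \M_{h(v), \val(v)}$, paired against $\varphi$ and re-symmetrized over external legs. The first $\Lambda$-factor of $\Delta\cat{a}$ thus survives as the symmetric-function variable for the external marked points.

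Finally I would pass from $\Aut(H(G) \to V(G))$ to $\Aut(G)$ by Frobenius reciprocity. The quotient by $\Aut(G)$ corresponds, on $S_n$-equivariant Serre characteristics, to averaging over $\Aut(G) \subset \Aut(H(G) \to V(G))$. By the projection formula for characters, this is the same as pairing the equivariant Serre characteristic of the product with the character $\mathrm{Ind}_{\Aut(G)}^{\Aut(H(G) \to V(G))} \triv = \zeta_G$. Combining with the previous step yields
\[\cat{e}^{S_n}(\M_G) = \zeta_G \tcirc \Delta\cat{a}\Big|_{\text{summand for }G},\]
and summing over $G \in \mathrm{Iso}(\Graph)$ with the $t^{|E(G)|}$ weight completes the proof.

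The main obstacle is the middle step: matching the combinatorial definition of $\tilde{\circ}$ with the actual geometric product and its $\Aut(H(G) \to V(G))$-equivariant Serre characteristic. This depends on the universal property of $\tilde{\circ}$ as the plethystic action coming from the identification of $\WRing$ with the Grothendieck ring of polynomial functors on $\mathbb{S}$-modules (\S\ref{sec:ringwring}, \S\ref{sec:tcirc}); once that functorial description is in hand, the identification is natural, but unpacking it carefully on an arbitrary graph is where the bookkeeping is most delicate. The remaining ingredients, namely additivity of $\cat{e}$, Frobenius reciprocity, and the genus arithmetic, are essentially routine.
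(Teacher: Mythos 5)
Your proof is correct in substance and takes essentially the same route as the paper's, which is Proposition~\ref{prop:ecoarse} summed over $G \in \mathrm{Iso}(\Graph)$: stratify $\Mbar$ by coarse dual graph, write each stratum as an $\Aut(G)$-quotient of a $\boxtimes$-product of copies of $\M$, and convert the $\Aut(G)$-quotient to a $\bbS_{\nu(G)}$-quotient after inducing, which yields $\zeta_G \tcirc \Delta\cat{a}$; your ``projection formula'' step is exactly what the isomorphisms $X/H\cong(\Ind_H^G X)/G$ and $\Ind_H^G\Res_H^G Y\cong (G/H)\times Y$, combined with Theorem~\ref{thm:tcirc_varieties} and Lemma~\ref{lem:degree_shift}, package at the level of varieties. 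One caveat worth flagging: the theorem sums over the unweighted groupoid $\Graph$, and the paper accordingly works with the \emph{coarse} stratification from the outset (equation~(\ref{eqn:coarse_strata}), where each vertex carries the genus-graded $\bbS$-variety $\M$, so no genus labels appear on the graph). You instead open with the fine genus-weighted stratification and then propose to ``absorb'' the weight function $w$; this is valid but requires an orbit-stabilizer accounting to pass from isomorphism classes of $(G,w)$ to isomorphism classes of $G$, which you leave implicit, and it causes a slip in your middle step where you describe $\Aut(H(G)\to V(G))$ as permuting vertices ``within the same $(h,\val)$-stratum'' --- that is the finer group $\bbS_{\underline{\nu}(\G)}$ from \S\ref{sec:graph_strat}, whereas the coarse theorem uses $\bbS_{\nu(G)}$, which sees only valences, with the genera recorded by the $t$-grading of $\cat{a}$ rather than by the graph. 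You correctly identify the delicate point as justifying that $\tcirc$ computes equivariant Serre characteristics of these geometric products; that is precisely what \S\ref{sec:tcirc} and the $\bbS^{[2]}$-variety formalism are set up to deliver.
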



In fact, formula (\ref{eqn:genformula}) holds on the level of the Grothendieck ring of varieties, as discussed in \S\ref{sec:graph_strat}. The generalization of (\ref{eqn:genformula}) which allows for vertices of various genera is discussed in \S\ref{sec:ppchar}, and implies Theorem \ref{thm:fixed-genus-thm} .

\subsection{Calculation techniques}

We now give some comments on the shape of the formulas $\overline{\cat{a}}_g$ and $\overline{\cat{a}}$ that illustrate the technical tools useful for calculations. 

\subsubsection{Enriched plethysm and differential operators} We emphasize that while the Pólya--Petersen characters take value in the 2-symmetric function ring $\WRing,$ the formulas in Theorems \ref{thm:fixed-genus-thm}, \ref{thm:all_graphs}, and \ref{thm:torus_fixed_all_graphs} only involve ordinary symmetric functions in $\hat{\Lambda}$ and standard operations on them. 

This is an instance of the general fact that the enriched plethysm $\tcirc: \WRing\times (\Lambda\otimes \Lambda)\to \Lambda$ is determined by the action of the enriched power sums $p_n(\mu) \in \WRing$, and these act by a standard projection followed by an Adams operation. Moreover, under the ring isomorphism $\WRing\cong \bigotimes_{i\geq 0}\Lambda(S_i),$ the enriched plethysm $\tcirc: \WRing\times (\Lambda\otimes \Lambda)\to \Lambda$ is compatible with and determined by the $S_i$-plethysms $\circ_{S_i}:\Lambda(S_i)\times (\Lambda_i\otimes \Lambda)\to \Lambda$ defined by Macdonald \cite{MacDonald1980}. Precise statements are given in \S\ref{sec:tcirc}. The upshot is that applying the action of P\'olya--Petersen characters on bisymmetric functions produces ordinary symmetric functions that can be efficiently computed and related to the existing literature on $S_n$-equivariant Euler characteristics of moduli spaces.

\subsubsection{Grafting}
Our derivation of Theorem \ref{thm:fixed-genus-thm} overcomes the basic challenge that there are infinitely many dual graph strata in $\bigsqcup_{n}\Mbar_{g,n}.$ To arrive at a formula with a finite number of terms, we observe that relative to $\Mbar_g$, the combinatorial complexity of strata comes from:
\begin{itemize}
    \item attaching trees of genus zero vertices, which we call `rational tails,' and 
    \item subdividing edges by stable genus zero vertices, where the paths of stable genus zero vertices are called `caterpillars.'
\end{itemize}
The formula in Theorem \ref{thm:fixed-genus-thm} translates the combinatorial operations described above into plethystic formulas that we now briefly summarize. See  \S\ref{sec:catails} for more details.
\begin{defn}
    Let $\Mbar_{g,n}^{\mathrm{nrt}}\subset \Mbar_{g,n}$ be the subspace of stable curves that do not contain any nontrivial connected subcurve of arithmetic genus $g.$ Let $$\overline{\mathsf{a}}_{g}^{\mathrm{nrt}}:=\sum_{n\geq 0}\mathsf{e}^{S_n}(\Mbar_{g,n}^{\mathrm{nrt}})$$ be the generating function for the $S_n$-equivariant Serre characteristics.
\end{defn} An intermediate step in deriving Theorem \ref{thm:fixed-genus-thm} is the well-known formula (Lemma \ref{lem:attaching_rat_tails}) \[\overline{\mathsf{a}}_g = \overline{\mathsf{a}}_{g}^{\mathrm{nrt}}\circ (p_1+\overline{\mathsf{a}}_0')\] which decomposes graph strata in $\Mbar_{g,n}$ into contributions from the minimal genus $g$ subgraph--- termed the `core' of the dual graph--- and rational tails, with generating functions given by $\overline{\mathsf{a}}_g^{\mathrm{nrt}}$ and $p_1+\overline{\mathsf{a}}_0',$ respectively. 

We attach caterpillars by allowing genus zero vertices of valence two in the statement of Theorem \ref{thm:fixed-genus-thm}. The moduli space of caterpillars, defined in §\ref{sec:catails}, forms an $S_2\times \bbS$-variety, and its Frobenius characteristics are calculated in \cite[Remark 4.2]{BergstromMinabe2} and \cite[
\S6.1]{genusonechar} as $$\frac{1}{2}p_1^2\otimes \frac{1}{1 - \mathsf{a}_0''} + \frac{1}{2}p_2\otimes \frac{1 + 2 \dot{\mathsf{a}}_0}{1 - \psi_2(\mathsf{a}_0'')}\in  K_0(\mathsf{MHS}) \otimes_{\Z}\Lambda_2\otimes_{\Z} \hat{\Lambda}.$$ The two summands correspond to the terms $\mathsf{w}^{\mu}_{h}$ in Theorem \ref{thm:fixed-genus-thm} for $h = 0$ and $|\mu| = 2$ which replace genus zero vertices of valence two by caterpillars.

After these two grafting operations, we only need to work with graphs in the finite groupoid $\hat{\Gamma}_g$, which indexes the dual graphs of curves in $\Mbar_{g}$, or equivalently the cones in $\mathcal{M}_g^{\mathrm{trop}}$ \cite{ACP, CCUW, cgp}, in order to compute the generating function $\sum_{n\geq 0}\mathsf{e}^{S_n}(\Mbar_{g,n})$. A problem left open in our work is to develop a more general grafting procedure for P\'olya--Petersen characters.

\subsection{Related work}\label{sec:motpers}

The present article builds on and complements many previous results on the topology of moduli spaces of curves.

\subsubsection{Modular operads}\label{subsubsec:modular_operads} In \cite{GetzlerKapranov}, Getzler and Kapranov study Deligne's weight spectral sequence for the family of normal crossings compactifications $\{\M_{g, n} \hookrightarrow \Mbar_{g,n}\}_{2g - 2 + n >0}$ from an operadic perspective. Taking Euler characteristics, they deduce an inspirational formula.

\begin{thm}\cite[Theorem 8.13]{GetzlerKapranov}\label{thm:GKformula}
    With $\overline{\cat{a}}$ as in (\ref{eqn:abar_defn}) and $\cat{a}$ as in (\ref{eqn:a_defn}), we have \[\overline{\mathsf{a}} = \mathrm{Log}\left(\mathrm{exp}\left(\sum_{n\geq 1}t^n\left(\frac{n}{2}\frac{\partial^2}{\partial p_n^2}+\frac{\partial}{\partial p_{2n}}\right)\right)\mathrm{Exp}(\mathsf{a})\right),\] where \[\mathrm{Exp}(\cat{a}) := \sum_{\mu \in \Part^\star} \frac{\psi_{\mu}(\cat{a})}{\prod_{i \geq 1} i^{\mu_i}\mu_i!},\] and $\mathrm{Log}$ is the plethystic inverse to $\mathrm{Exp}$ with formula determined by Cadogan \cite{Cadogan}.
\end{thm}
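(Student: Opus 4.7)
The plan is to derive Theorem \ref{thm:GKformula} from equation (\ref{eqn:genformula}) by a direct Pólya-enumeration argument. First, applying the plethystic exponential to (\ref{eqn:genformula}) yields
\[
\mathrm{Exp}(\overline{\cat{a}}) \;=\; \sum_{G \in \mathrm{Iso}(\widetilde{\Graph})} t^{|E(G)|}\bigl(\zeta_G \tcirc \Delta\cat{a}\bigr),
\]
where $\widetilde{\Graph}$ denotes the groupoid of all (possibly disconnected) finite graphs. This uses two facts: Pólya--Petersen characters factor over disjoint unions because $\Aut(G_1 \sqcup G_2)$ is built from $\Aut(G_1)\times \Aut(G_2)$ together with wreath symmetries swapping isomorphic components (exactly the combinatorics that $\mathrm{Exp}$ records), and the enriched plethysm $\tcirc \Delta\cat{a}$ is multiplicative in its first argument. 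Applying $\mathrm{Log}$ to this identity reduces the theorem to identifying the right-hand side with $\exp(D)\,\mathrm{Exp}(\cat{a})$, where $D := \sum_{n\geq 1}t^n\bigl(\tfrac{n}{2}\,\partial^2/\partial p_n^2 + \partial/\partial p_{2n}\bigr)$.

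Second, I would interpret $\mathrm{Exp}(\cat{a})$ as the edgeless case of this graph sum: for a graph $G_0$ with $E(G_0) = \varnothing$, the character $\zeta_{G_0}$ reduces to Pólya's classical cycle index on vertex orbits, and $\zeta_{G_0}\tcirc\Delta\cat{a}$ specializes to the symmetric-function expression for decorating vertex orbits of $G_0$ by components of $\cat{a}$; summed over isomorphism classes of edgeless graphs, this is precisely $\mathrm{Exp}(\cat{a})$. Next, I would show that $D$ acts as an edge-insertion operator. Acting on a contribution from $G_0$, the differential $\tfrac{n}{2}\,\partial^2/\partial p_n^2$ selects two distinct orbits of half-edges of cycle type $n$ and glues them into a single non-loop edge orbit of size $n$: the factor $\tfrac{1}{2}$ absorbs the $\Z/2$ swap of the two endpoints of each edge, while the factor $n$ cancels the cyclic $\Z/n$ automorphism of the resulting edge orbit. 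Analogously, $\partial/\partial p_{2n}$ inserts a loop orbit of size $n$ by consuming a single $p_{2n}$ that records the $2n$ half-edges cyclically wound around the loop orbit. The $t^n$ prefactor tracks the genus increment of an edge orbit of size $n$. Exponentiating $D = \sum_{k\geq 0} D^k/k!$ then sums over all finite multisets of such insertions, with $1/k!$ absorbing the unordering of the $k$ edge orbits.

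The main obstacle will be matching the $1/|\Aut(G)|$ weight of the groupoid sum on the left against the combined symmetry factors produced on the right: endpoint swap, cyclic rotation within an edge orbit, permutations among isomorphic edge orbits, and the residual $\Aut(G_0)$-action already encoded in $\mathrm{Exp}(\cat{a})$. I expect this to follow from an orbit-stabilizer calculation for the natural $\Aut(G_0)$-action on labelings of edge orbits of $G$ relative to $G_0$, expressing $|\Aut(G)|$ as a suitable wreath-product index. The formal framework of the enriched plethysm $\tcirc$ and the isomorphism $\WRing \cong \bigotimes_{i\geq 0}\Lambda(S_i)$ should make this bookkeeping essentially mechanical, but it is the step requiring real care, since the half-edge wreath symmetries are precisely what distinguishes $\zeta_G$ from the Pólya cycle index alone. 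Once this identification is verified on each cycle-type stratum, applying $\mathrm{Log}$ recovers $\overline{\cat{a}}$ and completes the proof.
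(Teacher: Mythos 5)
This theorem is stated in the paper as a recalled result, cited directly from Getzler and Kapranov; the paper offers no proof of its own and explicitly declines to reprove it (``Our proof of Theorem~\ref{thm:all_graphs} is independent from their derivation''). So there is no internal proof to measure you against, and your task is effectively a reconstruction of the Getzler--Kapranov argument.

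Your high-level strategy is reasonable and is essentially the inverse of the paper's observation that Theorem~\ref{thm:all_graphs} expands the right-hand side of the Getzler--Kapranov formula. Starting from~(\ref{eqn:genformula}), applying $\mathrm{Exp}$ to pass to possibly disconnected graphs, and then taking $\mathrm{Log}$ at the end is a legitimate logical route that avoids circularity, since the paper proves~(\ref{eqn:genformula}) via the coarse stratification and Proposition~\ref{prop:ecoarse}, independently of Getzler--Kapranov. Your identification of the edgeless sum with $\mathrm{Exp}(\cat{a})$ is correct: for $k$ isolated vertices, $\zeta_{G}=\ch(\mathrm{triv}_{S_k})$ lands in $R(\bbS_{(0^k)})$, and summing $\zeta_G\tcirc\Delta\cat{a}$ over $k$ gives exactly $\sum_\mu\psi_\mu(\cat{a})/\prod_i i^{\mu_i}\mu_i!$.

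However, there is a genuine gap exactly where you flag the ``main obstacle,'' and it is not a bookkeeping detail to be handled mechanically. The identity
\[
\exp\!\left(\sum_{n\ge 1} t^n\Bigl(\tfrac{n}{2}\tfrac{\partial^2}{\partial p_n^2}+\tfrac{\partial}{\partial p_{2n}}\Bigr)\right)\mathrm{Exp}(\cat{a})
\;=\;
\sum_{G\in\mathrm{Iso}(\widetilde{\Graph})}t^{|E(G)|}\bigl(\zeta_G\tcirc\Delta\cat{a}\bigr)
\]
is the content of the Wick-theorem-style argument in Getzler--Kapranov (their \S7--8), and it requires a careful orbit count showing that applying $D^k/k!$ to the edgeless sum produces each isomorphism class of graph $G$ with exactly $k$ edge orbits weighted by $1/|\Aut(G)|$. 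Your heuristic for the factors $\tfrac{n}{2}\partial^2/\partial p_n^2$ and $\partial/\partial p_{2n}$ (equivalently $\tfrac{1}{2n}\overline{\partial}_{n^2}$ and $\tfrac{1}{2n}\overline{\partial}_{2n}$, accounting for endpoint swap and rotation of an edge orbit of size $n$, and for the dihedral symmetry of a loop orbit) is the right intuition, but it is not a proof: you must verify that the composite of edge insertions, the $1/k!$, and the residual automorphisms of the vertex configuration really assemble to $1/|\Aut(G)|$ for every $G$, including graphs with multiple edges, loops, and repeated components. Also note a wrinkle in your first step: for isomorphic components $G_1\cong G_2$, the contribution of $G_1\sqcup G_2$ to the disconnected sum is not $(\zeta_{G_1}\tcirc\Delta\cat{a})(\zeta_{G_2}\tcirc\Delta\cat{a})$ but involves wreath symmetric powers; the multiplicativity of $\tcirc\Delta\cat{a}$ (Theorem~\ref{thm-tcirc}(1)) alone does not give the exponential formula, and you need the plethystic structure of $\mathrm{Exp}$ itself. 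Until both of these verifications are carried out, the argument is an outline of Getzler--Kapranov's proof rather than a proof.
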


In the language of loc. cit., Theorem \ref{thm:GKformula} is a formula for the Euler characteristic of the \textit{Feynman transform} of a modular operad. As remarked in \cite[\S0]{GetzlerKapranov}, Theorem \ref{thm:GKformula} is in a sense an $S_n$-equivariant upgrade of Wick's theorem, which extracts graph sums from asymptotic expansions of integrals \cite[Equation 0.2]{GetzlerKapranov}. As explained by the authors, the idea of attaching representations of $S_n$ at vertices of graphs appears in Hanlon--Robinson's graph enumeration techniques \cite{HanlonRobinson} which extend the work of P\'olya \cite{PolyaRead}.


Aside from its beauty, Theorem \ref{thm:GKformula} has the following advantages:
\begin{itemize}
    \item[(A1)] It is straightforward to implement on a computer algebra system (see e.g. computer implementation by Bergstr\"om \cite{BergstromData}, or the application in \cite{BergstromTommasi}).
    \item[(A2)] It is manifestly invertible \cite[Corollary 8.15]{GetzlerKapranov}.
\end{itemize} On the other hand, \cite{GetzlerKapranov} leaves the following questions open:
\begin{itemize}
\item[(Q1)] When the right-hand side of Theorem \ref{thm:GKformula} is expanded as a power series, is there a natural graph-theoretic interpretation for its coefficients?
\item[(Q2)] How can one directly compute the ``fixed genus" generating function $\overline{\cat{a}}_g$ as a finite combination of the functions $\cat{a}_h$ for $h \leq g$?
\item[(Q3)] How can similar techniques be applied to other moduli spaces with graphical stratifications?
\end{itemize}
Question (Q1) is answered by Theorem \ref{thm:all_graphs}, and question (Q2) is answered for all $g \geq 2$ by Theorem \ref{thm:fixed-genus-thm}. When it comes to question (Q3), the general theory developed in this paper applies to a wide range of moduli spaces; Theorem \ref{thm:torus_fixed_all_graphs} is an example. Question (Q2) has been answered for $g \leq 2$ in the literature: the cases $g \leq 1$ are due to Getzler \cite{GetzlerGenusZero, GetzlerSemiClassical}, and $g = 2$ was worked out recently by Diaconu \cite[Theorem 4.4]{Diaconu}. Figure \ref{fig:genus-two-fig} gives a graph-theoretic interpretation for all of the terms in Diaconu's formula. Petersen \cite{semiclassicalremark} gave an alternative calculation of $\overline{\cat{a}}_1$ which serves as a major inspiration for the present work, and we discuss this more in \S\ref{subsubsec:wreath} below.

Absent a general calculus for the graph sums appearing in Theorems \ref{thm:fixed-genus-thm} and \ref{thm:all_graphs}, Theorem \ref{thm:GKformula} retains advantage (A1) over our work. We hope to revisit these graph sums in the future: see \ref{sec:orbisumrmk}. When it comes to advantage (A2), we remark that Theorem \ref{thm:fixed-genus-thm} is in fact invertible, by first taking plethysm with $p_1 - \mathsf{a}_0'$ on both sides and then using the inclusion-exclusion principle. As such, one can obtain graph sum formulas for $\cat{a}_g$ in terms of $\overline{\cat{a}}_h$ for $h \leq g$. Then information about the set of series $\{\overline{\cat{a}}_h\}_{h \geq 0}$ can in principle be passed through these graph sums to compute the $S_n$-equivariant weight-graded Euler characteristics of $H_c^\star(\M_{g, n};\QQ)$, which are encoded by $\cat{a}$. It would be interesting to see if this approach could recover any existing calculations of these Euler characteristics; see \cite{CFGP} for weight $0$, \cite{PW2} for weight $2$, and \cite{PW11} for weight $11$. 



\subsubsection{Wreath products and polynomial functors}\label{subsubsec:wreath} A primary source of inspiration for the present work is  Petersen's calculation \cite{semiclassicalremark} of $\cat{e}^{S_n}(\Mbar_{1,n})$ using $S_2$-\textit{wreath symmetric functions}. The key formula in loc. cit. can be summarized as follows.

\begin{thm} \cite[Proposition 3.7]{semiclassicalremark}
    Let $\mathcal{D}ih_{k,n}\subset \Mbar_{1,n}$ be the $S_n$-invariant substack parametrizing necklaces of $k$ rational curves, namely the stable curves in $\Mbar_{1,n}$ whose dual graphs are a $k$-cycle. Then
    $$\sum_{n\geq 1} \mathsf{e}^{S_n}(\mathcal{D}ih_{k,n}) = \left(\mathrm{Ind}_{D_k}^{S_2\wr S_k}\mathrm{triv}\right)\circ_{S_2} \left(\sum_{n \geq 1} \cat{e}^{S_2 \times S_n}(\M_{0, n+2})\right),$$
    where $\circ_{S_2}$ denotes wreath product plethysm.
\end{thm}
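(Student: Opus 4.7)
The strategy is to express $\mathcal{D}ih_{k,n}$ as an explicit stack quotient and then to match the resulting generating function against Macdonald's wreath plethysm $\circ_{S_2}$ via the embedding $D_k\hookrightarrow S_2\wr S_k$.

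First, I would apply the \textbf{Setup} recalled in the introduction to the dual graph $C_k$ (the $k$-cycle). For a stable curve in $\mathcal{D}ih_{k,n}$, each vertex of $C_k$ carries two half-edges from the cycle and some number $n_i\geq 1$ of free marked points, so the corresponding vertex moduli is $\M_{0, n_i+2}$ with two distinguished ``cycle'' marked points. This gives
\[
\mathcal{D}ih_{k,n} \cong \left[\,\coprod_{\substack{(n_1,\ldots,n_k)\in\Z_{\geq 1}^k \\ n_1+\cdots+n_k=n}}\prod_{i=1}^k \M_{0,\,n_i+2}\,\middle/\,\Aut(C_k)\,\right],
\]
where $\Aut(C_k)=D_k$ acts by permuting the $k$ factors and simultaneously swapping cycle marked points according to its action on half-edges. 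Calling the coproduct $X_{k,n}$, this action factors through the natural inclusion
\[
D_k = \Aut(C_k) \hookrightarrow \Aut\bigl(H(C_k)\to V(C_k)\bigr) = S_2\wr S_k,
\]
so $X_{k,n}$ is a $(S_2\wr S_k)\times S_n$-variety (the $S_n$-factor permuting the free marked points), and $\mathcal{D}ih_{k,n}\cong [X_{k,n}/D_k]$.

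Second, for a finite subgroup $H\leq G$ acting on a variety $Y$ one has $H^\ast_c([Y/H])=H^\ast_c(Y)^H = \Hom_G(\Ind_H^G\triv,\,H^\ast_c(Y))$. Applying this with $G=S_2\wr S_k$, $H=D_k$, and $Y=X_{k,n}$, and taking $S_n$-equivariant Serre characteristics, one obtains
\[
\mathsf{e}^{S_n}(\mathcal{D}ih_{k,n}) \;=\; \bigl\langle \Ind_{D_k}^{S_2\wr S_k}\triv,\; \mathsf{e}^{(S_2\wr S_k)\times S_n}(X_{k,n})\bigr\rangle_{S_2\wr S_k}.
\]
By the K\"unneth formula and the description of the $(S_2\wr S_k)\times S_n$-action on $X_{k,n}$, summing this equivariant characteristic over $n$ yields the $k$-fold ``wreath tensor power'' of the $S_2\times S_n$-equivariant Serre characteristic $\sum_{n\geq 1}\mathsf{e}^{S_2\times S_n}(\M_{0,n+2})\in \Lambda_2\otimes\hat{\Lambda}$.

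Finally, by the very definition of the wreath plethysm $\circ_{S_2}\colon \Lambda(S_2)\times (\Lambda_2\otimes \hat{\Lambda})\to \hat{\Lambda}$ recalled in \S\ref{sec:tcirc}, pairing a character $\phi\in\Lambda(S_2)_k$ of $S_2\wr S_k$ against this wreath tensor power is precisely $\phi\circ_{S_2}(-)$. Specializing to $\phi=\ch\bigl(\Ind_{D_k}^{S_2\wr S_k}\triv\bigr)$ produces the right-hand side of the theorem. The main technical obstacle is this last identification: one must unwind Macdonald's definition of $\circ_{S_2}$ and check that it correctly encodes the combined operation of averaging over the wreath subgroup $(S_2\times S_n)^k\rtimes S_k \leq S_2\wr S_{nk}$ and distributing the free marked points among the $k$ vertices. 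Once this correspondence is verified, the remaining steps are routine manipulations of equivariant compactly supported cohomology.
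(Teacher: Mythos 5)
This statement is not proved in the paper: it is recalled from Petersen \cite{semiclassicalremark} as a prototype for the general theory. The closest thing to the paper's ``own proof'' is the special case $G = C_k$ of Proposition~\ref{prop:ecoarse} (via its proof and the compatibility Theorem~\ref{thm:tcirc_varieties}), together with the identification of $\Lambda(S_2) \subset \WRing$ and the fact that $\tcirc\Delta$ restricted to $\Lambda(S_2)$ is Macdonald's $\circ_{S_2}$.

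Your proposal is the correct skeleton and matches the paper's argument conceptually. The stack-quotient presentation $\mathcal{D}ih_{k,n}\cong[X_{k,n}/D_k]$ with $D_k\hookrightarrow S_2\wr S_k$ is exactly the specialization of the paper's~(\ref{eqn:coarse_strata}). The main difference is where the group-theoretic reshuffling happens: the paper first performs the induction/restriction tricks \emph{on the level of varieties}, writing $X_{k,n}/D_k \cong \bigl(\Ind_{D_k}^{S_2\wr S_k}\Spec\C\times X_{k,n}\bigr)/(S_2\wr S_k)$, and then takes Serre characteristics, relying on Theorem~\ref{thm:tcirc_varieties} and Lemma~\ref{lem:tcircMHS} to transport the quotient computation through $\tcirc$; you instead take cohomology first and then manipulate at the level of equivariant class functions via Frobenius reciprocity $H^\ast_c(X_{k,n})^{D_k}\cong\Hom_{S_2\wr S_k}(\Ind_{D_k}^{S_2\wr S_k}\triv,\,H^\ast_c(X_{k,n}))$. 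Both routes are valid, and the ``main technical obstacle'' you flag at the end — matching the Frobenius pairing against $\circ_{S_2}$ — is genuinely the crux; in the paper it is absorbed into the formula of Lemma~\ref{lem-Gwrformulas}, where $\mathcal{W}\circ_G\mathcal{V}$ is defined as $G_k$-invariants of $\mathcal{W}(k)\otimes\mathcal{V}^{\otimes k}$, together with the self-duality of $\Ind_{D_k}^{S_2\wr S_k}\triv$.

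One genuine omission in your sketch: you apply K\"unneth and form the $k$-fold tensor power of the \emph{graded} compactly supported cohomology of $\M_{0,n+2}$, and then take $D_k$-invariants. Because cohomology is graded, the $S_k$ (and $D_k$) actions on $\mathcal V^{\otimes k}$ must be taken with Koszul signs, and one needs to verify that the Serre characteristic of the quotient variety agrees with the alternating-sum computation after those signs are accounted for. This is exactly the content of Lemma~\ref{lem:tcircMHS} in the paper, which cites \cite[Proposition 7.3]{GetzlerKapranov}; your proof should invoke or reprove that compatibility before calling the remaining steps routine.
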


The automorphism group of a $k$-cycle is given by the dihedral group $D_k.$ Consequently, $\bigsqcup_{n}\mathcal{D}ih_{k,n}$ is the $D_k$-quotient of the $k$-fold product of $\bigsqcup_n \mathcal{M}_{0,n}.$ To turn this description into a formula, Petersen points out that the embedding $D_{k}\subset S_2\wr S_k$ reflects how $D_k$ acts on the half-edges along each vertex, while the more familiar embedding $D_k\subset S_k$ for $k\geq 3$ does not. The natural setting for the calculation is thus the ring $\Lambda(S_2)$ of $S_2$-wreath symmetric functions, as it records the character theory of the groups $S_2\wr S_k,$ for all $k$. This ring was first constructed and studied by Macdonald in \cite{MacDonald1980}, which is another major inspiration for the present article.

Macdonald explains in loc. cit. that representations of $S_2\wr S_k$ are identified with \textit{polynomial functors} taking $S_2$-representations to vector spaces. This identification gives a clean conceptual picture for the above formula: the $S_2$-wreath symmetric function $\mathrm{Ind}_{D_k}^{S_2\wr S_k}\mathrm{triv}$ is a polynomial that assembles the genus zero vertex contributions to characteristics of $k$-cycles.

Our formalism generalizes Petersen's necklace calculation to strata given by arbitrary graphs. Automorphism groups of finite graphs embed into finite products of wreath products of symmetric groups, and the character theory of all such products are recorded by the 2-symmetric function ring $\WRing$. The P\'olya--Petersen characters can then be seen as polynomials that assemble $\bbS$-modules along arbitrary graphs. From this perspective, our work expands the Getzler--Kapranov formula into individual graph contributions.

\subsection{Future directions}\label{subsec:more-applications}
Our formulas can be adapted to all moduli spaces that admit appropriate recursive graphical stratifications. Potential applications of the theory include
\begin{itemize}
\item universal compactified Jacobians $\overline{\mathcal{J}}_{g, n}^{\phi}$ over $\Mbar_{g, n}$ \cite{OdaSeshadri, Caporaso, PaganiTommasi},
\item moduli of torus-fixed quasimaps $\overline{\calQ}_{g, n}(X, \beta)^{\C^\star}$ \cite{MOP, qmapconstruction}, with an eye towards motivic analogues of existing wall-crossing formulas \cite{qmapwall1,qmapwall3, qmapwall4, qmapwall5},
\item moduli of genus zero relative stable maps to toric varieties \cite{dhruvtoric, KannanP1},
\item moduli of admissible $G$-covers of stable curves for a finite group $G$ \cite{AV, ACV,JKK,PetersenOperadGCovers}, 
\item the Fulton--Macpherson compactification $X[n]$ of the configuration space \cite{FM} and the closely related Chen--Gibney--Krashen spaces $T_{d,n}$ \cite{ChenGibneyKrashen}, 
\item and finally the \textit{weighted} version of all the above moduli spaces, in the sense of Alexeev \cite{ALEXEEV1994} and Hassett \cite{Hassett}.
\end{itemize} 

\subsubsection{Orbisum computations}\label{sec:orbisumrmk} It would be worthwhile to systematically determine the graph sums $K(\Theta_0, \ldots, \Theta_g)$ and $O(\Theta)$ that appear as coefficients in Theorem \ref{thm:fixed-genus-thm} and Theorem \ref{thm:all_graphs}. In the language of \cite{CFGP}, these numbers are examples of \textit{orbisums} over finite groupoids. In loc. cit., the calculation of orbisums is reduced to the enumeration of so-called ``orbigraphs.'' Crucially, the the orbisums evaluated in \cite{CFGP} are signed, and this leads to cancellation in large families of orbigraphs. It would be interesting to see whether their techniques can be adapted to our setting. Another potential approach is via the connection between orbigraph enumeration and matrix integrals, used to great effect by Bini and Harer \cite{BiniHarer} in order to compute non-equivariant topological and orbifold Euler characteristics of $\Mbar_{g,n}$. Finally, there is a potential connection to the Fock space formalism used in \cite{HahnMarkwig2} to evaluate certain Hurwitz numbers, which can be expressed as graph sums similar to those arising in our work \cite{HahnMarkwig1}. 



\subsubsection{Asymptotic analysis} Since our approach singles out each graph's contribution in each formula, it enables quantitative comparison across different graph types and their asymptotic behaviors as numerical parameters tend to infinity. For instance, it has been observed in \cite{genusonechar} that, in line with the expectation from enumerative geometry, the contribution of maps from curves with rational tails to $\chi^{S_n}(\Mbar_{1,n}(\mathbb{P}^r,d))$ dominates that of maps from cycles of rational curves as $d\to \infty$. It is desirable to characterize the contribution of rational tails in greater generality.

\subsubsection{Stable maps and modular operads} We would be interested to know whether the spaces of torus-fixed stable maps considered in Theorem \ref{thm:torus_fixed_all_graphs} fit into the framework of modular operads or their colored variants \cite{PetersenOperadGCovers}, and whether there is an analogue of Getzler--Kapranov's Feynman transform or Theorem \ref{thm:GKformula} in this setting.



\subsection{Outline of the paper} The algebra $\WRing$ of $2$-symmetric functions is constructed in \S\ref{sec:ringwring}, and its connections to polynomial functors and $\bbS^{[2]}$-modules are developed in \S\ref{sec:WRingPfun}. In \S\ref{sec:graph_strat}, we recall various graph stratifications of $\Mbar_{g,n}$, and in \S\ref{sec:ppchar} we use these stratifications in conjunction with the P\'olya--Petersen character to derive Theorems \ref{thm:fixed-genus-thm} and \ref{thm:all_graphs}. In \S\ref{sec:comb_subspaces} we explain how our theory adapts to combinatorial subspaces of $\Mbar_{g,n}$, deriving in particular a formula for the moduli space $\M_{g, n}^{\mathrm{ct}}$ of curves of compact type (Theorem \ref{thm:graph-genus-gen-fun}), and an alternative functional equation determining the total generating function $\overline{\cat{a}}$ (Theorem \ref{thm:core_gen_fun}). Finally in \S\ref{sec:stable-maps} we apply our theory to torus-fixed stable maps, proving Theorem \ref{thm:torus_fixed_all_graphs}. Appendix \ref{appendix:wreath_products} contains background information on wreath products and associated symmetric functions, while Appendix \ref{sec:appendixPFun} contains the proof of Theorem \ref{thm:s2polyfun}, which states that $\WRing$ is the Grothendieck ring of polynomial functors from the category of $\bbS$-modules over $\QQ$ to the category of finite-dimensional $\QQ$-vector spaces.

\subsection*{Acknowledgements}
We are grateful to Jonas Bergstr\"om, Sarah Brauner, Samir Canning, Melody Chan, Dan Petersen, Oscar Randal-Williams, and Dhruv Ranganathan for helpful conversations and correspondence. SK is supported by NSF DMS-2401850. TS is supported by a Cambridge Trust international scholarship.

\section{The ring $\WRing$}
In this section we set up the ring $\WRing$ of 2-symmetric functions. The P\'olya--Petersen characters, to be defined in Section \ref{sec:ppchar}, will be elements in this ring. As $\WRing$ is modeled on Macdonald's theory of wreath symmetric functions, we state the the main result that informs our perspectives on $\WRing$. A brief overview of Macdonald's framework is given in the Appendices. We define $\Lambda(G)$ as the free polynomial ring
\[ \Lambda(G) := \QQ[ p_i(c) \mid i> 0, c \in G_*  ] \]
where $G_*$ denotes the set of conjugacy classes of $G$.

\begin{thm}\label{thm:MacdonaldSetup}\cite{MacDonald1980}
    Let $G$ be a finite group. After tensoring with a splitting field $k$ of $G$ contained in $\mathbb{C}$, the ring $\Lambda(G)$ of $G$-wreath symmetric functions is isomorphic to:
    \begin{enumerate}
        \item the Grothendieck ring, tensored with $k$, of bounded $G\wr \mathbb{S}$-vector spaces over $k$, namely sequences $(V_n)_{n\geq 0}$ where each $V_n$ is a finite-dimensional $G\wr S_n$-representation, such that $V_N = 0$ for $N\gg 0$, and the tensor product is given by
        \[ V_m \boxtimes W_n := \Ind_{G \wr S_m \times G\wr S_n}^{G \wr S_{m + n}} V_m \otimes_\C W_n; \]
        \item the Grothendieck ring, tensored with $k$, of bounded polynomial functors $\pfunc[]{G\-\mathsf{Vect}_{k}}{\mathsf{Vect}_{k}}$ from $G$-representations in $k$ to $k$-vector spaces;
        \item the free $\lambda$-ring on $G_*$ tensored with $k$.
    \end{enumerate}
\end{thm}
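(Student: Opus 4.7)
The plan is to construct an explicit Frobenius-type characteristic map as the central tool and then derive the three isomorphisms either directly from it or by parallel constructions. The basic input is the combinatorial bijection between conjugacy classes of $G\wr S_n$ and tuples $\lambda\colon G_*\to \Part^\star$ of total size $n$, which matches the monomial basis $\prod_{c\in G_*}p_{\lambda(c)}(c)$ of $\Lambda(G)_n$ with the space of class functions on $G\wr S_n$; after tensoring with a splitting field $k$, this lets us transport ring and $\lambda$-ring structures between the two sides.

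To prove (1), I would define $\mathrm{ch}\colon \bigoplus_n R_k(G\wr S_n)\otimes_{\Z} k \to \Lambda(G)\otimes_{\QQ} k$ by sending a character $\chi$ to its expansion $\sum_{\lambda}\chi(\lambda)\,\prod_c p_{\lambda(c)}(c)/z_\lambda$, where $z_\lambda$ is the standard centralizer-order normalizer for wreath conjugacy classes. Bijectivity in each degree is then immediate once $k$ splits $G\wr S_n$, since characters form a basis for class functions. To upgrade to a ring isomorphism under the induction product $\boxtimes$, the key step is a Mackey/Frobenius reciprocity calculation showing that the character of $\Ind_{G\wr S_m\times G\wr S_n}^{G\wr S_{m+n}}(V\boxtimes W)$ expands in the $p$-monomials as the product of the expansions of $\mathrm{ch}(V)$ and $\mathrm{ch}(W)$; this is the wreath generalization of Frobenius's classical calculation for symmetric groups.

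For (2), I would use the $G$-equivariant Schur--Weyl framework: given a $G$-representation $V$, the tensor power $V^{\otimes n}$ carries a natural action of $G\wr S_n = G^n\rtimes S_n$ in which $G^n$ acts diagonally and $S_n$ permutes tensorands. Associating to a $G\wr S_n$-module $W$ the functor $V\mapsto V^{\otimes n}\otimes_{k[G\wr S_n]} W$ should realize an equivalence between $G\wr S_n$-modules and homogeneous degree-$n$ polynomial functors $G\-\mathsf{Vect}_k \to \mathsf{Vect}_k$. Passing to Grothendieck rings, with the induction product on one side matching Day convolution on the other, yields the second isomorphism. Part (3) is the shortest: the defining presentation of $\Lambda(G)$ gives $\Lambda(G)\cong \bigotimes_{c\in G_*}\Lambda$, and since $\Lambda$ is the free $\lambda$-ring on one generator, this tensor product is the free $\lambda$-ring on $G_*$; compatibility with the $\lambda$-structures on (1) and (2) can be checked on the generators $p_i(c)$, which correspond to the Adams operations $\psi_i$ applied to the class of the ``singleton'' representation associated to $c$.

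The main technical obstacle I anticipate is part (2): one has to pin down what ``polynomial functor'' means for an enriched source category like $G\-\mathsf{Vect}_k$ (via cross effects, or equivalently by a polynomial vanishing condition on direct sums) and then verify that the proposed equivalence is symmetric monoidal, so that the induced ring structures on the Grothendieck rings actually match those from (1). Once (2) is in hand, the compatibilities with (1) and (3) follow by tracking the power-sum basis through the equivalences. The splitting hypothesis on $k$ is used precisely to guarantee that characters separate classes and that irreducibles over $k$ remain irreducible, so that $\mathrm{ch}$ is genuinely bijective rather than merely injective.
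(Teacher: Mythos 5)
The paper does not prove this theorem: it is stated as background with a citation to Macdonald's \emph{Polynomial functors and wreath products}, and Appendix \ref{appendix:wreath_products} merely recalls the relevant definitions and the characteristic formula (Theorem \ref{eq:wreathchar}) and the polynomial functor equivalence, again with citations. Your proposal is therefore a reconstruction of Macdonald's proof rather than something to compare against an argument in this paper, but as such it is essentially on target and hits the right intermediate steps: the wreath Frobenius characteristic map, Mackey/Frobenius reciprocity for the induction product, the linearization equivalence for polynomial functors, and the identification $\Lambda(G)\cong\bigotimes_{c\in G_*}\Lambda$ for the free $\lambda$-ring statement.

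Two small corrections worth making. First, in your sketch of (2) you describe the monoidal structure on the polynomial functor side as ``Day convolution.'' Under the equivalence, the induction product $\boxtimes$ on $G\wr\bbS$-modules transports to the \emph{pointwise} (Hadamard) tensor product $(F\otimes F')(V)=F(V)\otimes F'(V)$, not to a Day convolution on the functor side; Day convolution with respect to disjoint union is a way to describe the $\boxtimes$-product on the module side, not the product on $\mathbf{P}[G\-\cat{Vect}_k,\cat{Vect}_k]$. Second, the formula $W\mapsto V^{\otimes n}\otimes_{k[G\wr S_n]}W$ is a coinvariants construction; the formula used in the paper's appendix (following Macdonald) is $V\mapsto (W\otimes V^{\otimes n})^{G\wr S_n}$. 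In characteristic zero these agree, but the identification requires the semisimplicity argument you gesture at via the splitting field, and this should be made explicit if you want to push the equivalence to the monoidal level. Neither point affects the soundness of your plan, but both are exactly the places where a careful writeup would need to invoke the linearization machinery of Appendix \ref{sec:appendixPFun} rather than hand-wave.
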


We now turn to the details of constructing $\WRing$. The analogies to Theorem \ref{thm:MacdonaldSetup} will be made clear in \S\ref{sec:WRingPfun} below.
\subsection{Integer partitions}\label{sec:intpart}
The group characters that are recorded by $\WRing$ are the automorphism groups of maps between finite sets, or equivalently automorphisms of generalized integer partitions, for which we set up some notation.

\begin{defn}
    Let $\mathsf{Map}$ denote the groupoid with objects being maps between finite sets, where an isomorphism between $f: X \to Y$ and $f' : X' \to Y'$ is given by a pair $(\varphi_X, \varphi_Y)$ of bijections \[\varphi_X : X \to X',\varphi_Y: Y \to Y'\] fitting into a commutative square \[\begin{tikzcd}
        X & Y \\
        {X'} & {Y'}
        \arrow["f", from=1-1, to=1-2]
        \arrow["{\varphi_X}"', from=1-1, to=2-1]
        \arrow["{\varphi_Y}", from=1-2, to=2-2]
        \arrow["{f'}"', from=2-1, to=2-2]
    \end{tikzcd}.\]
\end{defn}
    
    \begin{defn}
    We set $[0] : = \varnothing$ and $[n] := \{1, \ldots, n\}$ for $n \in \Z_{>0}$. If $m\in \Z_{>0}$, then the \textit{wreath symmetric group} $S_m \wr S_n$ is defined as the automorphism group of the surjection
    \[ \left(\coprod_{i = 1}^{n} [m]\right) \to [n]\] as an object in $\mathsf{Map}$. Recall the standard isomorphism
    \[ S_m \wr S_n \cong (S_m)^{n} \rtimes S_n,\]
    where on the right-hand side $S_n$ acts on $(S_m)^n$ by permuting the coordinates. When $m = 0$, define $S_0\wr S_n$ as $S_n$.
\end{defn}

It is helpful to pass between maps of finite sets and generalized integer partitions.

\begin{defn}
A \textit{generalized integer partition} is a finitely-supported function \[\nu: \Z_{\geq 0} \to \Z_{\geq 0},\]
where we write $\nu_i$ for $\nu(i)$, and interpret $\nu_i$ as the number of parts of $\nu$ of size $i$. Given $\nu$, define
\[ |\nu| = \sum_{i \geq 1} i \nu_i, \]
and write $\nu \vdash n$ if $|\nu| = n$.  Let $\mathsf{Part}$ denote the set of all generalized integer partitions, and write
\[ \mathsf{Part}_n \subset \mathsf{Part} \] for the subset of partitions $\nu$ with $|\nu| = n$. A generalized integer partition $\nu$ is called an \textit{integer partition} if $\nu_0=0$. The subset of integer partitions is denoted by $\Part^\star\subset \Part$ and $\Part_n^\star\subset \Part_n$, respectively. Note that $\Part$ and $\Part^\star$ are both commutative semigroups under function addition:  given $\lambda=(\lambda_0,\lambda_1,\lambda_2,\dots), \nu=(\nu_0,\nu_1,\nu_2,\dots)\in \Part$, define $\lambda \odot \nu$ as the partition that has $\lambda_i + \nu_i$ parts of size $i$ for all $i \geq 0.$
\end{defn}

Under the above definition, the usual ring $\Lambda = \QQ[p_1, p_2, \ldots]$ of symmetric functions can be thought of as the semigroup algebra of $\Part^\star$.

\begin{defn}
    The automorphism group of a generalized integer partition $\nu$ is defined as \[\bbS_\nu := \prod_{i \geq 0} S_i \wr S_{\nu_i}.\] When $\nu_0=0$, the group $\bbS_\nu$ is isomorphic to a subgroup of $S_{|\nu|}$ which preserves an integer partition of type $\nu.$ The sets $\mathsf{Part}$ and $\mathsf{Part}_n$ are hence enriched with groupoid structures.
\end{defn}

The groups $\bbS_\nu$ are exactly the automorphism groups of finite set maps.
\begin{lem}
Let $f: X \to Y$ be an object in $\mathsf{Map}$. Define an integer partition $\nu(f) \in \mathsf{Part}$ by setting
\[ {\nu(f)}_i := \# \{ y \in Y \mid \#f^{-1}(y) = i \}.\]
Then
\[ \Aut_{\mathsf{Map}}(f:X \to Y) \cong \bbS_{\nu(f)}.\]
\end{lem}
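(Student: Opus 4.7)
The plan is a direct structural decomposition argument: any automorphism $(\varphi_X, \varphi_Y)$ of $f$ permutes the fibers $f^{-1}(y)$ and so must preserve the fiber-size function. Once that is established, the automorphism group decomposes as a product over fiber sizes, and each factor is visibly a wreath product.

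First I would verify that for any $(\varphi_X, \varphi_Y) \in \Aut_{\mathsf{Map}}(f)$ and any $y \in Y$, the commutativity of the defining square forces $\varphi_X$ to restrict to a bijection $f^{-1}(y) \to f^{-1}(\varphi_Y(y))$. In particular $|f^{-1}(y)| = |f^{-1}(\varphi_Y(y))|$, so $\varphi_Y$ preserves the subsets $Y_i := \{ y \in Y \mid |f^{-1}(y)| = i\}$ and $\varphi_X$ preserves the preimages $X_i := f^{-1}(Y_i)$. Writing $f_i : X_i \to Y_i$ for the restriction, the map $f$ is the disjoint union of the $f_i$, and hence
\[ \Aut_{\mathsf{Map}}(f) \cong \prod_{i \geq 0} \Aut_{\mathsf{Map}}(f_i). \]

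Next I would identify each factor with $S_i \wr S_{\nu(f)_i}$. By construction $f_i$ is isomorphic in $\mathsf{Map}$ to the standard map
\[ \Big( \coprod_{j = 1}^{\nu(f)_i} [i] \Big) \longrightarrow [\nu(f)_i], \]
whose automorphism group is $S_i \wr S_{\nu(f)_i}$ by the very definition of the wreath symmetric group given just above the lemma. Passing to isomorphic objects preserves automorphism groups, so $\Aut_{\mathsf{Map}}(f_i) \cong S_i \wr S_{\nu(f)_i}$. Combining with the product decomposition of the previous step yields
\[ \Aut_{\mathsf{Map}}(f) \cong \prod_{i \geq 0} S_i \wr S_{\nu(f)_i} = \bbS_{\nu(f)}, \]
which is the claim.

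There is no real obstacle; the only mild care needed is in handling the factor $i = 0$ (points of $Y$ with empty fiber), for which $S_0 \wr S_{\nu(f)_0} = S_{\nu(f)_0}$ correctly records the free permutations of the unmatched points of $Y$ by $\varphi_Y$, consistent with the convention $S_0 \wr S_n = S_n$ made in the preceding definition.
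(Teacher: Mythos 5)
Your proof is correct, and it follows essentially the same approach that the paper indicates: the paper states the lemma without a formal proof but the remark immediately following it observes that every object $f$ in $\mathsf{Map}$ is isomorphic to the standard disjoint union $\bigsqcup_{i\geq 0}\left(\bigsqcup_{\nu(f)_i}[i]\right)\to \bigsqcup_{i\geq 0}[\nu(f)_i]$, from which the identification of automorphism groups follows. Your write-up fills in the details of that sketch cleanly, including the correct handling of the $i=0$ case via the convention $S_0 \wr S_n = S_n$.
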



\begin{rem}
    We note that $\nu(f)_0=0$ if and only if $f$ is surjective.
\end{rem}

\begin{rem}
    More conceptually, $\mathsf{Map}$ carries a symmetric monoidal structure under disjoint unions and isomorphisms induced by set isomorphisms $A\sqcup B\cong B\sqcup A.$ Every object $f$ in $\mathsf{Map}$ is isomorphic to the disjoint union $\bigsqcup_{i\geq 0}\left(\bigsqcup_{\nu(f)_i}[i]\right)\to \bigsqcup_{i\geq 0}[\nu(f)_i].$ With this perspective, the lemma states that the automorphism groups in $\mathsf{Map}$ are compatible with the symmetric monoidal structure.
\end{rem}

\subsection{The ring $\WRing$} \label{sec:ringwring}
For $\nu \in \Part$, we write \[R(\bbS_{\nu}):= K_0(\mathrm{Rep}_{\bbS_\nu})\otimes_{\Z}{\mathbb{Q}}\]
for the free $\mathbb{Q}$-vector space spanned by isomorphism classes of finite-dimensional $\bbS_\nu$-representations over $\QQ$. Equivalently, $R(\bbS_\nu)$ is the space of $\QQ$-valued class functions of $\bbS_{\nu}$.
\begin{defn}
    As a $\QQ$-vector space, define \[\WRing:= \bigoplus_{\nu\in \Part}R(\bbS_\nu).\]
\end{defn}

The multiplication on $\WRing$ is defined via the following operation of merging partitions.

\begin{rem}
If $\lambda, \nu \in \Part$, there is an inclusion map of partition automorphism groups \[\bbS_{\lambda}\times \bbS_{\nu}\to \bbS_{\lambda\odot\nu},\] well-defined up to conjugating in $\bbS_{\lambda\odot\nu}$: it is defined by taking disjoint union of the set maps $f_{\lambda},f_{\nu}\in \mathsf{Map}$ where $\mathrm{Aut}_{\mathsf{Map}}(f_{\lambda})\cong \bbS_{\lambda}$ and $\mathrm{Aut}_{\mathsf{Map}}(f_{\nu})\cong \bbS_{\nu}.$ \end{rem}

    
    
\begin{defn}\label{defn:ringWRing}
    Define bilinear maps $\boxtimes: R(\bbS_{\lambda})\times R(\bbS_{\nu})\to R(\bbS_{\lambda\odot \nu})$ by the formula \[ V \boxtimes W:=  \Ind_{\bbS_{\lambda} \times \bbS_{\nu}}^{\bbS_{\lambda \odot \nu}} V \otimes W .\] This construction extends naturally to a bilinear map $\boxtimes: \WRing \times \WRing\to \WRing$.
\end{defn}
Write $\mathbf{0}$ for the unique element of $\Part_0 \subset \Part$.
\begin{lem}
    The map $\boxtimes: \WRing \times \WRing\to \WRing$ defines a commutative $\QQ$-algebra structure on $\WRing$, where the identity is given by $1\in R(\mathbb{S}_{\mathbf{0}}).$
\end{lem}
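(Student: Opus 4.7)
The plan is to verify each of the three axioms (commutativity, associativity, and unit) by reducing them to standard properties of induced representations together with properties of the semigroup operation $\odot$ on $\Part$. Since $\boxtimes$ is defined bilinearly on the $R(\bbS_\nu)$ summands, it suffices to check each identity for basis elements coming from actual representations.

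First I would handle the unit. The empty partition $\mathbf{0}$ has $\mathbf{0}_i = 0$ for all $i$, so $\bbS_{\mathbf{0}} = \prod_{i\geq 0} S_i \wr S_0$ is the trivial group, and $R(\bbS_{\mathbf{0}}) = \QQ$ with basis $1 :=$ the trivial one-dimensional representation. Since $\mathbf{0} \odot \nu = \nu$ and the inclusion $\bbS_{\mathbf{0}} \times \bbS_\nu \hookrightarrow \bbS_\nu$ is the canonical isomorphism, the induction $\Ind_{\bbS_{\mathbf{0}} \times \bbS_\nu}^{\bbS_\nu}(1 \otimes V) = V$ is immediate, giving $1 \boxtimes V = V$.

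For commutativity, the key point is that the semigroup operation $\lambda \odot \nu = \nu \odot \lambda$ is manifestly commutative (it is pointwise addition of functions $\Z_{\geq 0} \to \Z_{\geq 0}$), and, as noted in the remark preceding Definition \ref{defn:ringWRing}, the inclusion $\bbS_\lambda \times \bbS_\nu \hookrightarrow \bbS_{\lambda \odot \nu}$ is well-defined only up to conjugation in $\bbS_{\lambda \odot \nu}$. The two inclusions corresponding to the orderings $(\lambda, \nu)$ and $(\nu, \lambda)$ differ by an element of $\bbS_{\lambda \odot \nu}$ that swaps the two blocks. Combined with the $\QQ$-linear isomorphism $V \otimes W \cong W \otimes V$, this gives an isomorphism of induced representations $\Ind_{\bbS_\lambda \times \bbS_\nu}^{\bbS_{\lambda \odot \nu}}(V \otimes W) \cong \Ind_{\bbS_\nu \times \bbS_\lambda}^{\bbS_{\nu \odot \lambda}}(W \otimes V)$, i.e.\ $V \boxtimes W = W \boxtimes V$.

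Associativity is the main step. The operation $\odot$ is associative on $\Part$ since function addition is associative, so both $(\lambda \odot \nu) \odot \mu$ and $\lambda \odot (\nu \odot \mu)$ agree with the three-fold sum. Both $(U \boxtimes V) \boxtimes W$ and $U \boxtimes (V \boxtimes W)$ should equal the common expression
\[
\Ind_{\bbS_\lambda \times \bbS_\nu \times \bbS_\mu}^{\bbS_{\lambda \odot \nu \odot \mu}}(U \otimes V \otimes W).
\]
To see this, one uses transitivity of induction together with the fact that the inclusion $\bbS_\lambda \times \bbS_\nu \hookrightarrow \bbS_{\lambda \odot \nu}$ extends, after taking product with $\bbS_\mu$ and composing with $\bbS_{\lambda \odot \nu} \times \bbS_\mu \hookrightarrow \bbS_{\lambda \odot \nu \odot \mu}$, to an inclusion conjugate to the canonical three-fold inclusion. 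The standard identity
\[
\Ind_K^G \circ \Ind_H^K = \Ind_H^G
\]
for a tower $H \subset K \subset G$ then gives the claim.

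The main obstacle is essentially bookkeeping: one must be careful that the inclusions $\bbS_\lambda \times \bbS_\nu \hookrightarrow \bbS_{\lambda \odot \nu}$ are only defined up to conjugation, and then check that induction is insensitive to this ambiguity. Once that is acknowledged, both commutativity and associativity become formal consequences of the corresponding properties of disjoint union in $\mathsf{Map}$, which is the structural source of the operation $\odot$ on partitions in the first place.
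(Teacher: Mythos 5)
Your proof is correct and follows essentially the same route as the paper's: the unit follows from $\mathbf{0}\odot\lambda=\lambda$, commutativity from $\lambda\odot\nu=\nu\odot\lambda$ together with conjugation-invariance of induction, and associativity from well-definedness (up to conjugacy) of the three-fold inclusion $\bbS_\lambda\times\bbS_\nu\times\bbS_\mu\hookrightarrow\bbS_{\lambda\odot\nu\odot\mu}$ and transitivity of induction. You spell out these steps in a bit more detail than the paper, which is fine; the only point the paper makes explicitly that you gloss over is that distributivity of $\boxtimes$ over $\oplus$ is what extends the construction on honest representations to a genuine $\QQ$-bilinear map on all of $\WRing$.
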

\begin{proof}
   The product $\boxtimes$ is distributive as we can check that $(V_1\oplus V_2)\boxtimes W = (V_1\boxtimes W)\oplus (V_2\boxtimes W)$ and similarlu  $V\boxtimes (W_1\oplus W_2) = (V\boxtimes W_1) \oplus (V\boxtimes W_2)$.  Associativity follows from the fact that the subgroup $\bbS_{\lambda}\times \bbS_{\nu}\times \bbS_{\mu}\to \bbS_{\lambda\odot\nu\odot \mu}$ is well-defined up to conjugacy, and commutativity comes from identifying $\lambda\odot \nu = \nu\odot \lambda$ in $\mathsf{Part}.$ $1\in R(\mathbb{S}_{\mathbf{0}})$ is a multiplicative unit because $\lambda\odot \mathbf{0} = \lambda = \mathbf{0}\odot \lambda$ for all partitions $\lambda$.
\end{proof}

It will be helpful to relate the ring $\WRing$ to the wreath product symmetric functions $\{\Lambda(S_m)\}_{m\geq 0}$ constructed by Macdonald \cite{MacDonald1980, Macdonald}. In the following, $(m^k)$ denotes the partition of $[mk]$ with $k$ parts of size $m.$

\begin{lem}
    The subspace $\bigoplus_{k\geq 0} R(\bbS_{(m^k)})$ forms a subalgebra of $\WRing$ which is isomorphic to $\Lambda(S_m)$.
\end{lem}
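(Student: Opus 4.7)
The plan is to verify two claims in sequence: (i) closure of the subspace under $\boxtimes$, and (ii) identification of the resulting algebra with Macdonald's $\Lambda(S_m)$ as recorded in Theorem \ref{thm:MacdonaldSetup}(1). The argument is essentially a bookkeeping verification of definitions, and I do not anticipate a serious obstacle beyond carefully tracking the embeddings of the relevant wreath product subgroups up to conjugacy.

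For closure, I will observe that the partition $(m^k) \in \Part$ satisfies $(m^k)_m = k$ and $(m^k)_i = 0$ for $i \neq m$. Since the semigroup operation $\odot$ on $\Part$ adds multiplicities componentwise, we have $(m^k) \odot (m^j) = (m^{k+j})$. Definition \ref{defn:ringWRing} then gives $R(\bbS_{(m^k)}) \boxtimes R(\bbS_{(m^j)}) \subset R(\bbS_{(m^{k+j})})$, so the subspace is a $\QQ$-subalgebra containing the unit $1 \in R(\bbS_{\mathbf{0}}) = R(\bbS_{(m^0)})$.

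For the identification, I will use the product description $\bbS_{\nu} = \prod_{i \geq 0} S_i \wr S_{\nu_i}$, which specializes to $\bbS_{(m^k)} \cong S_m \wr S_k$ because every factor with $i \neq m$ reduces to $S_i \wr S_0 = \{e\}$. Under this identification, the inclusion $\bbS_{(m^k)} \times \bbS_{(m^j)} \hookrightarrow \bbS_{(m^{k+j})}$ defined in \S\ref{sec:ringwring} --- arising from the disjoint union of set maps --- becomes the standard embedding $(S_m \wr S_k) \times (S_m \wr S_j) \hookrightarrow S_m \wr S_{k+j}$ used by Macdonald to define multiplication in $\Lambda(S_m)$. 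Both inclusions are only well-defined up to conjugacy, but this causes no issue because induced representations depend only on the conjugacy class of the subgroup.

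Putting it together, the $\boxtimes$ multiplication on $\bigoplus_k R(\bbS_{(m^k)})$ transports, under the graded $\QQ$-vector space isomorphism $\bigoplus_k R(\bbS_{(m^k)}) \cong \bigoplus_k R(S_m \wr S_k)$, to the induction product that defines multiplication on $\Lambda(S_m)$. Theorem \ref{thm:MacdonaldSetup}(1) then identifies this ring with $\Lambda(S_m)$, completing the proof.
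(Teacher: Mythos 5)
Your proof is correct and follows essentially the same route as the paper's: observe $\bbS_{(m^k)} \cong S_m \wr S_k$, check that the semigroup operation and the up-to-conjugacy embeddings of subgroups match Macdonald's, and conclude that the $\boxtimes$-product coincides with the induction product on $\Lambda(S_m)$. Your explicit verification of closure under $\odot$ and your citation of Theorem \ref{thm:MacdonaldSetup}(1) are slightly more detailed than what the paper writes, but add nothing that changes the substance of the argument.
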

\begin{proof}
    We observe that $\bbS_{(m^k)}\cong S_m\wr S_k$ and the injection $\bbS_{(m^{k_1})}\times \bbS_{(m^{k_2})}\to \bbS_{(m^{k_1+k_2})}$ is identical to $(S_m\wr S_{k_1})\times (S_m\wr S_{k_2})\to S_m\wr S_{k_1+k_2}$. Hence, the induction formula for $\boxtimes$ is identical to Macdonald's definition of the product structure on $\Lambda(S_m)$ in \cite[§1.B.4]{Macdonald}.
\end{proof}

Since $\bbS_\lambda:=\prod_{i\geq 0}S_i\wr S_{\lambda_i}$, we have $R(\bbS_{\lambda}) = \bigotimes_{i\geq 0}R(S_i\wr S_{\lambda_i})$. The product $\boxtimes$ can be decomposed along the tensor products as follows.
\begin{lem}
    Under the identification $R(\bbS_{\lambda}) = \bigotimes_{i\geq 0}R(S_i\wr S_{\lambda_i})$, the map $$\boxtimes: R(\bbS_{\lambda})\times R(\bbS_{\nu})\to R(\bbS_{\lambda\odot \nu})$$ is the tensor product $\bigotimes_{i\geq 0} m_i,$ where $m_i: R(S_i\wr S_{\lambda_i})\times R(S_i\wr S_{\nu_i})\to R\left(S_i\wr S_{(\lambda\odot \nu )_i}\right)$ is the induction map from the group homomorphisms $S_i\wr S_{\lambda_i}\times S_i\wr S_{\nu_i}\to S_i\wr S_{\lambda_i+\nu_i}$ described above. 
\end{lem}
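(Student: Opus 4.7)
The plan is to reduce the claim to the general categorical fact that induction of representations distributes over products of groups, after verifying that the inclusion $\bbS_\lambda \times \bbS_\nu \hookrightarrow \bbS_{\lambda\odot\nu}$ respects the factorization indexed by $i \geq 0$.

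First, I would unpack the definitions: since $(\lambda\odot\nu)_i = \lambda_i + \nu_i$ for every $i$, we have the canonical identifications
\[
\bbS_\lambda = \prod_{i \geq 0} S_i \wr S_{\lambda_i}, \qquad \bbS_\nu = \prod_{i \geq 0} S_i \wr S_{\nu_i}, \qquad \bbS_{\lambda\odot\nu} = \prod_{i \geq 0} S_i \wr S_{\lambda_i + \nu_i}.
\]
Next, I would verify that under these identifications, the inclusion $\bbS_\lambda \times \bbS_\nu \hookrightarrow \bbS_{\lambda\odot\nu}$ described in the remark preceding Definition \ref{defn:ringWRing} is conjugate to the product over $i$ of the inclusions $(S_i \wr S_{\lambda_i}) \times (S_i \wr S_{\nu_i}) \hookrightarrow S_i \wr S_{\lambda_i + \nu_i}$. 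This is essentially the content of the remark itself: the disjoint union of representative set maps $f_\lambda \sqcup f_\nu$ has fibers of size $i$ counted by $\lambda_i + \nu_i$, and the automorphism inclusion factors through the union of the parts of fiber-size $i$, which on each factor is precisely the $i$th component inclusion.

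Once this compatibility is established, the statement becomes the purely representation-theoretic fact that induction along a product of subgroup inclusions is a tensor product of inductions. Explicitly, if $H_i \subset G_i$ are subgroups of finite groups and $V_i$ is a finite-dimensional representation of $H_i$, then
\[
\Ind_{\prod_i H_i}^{\prod_i G_i}\bigl(\bigotimes_i V_i\bigr) \;\cong\; \bigotimes_i \Ind_{H_i}^{G_i} V_i,
\]
which I would invoke as a standard fact (it follows immediately from Frobenius reciprocity and the universal property of tensor products of representations of product groups, or from the explicit formula $\Ind_H^G V = \QQ[G] \otimes_{\QQ[H]} V$ together with the isomorphism $\QQ[\prod_i G_i] \cong \bigotimes_i \QQ[G_i]$). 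Applying this to $H_i = (S_i \wr S_{\lambda_i}) \times (S_i \wr S_{\nu_i})$, $G_i = S_i \wr S_{\lambda_i + \nu_i}$ and $V_i = V^{(i)}_\lambda \otimes V^{(i)}_\nu$ yields precisely the formula $\boxtimes = \bigotimes_{i \geq 0} m_i$.

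The only step that requires care is the compatibility of the inclusion with the decomposition indexed by $i$, which I expect to be the main (if modest) obstacle. This could be handled either by working directly with the set map model $f_\lambda \sqcup f_\nu$ and grouping fibers by cardinality, or by appealing to the symmetric monoidal structure on $\mathsf{Map}$ noted in the earlier remark together with the observation that each component $S_i \wr S_{\lambda_i + \nu_i}$ acts only on elements of fiber-size exactly $i$, so the inclusion is block-diagonal with respect to $i$.
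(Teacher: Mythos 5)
Your proposal is correct and matches the paper's argument, which is exactly the same two-step structure: observe that the inclusion $\bbS_\lambda \times \bbS_\nu \hookrightarrow \bbS_{\lambda\odot\nu}$ factors as a product over $i$ of the component inclusions $S_i\wr S_{\lambda_i}\times S_i\wr S_{\nu_i}\to S_i\wr S_{\lambda_i+\nu_i}$, then use that induction along a product of subgroups is the tensor product of the individual inductions. The paper's version is more terse (it simply asserts that applying $R(-)$ converts a product of group homomorphisms into a tensor product of linear maps), whereas you spell out the block-diagonal compatibility check and the Frobenius-reciprocity/group-algebra justification; both are correct, and your extra detail does not change the logical route.
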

\begin{proof}
    The multiplication $\boxtimes: R(\bbS_{\lambda})\times R(\bbS_{\nu})\to R(\bbS_{\lambda\odot \nu})$ is induced by $\bbS_{\lambda}\times \bbS_{\nu}\to \bbS_{\lambda\odot \nu}$, which is itself a product of the group homomorphisms $S_i\wr S_{\lambda_i}\times S_i\wr S_{\nu_i}\to S_i\wr S_{\lambda_i+\nu_i}.$ Applying $R(\-)$ takes a product of group homomorphisms to tensor product of linear maps.
\end{proof}

\begin{lem}
    Let $\bigotimes_{i\geq 0}\Lambda(S_i)$ be the colimit of \[\cdots\to \bigotimes_{i=0}^N \Lambda(S_i)\xrightarrow{-\otimes 1} \bigotimes_{i=0}^{N+1}\Lambda(S_i)\to \cdots\] so that $\bigotimes_{i\geq 0}\Lambda(S_i) = \bigcup_{N=0}^\infty \bigotimes_{i=0}^N \Lambda(S_i).$ Then we have $$\WRing\cong \bigotimes_{i\geq 0}\Lambda(S_i).$$
\end{lem}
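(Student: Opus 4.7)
The plan is to assemble the identification from two ingredients already in hand: the previous lemma that identifies $\bigoplus_{k\geq 0} R(\bbS_{(m^k)}) \subset \WRing$ with $\Lambda(S_m)$ as a subalgebra, and the preceding lemma showing that $\boxtimes$ decomposes as a tensor product $\bigotimes_{i\geq 0} m_i$ of the wreath product multiplications on the individual $\Lambda(S_i)$'s. Once these compatibilities are in place, the isomorphism is essentially a bookkeeping statement about how the colimit of finite tensor products reindexes sums over finitely-supported functions.

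First I would construct the comparison map. For each $N \geq 0$, the subspace
\[ \WRing^{\leq N} := \bigoplus_{\substack{\nu \in \Part \\ \nu_i = 0 \text{ for } i > N}} R(\bbS_\nu) \]
is closed under $\boxtimes$ (since $\odot$ preserves the support condition) and hence a subalgebra of $\WRing$. Using the identification $R(\bbS_\nu) = \bigotimes_{i=0}^N R(S_i \wr S_{\nu_i})$ for $\nu$ supported in $\{0,\ldots,N\}$, together with the previous lemma identifying $\bigoplus_{k\geq 0} R(S_i \wr S_k)$ with $\Lambda(S_i)$, distributing tensor over direct sum yields a canonical $\QQ$-linear isomorphism
\[ \Phi_N: \bigotimes_{i=0}^N \Lambda(S_i) \xrightarrow{\ \sim\ } \WRing^{\leq N}. \]
The previous lemma on the decomposition $\boxtimes = \bigotimes_{i\geq 0} m_i$ shows that $\Phi_N$ intertwines the tensor product multiplication on the left with $\boxtimes$ on the right, so $\Phi_N$ is a $\QQ$-algebra isomorphism.

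Second I would check compatibility with the colimit system. The inclusion $\WRing^{\leq N} \hookrightarrow \WRing^{\leq N+1}$ corresponds under $\Phi_N, \Phi_{N+1}$ to the map $- \otimes 1 : \bigotimes_{i=0}^N \Lambda(S_i) \to \bigotimes_{i=0}^{N+1} \Lambda(S_i)$, because the unit of $\Lambda(S_{N+1}) = \bigoplus_{k\geq 0} R(\bbS_{((N+1)^k)})$ is the generator of $R(\bbS_{((N+1)^0)}) = R(\bbS_{\mathbf{0}})$, which corresponds on the $\WRing$ side to the summand indexed by the partition with $\nu_{N+1} = 0$. By the universal property of the colimit, the $\Phi_N$ assemble into a $\QQ$-algebra homomorphism
\[ \Phi: \bigotimes_{i \geq 0} \Lambda(S_i) \longrightarrow \WRing. \]

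Finally I would verify $\Phi$ is an isomorphism. Since every $\nu \in \Part$ is finitely supported, we have $\WRing = \bigcup_{N \geq 0} \WRing^{\leq N}$, so $\Phi$ is surjective. Injectivity is immediate from the injectivity of each $\Phi_N$ and the fact that the colimit $\bigotimes_{i\geq 0} \Lambda(S_i) = \bigcup_N \bigotimes_{i=0}^N \Lambda(S_i)$ is a filtered union. No step here is a real obstacle; the only subtle point is making sure the unit conventions line up so that $- \otimes 1$ on the tensor side really corresponds to the inclusion of summands on the $\WRing$ side, which is handled by the observation that $R(\bbS_{((N+1)^0)}) = R(\bbS_{\mathbf 0})$ is exactly where the unit of $\Lambda(S_{N+1})$ sits.
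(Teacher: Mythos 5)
Your proof is correct and takes essentially the same approach as the paper: identify $R(\bbS_\lambda) \cong \bigotimes_{i\geq 0} R(S_i\wr S_{\lambda_i})$, re-index the direct sum against the tensor product, and invoke the preceding lemma on $\boxtimes = \bigotimes_i m_i$ for multiplicativity. You are somewhat more careful than the paper in explicitly threading the argument through the colimit structure via the filtration $\WRing^{\leq N}$ and checking compatibility of the $\Phi_N$ with the transition maps $-\otimes 1$, but this is a spelling-out of the same idea rather than a different route.
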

\begin{proof}
    Firstly, we prove the isomorphism on the level of $\mathbb{Q}$-vector spaces. Unwinding the definitions, $$\WRing= \bigoplus_{\lambda\in \Part}R(\bbS_\lambda) \cong \bigoplus_{\lambda\in \Part}\bigotimes_{i\geq 0}R(S_i\wr S_{\lambda_i}).$$ The last item is isomorphic to $\bigotimes_{i\geq 0}\left( \bigoplus_{n\geq 0} R(S_i\wr S_n)\right)$: the summands are matched by identifying integer partitions $\lambda$ as sequences of integers $(\lambda_0, \lambda_1,\lambda_2,\dots)$ that are zero for all but finitely many terms. The previous lemma checks that the $\mathbb{Q}$-linear isomorphism is multiplicative, hence the two are isomorphic as rings.
\end{proof}




\subsection{Power sums} 
In this section, we define the analogues of the power sum symmetric functions in the ring $\WRing$. As with $\Lambda$ and $\Lambda(S_i)$, we will prove that the generalized power sums freely generate $\WRing$ as a polynomial ring.

\begin{defn}
    Given a pair $(j, \mu)$ where $j \geq 1$ is an integer and $\mu \in \Part^\star$, define $p_j(\mu)\in R(S_{|\mu|}\wr S_j) \subset \WRing$ as the virtual character associated to the conjugacy class of an element of the form $((\sigma, e, \ldots, e), \kappa) \in S_{|\mu|} \wr S_{j}$ where $\sigma\in S_{|\mu|}$ is a permutation of cycle type $\mu$ and $\kappa\in S_{j}$ is a $j$-cycle.
\end{defn}


In the ring $\Lambda = \QQ[p_1, p_2, \ldots]$, monomials in the power sums are indexed by integer partitions: an integer partition $\theta \in \Part^\star$ gives the monomial
\[ p_\theta := \prod_{i \geq 1} p_i^{\theta_i}. \]
This correspondence between partitions and monomials is useful for bookkeeping. Analogously, monomials in the power sums of $\WRing$ are indexed by \textit{$2$-partitions}, which we now define.
\begin{defn}\label{defn:2part}
A \textit{$2$-partition} $\Theta$ is a function $\Theta: \Part^\star \to \Part^\star$ such that $\Theta(\mu) = \varnothing$ for $|\mu| \gg 0$. The set of $2$-partitions is denoted as $\Partt.$ Because $\Part^\star$ is a commutative semigroup, $\Partt$ carries a commutative semigroup structure under function addition. 
\end{defn}

\begin{defn}
Given $\nu \in \Part$, we define
\[ \Partt_\nu := \{\Theta \in \Partt \mid \sum_{\mu \in \Part^\star_i} |\Theta(\mu)| = \nu_i\, \text{for all }i \geq 0 \}. \]
\end{defn}
The point of $2$-partitions is that they naturally index conjugacy classes in the groups $\bbS_\nu$, for $\nu \in \Part$.
\begin{lem}\label{lem:conjugacy-classes}
For each $\nu \in \Part$, there is a natural bijection
\[ (\bbS_\nu)_* \to \Partt_\nu, \]
such that for any $\lambda \in \Part$, the following diagram commutes:
\[ \begin{tikzcd}
    &(\bbS_\lambda)_* \times (\bbS_\nu)_* \arrow[d] \arrow[r] & \Partt_\lambda \times \Partt_\nu \arrow[d] \\
    &(\bbS_{\lambda \odot \nu} )_* \arrow[r] & \Partt_{\lambda \odot \nu}.
\end{tikzcd} \]
\end{lem}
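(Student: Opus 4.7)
The plan is to assemble the claim from the classical description of conjugacy classes in a wreath product $G \wr S_n$. Recall (see e.g.\ Macdonald, reviewed in Appendix \ref{appendix:wreath_products}) that the conjugacy classes of $G \wr S_n$ are in natural bijection with functions $f : G_* \to \Part^\star$ satisfying $\sum_{c \in G_*} |f(c)| = n$: an element $((g_1,\ldots,g_n), \sigma)$ is conjugate to $((g_1',\ldots,g_n'), \sigma')$ iff for every conjugacy class $c \in G_*$, the multisets of cycles of $\sigma$ (respectively $\sigma'$) weighted by the conjugacy class of the $g$-product along each cycle agree; the corresponding function sends $c$ to this partition of cycle lengths.

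With this in hand I would apply the bijection with $G = S_i$, so that $G_* = \Part_i^\star$. This gives, for each $i \geq 0$, a bijection between conjugacy classes of $S_i \wr S_{\nu_i}$ and functions $\Theta_i : \Part_i^\star \to \Part^\star$ with $\sum_{\mu \in \Part_i^\star} |\Theta_i(\mu)| = \nu_i$. Taking the product over $i$, the conjugacy classes of $\bbS_\nu = \prod_{i \geq 0} S_i \wr S_{\nu_i}$ are parametrized by tuples $(\Theta_i)_{i \geq 0}$ of such functions. Such a tuple assembles into a single function $\Theta : \Part^\star \to \Part^\star$ by $\Theta|_{\Part_i^\star} = \Theta_i$, and the constraints exactly say that $\Theta \in \Partt_\nu$. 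Note $\Theta$ is automatically finitely supported because $\nu_i = 0$ for $i \gg 0$ and each $\Theta_i$ is finitely supported.

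For the compatibility diagram, observe that the map $\bbS_\lambda \times \bbS_\nu \to \bbS_{\lambda \odot \nu}$ decomposes as a product, over $i \geq 0$, of the standard inclusions $(S_i \wr S_{\lambda_i}) \times (S_i \wr S_{\nu_i}) \hookrightarrow S_i \wr S_{\lambda_i + \nu_i}$. On conjugacy classes, this standard inclusion, under the bijection with $G_*$-labeled partitions, sends a pair $(f, g)$ to their pointwise sum $f + g$ (concatenating cycles of the same $G_*$-label). Running this through the assembly in the previous paragraph shows that the induced map $\Partt_\lambda \times \Partt_\nu \to \Partt_{\lambda \odot \nu}$ is exactly pointwise addition, which is the semigroup operation on $\Partt$ from Definition \ref{defn:2part}. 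This yields commutativity of the square.

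The only real content is the classical wreath-product statement, which I would either cite from Macdonald or briefly reprove in the appendix; once that is in place, the rest is a purely formal bookkeeping exercise of regrouping indices. The main potential pitfall is notational: keeping straight the two levels of partitions (the $\mu \in \Part^\star_i$ labeling conjugacy classes of $S_i$, and the values $\Theta(\mu) \in \Part^\star$ labeling cycle structures of the $S_{\nu_i}$-part), and verifying that the bijection does not depend on choices by checking that the construction is manifestly symmetric-monoidal on the groupoid $\mathsf{Map}$.
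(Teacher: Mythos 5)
Your argument is essentially the paper's own proof: both cite the classical parametrization of conjugacy classes of $G \wr S_n$ (Lemma \ref{lem:wrconj}) with $G = S_i$, take the product over $i$ to get the bijection with $\Partt_\nu$, and observe that compatibility with the semigroup structure follows because the inclusion $\bbS_\lambda \times \bbS_\nu \hookrightarrow \bbS_{\lambda\odot\nu}$ factors through the standard wreath-product inclusions, which on labeled cycle types act by pointwise addition. You spell out the bookkeeping in slightly more detail, but the route is the same.
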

\begin{proof}
    Recall from Lemma \ref{lem:wrconj} that the conjugacy classes of $G\wr S_n$ are in bijection with maps $f: G_*\to \Part^\star$ such that $\sum_{[g]\in G_*}|f([g])| = n.$ Thus, the set of conjugacy classes of $\bbS_{\lambda}$ is the subset of $\Partt$ given by $$\prod_{i\geq 0} \{f \in \mathsf{Map}((S_i)_*, \mathsf{Part}^\star)\mid \sum_{[g]\in (S_i)_*} |f([g])| = \lambda_i\}$$
    which agrees with the definition of $\Partt_{\lambda}$ given above. The diagram commutes because the addition on $\Partt$ is given by merging partitions on $\Part^\star.$
\end{proof}
\begin{rem}
    The wreath symmetric groups in this work arise as automorphism groups of half-edges mapping to vertices. The proof of the Lemma suggests that in our setting, the domain and target of a 2-partition should be interpreted as partitions of the half-edges and of vertices, respectively.
\end{rem}
Using Lemma \ref{lem:conjugacy-classes} we can prove the following structural description of $\WRing$:
\begin{thm}\label{thm:WRingsemigp}
The ring $\WRing$ is canonically identified with the semigroup algebra of $\Partt$. In particular,
\[ \WRing = \QQ[p_j(\mu) \mid j > 0, \mu \in \Part^\star]. \]
\end{thm}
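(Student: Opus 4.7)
The plan is to combine the ring isomorphism $\WRing \cong \bigotimes_{i \geq 0} \Lambda(S_i)$ established in the previous lemma with Macdonald's structure theorem (Theorem \ref{thm:MacdonaldSetup}(3)), which asserts that each factor $\Lambda(S_i)$ is the polynomial ring on its power sums, and then repackage the resulting monomial basis in the language of $2$-partitions.

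First I would verify that the elements $p_j(\mu) \in \WRing$ defined above coincide, under the tensor factorization, with Macdonald's polynomial generators of $\Lambda(S_{|\mu|})$. Macdonald's generator $p_n(c) \in \Lambda(S_i)$, for $n \geq 1$ and $c \in (S_i)_*$, is the class function on $S_i \wr S_n$ supported on the conjugacy class corresponding under Lemma~\ref{lem:wrconj} to the map $(S_i)_* \to \Part^\star$ sending $c \mapsto (n^1)$ and every other class to $\varnothing$, i.e.\ the class of $((g,e,\ldots,e),\kappa)$ with $g \in c$ and $\kappa$ an $n$-cycle. This is precisely the conjugacy class used in the definition of $p_j(\mu)$ here, with $c$ taken to be the cycle-type class of $\mu$. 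Using the cycle-type bijection $(S_i)_* \leftrightarrow \Part^\star_i$ and the decomposition $\Part^\star = \bigsqcup_{i \geq 0} \Part^\star_i$, Macdonald's theorem then gives
\[ \WRing \;\cong\; \bigotimes_{i\geq 0} \QQ[p_j(\mu) \mid j \geq 1,\, \mu \in \Part^\star_i] \;=\; \QQ[p_j(\mu) \mid j \geq 1,\, \mu \in \Part^\star], \]
which is the polynomial ring description.

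For the semigroup algebra identification, for each $\Theta \in \Partt$ define the monomial
\[ p_\Theta := \prod_{\mu \in \Part^\star}\prod_{j \geq 1} p_j(\mu)^{\Theta(\mu)_j}. \]
Since $\WRing$ is a polynomial ring on the generators $p_j(\mu)$, the family $\{p_\Theta \mid \Theta \in \Partt\}$ is a $\QQ$-basis of $\WRing$, and monomial multiplication yields $p_\Theta \boxtimes p_{\Theta'} = p_{\Theta + \Theta'}$, where the sum in $\Partt$ is pointwise function addition, which at each $\mu$ is the merging operation on ordinary partitions. This is by construction the semigroup algebra $\QQ[\Partt]$. Compatibility with the grading by $\Part$ is automatic: the monomial $p_\Theta$ lies in the summand $R(\bbS_\nu)$ with $\nu_i = \sum_{\mu \in \Part^\star_i} |\Theta(\mu)|$, so $\Theta \in \Partt_\nu$, and the restricted basis $\{p_\Theta \mid \Theta \in \Partt_\nu\}$ matches the conjugacy-class indicator basis of the space of class functions $R(\bbS_\nu)$ produced by Lemma~\ref{lem:conjugacy-classes}.

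The main obstacle I anticipate is the identification in the first step: the excerpt describes $p_j(\mu)$ only as ``the virtual character associated to'' a specific conjugacy class, so one must be careful that this matches Macdonald's convention for power sums of $\Lambda(S_i)$ without any normalization discrepancy (a class indicator versus its scalar multiple by the centralizer order). This comparison is routine once carried out using Lemma~\ref{lem:wrconj}, which parameterizes the conjugacy classes of $S_i \wr S_n$ by $(S_i)_*$-valued integer partitions, but it is the one place where the two frameworks must be carefully aligned.
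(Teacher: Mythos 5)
Your proof is correct, but it takes a genuinely different route from the paper. The paper works directly with the conjugacy-class bijection of Lemma~\ref{lem:conjugacy-classes}: it identifies $\WRing = \bigoplus_{\nu} R(\bbS_\nu)$ with $\QQ\cdot\Partt$ as a vector space via $(\bbS_\nu)_* \cong \Partt_\nu$, verifies using the commutative square in that lemma that $\boxtimes$ corresponds to the semigroup addition on $\Partt$, and only then reads off the polynomial ring structure from the observation that the $2$-partitions corresponding to the $p_j(\mu)$ freely generate $\Partt$. You instead establish the polynomial ring description first, by passing through the tensor factorization $\WRing \cong \bigotimes_{i\geq 0}\Lambda(S_i)$ and invoking Macdonald's structure theorem (Theorem~\ref{thm:MacdonaldSetup}(3)/Theorem~\ref{eq:wreathchar}) on each factor, matching $p_j(\mu)$ with Macdonald's generators of $\Lambda(S_{|\mu|})$; the semigroup algebra identification then falls out from the monomial basis. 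Both arguments are sound. The paper's route is more self-contained and avoids the normalization bookkeeping you correctly flag in the last paragraph (cycle index vs.\ class indicator, differing by the centralizer order $z_c$); your route buys a quicker reduction to a known theorem, at the cost of having to carefully align the generators $p_j(\mu)$ with Macdonald's $p_n(c)$ via the cycle-type bijection $(S_i)_* \leftrightarrow \Part^\star_i$ from Lemma~\ref{lem:wrconj}. Your compatibility-with-grading remark at the end is correct in substance, though strictly the $p_\Theta$ are scalar multiples of the class indicators, not the indicators themselves; since a scalar multiple by a nonzero constant is still a basis, the conclusion is unaffected.
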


\begin{proof}
Lemma \ref{lem:conjugacy-classes} implies that we can identify \[\WRing = \bigoplus_{\lambda \in \Part} R(\bbS_\lambda) \cong \bigoplus_{\lambda \in \Part} \QQ \cdot (\bbS_\lambda)_* \] with the free vector space over $\QQ$ spanned by $\Partt$. The commutative diagram in Lemma \ref{lem:conjugacy-classes} implies that the multiplication on $\WRing$ from Definition \ref{defn:ringWRing} agrees with the multiplication structure on the semigroup ring generated by $\Partt$. Under this identification, $p_j(\mu) \in \WRing$ corresponds to the $2$-partition which assigns $\mu$ to the partition with one part of size $j$. As $j$ and $\mu$ vary over all positive integers and elements of $\Part^\star$ respectively, we see that these $2$-partitions freely generate $\Partt$, so the theorem is proved. 
\end{proof}

\begin{defn}\label{defn:2_part_conj}
    Given $\boldsymbol{\tau} \in \bbS_\nu$, define $\Theta^{\boldsymbol{\tau}} \in \Partt_\nu$ to be the image of the conjugacy class of $\boldsymbol{\tau}$ under the identification of Lemma \ref{lem:conjugacy-classes}.
\end{defn}
\begin{rem}\label{rem:thetatau}
    We give a more explicit recipe for the construction of $\Theta^{\boldsymbol{\tau}}$, by describing how to compute $\Theta^{\boldsymbol{\tau}}(\mu)$ for some $\mu \in \Part^\star$. Write $\boldsymbol{\tau} = (\tau_i)_{i\geq 0}\in \bbS_{\nu}$. Consider the $|\mu|$-th term of $\boldsymbol{\tau}$ given by $$\tau_{|\mu|} = ((g_1,\dots, g_{\lambda_{|\mu|}}), \sigma), \text{ where } g_i\in S_{|\mu|}, \quad \text{ and }\quad \sigma\in S_{\nu_{|\mu|}}.$$ Decompose $\sigma$ into disjoint cycles:
    \[\sigma = \sigma_1\cdots \sigma_\ell. \]
    Then $\Theta^{\boldsymbol{\tau}}(\mu)$ is defined as the partition with one part of size $j$ for each $\sigma_i$ such that
    \begin{enumerate}[(i)]
    \item $\sigma_i$ is a $j$-cycle, and
    \item if we write
    \[\sigma_i = (k_1, k_2, \ldots, k_j), \]
    then the element
    \[g_{k_1}\cdots g_{k_j} \in S_{|\mu|} \]
    has cycle type $\mu$.
    \end{enumerate} 
    
\end{rem}

\begin{defn}
    Given a pair of integer partitions $\theta, \mu \in \Part^\star$, define \[p_\theta(\mu) := \prod_{i \geq 1} p_i(\mu)^{\theta_i}\in \WRing.\]
    Given a $2$-partition $\Theta$, we define the monomial
    \[ p_\Theta := \prod_{\mu \in \Part^\star} p_{\Theta(\mu)}(\mu).\]
\end{defn}

By Theorem \ref{thm:WRingsemigp}, the monomials $p_\Theta$ for $\Theta \in \Partt$ form a linear basis for $\WRing$, and we have $p_{\Theta} \cdot p_{\Theta'} = p_{\Theta + \Theta'}$ for all $\Theta, \Theta' \in \Partt$. 


\subsection{The Frobenius characteristic of a $\bbS_\lambda$-representation}\label{subsec:frob_char}
\begin{defn}
    Let $\nu \in \Part$, and let $V$ be a complex representation of the group $\bbS_\nu$. Denote by $\ch(V) \in R(\bbS_\lambda)\subset \WRing$ the class of the representation $V$ in $R(\bbS_\nu)$. We refer to $\ch(V)$ as the \textit{Frobenius characteristic} of $V$.
\end{defn}

For any generalized partition $\nu \in \Part$, recall from Lemma \ref{lem:conjugacy-classes} that there is a canonical bijection
\[ (\bbS_\nu)_* \to \Partt_{\nu} \text{ given by } [\boldsymbol{\tau}] \mapsto \Theta^{\boldsymbol{\tau}}.\]
 The class function determined by $\boldsymbol{\tau}$ is given by $p_{\Theta^{\boldsymbol{\tau}}} \in R(\bbS_\lambda) \subset \WRing.$ 

\begin{defn}
Given $\boldsymbol{\tau} \in \bbS_\lambda$, we refer to $p_{\Theta^{\boldsymbol{\tau}}} \in \WRing$ as the \textit{cycle index} of $\boldsymbol{\tau}$.
\end{defn}

We have the following formula for the Frobenius characteristic of $\ch(V)$ in terms of cycle indices, which follows from the general isomorphism between $R(\bbS_{\lambda})$ and the space of $\C$-valued class functions on $\bbS_\lambda$.

\begin{lem}\label{lem:pre_ch_formula}
Suppose that $V$ is a finite-dimensional complex $\bbS_\lambda$-representation. Then
\[\ch(V) = \frac{1}{|\bbS_\lambda|} \sum_{\boldsymbol{\tau} \in \bbS_\lambda} \mathrm{Tr}(\boldsymbol{\tau}|V) \cdot p_{\Theta^{\boldsymbol{\tau}}}.\]
\end{lem}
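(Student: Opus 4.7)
The plan is to reduce the identity to the standard expansion of a class function in the basis of conjugacy class indicators. The starting point is the canonical isomorphism
\[ R(\bbS_\lambda) \otimes_{\QQ}\C \;\xrightarrow{\sim}\; \mathrm{ClassFun}(\bbS_\lambda,\C),\qquad V\mapsto \bigl(\boldsymbol{\tau}\mapsto \mathrm{Tr}(\boldsymbol{\tau}|V)\bigr), \]
which holds because $\bbS_\lambda$ is a finite group and $\C$ is a splitting field. Under this identification, $\ch(V)$ is sent to the ordinary character $\chi_V$, so the lemma becomes the equality of two class functions on $\bbS_\lambda$.

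Next I would identify what class function each monomial $p_{\Theta^{\boldsymbol{\tau}}}$ represents. The claim is that $p_{\Theta^{\boldsymbol{\tau}}}$ is the class function which takes the value $|Z_{\bbS_\lambda}(\boldsymbol{\tau})|$ on the conjugacy class $[\boldsymbol{\tau}]$ and vanishes on every other conjugacy class. To see this, I would combine two observations:
\begin{enumerate}
\item The tensor decomposition $R(\bbS_\lambda)=\bigotimes_{i\geq 0} R(S_i\wr S_{\lambda_i})$ factors the problem across the wreath factors, and by Lemma \ref{lem:conjugacy-classes} the $2$-partition $\Theta^{\boldsymbol{\tau}}$ also factors through the same decomposition; so it suffices to work in a single wreath factor $S_i\wr S_{\lambda_i}$.
\item Inside $R(S_i\wr S_{\lambda_i})$, the generators $p_j(\mu)$ are defined as the class function supported on the indicated conjugacy class, and the product $p_\theta(\mu)=\prod_j p_j(\mu)^{\theta_j}$ in $\WRing$ — computed via the induction product $\boxtimes$ — yields the class function supported on the conjugacy class indexed by $(\mu,\theta)$, with value equal to the centralizer order there. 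This is the wreath-product analogue of the fact, recalled in Appendix \ref{appendix:wreath_products}, that in $S_n$ the power sum $p_\mu$ corresponds under the Frobenius characteristic to $z_\mu$ times the indicator of the class of cycle type $\mu$.
\end{enumerate}
The normalization constant obtained in this way is exactly $|Z_{S_i\wr S_{\lambda_i}}(\cdot)|$, by the standard formula for centralizers in wreath products.

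Granting this identification, let $\phi_{[\boldsymbol{\tau}]}$ denote the indicator of the conjugacy class $[\boldsymbol{\tau}]$; then $p_{\Theta^{\boldsymbol{\tau}}}=|Z_{\bbS_\lambda}(\boldsymbol{\tau})|\cdot \phi_{[\boldsymbol{\tau}]}$. Any class function $\chi$ decomposes as
\[ \chi \;=\; \sum_{[\boldsymbol{\tau}]} \chi(\boldsymbol{\tau})\,\phi_{[\boldsymbol{\tau}]} \;=\; \sum_{[\boldsymbol{\tau}]} \frac{\chi(\boldsymbol{\tau})}{|Z_{\bbS_\lambda}(\boldsymbol{\tau})|}\, p_{\Theta^{\boldsymbol{\tau}}}, \]
and converting the sum over conjugacy classes into a sum over group elements via the orbit--stabilizer identity $|[\boldsymbol{\tau}]|\cdot |Z_{\bbS_\lambda}(\boldsymbol{\tau})|=|\bbS_\lambda|$ produces exactly the right-hand side of the lemma when $\chi=\chi_V$. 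Finally, since both sides of the claimed identity lie in the $\QQ$-subspace $R(\bbS_\lambda)\subset R(\bbS_\lambda)_\C$, the equality holds without any base change.

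The main obstacle is step (2) above: correctly tracking the normalization. One must verify that the multiplicative structure $\boxtimes$ on $\WRing$, which is defined by induction along $\bbS_\lambda\times \bbS_\nu\hookrightarrow \bbS_{\lambda\odot\nu}$, interacts with the definition of $p_j(\mu)$ so that the product $p_\Theta$ is supported on the appropriate conjugacy class with value precisely the centralizer order. This is a bookkeeping computation in wreath products, and the relevant cycle-and-fiber description of conjugacy classes and centralizers is recorded in Lemma \ref{lem:wrconj} and the surrounding discussion in Appendix \ref{appendix:wreath_products}.
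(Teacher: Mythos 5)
Your proposal is correct and takes essentially the route the paper has in mind: the paper offers only the one-line observation that the formula ``follows from the general isomorphism between $R(\bbS_\lambda)$ and $\C$-valued class functions,'' and you simply flesh this out by identifying $p_{\Theta^{\boldsymbol{\tau}}}$ with the centralizer-scaled class indicator and then applying the orbit--stabilizer expansion. The one normalization step you flag as a ``bookkeeping computation'' is indeed the whole content; it can also be dispatched slightly more quickly by linearity -- reduce to $V = \bigotimes_i V_i$, observe that both sides factor across $\bbS_\lambda = \prod_i S_i \wr S_{\lambda_i}$, and invoke Macdonald's Theorem~\ref{eq:wreathchar} in each factor, which gives the normalization for free.
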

\subsection{Forgetful morphisms}
There are two forgetful maps from $\WRing$ to $\Lambda$ corresponding to the two group homomorphisms $$\mathrm{Sym}(Y)\leftarrow\mathrm{Aut}_{\mathsf{Map}}(f: X\to Y)\to \mathrm{Sym}(X).$$

\begin{defn}
Define algebra homomorphisms
\[\varphi_1: \WRing  \to \Lambda,\]\[ \varphi_2: \WRing \to \Lambda\]
by assigning \[\varphi_1(p_j(\mu)):=  p_j\]\[\varphi_2(p_j(\mu)):= p_j \circ p_\mu\] respectively and extending multiplicatively.
\end{defn}

Though we will not use these maps in the present work, we include them here with an eye towards future applications of the theory. The homomorphisms $\varphi_1$ and $\varphi_2$ have the following representation-theoretic meaning. The projections $S_i \wr S_{\nu_i} \to S_{\nu_i}$ fit together to define a map $\bbS_\nu \to \prod_{i \geq 0} S_{\nu_i}$. Viewing the latter group as a Young subgroup of $S_{\sum_i \nu_i}$, we get a homomorphism
\[ \bbS_\nu \to S_{\sum_i \nu_i}, \]
well-defined up to conjugacy. The map $\varphi_1$ has the property that
\[ \varphi_1(\ch(V)) =  \ch(V \otimes_{\C[\bbS_{\nu}]}\QQ[S_{\sum_i\nu_i}]). \]
On the other hand, we have an inclusion of groups $\bbS_{\nu} \to S_{|\nu|}$, by including each $S_i \wr S_{\nu_i} \to S_{i \nu_i}$, and then including the product $\prod_{i} S_{i \nu_i}$ as a Young subgroup of $S_{|\nu|}$. The morphism $\varphi_2$ has the property that
\[ \varphi_2(\ch(V)) = \ch(V \otimes_{\QQ[\bbS_\nu]} \QQ[S_{|\nu|}]=\ch(\Ind_{\bbS_\nu}^{S_{|\nu|}} V). \]
These claims about $\varphi_1$ and $\varphi_2$ can be proved by analyzing the behavior of conjugacy classes under the described group homomorphisms.

\section{$\bbS^{[2]}$-modules and polynomial functors}\label{sec:WRingPfun}
While the previous section gives a concrete algebraic description of $\Lambda^{[2]},$ a more conceptual perspective is given by its categorifications to both a representation category and a category of polynomial functors. These enhancements give a streamlined definition of the enriched plethysm action of $\Lambda^{[2]}$ on $\Lambda\otimes \Lambda$ and provide access to the $\lambda$-ring structure. Recall that the ring \[\Lambda = \QQ[p_1, p_2, \ldots] = \bigoplus_{n \geq 0} R(S_n)\]
can be understood as the Grothendieck ring of bounded \textit{$\bbS$-modules}, where a $\bbS$-module $\calV$ is defined as a collection $\{\mathcal{V}(n)\mid n\geq 0\}$ where each $\mathcal{V}(n)$ is a finite-dimensional ${S}_n$-representation over $\QQ$; in this context, \textit{bounded} means that $\calV(n) = 0$ for all but finitely many $n$. Equivalently, an $\bbS$-module is a $\QQ$-vector space $\calV$ together with a direct sum decomposition
\[\calV = \bigoplus_{n \geq 0} \calV(n), \]
such that each $\calV(n)$ is a finite-dimensional $S_n$-representation; boundedness is equivalent to requiring that $\calV$ is also finite-dimensional.

\subsection{$\bbS^{[2]}$-modules}\label{sec:S2modules}
We define the category of $\bbS^{[2]}$-modules analogously to the category of $\bbS$-modules.
\begin{defn}
    An \textit{$\mathbb{S}^{[2]}$-module} $\mathcal{V}$ is a collection $\{\mathcal{V}(\lambda)\mid \lambda\in \Part\}$ where each $\mathcal{V}(\lambda)$ is a finite-dimensional rational $\mathbb{S}_\lambda$-representation. Equivalently, a $\bbS^{[2]}$-module is a $\QQ$-vector space $\calV$ together with a direct sum decomposition
    \[\calV = \bigoplus_{\lambda \in \Part} \calV(\lambda) \]
    where each $\calV(\lambda)$ is a finite-dimensional $\bbS_\lambda$-representation. We say $\calV$ is \textit{bounded} if $\calV$ is finite-dimensional. 
\end{defn}
 Let ${\bbS^{[2]}}\-\mathsf{Vect}$ be the category of $\bbS^{[2]}$-modules, and let $\bbS^{[2]}\-\cat{Vect}^\star$ be the subcategory of bounded $\bbS^{[2]}\-$modules. These categories each admit entry-wise direct sums $\bigoplus$. A symmetric monoidal structure on $\bbS^{[2]}\-\cat{Vect}$ (and on $\bbS^{[2]}\-\cat{Vect}^\star$) is defined by the box product: if $\calV$ and $\calW$ are $\bbS^{[2]}$-modules, define \[(\mathcal{V}\boxtimes \mathcal{W})(\lambda) := \bigoplus_{\nu\odot \mu=\lambda} \Ind_{\bbS_{\nu}\times \bbS_{\mu}}^{\bbS_{\lambda}}\mathcal{V}(\nu)\otimes \mathcal{W}(\mu).\]

\begin{rem}The direct sum decomposition $\WRing = \bigoplus_{\nu\in \Part}R(\bbS_{\nu})$ leads to a ring isomorphism between $\WRing$ and $K_0({\bbS^{[2]}}\-\mathsf{Vect}^\star)$.
\end{rem}

\begin{rem}
    The category $\mathsf{Part}$ may be identified with the groupoid of disconnected generalized corollas: to a generalized partition $\nu= (0^{\nu_0}1^{\nu_1}\dots )$ we associate a graph $\mathrm{Cor}_{\nu}$ that is a disjoint union of $\nu_i$ copies of an $i$-corolla, which is $i$ half-edges attached to a vertex. The group $\bbS_{\nu}$ agrees with the automorphism group of $\mathrm{Cor}_{\nu}$. The collection of connected corollas correspond to the partitions that have only a single part and have automorphism groups $S_n$.
\end{rem}


Extending the above remark, an $\mathbb{S}$-module is a functor from the groupoid of connected corollas to $\mathsf{Vect}_{\QQ}$ and that an $\mathbb{S}^{[2]}$-module is a functor from the groupoid of all corollas to $\mathsf{Vect}_{\QQ}$. The following formula is a representation-theoretic version of building corollas by taking disjoint unions of connected corollas.

To begin with, we note that for a $S_i$-representation $V$, the tensor power $V^{\otimes \nu_i}$ is an $S_i\wr S_{\nu_i}$-representation; given a map of $S_i$-representations $\varphi: V\to W,$ its $\nu_i$-th tensor power $\varphi^{\otimes \nu_i}: V^{\otimes \nu_i}\to W^{\otimes \nu_i}$ is a map of $S_i\wr S_{\nu_i}$-representations.

\begin{defn}
    Define $(\-)^{\otimes}: \mathbb{S}\-\mathsf{Vect}_{}\to \mathbb{S}^{[2]}\-\mathsf{Vect}_{}$ as the functor that sends an $\mathbb{S}$-module $\mathcal{W}$ to the $\mathbb{S}^{[2]}$-module $\mathcal{W}^{\otimes}$ defined by \[\mathcal{W}^{\otimes}(\nu):= \bigotimes_{i\geq 0} (\mathcal{W}(i))^{\otimes \nu_i},\] and sends a morphism $\boldsymbol{\varphi}: \mathcal{W}_1\to \mathcal{W}_2$, consisting of $\varphi_i: \mathcal{W}_1(i)\to \mathcal{W}_2(i),$ to the map $\boldsymbol{\varphi}^{\otimes }: \calW_1^\otimes \to \calW_2^\otimes$ given by $$\bigotimes_{i\geq 0}\varphi_i^{\otimes \nu_i}:  \mathcal{W}_1^{\otimes}(\nu)\to \mathcal{W}_2^{\otimes}(\nu)$$
    for each $\nu \in \Part$. For a fixed $\nu \in \Part$, we set \[\mathcal{W}^{\otimes \nu}:= \bigotimes_{i \geq 0}\left(\mathcal{W}(i)\right)^{\otimes \nu_i},\] which is an $\mathbb{S}_{\nu}$-representation.
\end{defn}

It is helpful to see the $\bbS^{[2]}$-module $\mathcal{W}^{\otimes \nu}$ as a monomial with exponent $\nu$ with the $\bbS$-module $\mathcal{W}$ being the variable. This will be made precise in the next section, in which we establish the correspondence between $\bbS^{[2]}$-modules and polynomial functors of $\bbS$-modules.

\subsection{Polynomial functors}




Basic facts on polynomial functors  between linear categories are recalled in Appendix \ref{sec:appendixPFun}. Write $\cat{Vect}$ for the category of finite-dimensional vector spaces over $\QQ$.

\begin{defn} Let $\mathcal{V}$ be a bounded $\mathbb{S}^{[2]}$-module. Define $P_{\mathcal{V}}$ as the functor from $\mathbb{S}\-\mathsf{Vect}$ to $\mathsf{Vect}_{}$ given by \[\mathcal{W}\mapsto \bigoplus_{\nu\in \Part}\mathcal{V}(\nu)\otimes_{\mathbb{S}_\nu} \mathcal{W}^{\otimes \nu}.\]\end{defn}


Let $G$ be a finite group. In \cite{MacDonald1980}, Macdonald proves that over a splitting field $k$ of $G$, the category of bounded $(G\wr \bbS)$-modules over $k$ is equivalent to the category of polynomial functors from $G\-\cat{Vect}_k$ to $\cat{Vect}_k$, where $G\-\cat{Vect}_k$ is the category of finite dimensional $G$-representations over $k$. In our setting, the analogue of Macdonald's theorem is as follows.

\begin{thm}\label{thm:s2polyfun}
    The functor $P_{\mathcal{V}}$ is polynomial. The assignment $\mathcal{V}\mapsto P_{\mathcal{V}}$ defines an equivalence between $\mathbb{S}^{[2]}$-$\mathsf{Vect}_{}^\star$ and the category $\mathbf{P}[\mathbb{S}\-\mathsf{Vect}_{}, \mathsf{Vect}_{}]$ of polynomial functors $\bbS\-\cat{Vect} \to \cat{Vect}_{}$.
\end{thm}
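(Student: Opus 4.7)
The plan is to reduce Theorem \ref{thm:s2polyfun} to Macdonald's theorem (item (2) of Theorem \ref{thm:MacdonaldSetup}) applied individually to each wreath product $S_i\wr\bbS$. The key structural fact to exploit is that $\bbS\-\mathsf{Vect}$ decomposes as $\prod_{i\geq 0}S_i\-\mathsf{Vect}$, mirrored on the $\bbS^{[2]}$-module side by the identification $\bbS_\nu=\prod_i(S_i\wr S_{\nu_i})$ and the decomposition $\WRing\cong\bigotimes_{i\geq 0}\Lambda(S_i)$ already established. The strategy is thus to package a polynomial functor $P:\bbS\-\mathsf{Vect}\to\mathsf{Vect}$ into its multi-degree components and then apply Macdonald's correspondence variable by variable.

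First I would verify that $P_{\mathcal{V}}$ is polynomial. A morphism $\boldsymbol{\varphi}:\mathcal{W}_1\to\mathcal{W}_2$ is a tuple $(\varphi_i)_{i\geq 0}$ of $S_i$-equivariant maps, and the induced map on $\mathcal{W}^{\otimes \nu}=\bigotimes_i\mathcal{W}(i)^{\otimes \nu_i}$ is $\bigotimes_i\varphi_i^{\otimes \nu_i}$, which is multi-polynomial of multi-degree $(\nu_i)_{i\geq 0}$ in the components $\varphi_i$. Tensoring with the fixed representation $\mathcal{V}(\nu)$ and passing to $\bbS_\nu$-coinvariants preserves polynomiality, so each summand $\mathcal{V}(\nu)\otimes_{\bbS_\nu}\mathcal{W}^{\otimes \nu}$ defines a polynomial functor. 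Boundedness of $\mathcal{V}$ restricts the direct sum over $\nu$ to finitely many terms, giving a polynomial functor of bounded total degree.

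For the quasi-inverse, given a polynomial functor $P$, I would scale each graded piece $\mathcal{W}(i)$ independently by a parameter $t_i$ and extract the multi-homogeneous piece $P_\nu$ of multi-degree $\nu\in\Part$. In characteristic zero this decomposition is exact, and each $P_\nu$ factors through a multi-polynomial functor on $\prod_iS_i\-\mathsf{Vect}$ of degree $\nu_i$ in the $i$-th variable. Applying Macdonald's theorem to each factor in turn produces a $\prod_i(S_i\wr S_{\nu_i})=\bbS_\nu$-representation $\mathcal{V}_P(\nu)$; assembling these yields a bounded $\bbS^{[2]}$-module $\mathcal{V}_P$, functorial in $P$.

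The main obstacle will be packaging Macdonald's single-variable theorem into the multi-variable form needed here: namely, I must show that a polynomial functor on a product $\QQ$-linear category decomposes as a direct sum of multi-homogeneous pieces, and that each such piece is equivalent data to a module over the product of wreath groups. Once that framework is in place, the two naturality checks $\mathcal{V}_{P_{\mathcal{V}}}\cong\mathcal{V}$ and $P_{\mathcal{V}_P}\cong P$ reduce to term-wise applications of Macdonald's theorem, since the constructions respect multi-degree and the $\bbS_\nu$-coinvariants formula is precisely the inverse to the multilinearization-plus-Schur-Weyl procedure on each factor. I would carry out this technical step by induction on the support of $\nu$, reducing one variable at a time.
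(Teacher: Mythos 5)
Your proposal is correct in outline, but it takes a genuinely different route from the paper's own proof. The paper does not invoke Macdonald's univariate theorem as a black box; instead, in \S\ref{sec:pfpolyfun} it re-runs Macdonald's linearization machinery directly in the $\bbS$-module setting. Concretely: it replaces scalar scaling by $k$ with scaling by the algebra $k^{\Z_{\geq 0}}$ to extract multi-homogeneous components indexed by $\pi\in\Part$; it then passes to the auxiliary category $(\bbS\text{-}\mathsf{Vect})^\pi$, constructs the multilinearization $L_F$ and the natural transformations $\tilde\xi,\tilde\eta$, and proves the identities $\tilde\eta\tilde\xi=\prod_i\pi_i!$ and $\tilde\xi\tilde\eta=\sum_{\boldsymbol{s}}\tilde F(\boldsymbol{s})$ from which the equivalence $F\cong\bigl(L_F^{(\pi)}\bigr)^{\prod_iS_{\pi_i}}$ follows in one shot. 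Your approach instead isolates the multi-degree decomposition and then applies Macdonald's theorem iteratively, one wreath factor at a time. This is a legitimate alternative, and arguably cleaner conceptually since it emphasizes the decomposition $\WRing\cong\bigotimes_i\Lambda(S_i)$. What you buy is modularity; what it costs is that each inductive step needs Macdonald's equivalence with target not $\mathsf{Vect}_k$ but a $k$-linear abelian category (the category of polynomial functors in the remaining variables, or an ambient category $\mathsf{C}$), and you must show that this enriched version holds and that the $\bbS_\nu$-action assembles correctly across the induction. The paper sidesteps this by proving the whole thing at once; you should be aware that the ``technical step'' you flag is exactly where the bookkeeping lives, and that the enriched Macdonald theorem you need is only stated as a remark in Appendix A of the paper, not proved. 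If you fill that in carefully your argument goes through, but it is not shorter than the paper's self-contained route.
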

The proof of Theorem \ref{thm:s2polyfun} follows \cite{MacDonald1980} closely and is given in \ref{sec:pfpolyfun}. 

\begin{rem}
    We note that $\QQ$ is a splitting field for all symmetric groups, hence Macdonald's results concerning polynomial functors on the category of $S_i$-vector spaces work over $\QQ.$ The same carries over to polynomial functors over $\bbS$-modules. Therefore, $\WRing$ and $\bbS^{[2]}$-modules have been defined over $\QQ$ throughout.
\end{rem}

Theorem \ref{thm:s2polyfun} implies that there is a natural isomorphism
\[ \WRing \cong K_0(\mathbf{P}[\mathbb{S}\-\mathsf{Vect}_{}, \mathsf{Vect}_{}]). \]
Notice that there is a natural functor
\[ \mathbf{P}[\mathbb{S}\-\mathsf{Vect}_{}, \mathsf{Vect}_{}] \times \mathbf{P}[\mathsf{Vect}_{}, \bbS\-\mathsf{Vect}_{}] \to \mathbf{P}[\mathsf{Vect}_{}, \mathsf{Vect}_{}] \]
given by composition of functors, which upon taking Grothendieck rings will descend to a map
\[\WRing\times K_0(\mathbf{P}[\mathsf{Vect}_{}, \bbS\-\mathsf{Vect}_{}] )\to \Lambda. \]

The representation-theoretic avatars of polynomial functors $\cat{Vect} \to \bbS\-\cat{Vect}$ are given by \textit{$\mathbb{S}\times \mathbb{S}$-modules}. An $\bbS \times \bbS\-$module $\calV$ is the data of a finite-dimensional $(S_m \times S_n)$-representation $\calV(m, n)$ for all $m, n \in \Z_{\geq 0}$. The equivalence between $\bbS \times \bbS$-modules and polynomial functors $\mathsf{Vect}_{}$ to $\bbS\-\mathsf{Vect}_{}$ is given by a currying construction which extends the correspondence between $\bbS$-modules and polynomial functors from $\mathsf{Vect}_{}$ to $\mathsf{Vect}_{}.$ 

In our applications, $\bbS \times \bbS$-modules arise as the cohomology groups of moduli spaces with two classes of marked points, or equivalently legs, on their dual graphs: one class of legs are glued to form edges, and the other remain as `free' marked points.

\begin{defn}\label{defn:boundedSS}
    An $\mathbb{S}\times \mathbb{S}$-module $\mathcal{V}$ is \textit{bounded} if $\mathcal{V}(i,m)$ vanishes for all but finitely many $(i,m).$ It is \textit{$(1)$-bounded} if there exists $M \in \Z_{\geq 0}$ such that for $m > M$, $\calV(i,m) = 0$ for all $i.$    

    Let $\mathcal{V}$ be a $(1)$-bounded $\mathbb{S}\times \mathbb{S}$-module. Define $P_{\mathcal{V}}$ as the functor from $\mathsf{Vect}_{}$ to $\bbS\-\mathsf{Vect}_{}$ given by 
    \[ W \mapsto \bigoplus_{i \geq 0} \left( \bigoplus_{m \geq 0} \calV(i, m) \otimes_{S_m} W^{\otimes m} \right), \]
    where we are using the direct sum convention for $\bbS$-modules.
\end{defn}

\begin{rem}
    Visualizing an $\bbS\times\bbS$-module $\mathcal{V}$ as a grid of representations with $\mathcal{V}(i,m)$ at entry $(i,m),$ we see that $\mathcal{V}$ is (1)-bounded if and only if the grid has finitely many non-zero columns.
\end{rem}

\begin{rem}
    The Grothendieck ring of (1)-bounded $\bbS\times\bbS$-modules is isomorphic to $\hat{\Lambda}\otimes {\Lambda}.$ In comparison, the Grothendieck rings of $\bbS\times\bbS$-modules resp. bounded $\bbS\times\bbS$-modules are $\hat{\Lambda}\otimes \hat{\Lambda}$ and $\Lambda\otimes \Lambda.$
\end{rem}

\begin{prop}\label{prop:SSpoly}
If $\calV$ is a (1)-bounded $\bbS \times \bbS$-module, then $P_{\mathcal{V}}$ is a polynomial functor from $\mathsf{Vect}_{}$ to $\bbS\-\cat{Vect}$. The assignment $\mathcal{V}\mapsto P_{\mathcal{V}}$ induces an equivalence of categories between $(1)$-bounded $\mathbb{S}\times \mathbb{S}$-modules and $\mathbf{P}[\mathsf{Vect}_{}, \mathbb{S}\-\mathsf{Vect}_{}]$ as well as an equivalence of categories between bounded $\mathbb{S}\times \mathbb{S}$-modules and $\mathbf{P}[\mathsf{Vect}_{}, \mathbb{S}\-\mathsf{Vect}_{}^\star].$
\end{prop}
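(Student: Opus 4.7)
The plan is to reduce the claim to the non-equivariant Macdonald equivalence between bounded $\bbS$-modules and polynomial functors $\cat{Vect} \to \cat{Vect}$ (recalled in Theorem \ref{thm:MacdonaldSetup} and to be proved in Appendix \ref{sec:appendixPFun} via the methods of \cite{MacDonald1980}), applied componentwise to the target decomposition $\bbS\-\cat{Vect} = \bigoplus_{i \geq 0} S_i\-\cat{Vect}$. Given an $\bbS \times \bbS$-module $\calV$, I would first decompose it as $\calV = \bigoplus_{i \geq 0} \calV_i$, where $\calV_i$ is the $\bbS$-module with $\calV_i(m) := \calV(i,m)$ equipped with an additional $S_i$-action commuting with the $S_m$-action. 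The (1)-boundedness hypothesis says precisely that there exists $M$ with $\calV_i(m) = 0$ for all $m > M$ and all $i$, so each $\calV_i$ is a \emph{uniformly} bounded $\bbS$-module with extra $S_i$-symmetry.

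To check that $P_\calV$ is polynomial of degree $\leq M$, I would observe that $P_\calV(W)$ decomposes as $\bigoplus_i P_\calV^{(i)}(W)$ with $P_\calV^{(i)}(W) := \bigoplus_m \calV(i,m) \otimes_{S_m} W^{\otimes m}$, and that each $P_\calV^{(i)}$ is a polynomial functor $\cat{Vect} \to S_i\-\cat{Vect}$ of degree $\leq M$ via the $S_i$-equivariant Macdonald equivalence (which is the statement of Theorem \ref{thm:MacdonaldSetup} specialized to $G = S_i$ combined with the fact that an $S_i$-action on the output of a polynomial functor $\cat{Vect} \to \cat{Vect}$ is encoded by an $S_i$-action on the associated $\bbS$-module). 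Since cross-effects in the additive category $\bbS\-\cat{Vect}$ are computed summand-by-summand along the direct sum decomposition of the target, the vanishing of cross-effects of order $M+1$ for each component implies that $P_\calV$ is polynomial of degree $\leq M$ as a whole.

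For the converse and full equivalence of categories, given a polynomial functor $F: \cat{Vect} \to \bbS\-\cat{Vect}$ of some degree $\leq n$, I would project onto each $S_i\-\cat{Vect}$ summand to produce polynomial functors $F^{(i)}$ of degree $\leq n$, using the fact that retraction onto a direct summand preserves polynomial degree. Macdonald's equivalence then yields bounded $\bbS$-modules $\calV_i$ with compatible $S_i$-action and $\calV_i(m) = 0$ for $m > n$, and these assemble into a (1)-bounded $\bbS \times \bbS$-module $\calV$ with $P_\calV \cong F$; fully faithfulness and naturality of this assignment reduce to their analogues on each $S_i$-component. For the bounded case, if $F$ factors through $\bbS\-\cat{Vect}^\star$, then evaluating at $W = \QQ^N$ for $N \gg 0$ shows $F^{(i)}(W) = 0$ for $i \gg 0$, which forces $\calV(i,m) = 0$ for $i$ large uniformly in $m$; combined with (1)-boundedness this yields full boundedness of $\calV$.

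The main obstacle will be verifying uniformity of polynomial degree across all $S_i$-components: the definition of polynomial functor requires a single uniform degree bound, and it is exactly this uniformity that distinguishes the (1)-bounded condition from the weaker condition that each $\calV(i, \-)$ is separately bounded in $m$. Once this uniformity is isolated as the key point, the remainder of the argument is formal and reduces cleanly to the bounded $\bbS$-module version of Macdonald's equivalence handled in Appendix \ref{sec:appendixPFun}.
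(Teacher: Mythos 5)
Your proof is correct, but it decomposes along a different axis than the paper does, and it is worth noting the contrast. The paper's own proof decomposes the polynomial functor $F$ into its homogeneous components $F = \bigoplus_n F_n$ (by degree in the source variable $W$), and then applies Macdonald's linearization theorem (Theorem \ref{thm:maclin}) directly, using the crucial point that the target $\bbS\-\cat{Vect}$ is a $k$-linear abelian category to which \cite[Thm.~5.7]{MacDonald1980} applies verbatim; the output $L_{F_n}^{(n)}(k)$ is then an $S_n$-object in $\bbS\-\cat{Vect}$, i.e.\ an $(S_n \times \bbS)$-module, and (1)-boundedness is read off as finiteness of the homogeneous degree. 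You instead decompose along the target index $i$, slicing the $\bbS \times \bbS$-module into $(S_i \times \bbS)$-modules $\calV_i$ and invoking the $G = S_i$ case of the $G\times\bbS$-module version of Macdonald's equivalence componentwise, then re-assembling via the observation that $\Hom$-spaces in $\bbS\-\cat{Vect}$ and hence cross-effects are computed summand by summand. Both reductions to Macdonald's work are legitimate; the paper's route is more directly a single application of the general linearization theorem, while yours is a sum of infinitely many applications of the already-established $G$-case glued by a uniformity argument. Your identification of uniformity of the degree bound across $i$ as the precise content of (1)-boundedness is exactly the right thing to isolate, and your explicit treatment of the bounded case (evaluating at $\QQ^N$ for $N > M$ to force $\calV(i,m) = 0$ for $i \gg 0$) fills in a step the paper leaves implicit.
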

The proof is an explicit linearization that is given in \ref{sec:pfSSpoly}.


\subsection{The plethystic action of $\WRing$ on $\hat{\Lambda} \otimes {\Lambda}$}\label{sec:tcirc}

Using their interpretations as polynomial functors, we obtain an action of $\WRing$ on $\hat{\Lambda} \otimes {\Lambda}$.

\begin{defn}
    Let $\mathcal{V}$ be a (1)-bounded $\mathbb{S}\times \mathbb{S}$-module, and let $\mathcal{W}$ be a bounded $\mathbb{S}^{[2]}$-module. Abusing notation, identify them with polynomial functors $\mathcal{V}\in \mathbf{P}[\mathsf{Vect}_{}, \bbS\-\mathsf{Vect}_{}]$ and $\mathcal{W}\in \mathbf{P}[\bbS\-\mathsf{Vect}_{}, \mathsf{Vect}_{}]$. The enriched plethysm $\mathcal{W}\tcirc \mathcal{V}$ is the $\mathbb{S}$-module corresponding to the composition of polynomial functors $\mathsf{Vect}_{}\xrightarrow{\mathcal{V}} \bbS\-\mathsf{Vect}_{}\xrightarrow{\mathcal{W}}\mathsf{Vect}_{}.$ 
\end{defn}

\begin{defn}
    Let $\mathcal{V}$ be an $\bbS\times \bbS$-module. Define $\mathcal{V}_i\in (S_i\times\bbS)\-\mathsf{Vect}$ by $\mathcal{V}_i(m):=\mathcal{V}(i,m).$ The formula for box products on $\bbS$-modules applied on $S_i\times \mathbb{S}$-modules gives $\mathcal{V}_i^{\boxtimes \nu_i}$ as $(S_i\wr S_{\nu_i})\times \mathbb{S}$-modules.
    
\end{defn}

\begin{rem}
    When $\mathcal{V}$ is (1)-bounded, each $\mathcal{V}_i$ are bounded as $S_i\times \bbS$-modules.
\end{rem}

The notation serves to describe the $\mathbb{S}$-module under the composition operation.
\begin{lem}\label{lem:tcirc}
    Let $\mathcal{V}$ be a (1)-bounded $\mathbb{S}\times \mathbb{S}$-module, and let $\mathcal{W}$ be a bounded $\mathbb{S}^{[2]}$-module. Then $\mathcal{W}\tcirc \mathcal{V}$ is the $\mathbb{S}$-module given by the formula $$\bigoplus_{\nu\in \Part} \mathcal{W}(\nu)\otimes_{\bbS_{\nu}} \left(\midboxtimes_{i\geq 0}\mathcal{V}_i^{\boxtimes \nu_i}\right).$$
\end{lem}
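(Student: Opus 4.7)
The plan is to unwind both sides using the equivalence between polynomial functors and the appropriate module categories (Theorem \ref{thm:s2polyfun} and Proposition \ref{prop:SSpoly}), and then to identify the resulting $\bbS$-module via a sequence of repeated applications of the tensor/induction identity $V \otimes_H W^{\otimes n} \cong (\Ind_H^{S_n} V) \otimes_{S_n} W^{\otimes n}$.

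First, I would compute $P_{\mathcal{V}}(W)$ as an $\bbS$-module for a vector space $W \in \mathsf{Vect}$: by Definition \ref{defn:boundedSS}, its $i$-th piece is $(P_{\mathcal{V}}(W))(i) = \bigoplus_{m \geq 0} \mathcal{V}(i,m) \otimes_{S_m} W^{\otimes m}$. Setting $\mathcal{U} := P_{\mathcal{V}}(W)$, I would then compute $\mathcal{U}^{\otimes \nu} = \bigotimes_{i \geq 0} \mathcal{U}(i)^{\otimes \nu_i}$ by expanding each factor $\mathcal{U}(i)^{\otimes \nu_i}$: distributing the tensor power over the direct sum indexed by $m$, combining the tensored $W$-factors as $W^{\otimes (m_1 + \cdots + m_{\nu_i})}$, and then using Frobenius reciprocity $\bigotimes_j \bigl(\mathcal{V}(i,m_j) \otimes_{S_{m_j}} W^{\otimes m_j}\bigr) \cong \bigl(\Ind_{\prod_j S_{m_j}}^{S_{\sum_j m_j}} \bigotimes_j \mathcal{V}(i,m_j)\bigr) \otimes_{S_{\sum_j m_j}} W^{\otimes \sum_j m_j}$. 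Summing over compositions gives $\mathcal{U}(i)^{\otimes \nu_i} \cong \bigoplus_{n \geq 0} (\mathcal{V}_i^{\boxtimes \nu_i})(n) \otimes_{S_n} W^{\otimes n}$, equipped with the natural $S_i \wr S_{\nu_i}$-action arising from permuting the $\nu_i$ tensor factors (compatibly with each internal $S_i$-action on $\mathcal{V}(i, m_j)$).

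Next, I would apply the same maneuver across the $i$-index: tensoring these $\bbS$-modules for different $i$, distributing over direct sums, and applying induction from products of Young subgroups, I obtain $\mathcal{U}^{\otimes \nu} \cong \bigoplus_{N \geq 0} \bigl(\midboxtimes_{i \geq 0} \mathcal{V}_i^{\boxtimes \nu_i}\bigr)(N) \otimes_{S_N} W^{\otimes N}$, as an $(\bbS_\nu \times \bbS)$-representation where $\bbS_\nu = \prod_i S_i \wr S_{\nu_i}$ acts through the box products and $\bbS$ acts through the grading in $N$. Substituting this into $P_{\mathcal{W}}(\mathcal{U}) = \bigoplus_\nu \mathcal{W}(\nu) \otimes_{\bbS_\nu} \mathcal{U}^{\otimes \nu}$ and interchanging the direct sums (the $S_N$-action commutes with the $\bbS_\nu$-action), I read off the $N$-th graded piece of the $\bbS$-module $\mathcal{W} \tcirc \mathcal{V} = P_{\mathcal{W}} \circ P_{\mathcal{V}}$ as $\bigoplus_{\nu \in \Part} \mathcal{W}(\nu) \otimes_{\bbS_\nu} \bigl(\midboxtimes_{i \geq 0} \mathcal{V}_i^{\boxtimes \nu_i}\bigr)(N)$, which is the claimed formula.

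The main obstacle is bookkeeping rather than any serious difficulty: I must carefully verify the equivariance statements at each step, especially that the $\bbS_\nu = \prod_i (S_i \wr S_{\nu_i})$-action on $\mathcal{U}^{\otimes \nu}$ produced by the expansion agrees with the action through which $\mathcal{W}(\nu)$ is tensored, and that the internal $S_i$-actions on $\mathcal{V}(i, m_j)$ (which become the base groups of the wreath products) are correctly threaded through the inductions. Boundedness of $\mathcal{W}$ and $(1)$-boundedness of $\mathcal{V}$ ensure that the direct sums are finite in each degree, so no convergence issue arises.
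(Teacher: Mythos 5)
Your proof is correct and follows essentially the same route as the paper's: both unwind the composed polynomial functor and regroup tensor products of coinvariants into inductions from Young subgroups, arriving at the box products $\midboxtimes_{i}\mathcal{V}_i^{\boxtimes\nu_i}$. The only cosmetic difference is that the paper carries out the formal Macdonald linearization step-by-step (extracting the degree-$n$ multilinear component and specializing $V_1 = \cdots = V_n = k$), whereas you bring the composition formula directly into the canonical form $\bigoplus_N U(N)\otimes_{S_N}W^{\otimes N}$ via repeated use of the tensor/induction identity and then read off $U$.
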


The proof of Lemma \ref{lem:tcirc} is given in \ref{sec:pftcirc}.
\begin{rem}\label{rem:unbounded}
    The enriched plethysm described above in fact extends to all $\bbS\times\bbS$-modules. Let $\calV$ be a possibly unbounded $\bbS\times\bbS$-module, and let $\mathcal{W}$ be a bounded $\bbS^{[2]}$-module. The boundedness of $\mathcal{W}$ implies that the set $$\mathsf{Part}_{\mathcal{W}}:=\{\nu\in \Part\mid \mathcal{W}(\nu)\neq 0\}$$ is finite. Further, as each element in $\mathsf{Part}_{\mathcal{W}}$ has finitely many parts, the set $$\mathsf{I}_{\mathcal{W}}:=\{i\in \mathbb{Z}_{\geq 0}\mid \exists \nu\in \mathsf{Part}_{\mathcal{W}}: \nu_i\neq 0\}$$ is finite. Therefore, the formula in Lemma \ref{lem:tcirc} gives a well-defined $\bbS$-module $\mathcal{W}\tcirc \mathcal{V},$ which is possibly unbounded.
\end{rem}

  The composition of polynomial functors descends to Grothendieck rings.


\begin{defn}
    $\tilde{\circ}: \WRing\times (\hat{\Lambda}\otimes {\Lambda})\to {\Lambda}$ is defined as the unique map on Grothendieck groups $$\tilde{\circ}: K_0(\mathbf{P}[\bbS\-\mathsf{Vect}, \mathsf{Vect}])\times K_0(\mathbf{P}[\mathsf{Vect}, \bbS\-\mathsf{Vect}])\to K_0(\mathbf{P}[\mathsf{Vect}, \mathsf{Vect}])$$ that extends the assignment $\left([\mathcal{W}], [\mathcal{V}]\right)\mapsto [\mathcal{W}\tilde{\circ}\mathcal{V}].$ By remark \ref{rem:unbounded}, it extends to $\tcirc: \Lambda^{[2]}\times (\hat{\Lambda}\otimes \hat{\Lambda})\to \hat{\Lambda}$ that factors through the inclusion $\hat{\Lambda}\otimes {\Lambda}\hookrightarrow \hat{\Lambda}\otimes \hat{\Lambda}$ as described in the remark.
\end{defn}


The polynomial functor formalism and the $\bbS$-module formula presented above lead to the following concrete formulas for the enriched plethysm. Before stating them, we set some notation on a convenient basis for $\Lambda\otimes \hat{\Lambda}$. To begin with, we recall the notation $p_\mu = \prod_{i \geq 1} p_i^{\mu_i}$ for $\mu \in \Part^\star$, and that $\{p_{\mu}\mid \mu\in \Part^\star\}$ forms a linear basis for $\Lambda$.

\begin{defn}\label{defn:projmu}
    Let $\mu\in \Part^\star.$ Define $\mathrm{proj}_{\mu}: \hat{\Lambda}\otimes \hat{\Lambda}\to \hat{\Lambda}$ as the linear projection \[\hat{\Lambda} \otimes \hat{\Lambda} = \prod_{\mu \in \Part^\star} \frac{p_{\mu}}{\prod_{i \geq 1} i^{\mu_i}\mu_i!}\otimes \hat{\Lambda} \to \frac{p_{\mu}}{\prod_{i \geq 1} i^{\mu_i}\mu_i!} \otimes \hat{\Lambda} \xrightarrow{\cong} \hat{\Lambda}.\] Restricting the map to $\hat{\Lambda}\otimes \Lambda\subset \hat{\Lambda}\otimes \hat{\Lambda}$ gives $\mathrm{proj}_{\mu}: \hat{\Lambda}\otimes {\Lambda}\to {\Lambda}.$
\end{defn}


\begin{thm}\label{thm-tcirc}
    The map ${\tcirc}: \WRing\times (\hat{\Lambda}\otimes \hat{\Lambda})\to \hat{\Lambda}$ is characterized by the following properties.
    \begin{enumerate}[(1)]
        \item For all $q\in \hat{\Lambda}\otimes \hat{\Lambda},$ the map $\tcirc q: \WRing\to \hat{\Lambda}$ given by $f\mapsto f\tcirc q$ is an algebra homomorphism.
        \item For $\mu\in \Part^\star, q\in \hat{\Lambda}\otimes \hat{\Lambda}$, and $n \geq 0$, we have $$p_n(\mu)\tcirc q = \psi_n(\mathrm{proj}_{\mu}(q)).$$
    \end{enumerate}
    \end{thm}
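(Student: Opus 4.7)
The plan is to prove uniqueness of a map satisfying (1) and (2), and then to verify that $\tcirc$ defined via polynomial functor composition satisfies both properties. For uniqueness, I would invoke Theorem \ref{thm:WRingsemigp}, which identifies $\WRing$ with the polynomial ring $\QQ[p_j(\mu) : j \geq 1,\ \mu \in \Part^\star]$. For each fixed $q$, any $\QQ$-algebra homomorphism out of this polynomial ring is determined by its values on the generators $p_j(\mu)$. Properties (1) and (2) specify exactly that $f \mapsto f \tcirc q$ is an algebra homomorphism taking the prescribed values on generators, so at most one such map exists.

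For property (1), I would argue via Theorem \ref{thm:s2polyfun} that the box product of $\bbS^{[2]}$-modules corresponds to pointwise tensor product of polynomial functors $\bbS\-\mathsf{Vect} \to \mathsf{Vect}$. More precisely, the claim is that $P_{\calW_1 \boxtimes \calW_2}(\calU) \cong P_{\calW_1}(\calU) \otimes_\QQ P_{\calW_2}(\calU)$ naturally in $\calU \in \bbS\-\mathsf{Vect}$; this is checked by expanding $P_{\calW_1 \boxtimes \calW_2}$ using Definition \ref{defn:ringWRing} and applying a Frobenius-type reciprocity identity to unwind the induction. Since composition of functors distributes over pointwise tensor products, $(\calW_1 \boxtimes \calW_2) \circ \calV = (\calW_1 \circ \calV) \otimes (\calW_2 \circ \calV)$ for all $\calV \in \mathbf{P}[\mathsf{Vect}, \bbS\-\mathsf{Vect}]$; passing to Grothendieck rings gives property (1), using that multiplication in $\hat{\Lambda}$ similarly corresponds to pointwise tensor product.

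For property (2), the starting point is Lemma \ref{lem:tcirc}. Since $p_n(\mu) \in R(\bbS_\nu)$ is supported on the partition $\nu$ with $\nu_{|\mu|} = n$ and $\nu_i = 0$ otherwise, the lemma reduces the computation to $p_n(\mu) \tcirc \calV = p_n(\mu) \otimes_{S_{|\mu|} \wr S_n} \calV_{|\mu|}^{\boxtimes n}$. The key ingredient is the cycle-product character formula for wreath products recalled in Appendix \ref{appendix:wreath_products}: for an $S_{|\mu|}$-representation $U$, the trace of $((\sigma, e, \ldots, e), \kappa)$ with $\kappa$ a single $n$-cycle on $U^{\otimes n}$ equals $\mathrm{tr}_U(\sigma)$. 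Applying this with $U = \calV_{|\mu|}(W)$ for varying $W \in \mathsf{Vect}$, and pairing with $p_n(\mu)$ as a class function on $S_{|\mu|} \wr S_n$ via the tensor-over-group formula $\chi \otimes_G V = \frac{1}{|G|} \sum_g \chi(g) \cdot g \cdot V$, the contraction reduces to the character value of $\calV_{|\mu|}$ on cycle type $\mu$---namely $\mathrm{proj}_\mu q$---while the $n$-fold tensor on the $\mathsf{Vect}$ side materializes as the $n$-th Adams operation $\psi_n$ on the resulting $\bbS$-module structure. The extension from (1)-bounded to general $\bbS \times \bbS$-modules, and hence to all $q \in \hat{\Lambda} \otimes \hat{\Lambda}$, is handled by Remark \ref{rem:unbounded}.

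The main obstacle I expect is the careful tracking of normalizations in the last step: $p_n(\mu)$ is a specific scalar multiple of the indicator function of its conjugacy class, and the constants $|S_{|\mu|} \wr S_n|$, the orbit size of the conjugacy class, and the $z_\mu$-normalization implicit in $\mathrm{proj}_\mu$ must cancel exactly to produce the clean identity. Once these factors are accounted for, all pieces align to give $p_n(\mu) \tcirc q = \psi_n(\mathrm{proj}_\mu q)$.
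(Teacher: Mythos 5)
Your proof follows the same skeleton as the paper's: uniqueness via the polynomial-ring description of $\WRing$ (Theorem~\ref{thm:WRingsemigp}), property (1) via the compatibility of box products with pointwise tensor products of polynomial functors, and property (2) by reducing through Lemma~\ref{lem:tcirc} to the single term supported on the generalized partition $(|\mu|^n)$, which gives the expression $p_n(\mu) \otimes_{S_{|\mu|}\wr S_n} \calV_{|\mu|}^{\boxtimes n}$. The one place you diverge is the final step: the paper at that point simply recognizes this expression as Macdonald's $S_{|\mu|}$-wreath plethysm $p_n(\mu)\circ_{|\mu|}\calV_{|\mu|}$ and invokes Lemma~\ref{lem-Gwrformulas} (Petersen's formula) to get $\psi_n(\mathrm{proj}_\mu q)$, whereas you propose to re-derive this wreath plethysm identity from scratch via the cycle-product trace formula for wreath products acting on tensor powers. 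Your sketch is plausible and the trace fact you cite is correct, but the closing claim that ``the $n$-fold tensor materializes as the Adams operation $\psi_n$'' and the promise to track normalizations are exactly the content of Lemma~\ref{lem-Gwrformulas}, so you are signing up to re-prove a quoted lemma rather than cite it; nothing is wrong, but the argument as written is not yet complete at that point. Citing Lemma~\ref{lem-Gwrformulas} there would close the gap and recover the paper's proof exactly.
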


\begin{proof}
    Item (1) follow from general facts about composing polynomial functors.

    For (2), by linearity it suffices to assume that $q = \ch(\mathcal{V}),$ where $\mathcal{V}$ is an $\bbS\times \bbS$-module given by $(m,n) \mapsto \mathcal{V}^{(1)}(m)\otimes \mathcal{V}^{(2)}(n)$ for $\bbS$-modules $\mathcal{V}^{(1)}, \mathcal{V}^{(2)}.$
    
    We recall that $$p_n(\mu)\in R(S_{|\mu|}\wr S_n) = R(\mathbb{S}_{(|\mu|^n)})$$ where $(|\mu|^n)$ denotes the partition with $n$ parts of size $|\mu|.$ Implementing the linearization formula in Lemma \ref{lem:tcirc}, we see that with $\mathcal{V} = \left(\bigoplus_{\ell\geq 0} \mathcal{V}^{(1)}(\ell)\right)\otimes \mathcal{V}^{(2)} ,$ the only $i\geq 0$ for which $\lambda_i\neq 0$ is $i = |\mu|,$ in which case $\lambda_i = n,$ and \[\mathcal{V}_{|\mu|} = \mathcal{V}^{(1)}(|\mu|)\otimes \mathcal{V}^{(2)}.\]

     Therefore, the formula for $p_n(\mu)\tcirc q$ simplifies to $$p_n(\mu)\otimes_{\mathbb{S}_{(|\mu|^n)}} \left(\mathcal{V}^{(1)}(|\mu|)\otimes \mathcal{V}^{(2)}\right)^{\boxtimes n},$$
     where $p_n(\mu)$ above is thought of as a virtual representation of $\bbS_{(|\mu|^n)}$, and $\left(\mathcal{V}^{(1)}(|\mu|)\otimes \mathcal{V}^{(2)}\right)^{\boxtimes n}$ is a $\bbS$-module with a $\bbS_{(|\mu|^n)}$-action.
    
    This agrees with the formula for the $S_{|\mu|}$-enriched plethysm $p_n(\mu)\circ_{|\mu|}(\mathcal{V}^{(1)}(|\mu|)\otimes \mathcal{V}^{(2)}).$ By Lemma \ref{lem-Gwrformulas}, its Frobenius character is equal to $\psi_n(\mathrm{proj}_\mu(q))$ as defined above.

    Since $\WRing$ is generated by all $p_n(\mu)$ as a polynomial ring, the map $\tcirc$ is uniquely determined by property (1).
\end{proof}



The $\bbS\times \bbS$-modules that will be relevant for our applications arise from $\bbS$-modules via the following construction.

\begin{defn}
    Let $\mathcal{V}$ be an $\bbS$-module. Define $\Delta\mathcal{V}$ as the $\bbS\times \bbS$-module defined by $\Delta\mathcal{V}(i,m) = \mathrm{Res}^{S_{i+m}}_{S_i\times S_m} \mathcal{V}(i+m).$
\end{defn}

\begin{rem}
    $\Delta\mathcal{V}$ is (1)-bounded if and only if $\mathcal{V}$ is bounded, if and only if $\Delta\mathcal{V}$ is bounded.
\end{rem}

The map $\tcirc \Delta,$ defined as the composition $ \WRing\times \hat{\Lambda}\xrightarrow{\mathrm{id}\times \Delta} \WRing\times (\hat{\Lambda}\otimes \hat{\Lambda})\xrightarrow{\tcirc} \hat{\Lambda},$ can be viewed as a plethystic action of $\WRing$ on $\hat{\Lambda}.$ It has the following appealing formula in terms of differential operators on $\hat{\Lambda}.$

\begin{defn}\cite[Example 5.3]{Macdonald}
    Let $f\in \Lambda$ be of bounded degree. Define\footnote{It is also known as the skewing operator in literature, in relation to Schur functions associated to skew partitions.} $D(f): \hat{\Lambda}\to \hat{\Lambda}$ as $$D(f):=f\left(\frac{\partial}{\partial p_1},2 \frac{\partial}{\partial p_2}, 3\frac{\partial}{\partial p_3},\dots \right).$$ Under the Hall inner product on $\Lambda$, $D(f)$ is the adjoint to the multiplication map $\cdot f: \hat{\Lambda}\to \hat{\Lambda}.$
\end{defn}

\begin{lem}\label{lem:Dpmu}
    Let $\mu\in \Part^\star$ and $f\in \hat{\Lambda},$ then $$p_n(\mu) \tcirc \Delta f = p_n \circ D(p_\mu)(f).$$
\end{lem}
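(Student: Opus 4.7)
The plan is to reduce the claim to Theorem \ref{thm-tcirc} together with a standard fact about the Hall pairing on $\Lambda$.

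First, I would apply Theorem \ref{thm-tcirc}(2) with $q = \Delta f \in \hat{\Lambda} \otimes \hat{\Lambda}$ to obtain
\[ p_n(\mu) \tcirc \Delta f = \psi_n(\mathrm{proj}_\mu(\Delta f)). \]
Since the Adams operation $\psi_n$ coincides with plethystic substitution by $p_n$, i.e.\ $\psi_n(g) = p_n \circ g$ for any $g \in \hat{\Lambda}$, the lemma reduces to proving the identity
\[ \mathrm{proj}_\mu(\Delta f) = D(p_\mu)(f). \]

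To establish this identity, I would use that $\Delta : \hat{\Lambda} \to \hat{\Lambda} \otimes \hat{\Lambda}$ is adjoint to multiplication under the Hall inner product: for any $g, h \in \Lambda$,
\[ \langle \Delta f, \, g \otimes h \rangle = \langle f, \, g \cdot h \rangle. \]
The Hall pairing satisfies $\langle p_\lambda, p_\mu \rangle = z_\mu \delta_{\lambda \mu}$ where $z_\mu = \prod_i i^{\mu_i}\mu_i!$, so the basis $\{p_\mu / z_\mu\}$ is dual to $\{p_\mu\}$. Writing $\Delta f = \sum_\lambda \frac{p_\lambda}{z_\lambda} \otimes g_\lambda$ with $g_\lambda \in \hat{\Lambda}$, the defining property of $\mathrm{proj}_\mu$ gives $\mathrm{proj}_\mu(\Delta f) = g_\mu$. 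On the other hand, for every $h \in \Lambda$,
\[ \langle g_\mu, h \rangle = \langle \Delta f, \, p_\mu \otimes h \rangle = \langle f, \, p_\mu \cdot h \rangle = \langle D(p_\mu) f, \, h \rangle, \]
where the last equality is the definition of $D(p_\mu)$ as the adjoint of multiplication by $p_\mu$. Non-degeneracy of the Hall pairing then yields $g_\mu = D(p_\mu)(f)$, completing the argument.

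There is no serious obstacle; the only point that requires care is making sense of these identities in the completed setting $\hat{\Lambda}$, since the Hall pairing is most cleanly stated on $\Lambda$. This is easily handled by testing against $h \in \Lambda$ (rather than in $\hat{\Lambda}$), and noting that both sides of the desired equation lie in $\hat{\Lambda}$ and are determined by their Hall pairings with all $h \in \Lambda$.
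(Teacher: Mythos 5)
Your proof is correct and follows essentially the same strategy as the paper: apply Theorem \ref{thm-tcirc}(2) to reduce the claim to the identity $\mathrm{proj}_\mu(\Delta f) = D(p_\mu)(f)$, and then establish that via the Hall inner product. The only notable difference is that where the paper reduces to $f = \ch(\mathcal{V})$ for an $\bbS$-module and cites \cite[Proposition 8.10]{GetzlerKapranov} for the Hall pairing step, you give a short self-contained adjointness argument (``$\Delta$ is adjoint to multiplication'' together with ``$D(p_\mu)$ is adjoint to multiplication by $p_\mu$''), which is slightly more elementary but substantively equivalent.
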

\begin{proof}
    Let $m = |\mu|$ and let $\mathcal{V}$ be an $\bbS$-module. By linearity, it suffices to prove for $f = \ch(\mathcal{V}).$ In the following, we let $(\Delta\mathcal{V})_m$ denote the $S_m\times \bbS$-module given by $\mathrm{Res}^{\bbS\times\bbS}_{S_m\times \bbS}\Delta \mathcal{V}$ and let \[\langle -, -\rangle_{S_m}: \Lambda_{m}\otimes \Lambda_{m}\to \mathbb{Q}\] denote the inner product on class functions. Taking tensor product with $\hat{\Lambda},$ we extend this to $\langle -, -\rangle: \Lambda_{m}\otimes (\Lambda_{m}\otimes \hat{\Lambda})\to \hat{\Lambda}$; this is a restriction of the Hall inner product on $\hat{\Lambda}$. 
    
    Applying Theorem \ref{thm-tcirc},  $$p_n(\mu) \tcirc \Delta \ch(\mathcal{V}) = p_n\circ \langle p_{\mu},\ch(\Delta \mathcal{V})_m\rangle_{S_{m}} = p_n\circ \sum_{k\geq 0} \left\langle p_{\mu}, \ch\left(\Res^{S_{m+k}}_{S_m\times S_k} \mathcal{V}\right)\right\rangle.$$ 
    By \cite[Proposition 8.10]{GetzlerKapranov}, the right hand side is equal to $p_n\circ D(p_{\mu})(\ch(\mathcal{V})).$
\end{proof}

\begin{defn}\label{defn:partialbar}
    For a symmetric function $f \in \hat{\Lambda}$ and $\mu\in \Part^\star,$ we define \[ \overline{\partial}_\mu f := D(p_\mu) (f) = \left(\prod_{i \geq 1}  i^{\mu_i}\right) \cdot \frac{\partial^{\mu_1 + \mu_2 + \cdots} f}{\partial p_1^{\mu_1} \partial p_2^{\mu_2}\cdots}. \] Writing $\tcirc$ for $\tcirc \Delta$, the plethystic action $\WRing \times \hat{\Lambda} \to \hat{\Lambda}$ is determined by
\[ p_n(\mu) \tcirc f = \psi_n(\overline{\partial}_\mu f). \]
Extending multiplicatively, for $\tilde{\mu}\in \Part^\star,$ we set $\psi_{\tilde{\mu}}:=\prod_{i\geq 1} \psi_{i}^{\tilde{\mu}_i}.$ Then $p_{\tilde{\mu}}(\mu)\tcirc f=\psi_{\tilde{\mu}}(\overline{\partial}_\mu f).$
\end{defn}

\subsection{Graded modules and mixed Hodge structures}
All of the constructions of the previous section generalize in the following two ways:
\begin{itemize}
\item instead of considering $\bbS^{[2]}$, $\bbS \times \bbS$, and $\bbS$-modules in the category of finite dimensional rational vector spaces, we may consider them in the category $\cat{MHS}$ of mixed Hodge structures over $\QQ$;
\item we can in addition suppose that we are working with \textit{graded} mixed Hodge structures.
\end{itemize}
\begin{defn}
A graded $\bbS$-module $\calV^{\bullet}$ in $\cat{MHS}$ is a $\bbS$-module in $\cat{MHS}$ together with a direct sum decomposition
\[ \calV^{\bullet} = \bigoplus_{i \geq 0} \calV^{(i)} \]
where each $\calV^{(i)}$ is a $\bbS$-module in $\cat{MHS}$. Graded $\bbS^{[2]}$-modules and $(\bbS \times \bbS)$-modules in $\cat{MHS}$ are defined similarly.
\end{defn}
Note that box products and direct sums of graded $\bbS$-modules are also graded, in a natural way. For a graded $\bbS$-module $\calV^\bullet$ in $\cat{MHS}$ we define
\[ \ch_{\bbS}^{\cat{MHS}}(\calV^{\bullet}) := \sum_{i \geq 0}(-1)^{i}[\calV^{(i)}] \in K_0(\cat{MHS_{\bbS}}) \cong K_0(\cat{MHS}) \otimes \hat{\Lambda}. \]

Similary we can define \[\ch_{\bbS \times \bbS}^{\cat{MHS}}(\calV^{\bullet}) \in K_0(\cat{MHS}) \otimes \hat{\Lambda} \otimes \hat{\Lambda}\quad\mbox{and}\quad \ch_{\bbS^{[2]}}^{\cat{MHS}}(\calV^{\bullet}) \in K_0(\cat{MHS) \otimes}\WRing\] for graded $\bbS \times \bbS$-modules and bounded graded $\bbS^{[2]}$-modules, respectively. The action $\tcirc$ of Theorem \ref{thm-tcirc} extends to an action
\[\tcirc : K_0(\cat{MHS}) \otimes \WRing \times K_0(\cat{MHS}) \otimes \hat{\Lambda} \otimes \hat{\Lambda}  \]
in a standard way, since only the Adams operations need to be defined on $K_0(\cat{MHS})$. This extension is well-known, c.f. \cite{GetzlerPreprint, GetzlerPandharipande, BergstromTommasi}: first, given a mixed Hodge structure $M$, we can set
\begin{equation}\label{eqn:adams_mhs}
    \sum_{\mu \in \Part_n^\star} \frac{\psi_{\mu}([M])}{\prod_{i \geq 1} i^{\mu_i}\mu_i!} = [\mathrm{Sym}^nM], 
\end{equation}
and then the Adams operations $\psi_n([M])$ for $n \geq 1$ are determined by Newton's equations \cite[(5.6)]{GetzlerPandharipande}; in the notation of loc. cit., the left-hand side of (\ref{eqn:adams_mhs}) is denoted $s_n \circ [M]$. Analyzing the interface between signs and symmetric group actions on graded vector spaces as in \cite[Proposition 7.3]{GetzlerKapranov}, we have the following lemma.

\begin{lem}\label{lem:tcircMHS}
Suppose $\calW^{\bullet}$ is a bounded graded $\bbS^{[2]}$-module in $\cat{MHS}$ and $\calV^{\bullet}$ is a graded $(\bbS \times \bbS)$-module in $\cat{MHS}$. Then, with $\tcirc$ defined as in Lemma \ref{lem:tcirc}, we have
\[ \ch_{\bbS}^{\cat{MHS}}(\calW^{\bullet} \tcirc \calV^{\bullet}) = \ch_{\bbS^{[2]}}^{\cat{MHS}}(\calW^{\bullet}) \tcirc \ch_{\bbS \times \bbS}^{\cat{MHS}}(\calV^{\bullet}). \]
\end{lem}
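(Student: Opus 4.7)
The plan is to reduce to testing the identity on multiplicative generators of $\WRing$ and then to import the sign bookkeeping from \cite[Proposition 7.3]{GetzlerKapranov}.

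First I would exploit additivity and multiplicativity. Both sides of the claimed equation are additive in $\calW^\bullet$ and $\calV^\bullet$, so we may argue at the level of Grothendieck classes. At the categorical level, the polynomial-functor description (Theorem \ref{thm:s2polyfun}) together with Lemma \ref{lem:tcirc} provides a natural isomorphism of $\bbS$-modules in $\cat{MHS}$,
\[ (\calW^\bullet_1 \boxtimes \calW^\bullet_2) \tcirc \calV^\bullet \;\cong\; (\calW^\bullet_1 \tcirc \calV^\bullet) \boxtimes (\calW^\bullet_2 \tcirc \calV^\bullet), \]
because under the equivalence of Theorem \ref{thm:s2polyfun} one has $P_{\calW_1 \boxtimes \calW_2}(\calU) = P_{\calW_1}(\calU) \otimes P_{\calW_2}(\calU)$ and composition of polynomial functors distributes over tensor products. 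Passing to $\ch^{\cat{MHS}}_{\bbS}$ shows that the left-hand side of the lemma is multiplicative in $\calW^\bullet$, while Theorem \ref{thm-tcirc}(1) says the same for the right-hand side. Since $K_0(\cat{MHS}) \otimes \WRing$ is generated as an algebra by $K_0(\cat{MHS})$ together with the power sums $p_n(\mu)$ (Theorem \ref{thm:WRingsemigp}), it suffices to treat the case $\ch^{\cat{MHS}}_{\bbS^{[2]}}(\calW^\bullet) = [M]\cdot p_n(\mu)$, where $M$ is a mixed Hodge structure concentrated in a single cohomological degree and $(n,\mu) \in \Z_{\geq 0} \times \Part^\star$.

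Second, in this generator case Lemma \ref{lem:tcirc} collapses dramatically: only $i = |\mu|$ with $\nu_i = n$ contributes, so the formula for $\calW^\bullet \tcirc \calV^\bullet$ becomes an induction, up to an $\bbS$-module, of the virtual graded $\bbS_{(|\mu|^n)}$-representation $M \otimes p_n(\mu) \otimes (\calV^\bullet_{|\mu|})^{\boxtimes n}$. I would take Frobenius characteristic and apply the Macdonald wreath-plethysm identity already used in the proof of Theorem \ref{thm-tcirc} to rewrite the result, modulo signs, as $[M] \cdot p_n \circ \langle p_\mu,\, \ch^{\cat{MHS}}_{\bbS \times \bbS}(\calV^\bullet)\rangle_{S_{|\mu|}}$, which by Definition \ref{defn:projmu} and Newton's relations is $[M] \cdot \psi_n(\operatorname{proj}_\mu \ch^{\cat{MHS}}_{\bbS \times \bbS}(\calV^\bullet))$, matching the right-hand side of the lemma.

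The main obstacle is the sign bookkeeping in the graded setting: the permutation action of $S_n$ on $(\calV^\bullet_{|\mu|})^{\boxtimes n}$ produces Koszul signs whenever odd-degree pieces of $\calV^\bullet$ are swapped, and these must be reconciled with the definition of $\psi_n$ on $K_0(\cat{MHS})$ via $\mathrm{Sym}^n$ and Newton's identities, i.e.\ with recipe (\ref{eqn:adams_mhs}). Applying \cite[Proposition 7.3]{GetzlerKapranov} to this $S_n$-action translates the Koszul signs into exactly that definition of $\psi_n$, and with this translation the character computation of the previous paragraph produces the claimed identity on the nose.
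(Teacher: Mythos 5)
The paper itself does not supply a detailed proof of this lemma; it states it after the sentence ``Analyzing the interface between signs and symmetric group actions on graded vector spaces as in \cite[Proposition 7.3]{GetzlerKapranov}, we have the following lemma.'' Your proposal is consistent with and essentially fills out that intended argument: reduce by multiplicativity (Theorem \ref{thm-tcirc}(1) on the right, distributivity of polynomial-functor composition over pointwise tensor product on the left, which does give the isomorphism $(\calW_1^\bullet \boxtimes \calW_2^\bullet) \tcirc \calV^\bullet \cong (\calW_1^\bullet \tcirc \calV^\bullet) \boxtimes (\calW_2^\bullet \tcirc \calV^\bullet)$) to the case where the class is $[M]\cdot p_n(\mu)$, run the same wreath-plethysm computation as in the proof of Theorem \ref{thm-tcirc}, and then defer the Koszul-sign bookkeeping to \cite[Proposition 7.3]{GetzlerKapranov}. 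That is the same chain of ideas the authors invoke, just written out more explicitly.

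Two small points you should make precise if you turn this into a full proof. First, the reduction ``it suffices to treat generators'' requires knowing that both sides of the identity descend to $K_0$, i.e.\ are invariant under replacing $\calW^\bullet$ by another graded module with the same $\ch^{\cat{MHS}}_{\bbS^{[2]}}$-class; this follows from additivity of $\ch^{\cat{MHS}}$ and of the module-level $\tcirc$ in exact sequences, but you use it silently. Second, when you say ``applying \cite[Proposition 7.3]{GetzlerKapranov} to this $S_n$-action translates the Koszul signs into exactly that definition of $\psi_n$'' you are compressing the genuinely delicate step: the Koszul signs from permuting odd-degree tensor factors of $(\calV^\bullet_{|\mu|})^{\boxtimes n}$ must be shown to convert $[M]^n$ into $\psi_n$ evaluated on the signed class, matching the conventions of (\ref{eqn:adams_mhs}) and \cite[(5.6)]{GetzlerPandharipande}. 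This is precisely the content that the paper also delegates to Getzler--Kapranov, so your proposal is at the same level of rigor as the paper's assertion; a self-contained write-up would need to reproduce that sign calculation rather than merely cite it.
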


\subsection{Enrichment to varieties}

The composition operation $\tcirc$ may be categorified to the level of algebraic varieties.
\begin{defn}
A $\bbS$-variety $\calX$ is a sequence $\calX(n)$ of varieties for each $n \geq 0$, together with a $S_n$-action on each $\calX(n)$. We may define $\bbS^{[2]}$ and $(\bbS \times \bbS)$-varieties analogously. 
\end{defn}

For brevity, let $\mathscr{S}$ denote any of $\bbS, \bbS\times\bbS,$ or $\bbS^{[2]}$. The Grothendieck group of $\mathscr{S}$-varieties $K_0(\mathsf{Var}_\mathscr{S})$ is defined defined as the quotient of free abelian group generated by $\mathscr{S}$-varieties by equivariant cut-and-paste relations. For $\mathscr{S} = \bbS$ or $\mathscr{S} = \bbS^{[2]}$, the group $K_0(\mathsf{Var}_\mathscr{S})$ is made into a ring using the box product: if $\calX$ and $\calY$ are $\bbS$-varieties, we set
\[ (\calX \boxtimes \calY)(n) := \coprod_{m = 0}^{n} \Ind_{S_m \times S_n}^{S_{m + n}} \calX(m) \times \calY(m - n); \]
if they are instead $\bbS^{[2]}$-varieties, we set
\[ (\calX \boxtimes \calY)(\nu) := \coprod_{\mu \odot \lambda = \nu}  \Ind_{\bbS_{\mu} \times \bbS_{\lambda}}^{\bbS_{\nu}} \calX(\mu) \times \calY(\lambda).\]

    
    Now suppose that $\calY$ is an arbitrary $\bbS \times \bbS$-variety, and assume that $\calX$ is a \textit{bounded} ${\bbS^{[2]}}$-variety, in the sense that $\calX(\nu) = \varnothing$ for all but finitely many $\nu$. Put $\calY_i$ for the $(S_i \times\bbS)$-variety
    \[\calY_i(n) := \calY(i,n). \]
    With this notation, we may define a $\bbS$-variety $\mathcal{X}\tcirc \mathcal{Y}$ by \begin{equation}\label{eqn:variety_composition}
        \mathcal{X}\tcirc \mathcal{Y}:=\bigsqcup_{\nu\in \Part}\left(\mathcal{X}(\nu)\times \midboxtimes_{\hspace{1pt}i\geq 0} \mathcal{Y}_i^{\boxtimes \nu_i}\right)\big/{\bbS_{\nu}}.
    \end{equation}
    This is in parallel to the plethysm operation on the level of modules, with direct sums replaced by disjoint unions, tensor products by products, and $\bbS_\nu$-invariants by $\bbS_\nu$-quotients. Altogether, (\ref{eqn:variety_composition}) defines a composition operation \[\tilde{\circ}: K_0(\mathsf{Var}_{\mathbb{S}^{[2]}}^\star)\times  K_0(\mathsf{Var}_{\mathbb{S}\times \bbS})\to K_0(\mathsf{Var}_{\mathbb{S}}),\]
    where we have set $\cat{Var}_{\bbS^{[2]}}^\star$ for the category of bounded $\bbS^{[2]}$-varieties.

    The compactly supported cohomology groups of $\mathscr{S}$-varieties are mixed Hodge structures that carry $\mathscr{S}$-actions, so they are graded $\mathscr{S}$-modules in $\cat{MHS}$.
    \begin{defn}
    We define the equivariant Serre characteristics as follows.
    \begin{enumerate}
    \item If $\calX$ is an arbitrary $\bbS$-variety, define a graded $\bbS$-module $\calU^\bullet_\calX$ in $\cat{MHS}$ by
    \[ \calU^{(i)}_\calX(n) := H^i_c(\calX(n);\QQ), \]
    and define the $\bbS$-equivariant Serre characteristic $\cat{e}^{\bbS}(\calX) \in K_0(\cat{MHS}) \otimes \hat{\Lambda}$ by
    \[ \cat{e}^{\bbS}(\calX) := \ch^{\cat{MHS}}_{\bbS}(\calU^{\bullet}_{\calX}). \]
    \item If $\calX$ is an arbitrary $\bbS \times \bbS$-variety, define a graded $\bbS \times \bbS$-module $\calV_{\calX}^{\bullet}$ by
    \[\calV^{(i)}_{\calX}(m, n) := H^i_c(\calX(m,n);\QQ), \]
    and define $\cat{e}^{\bbS \times \bbS}(\calX) \in K_0(\cat{MHS}) \otimes \hat{\Lambda} \otimes \hat{\Lambda}$ by
    \[ \cat{e}^{\bbS \times \bbS}(\calX) := \ch^{\cat{MHS}}_{\bbS \times \bbS}(\calV^{\bullet}_{\calX}). \]
    \item If If $\calX$ is a bounded $\bbS^{[2]}$-variety, define a bounded graded $\bbS^{[2]}$-module $\calW_{\calX}^{\bullet}$ by
    \[\calW^{(i)}_{\calX}(\lambda) := H^i_c(\calX(\lambda);\QQ), \]
    and define $\cat{e}^{\bbS^{[2]}}(\calX) \in K_0(\cat{MHS}) \otimes \WRing$ by
    \[ \cat{e}^{\bbS \times \bbS}(\calX) := \ch^{\cat{MHS}}_{\bbS^{[2]}}(\calW^{\bullet}_{\calX}). \]

    \end{enumerate}
    \end{defn}
    For each $\mathscr{S}$, the equivariant Serre characteristic $\cat{e}^{\mathscr{S}}$ is a group homomorphism; via the K\"unneth formula, it is also a ring homomorphism for $\mathscr{S} = \bbS$ and $\mathscr{S}= \bbS^{[2]}$. From Lemma \ref{lem:tcircMHS}, we have that the composition operations on the level of varieties and mixed Hodge structures are compatible.

\begin{thm}\label{thm:tcirc_varieties}
For a bounded $\bbS^{[2]}$-variety $\calX$ and an $\bbS\times \bbS$-variety $\calY$, we have
\[\mathsf{e}^{\bbS}(\calX \tcirc \calY) = \mathsf{e}^{\bbS^{[2]}}(\calX) \tcirc \mathsf{e}^{\bbS \times \bbS}(\calY).\]
\end{thm}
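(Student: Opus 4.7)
The plan is to reduce the varietal identity to the module-level identity given by Lemma \ref{lem:tcircMHS}. The key point is that every operation appearing in the definition of $\calX \tcirc \calY$ in (\ref{eqn:variety_composition})---disjoint union, Cartesian product, induction along a Young-type subgroup inclusion, and quotient by a finite group---is intertwined with the corresponding operation on graded mixed Hodge structures by the functor $\calZ \mapsto H^{\bullet}_c(\calZ;\QQ)$. I will assemble these compatibilities and then invoke Lemma \ref{lem:tcircMHS}.

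First, I would fix a bounded $\bbS^{[2]}$-variety $\calX$ and an $\bbS\times\bbS$-variety $\calY$, and unpack the definition of $\calX \tcirc \calY$ as an $\bbS$-variety. For each $\nu \in \Part$ I would decompose
\[ \midboxtimes_{i \geq 0} \calY_i^{\boxtimes \nu_i} \;=\; \coprod_{\underline{n}} \Ind^{\bbS \times S_{|\underline{n}|}}_{\bbS \times \prod_i (S_{n_i})^{\nu_i}} \bigl( \textstyle\prod_{i} \calY_i^{\times \nu_i}|_{\underline{n}} \bigr), \]
which together with the outer product against $\calX(\nu)$ and the quotient by $\bbS_\nu$ realizes each component $(\calX \tcirc \calY)(N)$ as a finite disjoint union of quotients of smooth products by finite groups. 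The boundedness of $\calX$ ensures finiteness of the indexing set $\Part_{\calX} := \{\nu : \calX(\nu) \neq \varnothing\}$ (compare Remark \ref{rem:unbounded}), so all the sums and products in what follows are manifestly well-defined.

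Next, I would compute the compactly supported cohomology with its $\bbS$-module and MHS structure using four standard facts: (a) $H^{\bullet}_c$ sends disjoint unions of varieties to direct sums of graded MHS; (b) the K\"unneth formula for compactly supported cohomology with rational coefficients (recall all varieties in sight are over $\C$, so the K\"unneth map is an isomorphism of graded MHS); (c) for any variety $Z$ with an action of a finite group $G$, one has $H^{\bullet}_c(Z/G;\QQ) \cong H^{\bullet}_c(Z;\QQ)^G$ as graded MHS; (d) over $\QQ$, $G$-invariants coincide with $G$-coinvariants for finite $G$, so $H^{\bullet}_c(Z;\QQ)^G \cong H^{\bullet}_c(Z;\QQ) \otimes_{\QQ[G]} \QQ$. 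Combining (a)--(d) with the induction formula for box products, I would obtain a natural isomorphism of graded $\bbS$-modules in $\cat{MHS}$
\[ H^{\bullet}_c(\calX \tcirc \calY;\QQ) \;\cong\; \bigoplus_{\nu \in \Part} H^{\bullet}_c(\calX(\nu);\QQ) \otimes_{\QQ[\bbS_\nu]} \Bigl( \midboxtimes_{i \geq 0} \bigl(H^{\bullet}_c(\calY_i;\QQ)\bigr)^{\boxtimes \nu_i} \Bigr), \]
where the box product on the right is taken in the category of graded $\bbS$-modules in $\cat{MHS}$. This is exactly the formula of Lemma \ref{lem:tcirc} applied to the graded $\bbS^{[2]}$-module $\calW^{\bullet} = H^{\bullet}_c(\calX;\QQ)$ and the graded $\bbS \times \bbS$-module $\calV^{\bullet} = H^{\bullet}_c(\calY;\QQ)$.

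Finally, applying Lemma \ref{lem:tcircMHS} to $\calW^{\bullet}$ and $\calV^{\bullet}$, and using the definitions
\[ \mathsf{e}^{\bbS}(\calX\tcirc\calY) = \ch^{\cat{MHS}}_{\bbS}\bigl(H^{\bullet}_c(\calX \tcirc \calY;\QQ)\bigr),\quad \mathsf{e}^{\bbS^{[2]}}(\calX)=\ch^{\cat{MHS}}_{\bbS^{[2]}}(\calW^{\bullet}),\quad \mathsf{e}^{\bbS\times\bbS}(\calY)=\ch^{\cat{MHS}}_{\bbS\times\bbS}(\calV^{\bullet}), \]
yields the desired identity in $K_0(\cat{MHS}) \otimes \hat{\Lambda}$.

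The main obstacle I expect is the bookkeeping of signs arising from odd-degree cohomology when passing from graded MHS to the equivariant Serre characteristics; this is precisely what is encoded in the sign conventions of \cite[Proposition 7.3]{GetzlerKapranov} used in the proof of Lemma \ref{lem:tcircMHS}. In particular, one must verify that the signs picked up by the Adams operations on $K_0(\cat{MHS})$ (via Newton's identity and (\ref{eqn:adams_mhs})) match the signs arising from permuting tensor factors in $H^{\bullet}_c(\calY_i;\QQ)^{\boxtimes \nu_i}$ under the $S_i \wr S_{\nu_i}$-action; but this matching is exactly what Lemma \ref{lem:tcircMHS} packages, so once steps (a)--(d) are in place the conclusion is automatic.
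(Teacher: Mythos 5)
Your proposal is correct and takes essentially the same approach the paper intends: the paper itself states Theorem \ref{thm:tcirc_varieties} as an immediate consequence of Lemma \ref{lem:tcircMHS}, without spelling out the intermediate step, and your proof fills in precisely the missing details (disjoint union/Künneth/finite-quotient compatibility of $H_c^\bullet$) to reduce the varietal identity to that lemma.
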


\subsection{Genus gradings}
We introduce genus gradings on $\mathscr{S}$-varieties so that we treat moduli spaces of curves of various genera on the same footing.

\begin{defn}
Suppose $\mathscr{S} = \bbS$ or $\mathscr{S} = \bbS \times \bbS$. A \textit{genus-graded} $\mathscr{S}$-variety $\calX$ is a sequence $\calX(g, \-)$ of $\mathscr{S}$-varieties defined for each $g \geq 0$. We write $\calX_g(\-)$ for the $\mathscr{S}$-variety defined by $\calX_{(g)}(\-) = \calX(g, \-)$.
\end{defn}

Suppose given a genus-graded $\bbS$-variety $\calX$. We define the genus-graded Serre characteristic by
\begin{equation}\label{eqn:genus_graded_serre}
\cat{e}_t^{\bbS}(\mathcal{X}) = \sum_{g\geq 0} \cat{e}^{\bbS}(\mathcal{X}_{(g)})t^{g-1} \in K_0(\cat{MHS)} \otimes \hat{\Lambda}(\!(t)\!).
\end{equation}
The genus-graded Serre characteristic of a genus-graded $\bbS \times \bbS$-variety is defined similarly, and takes value in $K_0(\cat{MHS}) \otimes (\hat{\Lambda} \otimes \hat{\Lambda)}(\!(t)\!)$.

\subsection{Graded plethystic action}
For our formulas we need to upgrade (\ref{eqn:variety_composition}) to the case where $\calX$ is an (ungraded, bounded) $\bbS^{[2]}$-variety, and $\calY$ is a genus-graded $\bbS \times \bbS$-variety. For this, we first need to upgrade box products to the genus-graded setting.

\begin{defn}
   If $\calX$ and $\calY$ are genus-graded $\bbS$-varieties, define their box product by
   \[(\mathcal{X}\boxtimes \mathcal{Y})(g, n) := \coprod_{h = 0}^{g} \coprod_{m = 0}^{n} \Ind_{S_m \times S_{n - m}}^{S_n} \mathcal{X}(h, m)\times \mathcal{Y}(g - h, n - m).\]
\end{defn}

Now if $\calY$ is a genus-graded $\bbS \times \bbS$-variety, define a genus-graded $S_i \times \bbS$-variety $\calY_i$ by
\[ \calY_i(g, n) := \calY(g, i, n). \]
Now we define, for a bounded $\bbS^{[2]}$-variety $\calX$ and a genus-graded $\bbS \times \bbS$-variety $\calY$ a genus-graded $\bbS$-variety $\calX \tcirc \calY$ by
\begin{equation}\label{eqn:graded_tcirc_varieties}
\calX \tcirc \calY := \coprod_{\nu \in \Part} \left(\calX(\nu) \times \midboxtimes_{i \geq 0} \calY_i^{\boxtimes \nu_i}\right)/\bbS_{\nu}.
\end{equation}
We now describe the corresponding action on the level of symmetric functions.
\begin{defn}\label{defn:graded_plethysm}
The action $\tcirc$ of \[\Lambda^{[2]} \times \hat{\Lambda} \otimes \hat{\Lambda} \to \hat{\Lambda}\] induces an action 
\[ \tcirc: \WRing \times (\hat{\Lambda} \otimes \hat{\Lambda})(\!( t)\!) \to \hat{\Lambda} (\!( t)\!), \]
by setting
\[ p_n(\mu) \tcirc (f \otimes g) t = \left[p_n(\mu) \tcirc (f \otimes g) \right]\cdot  t^n \]
for all $n \geq 0$ and $\mu \in \Part^\star$, and requiring $\tcirc{g}: \Lambda^{[2]} \to \hat{\Lambda}(\!(t)\!),$ $f\mapsto f\tcirc g$ to be an algebra homomorphism for all $g\in (\hat{\Lambda}\otimes \hat{\Lambda})(\!(t)\!).$
\end{defn}

The action defined in Definition \ref{defn:graded_plethysm} extends to
\[\tcirc:K_0(\cat{MHS}) \otimes \WRing \times K_0(\cat{MHS}) \otimes (\hat{\Lambda} \otimes \hat{\Lambda})(\!(t)\!) \to K_0(\cat{MHS}) \otimes \hat{\Lambda}(\!(t)\!) \]
in a natural way. With this extension, a natural generalization of Theorem \ref{thm:tcirc_varieties} holds, using the composition operation in (\ref{eqn:graded_tcirc_varieties}) and accounting for the degree shift in (\ref{eqn:genus_graded_serre}): if $\calX$ is a bounded $\bbS^{[2]}$-variety and $\calY$ is a genus-graded $\bbS^{[2]}$-variety, we have
\begin{equation}\label{eqn:weird_degrees}
    t\cdot\cat{e}_t^{\bbS}(\calX \tcirc \calY) =  \cat{e}^{\bbS^{[2]}}(\calX) \tcirc \left(t\cdot\cat{e}_t^{\bbS \times \bbS}(\calY)\right).
\end{equation} 

\begin{lem}\label{lem:degree_shift}
    Suppose that $\calX$ is a $\bbS^{[2]}$-variety supported in the single degree $\nu$, and suppose that $\calY$ is an arbitrary genus-graded $\bbS \times \bbS$-variety. Put $\calZ$ for the genus-graded $\bbS$-variety defined by \[\calZ := \calX \tcirc \calY,\] and set $V := \sum_{i \geq 0} \nu_i$. Then
    \[ \cat{e}^{\bbS^{[2]}}(\calX) \tcirc \cat{e}_t^{\bbS \times \bbS} (\calY) =  t^{1 - V} e_{t}^{\bbS}(\calZ). \]
\end{lem}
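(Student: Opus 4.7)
The plan is to deduce the lemma from equation~\eqref{eqn:weird_degrees}, which already gives $t \cdot \cat{e}_t^{\bbS}(\calZ) = \cat{e}^{\bbS^{[2]}}(\calX) \tcirc \bigl(t \cdot \cat{e}_t^{\bbS \times \bbS}(\calY)\bigr)$, by establishing a single scaling identity: namely, that pulling the factor of $t$ out of the second slot of $\tcirc$ costs exactly $t^V$ whenever the first argument is supported on $R(\bbS_\nu)$. First I would observe that because $\calX$ is supported only in degree $\nu$, the class $\cat{e}^{\bbS^{[2]}}(\calX)$ lies in $K_0(\cat{MHS}) \otimes R(\bbS_\nu)$, which by Theorem~\ref{thm:WRingsemigp} and Lemma~\ref{lem:conjugacy-classes} is $K_0(\cat{MHS})$-spanned by the monomial basis $\{p_\Theta : \Theta \in \Partt_\nu\}$. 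By $K_0(\cat{MHS})$-linearity in the first slot of $\tcirc$, it then suffices to prove the scaling identity for each such $p_\Theta$.

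The key computation will be the homogeneity identity
$$p_\Theta \tcirc (tq) \;=\; t^V \cdot (p_\Theta \tcirc q) \qquad (\Theta \in \Partt_\nu,\ q \in \hat{\Lambda} \otimes \hat{\Lambda}).$$
To prove it, I would apply the algebra-homomorphism property of $-\tcirc (tq)$ to the factorization $p_\Theta = \prod_{\mu \in \Part^\star} \prod_{j \geq 1} p_j(\mu)^{\Theta(\mu)_j}$, and then use the defining relation $p_j(\mu) \tcirc (tq) = (p_j(\mu) \tcirc q) \cdot t^j$ from Definition~\ref{defn:graded_plethysm} factor by factor. The accumulated $t$-exponent is $\sum_{\mu,j} j\,\Theta(\mu)_j = \sum_\mu |\Theta(\mu)|$, which collapses to $\sum_{i\geq 0} \nu_i = V$ by the defining condition on $\Partt_\nu$.

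Next I would extend this identity from $q \in \hat{\Lambda}\otimes\hat{\Lambda}$ to arbitrary $q \in (\hat{\Lambda}\otimes\hat{\Lambda})(\!(t)\!)$. Each operator $p_n(\mu) \tcirc (-)$ is additive because it acts as $\psi_n$ composed with a projection onto a power-sum coefficient of $p_\mu$, and the algebra-homomorphism property then propagates this scaling to all $p_\Theta$ and hence, by $K_0(\cat{MHS})$-linearity, to $\cat{e}^{\bbS^{[2]}}(\calX)$ itself. Applying the resulting identity with $q = \cat{e}_t^{\bbS \times \bbS}(\calY)$ and combining with~\eqref{eqn:weird_degrees} yields
$$t \cdot \cat{e}_t^{\bbS}(\calZ) \;=\; \cat{e}^{\bbS^{[2]}}(\calX) \tcirc \bigl(t \cdot \cat{e}_t^{\bbS \times \bbS}(\calY)\bigr) \;=\; t^V \cdot \bigl(\cat{e}^{\bbS^{[2]}}(\calX) \tcirc \cat{e}_t^{\bbS \times \bbS}(\calY)\bigr),$$
and dividing through by $t^V$ gives $\cat{e}^{\bbS^{[2]}}(\calX) \tcirc \cat{e}_t^{\bbS \times \bbS}(\calY) = t^{1-V} \cat{e}_t^{\bbS}(\calZ)$ as claimed.

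The main obstacle I anticipate is the bookkeeping for the extension of the $t$-scaling identity to Laurent series in $t$: this requires some care because plethysm is not additive in the second argument in general, so one has to exploit precisely that the power-sum generators $p_j(\mu)$ of $\WRing$ do act additively in the second slot, and then leverage the algebra-homomorphism property to conclude for an arbitrary $p_\Theta$.
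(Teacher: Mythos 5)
Your proof is correct and takes the same approach as the paper: combine the already-established identity~\eqref{eqn:weird_degrees} with the $t^V$-scaling identity $\cat{e}^{\bbS^{[2]}}(\calX) \tcirc (t\cdot q) = t^V(\cat{e}^{\bbS^{[2]}}(\calX) \tcirc q)$ and divide by $t^V$. The only difference is that the paper asserts the scaling identity without detail, whereas you supply its derivation via the monomial basis $\{p_\Theta\}_{\Theta\in\Partt_\nu}$ of $R(\bbS_\nu)$, the algebra-homomorphism property of $\tcirc$, and the exponent bookkeeping $\sum_\mu |\Theta(\mu)| = \sum_i \nu_i = V$.
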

\begin{proof}
When $\calX$ is supported in the single degree $\nu$, we have
\[\cat{e}^{\bbS^{[2]}}(\calX) \tcirc \left(t\cdot\cat{e}_t^{\bbS \times \bbS}(\calY)\right) = t^V\left(\cat{e}^{\bbS^{[2]}}(\calX) \tcirc \cat{e}_t^{\bbS \times \bbS}(\calY)\right), \]
so the claim follows by dividing both sides of (\ref{eqn:weird_degrees}) by $t^{V}$.
\end{proof}


\begin{rem}
    The appearance of of the $t^{1-V}$ factor in Lemma \ref{lem:degree_shift} will eventually account for Euler characteristics of graphs in \S\ref{sec:ppchar}; similar accounting is done in \cite[Prop 8.5]{GetzlerKapranov}, where a factor $\hbar^{-n}$ appears in front of the genus-graded $\bbS$-module $\Exp_{n}(\mathcal{V})$ associated to taking disjoint unions of $n$ vertices from $\mathcal{V}$.
\end{rem}

We conclude this section by setting some notation for genus-degree shifts of genus-graded $\bbS$-varieties.

\begin{defn}
    Let $\mathcal{X}$ be a genus-graded $\bbS$-variety. For $d\in \mathbb{Z},$ define a genus-graded $\bbS$-variety $\calX[d]$ by \[\mathcal{X}[d](g,n):=\mathcal{V}(g-d,n).\] 
    We have that
    $\mathsf{e}_t^{\bbS}(\mathcal{X}[d]) = t^d \mathsf{e}_t^{\mathscr{S}}(\mathcal{X}).$
\end{defn}

\section{Graph stratifications for moduli spaces of stable curves}\label{sec:graph_strat}
For each integer $g \geq 0$, define an $\bbS$-variety $\Mbar_g$ by setting
\[ \Mbar_{g}(n) = \begin{cases}
\varnothing &\text{if }2g-2 + n \leq 0, \\
\Mbar_{g, n} &\text{if }2g - 2 + n> 0
\end{cases}.\]
We set $\M_g \subset \Mbar_g$ for the $\bbS$-subspace parameterizing smooth curves. Also define a graded $\bbS$-variety $\Mbar$ by setting
\[\Mbar_{(g)}(n) = \Mbar_{g, n},\] 
and write $\M \subset \Mbar$ for the graded $\bbS$-subspace of smooth curves. In this section we will recall well-known graph stratifications of $\Mbar_g$ and $\Mbar$.

\subsection{Graph preliminaries} The following definition of a graph is well-suited for working with moduli spaces of curves.
\begin{defn}
A \textit{graph} $G$ is a tuple
\[ G=  (V(G), H(G), r_G, \iota_G), \]
where
\begin{itemize}
\item $V(G)$ is a finite set of \textit{vertices} and $H(G)$ is a finite set of \textit{half-edges};
\item $r_G : H(G) \to V(G)$ is a map of sets, called the \textit{root map};
\item $\iota_G : H(G) \to H(G)$ is a fixed-point free involution.
\end{itemize}
We obtain an equivalence relation on $H_G$ by setting $h \sim \iota_G(h)$ for all $h \in H(G)$; the elements of the set $E_G$ of equivalence classes are the \textit{edges} of $G$. A graph $G$ has a \textit{geometric realization} which is a topological space. We say $G$ is \textit{connected} if its geometric realization is connected.
\end{defn}

Let $\mathsf{Graph}$ be the groupoid of graphs, where an isomorphism \[f: G \to G'\] is a pair of bijections
\[f_H: H(G) \to H(G')  \]
 and
 \[f_V: V(G)\to V(G') \]
 such that $\iota_{G'} \circ f_H = f_H \circ \iota_G$ and $r_{G'} \circ f_H = f_V \circ r_G$. For any graph $G$ we have a canonical inclusion of groups \[\Aut_\mathsf{Graph}(G) \hookrightarrow \Aut_{\mathsf{Map}}( r_G: H(G) \to V(G)), \]
 and we identify $\Aut_{\mathsf{Graph}}(G)$ with its image in $\Aut_{\mathsf{Map}}( r_G: H(G) \to V(G))$. 
 We define the \textit{valence} of a vertex $v \in V(G)$ to be
 \[\val(v) := |r_G^{-1}(v)|. \]
From the graph $G$ we define the partition $\nu(G) \in \Part$ by setting
\[ \nu(G)_i := |\{v \in V(G) \mid \val(v) = i \}| \]
 If, for each $i$, we choose
 \begin{itemize}
\item an ordering of the vertices $v \in V(G)$ with $\val(v) = i$
\item for each vertex $v$ of valence $i$, an ordering of $r_G^{-1}(v)$,
 \end{itemize}
 we obtain an isomorphism

\begin{equation}\label{eqn:aut_embedding}\psi_G:\Aut_{\mathsf{Map}}(r_G) \xrightarrow[\phantom{long}]{\sim} \bbS_{\nu(G)}.
\end{equation}
 Different choices of orderings lead to isomorphisms which differ by conjugation in  $\bbS_{\nu(G)}$. Altogether, this realizes $\Aut(G)$ as a subgroup of $\bbS_{\nu(G)}$, up to conjugacy.

\subsection{Vertex-decorated graphs}
We will also need graphs with genus decorations at the vertices.
\begin{defn}
Let $\cat{VGraph}$ to be the groupoid of pairs $(G, w)$ where $G$ is a finite connected graph, and $w: V(G) \to \Z_{\geq 0}$ is any function. We write \[\cat{VGraph}_g \subset \cat{VGraph} \]
for the full subgroupoid of vertex-weighted graphs $\G = (G, w)$ for which \[g = \sum_{v \in V(\G)} w(v) + |E(G)| - |V(G)| + 1  ;\] if $\G \in \mathrm{Ob}(\cat{VGraph}_g)$, we say $\G$ has \textit{genus} equal to $g$. Objects in $\cat{VGraph}$ are called vertex-weighted graphs.
\end{defn}

\begin{defn}
    Let $\G = (G, w) \in \mathrm{Ob}(\cat{VGraph}_g)$ be a vertex-weighted graph of genus $g$ with root map $r_{G}: H(G)\to V(G).$ The \textit{fine graph partition} $\underline{\nu}(\G)$ of $\G$ is defined as a map $\{0,\dots, g\}\to \Part$ given by $$h\mapsto \underline{\nu}(\G)^{(h)}\in \Part, \text{ with }\underline{\nu}(\G)^{(h)}_i:=\#\{v\in V(G)\mid \mathrm{val}(v) = i, w(v) = h\}.$$
\end{defn}

Let $\nu(G)\in \Part$ be the partition associated to the underlying graph of $\G,$ namely its image under the forgetful functor $\mathsf{VGraph}\to \mathsf{Graph}.$ Then the tuple of partitions $\underline{\nu}(\G)$ refines $\nu(G)$ in the sense that $\underline{\nu}(\G)^{(0)}\odot \cdots \odot \underline{\nu}(\G)^{(g)} = \nu(G).$

\begin{defn}
    The automorphism group of a vertex weighted graph $\G = (G, w) \in \mathrm{Ob}(\cat{VGraph})$ is the subgroup preserving the vertex weighting: $$\mathrm{Aut}_{\cat{VGraph}}(\G):=\{\sigma\in \mathrm{Aut}_{\cat{Graph}}(\G)\mid w(v) = w(\sigma(v)) \text{ for all } v\in  V(G_C)\, \}.$$
\end{defn}

We always use the boldface notation $\G$ for vertex-weighted graph, and we write $\Aut(\G)$ for $\Aut_{\cat{VGraph}}(\G)$. 

For $\G = (G, w) \in \mathrm{Ob}(\cat{VGraph}_g)$, there is a canonical way to factor the embedding \[\Aut(\G) \hookrightarrow \bbS_{\nu(G)}\]
in a way that accounts for the vertex weights. 

\begin{lem}\label{lem:refsubgrp}
    Define \[\bbS_{\underline{\nu}(\G)}:=\prod_{h=0}^g \bbS_{\underline{\nu}(\G)^{(h)}} = \prod_{h=0}^g\left(\prod_{i\geq 0} S_i\wr S_{\underline{\nu}(\G)^{(h)}_i}\right).\] Then the group embedding \[\mathrm{Aut}(\G)\hookrightarrow \bbS_{\nu(G)}\] factors through $$\mathrm{Aut}(\G)\hookrightarrow \bbS_{\underline{\nu}(\G)} \hookrightarrow \prod_{i\geq 0} S_i\wr S_{\sum_{h=0}^g \underline{\nu}(\G)^{(h)}_i} = \bbS_{\nu(G)}.$$ 
\end{lem}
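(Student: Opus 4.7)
The plan is to construct the factorization by refining the choices of orderings that define the embedding $\Aut_{\mathsf{Map}}(r_G) \xrightarrow{\sim} \bbS_{\nu(G)}$ from \eqref{eqn:aut_embedding} so that it respects the weight stratification of the vertex set. The key observation is that vertices of the same valence $i$ now come with an extra piece of data, namely their weight $h \in \{0, \ldots, g\}$, and a weight-preserving automorphism can only permute vertices among those sharing both the same valence and the same weight.

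First, for each pair $(i, h)$, I would fix an ordering of the set of vertices $v \in V(G)$ with $\val(v) = i$ and $w(v) = h$; concatenating these orderings for $h = 0, 1, \ldots, g$ yields an ordering of all vertices of valence $i$. I also fix an ordering of $r_G^{-1}(v)$ for each vertex $v$. These choices determine an isomorphism $\psi_G: \Aut_{\mathsf{Map}}(r_G) \xrightarrow{\sim} \bbS_{\nu(G)}$ as in \eqref{eqn:aut_embedding}, and simultaneously induce, for each $i$, an identification
\[
S_i \wr S_{\nu(G)_i} \;\cong\; S_i \wr S_{\underline{\nu}(\G)^{(0)}_i + \cdots + \underline{\nu}(\G)^{(g)}_i}
\]
together with its Young-subgroup-style inclusion
\[
\prod_{h=0}^{g} S_i \wr S_{\underline{\nu}(\G)^{(h)}_i} \;\hookrightarrow\; S_i \wr S_{\nu(G)_i},
\]
realized by grouping the first $\underline{\nu}(\G)^{(0)}_i$ factors of the base together, then the next $\underline{\nu}(\G)^{(1)}_i$, and so on. Taking the product over all $i \geq 0$ defines the embedding $\bbS_{\underline{\nu}(\G)} \hookrightarrow \bbS_{\nu(G)}$ claimed in the statement.

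Next, I would check that $\psi_G$ carries $\Aut(\G) \subset \Aut_{\mathsf{Map}}(r_G)$ into the subgroup $\bbS_{\underline{\nu}(\G)}$. By definition, $\sigma \in \Aut(\G)$ satisfies $w(\sigma(v)) = w(v)$ for every vertex $v$, so $\sigma$ permutes the set of vertices of valence $i$ and weight $h$ among themselves, for every pair $(i,h)$. Under the chosen ordering, the image $\psi_G(\sigma)$ in each factor $S_i \wr S_{\nu(G)_i}$ therefore has base permutation preserving the block decomposition into the $(g+1)$ consecutive intervals of sizes $\underline{\nu}(\G)^{(0)}_i, \ldots, \underline{\nu}(\G)^{(g)}_i$, which is precisely the condition for lying in the image of the Young-type inclusion above.

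The main (and really only) obstacle is that $\psi_G$ is canonical only up to conjugacy in $\bbS_{\nu(G)}$, so one must ensure that the refined factorization is also well-defined up to conjugacy in $\bbS_{\underline{\nu}(\G)}$. This is handled by noting that any two choices of weight-compatible orderings differ by an element of $\bbS_{\underline{\nu}(\G)}$ itself: permuting the vertices within a fixed weight class changes $\psi_G$ by conjugation by an element of $S_i \wr S_{\underline{\nu}(\G)^{(h)}_i}$, and reordering the half-edges at a vertex of valence $i$ and weight $h$ changes $\psi_G$ by conjugation by an element of the corresponding $S_i$ factor. Hence the factored embedding $\Aut(\G) \hookrightarrow \bbS_{\underline{\nu}(\G)}$ is well-defined up to conjugacy in $\bbS_{\underline{\nu}(\G)}$, completing the proof.
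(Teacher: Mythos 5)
The paper states this lemma without proof, treating it as an immediate consequence of the definitions. Your argument is correct and is the expected one: refine the orderings defining $\psi_G$ to respect the partition of vertices by weight, observe that this induces the block (Young-type) inclusion $\bbS_{\underline{\nu}(\G)}\hookrightarrow\bbS_{\nu(G)}$, and note that weight-preserving automorphisms permute equal-valence/equal-weight vertices among themselves and hence land in the block subgroup; your handling of conjugacy-independence is also the right bookkeeping. One small terminological quibble: in $S_i\wr S_{\nu(G)_i}\cong(S_i)^{\nu(G)_i}\rtimes S_{\nu(G)_i}$, the component that must preserve the weight blocks is the $S_{\nu(G)_i}$-factor (sometimes called the top or outer permutation), not the base group $(S_i)^{\nu(G)_i}$, so calling it the ``base permutation'' risks confusion with the standard wreath-product terminology, but the mathematical content is right.
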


Recall from Section \ref{sec:ringwring} that conjugacy classes in $\bbS_{\tup{\nu}(\G)}$ are classified by $(g+1)$-tuples of $2$-partitions $(\Theta_0, \ldots, \Theta_g)$ such that $\Theta_h \in \Partt_{\tup{\nu}(\G)^{(h)}}$ for each $h$.

 \subsection{The coarse stratification of $\Mbar$}

 We now describe a stratification of the graded $\bbS$-variety $\Mbar$ where the strata are indexed by isomorphism classes of connected graphs.

 \begin{defn}
      The \textit{coarse dual graph} $G_C \in \mathrm{Ob}(\Graph)$ of $C$ is the graph with vertices as the set of irreducible components of the normalization of $C$ and half-edges as the preimages of nodes in the normalization of $C$. The involution $\iota_{G_C}$ on half-edges identifies the two branches at a node of $C$, and the map $r_{G_C}$ assigns a branch of a node to the irreducible component containing it.
 \end{defn}

 Given a connected graph $G$, define a graded $\bbS$-variety $\Mbar_G$ by setting
 \[( \Mbar_G)_{(g)}(n) \subset \Mbar_{(g)}(n) \]
 to be the locus of curves whose coarse dual graph is equal to $G$. Then we have a $S_n$-equivariant stratification
 \begin{equation}\label{eqn:coarse_stratification}
 \Mbar_{(g)}(n) = \coprod_{G \in \mathrm{Iso}(\cat{Graph})} (\Mbar_G)_{(g)}(n);
 \end{equation}
 note that for fixed $g, n$, only finitely many strata on the right-hand side of (\ref{eqn:coarse_stratification}) are nonempty. We can describe the graded $\bbS$-variety $\Mbar_G$ in terms of $\M$ as follows. For any graded $\bbS$-variety $\calX$ and integer $m \geq 0$, write $\Res^{\bbS}_{S_{m \times \bbS}} \calX$  for the $S_m$-object in the category of graded $\bbS$-varieties determined by
\[\left(\Res^{\bbS}_{S_{m \times \bbS}} \calX\right)(n) = \Res^{S_{m + n}}_{S_m \times S_n} \calX(m + n). \]
Observe that for any integer $k \geq 0$, the graded $\bbS$-variety
\[\left(\Res^{\bbS}_{S_{m \times \bbS}} \calX\right)^{\boxtimes k}\]
admits an $(S_m \wr S_k)$-action, and if $\lambda \in \Part$, then
\[ \prod_{i \geq 0} \left(\Res_{S_i \times \bbS}^{\bbS} \calX\right)^{\boxtimes \lambda_i} \]
admits a $\bbS_\lambda$-action. Therefore, for any connected graph $G$, the product
\[ \midboxtimes_{v \in V(G)} \Res^{\bbS}_{S_{\val(v) \times \bbS}} \calX\] admits a $\bbS_{\nu(G)}$-action. Returning to moduli of stable curves, we see that there is an isomorphism of graded $\bbS$-varieties
\begin{equation}\label{eqn:coarse_strata}\Mbar_G \cong \left[\left(\midboxtimes_{v \in V(G)} \Res^{\bbS}_{S_{\val(v) \times \bbS}} \M \right) /\Aut(G)\right][1-\chi(G)],
 \end{equation}
 where $\Aut(G)$ acts on the product as a subgroup of $\bbS_{\nu(G)}$. The degree shift accounts for the first Betti number of the graph $G.$
 
 \subsection{The fine dual graph stratification}
 To give a stratification of the $\bbS$-variety $\Mbar_g$, we will use a finer notion of a dual graph.
 \begin{defn}
     Let $C$ be a connected nodal curve of arithmetic genus $g$. The \textit{fine dual graph} $\G_C \in \mathrm{Ob}(\cat{VGraph}_g)$ of $C$ is the pair $\G_C =(G_C, w_C)$ where $G_C$ is the coarse dual graph of $C$ and $w_C: V(G_C) \to \Z_{\geq 0}$ assigns each irreducible component of the normalization its geometric genus. 
 \end{defn}


Given $\G \in \mathrm{Ob}(\cat{VGraph}_g)$, define an $\bbS$-variety $\Mbar_\G$, where
\[ \Mbar_\G(n) \subset \Mbar_{g}(n) \]
is the locus of stable curves whose fine dual graph is equal to $\G$. We thus have an $S_n$-equivariant stratification
\begin{equation}\label{eqn:vgraph_strat}\Mbar_{g}(n) = \coprod_{\G \in \mathrm{Iso}(\cat{VGraph}_g)} \Mbar_{\G}(n);
\end{equation}
analogously to (\ref{eqn:coarse_stratification}), only finitely many of the strata appearing on the right-hand side of (\ref{eqn:vgraph_strat}) are nonempty for fixed $n$. A fine stratum corresponding to $\G = (G, w)$ satisfies
\begin{equation}\label{eqn:fine_strata} \Mbar_\G \cong \left( \midboxtimes_{v \in V(G)} \Res^{\bbS}_{S_{\val(v)} \times \bbS} \calM_{w(v)} \right)/\Aut(\G),
\end{equation}
where $\Aut(\G)$ acts on the product as a subgroup of $\bbS_{\underline{\nu}(\G)}$. 
\subsection{Caterpillars and rational tails}\label{sec:catails}
The fine graph stratification (\ref{eqn:vgraph_strat}) has infinitely many strata. There are two sources of infinitude in $\cat{VGraph}_g$, as we will now describe. Given $\G \in \cat{VGraph}_g$, define the \textit{core} of $\G$ to be the minimal vertex-weighted subgraph $\G^{\mathrm{core}} \subset \G$ such that the genus of $\G^{\mathrm{core}}$ is equal to $g$. Then the two sources of infinitude in $\mathsf{VGraph}_g$ are as follows:
\begin{itemize}
\item arbitrarily large trees of weight zero vertices can be attached to the core of the graph;
\item if $e \in E(\G)$ is an edge, then $e$ can be subdivided arbitrarily many times by weight zero vertices.
\end{itemize}
We would like finite stratifications where possible, because they will lead to finite formulas for the Serre characteristic $\mathsf{e}(\Mbar_g)$. Define the $\bbS$-subspace $\Mbar_{g}^{\mathrm{nrt}}$ of $\Mbar_g$ by setting
\[\Mbar_{g}^{\mathrm{nrt}}(n) \subset \Mbar_g(n) \]
to be the subspace of curves whose dual graphs are equal to their own cores. The following formula is standard in the theory of stable curves.
\begin{lem}\label{lem:attaching_rat_tails}
For $g \geq 1$, we have
\[ \mathsf{e}^{\bbS}(\Mbar_g) = \mathsf{e}^{\bbS}(\Mbar_g^{\mathrm{nrt}}) \circ \left(p_1 + \frac{\partial \mathsf{e}^{\bbS}(\Mbar_0)}{\partial p_1}\right). \]
\end{lem}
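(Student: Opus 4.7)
The approach is to give an equivariant stratification of $\Mbar_g$ by rational-tail configuration and to recognize the resulting generating function as a species composition. Starting from the fine dual graph stratification (\ref{eqn:vgraph_strat}), I would group graphs $\G \in \mathsf{VGraph}_g$ by their core $\G^{\mathrm{core}} \in \mathsf{VGraph}_g^{\mathrm{nrt}}$. Every $\G$ decomposes uniquely as $\G^{\mathrm{core}}$ together with a finite collection of rooted trees of weight-zero vertices attached at marked points of $\G^{\mathrm{core}}$; this decomposition relies on the fact that $g \geq 1$, so that the core is well-defined and contains all vertices of positive weight together with all cycles. Each marked point (leaf) of $\G^{\mathrm{core}}$ is then decorated either trivially (it remains a marked point of $\G$) or by a nontrivial rational rooted tree that brings along its own free marked points.

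The next step is to identify the Serre characteristic of the $\bbS$-variety of nontrivial decorated rational trees. Such a rooted rational tree with $m$ free marked points and one attaching root corresponds precisely to an element of $\Mbar_{0, m+1}$ with one marked point singled out, so the relevant $\bbS$-variety $\Mbar_0^*$ is given by $\Mbar_0^*(m) := \mathrm{Res}^{S_{m+1}}_{S_m}\Mbar_{0,m+1}$. Its Serre characteristic is $\partial \mathsf{e}^{\bbS}(\Mbar_0)/\partial p_1$, by the standard correspondence between the operator $\partial/\partial p_1$ on Frobenius characteristics and restriction along $S_{n-1} \hookrightarrow S_n$. The trivial decoration ``keep as a marked point'' contributes the singleton species, whose characteristic is $p_1$.

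Assembling these observations, the graph decomposition gives an equivariant isomorphism of graded $\bbS$-varieties
\[
\Mbar_g \;\cong\; \Mbar_g^{\mathrm{nrt}} \circ \bigl(\mathbf{1} \sqcup \Mbar_0^*\bigr),
\]
where $\mathbf{1}$ denotes the singleton and $\circ$ is the species composition of $\bbS$-varieties substituting at each marked point of a core curve either a marked point or a rational tail with attaching node. On equivariant Serre characteristics, this composition descends to the ordinary plethysm on symmetric functions, by the ungraded analogue of Theorem \ref{thm:tcirc_varieties}; the formula follows. The main point requiring care is that the core-plus-tails decomposition of $\G$ is compatible with graph automorphisms: an automorphism of $\G$ decomposes as an automorphism of $\G^{\mathrm{core}}$ permuting identically-decorated attaching half-edges, together with automorphisms of each rational tree fixing its root, which is precisely the wreath product structure encoded by plethystic substitution.
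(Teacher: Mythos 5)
Your proof is correct and is the same core-plus-rational-tails argument the paper has in mind: the paper gives no proof (it labels the lemma ``standard in the theory of stable curves''), but its own sketch in \S 1.3.2 matches your decomposition, with $p_1$ for a marked point left alone and $\partial\mathsf{e}^{\bbS}(\Mbar_0)/\partial p_1$ for a rooted genus-zero tree grafted in its place. You correctly use $g\geq 1$ to guarantee a well-defined nonempty core and identify the wreath-product compatibility of graph automorphisms as the reason the stratification descends to a plethystic composition.
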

Modulo the computation of $\mathsf{e}^{\bbS}(\Mbar_0)$, Lemma \ref{lem:attaching_rat_tails} reduces the calculation of $\mathsf{e}^{\bbS}(\Mbar_g)$ to that of $\mathsf{e}^{\bbS}(\Mbar_g^{\mathrm{nrt}})$, and in a sense eliminates the first source of infinitude discussed above. The characteristic $\mathsf{e}^{\bbS}(\Mbar_1^{\mathrm{nrt}})$ was computed by Getzler \cite{GetzlerSemiClassical} and then by Petersen \cite{semiclassicalremark}. Petersen's calculation fits into the framework developed herein, but for ease of exposition we will assume that $g \geq 2$ for the remainder of this section. To stratify $\mathsf{e}(\Mbar_g^{\mathrm{nrt}})$, we introduce some natural subcategories of $\mathsf{VGraph}_g$.
\begin{defn}\label{def:bary_stable_graphs}
Given a graph $\G \in \mathrm{Ob}(\mathsf{VGraph}_g)$, we say $\G$ is \textit{stable} if ${2w(v) - 2 + \val(v)> 0}$ for all vertices $v \in V(\G)$. Let 
\[\Gamma_g \subset \mathsf{VGraph}_g\]
be the full subgroupoid of stable graphs. Given $\G \in \mathrm{Ob}(\mathsf{VGraph}_g^{\mathrm{stab}})$, write $\hat{\G} \in \mathrm{Ob}(\cat{VGraph}_g)$ for the graph obtained from $\G$ by subdividing each edge $e \in E(\G)$ by a single weight zero vertex. Let
\[\hat{\Gamma}_g\subset \mathsf{VGraph}_g\]
be the full subgroupoid consisting of objects of the form $\hat{\G}$ for $\G \in \mathrm{Ob}(\Gamma_g)$. 
\end{defn}
The category $\hat{\Gamma}_g$ can be thought of as the groupoid of barycentrically subdivided stable graphs. In particular, we have
\[ |\mathrm{Iso}(\hat{\Gamma}_g)| = |\mathrm{Iso}(\Gamma_g)| < \infty, \]
and we will use $\hat{\Gamma}_g$ to give a finite stratification of the $\bbS$-variety $\Mbar_g^{\mathrm{nrt}}$.

\begin{defn}
Given a connected nodal curve $C$ of genus $g \geq 2$ such that $\G_C = \G_C^{\mathrm{core}}$, we define \[\mathfrak{G}_C \in \mathrm{Ob}(\hat{\Gamma}_g) \]
in two steps:
\begin{enumerate}
\item Take $\G_C$ and smooth out all weight zero vertices of valence $2$, i.e. treat them as part of the underlying edge (the resulting graph is sometimes called the \textit{stabilization} of $\G_C$);
\item Subdivide each edge in the resulting graph exactly once, with a vertex of weight zero.
\end{enumerate}
We call $\mathfrak{G}_C$ the \textit{reduced fine dual graph} of $C$.
\end{defn}

Given $\mathfrak{G} \in \mathrm{Ob}(\hat{\Gamma}_g)$, define an $\bbS$-variety
\[ \Mbar_\mathfrak{G}^{\mathrm{nrt}}\subset \Mbar_g^{\mathrm{nrt}} \]
by letting \[\Mbar_\mathfrak{G}^{\mathrm{nrt}}(n) \subset \Mbar_{g}^{\mathrm{nrt}}(n)\]
denote the locus of curves with reduced fine dual graph equal to $\mathfrak{G}$; then
\begin{equation}\label{eqn:reduced_stratification}
\Mbar_g^{\mathrm{nrt}} = \coprod_{\mathfrak{G} \in \mathrm{Iso}(\hat{\Gamma}_g)} \Mbar_{\mathfrak{G}}^{\mathrm{nrt}},
\end{equation}
so we get a finite stratification of $\Mbar_g^{\mathrm{nrt}}$. Now define a $(S_2 \times \bbS)$-variety $\Cat$ as follows: for $n \geq 1$, let
\[ \Cat(n) \subset \Res_{S_2 \times S_n}^{S_{n + 2}} \Mbar_{0, n+2} \]
be the locus of stable genus zero curves whose coarse dual graph is a path (this includes the case of a single isolated vertex), and such that the first two markings are on the two extreme components of the curve. Then $S_2$ acts on $\Cat(n)$ by permuting the first two marked points, and $S_n$ acts by permuting the other $n$ markings. We also formally set $\Cat(0) = \Spec \C$, with the trivial $(S_2 \times S_0)$-action. The utility of this definition is that it yields the following description of the stratum $\Mbar_{\mathfrak{G}}^{\mathrm{nrt}}$. Let $V^{b}(\mathfrak{G})$ be the set of vertices for $w(v) = 0$ and $\val(v) = 2$, and set 
\[ V^{\mathrm{stab}}(\mathfrak{G}) = V(\mathfrak{G}) \smallsetminus V^{b}(\mathfrak{G}) \]
\begin{equation}\label{eqn:reduced_stratum}
    \Mbar_{\mathfrak{G}}^{\mathrm{nrt}} \cong \left[\left(\midboxtimes_{v \in V^{b}(\mathfrak{G})} \Cat\right) \boxtimes\left( \midboxtimes_{v \in V^{\mathrm{stab}}(\mathfrak{G})} \Res^{\bbS}_{S_{\val(v)} \times \bbS} \M_{w(v)} \right)\right] /\Aut(\mathfrak{G}),
\end{equation}
where $\Aut(\mathfrak{G})$ acts as a subgroup of $\bbS_{\underline{\nu}(\mathfrak{G})}$. The product formula (\ref{eqn:reduced_stratum}) reflects the fact that any stable curve in $\Mbar_g^{\mathrm{nrt}}$ is obtained from a graph in $\hat{\Gamma}_g$ by choosing a (possibly empty) chain of pointed rational curves for each bivalent vertex of weight $0$, choosing a point of the $\bbS$-variety $\M_{w(v)}$ for each remaining vertex, and then gluing as determined by the graph.

\section{The P\'olya--Petersen character}\label{sec:ppchar}

To translate the stratifications of \S\ref{sec:graph_strat} into concrete formulas involving symmetric functions, we introduce the P\'olya--Petersen character of a graph. Let $G \in \mathrm{Ob}(\Graph)$. Recall that the isomorphism (\ref{eqn:aut_embedding}) gives up to conjugacy an embedding of groups $\Aut(G) \hookrightarrow \bbS_{\nu(G)}$.
\begin{defn}
The \textit{P\'olya--Petersen character} of $G$ is 
\[ \zeta_G := \ch\left( \Ind_{\Aut(G)}^{\bbS_{\nu_G}} \mathrm{triv} \right) \in \WRing_{\nu_G}. \]
\end{defn}
The P\'olya--Petersen character immediately lets us calculate Serre characteristic of the graded $\bbS$-varieties $\Mbar_G$ which stratify $\Mbar$.
\begin{prop}\label{prop:ecoarse}
For any $G \in \mathrm{Ob}(\Graph)$, we have \[\cat{e}_t^{\bbS}(\Mbar_G) = \left(\zeta_G \cdot t^{|E(G)|}\right) \tcirc \Delta \cat{e}_t^{\bbS \times \bbS}(\M).\]
\end{prop}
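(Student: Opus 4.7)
The approach is to realize $\Mbar_G$, up to a genus-grading shift, as the composition $\tcirc$ of a well-chosen bounded $\bbS^{[2]}$-variety with $\Delta\M$, and then to invoke Lemma \ref{lem:degree_shift} to transport this geometric identification to the desired identity on equivariant Serre characteristics.

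First I would introduce the bounded $\bbS^{[2]}$-variety $\calX_G$ supported entirely in degree $\nu(G)$ and given there by the finite $\bbS_{\nu(G)}$-set $\bbS_{\nu(G)}/\Aut(G)$, where $\Aut(G)$ is embedded in $\bbS_{\nu(G)}$ via (\ref{eqn:aut_embedding}). Since $\calX_G(\nu(G))$ is zero-dimensional, its Serre characteristic is the Frobenius characteristic of the permutation representation $\Ind_{\Aut(G)}^{\bbS_{\nu(G)}}\mathrm{triv}$, so by definition of the P\'olya--Petersen character, $\cat{e}^{\bbS^{[2]}}(\calX_G) = \zeta_G$.

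Next I would unwind (\ref{eqn:variety_composition}) for $\calX_G \tcirc \Delta\M$. Since $\calX_G$ is supported only in partition $\nu(G)$, only that term contributes and one obtains
\[\calX_G \tcirc \Delta\M \;\cong\; \bigl(\bbS_{\nu(G)}/\Aut(G) \,\times\, \midboxtimes_{i \geq 0} (\Delta\M)_i^{\boxtimes \nu(G)_i}\bigr)\big/\bbS_{\nu(G)} \;\cong\; \bigl(\midboxtimes_{i \geq 0} (\Delta\M)_i^{\boxtimes \nu(G)_i}\bigr)\big/\Aut(G).\]
Using $(\Delta\M)_i = \Res^{\bbS}_{S_i \times \bbS}\M$ and grouping the box powers vertex-by-vertex via the orderings defining $\Aut(G) \hookrightarrow \bbS_{\nu(G)}$, this identifies with $\bigl(\midboxtimes_{v \in V(G)} \Res^{\bbS}_{S_{\val(v)} \times \bbS}\M\bigr)/\Aut(G)$. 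Comparing with the stratum formula (\ref{eqn:coarse_strata}) then gives $\Mbar_G = (\calX_G \tcirc \Delta\M)[1 - \chi(G)]$, so that $\cat{e}_t^{\bbS}(\Mbar_G) = t^{1-\chi(G)}\, \cat{e}_t^{\bbS}(\calX_G \tcirc \Delta\M)$.

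Finally I would apply Lemma \ref{lem:degree_shift} with $V = |V(G)|$ and $\calY = \Delta\M$ to obtain
\[\zeta_G \tcirc \cat{e}_t^{\bbS \times \bbS}(\Delta\M) \;=\; t^{1 - |V(G)|}\, \cat{e}_t^{\bbS}(\calX_G \tcirc \Delta\M).\]
Multiplying both sides by $t^{|E(G)|}$ and using $1 - \chi(G) = 1 - |V(G)| + |E(G)|$ yields the claimed identity, after identifying the notation $\Delta\cat{e}_t^{\bbS \times \bbS}(\M)$ with $\cat{e}_t^{\bbS \times \bbS}(\Delta\M)$. The only non-formal aspect is the bookkeeping of three genus shifts --- the shift $1-\chi(G)$ in (\ref{eqn:coarse_strata}), the $t^{1-V}$ from Lemma \ref{lem:degree_shift}, and the prefactor $t^{|E(G)|}$ --- which reconcile cleanly via $\chi(G) = |V(G)| - |E(G)|$.
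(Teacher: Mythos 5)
Your proof is correct and takes essentially the same route as the paper: both realize $\Mbar_G$ (up to the degree shift $[1-\chi(G)]$) as $\calX_G \tcirc \Delta\M$ where $\calX_G$ is the bounded $\bbS^{[2]}$-variety concentrated in degree $\nu(G)$ given by $\bbS_{\nu(G)}/\Aut(G) = \Ind_{\Aut(G)}^{\bbS_{\nu(G)}}\Spec\C$, and both then pass to Serre characteristics using Lemma \ref{lem:degree_shift}. The paper arrives at the same point through a chain of isomorphisms ($X/H \cong (\Ind_H^G X)/G$ and $\Ind_H^G\Res_H^G Y \cong G/H \times Y$), whereas you unwind (\ref{eqn:variety_composition}) directly and make the $t$-power bookkeeping explicit, but these are the same manipulations in a different order.
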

\begin{proof}
We have
\begin{align*} 
\Mbar_G[\chi(G) - 1] &\cong \left(\boxtimes_{v \in V(G)} \Res^{\bbS}_{S_{\val(v)} \times \bbS} \M\right)/\Aut(G)
\\&\cong \left(\Ind_{\Aut(G)}^{\bbS_{\nu(G)}}\Res_{\Aut(G)}^{\bbS_{\nu(G)}}\boxtimes_{v \in V(G)} \Res^{\bbS}_{S_{\val(v)} \times \bbS} \M\right)/\bbS_{\nu(G)}
\\&\cong \left(\Ind_{\Aut(G)}^{\bbS_{\nu(G)}} \Spec \C \times \boxtimes_{v \in V(G)} \Res^{\bbS}_{S_{\val(v)} \times \bbS} \M\right)/\bbS_{\nu(G)}
\end{align*}
The first isomorphism is (\ref{eqn:coarse_strata}). The second isomorphism comes from the fact that given a subgroup $H\subset G$ and an $H$-variety $X,$ there is $X/H\cong (\Ind_{H}^G X)/G.$ Similarly, the third isomorphism is from the fact that given a subgroup $H\subset G$ and a $G$-variety $Y,$ there is $\Ind_{H}^G\Res_H^G Y\cong G/H\times Y,$ where the right hand side admits the diagonal $G$-action. 
We can check that these isomorphisms preserve the relevant $S_n$-action and hence are isomorphisms of $\bbS$-varieties. The claimed formula now follows from taking equivariant Serre characteristics: the factor of $t^{|E(G)|}$ comes from the formula
\[1 - \chi(G) = |E(G)|- |V(G)| + 1\]
and Lemma \ref{lem:degree_shift}.
\end{proof}

For $\boldsymbol{\tau} \in \bbS_\nu$, we recall from Definition \ref{defn:2_part_conj} that $\Theta^{\boldsymbol{\tau}} \in \Partt_\nu$ denotes its cycle type, and that for any $\Theta \in \Partt$, we have
\[ p_\Theta = \prod_{\mu \in\Part^\star} p_{\Theta(\mu)}(\mu). \]
Using again the embedding  $\Aut(G) \hookrightarrow \bbS_{\nu(G)}$ (\ref{eqn:aut_embedding}), and using the notation $\boldsymbol{\tau}$ for elements of $\Aut(G)$, we write $\Theta^{\boldsymbol{\tau}}$ for the $2$-partition encoding the conjugacy class of the image of $\boldsymbol{\tau}$ in $\bbS_{\nu(G)}$. With this notation in hand, basic character theory yields the following formula for $\zeta_G$. 

\begin{lem}\label{lem:ungraded-PP-formula}
 Suppose $G \in \mathrm{Ob}(\Graph)$. Then
\[\zeta_G = \frac{1}{|\Aut(G)|} \sum_{\boldsymbol{\tau} \in \Aut(G)} p_{\Theta^{\boldsymbol{\tau}}}. \]
\end{lem}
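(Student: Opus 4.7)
The plan is to combine the general trace formula for characters on the Frobenius side (Lemma \ref{lem:pre_ch_formula}) with the standard formula for the character of an induced representation, and then rearrange the double sum. First, unwinding the definition of $\zeta_G$ as the Frobenius characteristic of the $\bbS_{\nu(G)}$-representation $\Ind_{\Aut(G)}^{\bbS_{\nu(G)}}\mathrm{triv}$ and applying Lemma \ref{lem:pre_ch_formula}, I obtain
\[ \zeta_G = \frac{1}{|\bbS_{\nu(G)}|} \sum_{\boldsymbol{\sigma} \in \bbS_{\nu(G)}} \mathrm{Tr}\!\left(\boldsymbol{\sigma}\,\big|\, \Ind_{\Aut(G)}^{\bbS_{\nu(G)}} \mathrm{triv}\right) \cdot p_{\Theta^{\boldsymbol{\sigma}}}. \]

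Next, I would invoke the classical formula for the character of an induced representation: for a subgroup $H \subset K$ of a finite group $K$ and any $g \in K$,
\[ \mathrm{Tr}\!\left(g \,\big|\, \Ind_H^K \mathrm{triv}\right) = \frac{1}{|H|}\, \#\{x \in K \mid x^{-1}gx \in H\}. \]
Applied with $H = \Aut(G)$ and $K = \bbS_{\nu(G)}$, this turns the previous display into a triple sum over $\boldsymbol{\sigma} \in \bbS_{\nu(G)}$ and $\boldsymbol{x} \in \bbS_{\nu(G)}$ with $\boldsymbol{x}^{-1}\boldsymbol{\sigma}\boldsymbol{x} \in \Aut(G)$.

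To finish, I would reindex the double sum by setting $\boldsymbol{\tau} := \boldsymbol{x}^{-1}\boldsymbol{\sigma}\boldsymbol{x} \in \Aut(G)$; then for each fixed $\boldsymbol{\tau} \in \Aut(G)$, the element $\boldsymbol{x}$ ranges freely over $\bbS_{\nu(G)}$ and $\boldsymbol{\sigma} = \boldsymbol{x}\boldsymbol{\tau}\boldsymbol{x}^{-1}$ is determined. The key observation is that $p_{\Theta^{\boldsymbol{\sigma}}}$ depends only on the conjugacy class of $\boldsymbol{\sigma}$ in $\bbS_{\nu(G)}$, so $p_{\Theta^{\boldsymbol{\sigma}}} = p_{\Theta^{\boldsymbol{\tau}}}$. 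After this substitution the inner sum over $\boldsymbol{x}$ contributes a factor of $|\bbS_{\nu(G)}|$, which cancels the normalizing factor from Lemma \ref{lem:pre_ch_formula}, leaving
\[ \zeta_G = \frac{1}{|\Aut(G)|} \sum_{\boldsymbol{\tau} \in \Aut(G)} p_{\Theta^{\boldsymbol{\tau}}}, \]
as claimed.

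There is no serious obstacle here: the lemma is essentially a translation of Frobenius reciprocity into the Frobenius-characteristic language of $\WRing$, and the only bookkeeping step requiring care is tracking that conjugacy of $\boldsymbol{\sigma}$ in $\bbS_{\nu(G)}$ (rather than in $\Aut(G)$) still preserves the $2$-partition $\Theta^{\boldsymbol{\sigma}}$, which is immediate from the construction of $\Theta^{(-)}$ in Definition \ref{defn:2_part_conj} via Lemma \ref{lem:conjugacy-classes}.
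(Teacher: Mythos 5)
Your proof is correct, and it faithfully unpacks what the paper compresses into "basic character theory yields the following formula": you apply Lemma \ref{lem:pre_ch_formula}, substitute the induced-character formula, and reindex the double sum using the conjugation-invariance of $p_{\Theta^{(-)}}$ to cancel the normalizing factor $|\bbS_{\nu(G)}|$. This is exactly the intended argument.
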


\begin{exmp}
Consider the graph $G$ given in Figure \ref{fig:graph_exmp}. In this case $\nu(G) = (2^33^2)$, so
\[\bbS_{\nu(G)} = (S_2 \wr S_3) \times (S_3 \wr S_2), \]
and $\Aut(G)$ is an extension of $S_3$ by $\Z/2\Z$, where the $\Z/2\Z$ factor corresponds to a horizontal flip, and the elements of $S_3$ permute the vertices of valence $2$ and their adjacent edges.
    \begin{figure}[h]
        \centering
        \includegraphics[scale=1]{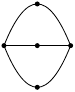}
        \caption{}    
        \label{fig:graph_exmp}
    \end{figure}

 Using Lemma \ref{lem:ungraded-PP-formula}, we compute the P\'olya--Petersen character of $G$ as
\begin{align*}
\zeta_G = \frac{1}{12}\left( \underbrace{p_1(1^2)^3 p_1(1^3)^2}_{\text{identity}} \right. &+ \underbrace{3p_1(1^2) p_2(1^2)p_1(1^12^1)}_{2\text{-cycles without flip}} + \underbrace{2p_3(1^2)p_1(3)^2}_{3\text{-cycles without flip}} \\& + \left.\underbrace{p_1(2^1)^3 p_2(1^3)}_{\text{flip}} + \underbrace{3 p_1(2^1) p_2(1^2) p_2(1^3)}_{2\text{-cycles with flip}} + \underbrace{2 p_3(2^1) p_2(3^1)}_{3\text{-cycles with flip}} \right).
\end{align*}
\end{exmp}

\begin{defn}\label{defn:AutTheta}
    For $G \in \mathrm{Ob}(\Graph)$ and $\Theta \in \Partt$, define
\[ \Aut^{\Theta}(G) := \{ \boldsymbol{\tau} \in \Aut(G) \mid \Theta^{\boldsymbol{\tau}} = \Theta \}. \]
\end{defn}
We observe that $\Aut^{\Theta}(G) \neq \varnothing$ implies that $\Theta \in \Partt_{\nu(G)}$, and that each $\Aut^{\Theta}(G)$ is a union of conjugacy classes of $\Aut(G)$. 
\begin{cor}\label{cor:single-graph-all-genus}
We have
\[ \mathsf{e}_t^{\bbS}(\Mbar_G) = t^{|\nu(G)|/2}\sum_{\Theta \in \Partt}\frac{|\Aut^{\Theta}(G)|}{|\Aut(G)|}  \prod_{\mu \in \Part^\star} \psi_{\Theta(\mu)}(\overline{\partial}_\mu \mathsf{e}_t^{\bbS}(\M) ). \]
\end{cor}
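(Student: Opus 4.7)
The plan is to combine Proposition~\ref{prop:ecoarse} with the expansion of $\zeta_G$ given in Lemma~\ref{lem:ungraded-PP-formula}, and then translate the resulting sum over group elements into a sum over $2$-partitions using the algebra-homomorphism property of $\tcirc$ (Theorem~\ref{thm-tcirc}(1)) together with the power-sum formula $p_n(\mu) \tcirc \Delta f = \psi_n(\overline{\partial}_\mu f)$ from Definition~\ref{defn:partialbar}.

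First, I would apply Proposition~\ref{prop:ecoarse} to write
\[
\mathsf{e}_t^{\bbS}(\Mbar_G) = \left(\zeta_G \cdot t^{|E(G)|}\right) \tcirc \Delta \mathsf{e}_t^{\bbS \times \bbS}(\M),
\]
and note that the handshake identity $\sum_v \val(v) = 2|E(G)|$ gives $|\nu(G)| = 2|E(G)|$, so the prefactor $t^{|E(G)|}$ coincides with $t^{|\nu(G)|/2}$. Next, substituting $\zeta_G = \tfrac{1}{|\Aut(G)|}\sum_{\boldsymbol{\tau}\in\Aut(G)} p_{\Theta^{\boldsymbol{\tau}}}$ from Lemma~\ref{lem:ungraded-PP-formula} and using $\QQ$-linearity of $\tcirc$ in the first argument pulls the sum outside the plethysm.

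The monomial $p_{\Theta^{\boldsymbol{\tau}}}=\prod_{\mu}\prod_{i} p_i(\mu)^{\Theta^{\boldsymbol{\tau}}(\mu)_i}$ factors through $\tcirc$ by Theorem~\ref{thm-tcirc}(1), which states that $f\mapsto f\tcirc g$ is an algebra homomorphism. Applying the power-sum formula from Definition~\ref{defn:partialbar} (extended to $t$-graded inputs via Definition~\ref{defn:graded_plethysm}), together with the multiplicative convention $\psi_{\tilde\mu}=\prod_i \psi_i^{\tilde\mu_i}$, yields
\[
p_{\Theta^{\boldsymbol{\tau}}} \tcirc \Delta \mathsf{e}_t^{\bbS \times \bbS}(\M) = \prod_{\mu \in \Part^\star} \psi_{\Theta^{\boldsymbol{\tau}}(\mu)}\bigl(\overline{\partial}_\mu \mathsf{e}_t^{\bbS}(\M)\bigr).
\]
Regrouping the sum over $\boldsymbol{\tau}$ according to $\Theta = \Theta^{\boldsymbol{\tau}}$ then gives the desired formula, since by Definition~\ref{defn:AutTheta} the number of $\boldsymbol{\tau} \in \Aut(G)$ with $\Theta^{\boldsymbol{\tau}}=\Theta$ is exactly $|\Aut^{\Theta}(G)|$.

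The argument is essentially bookkeeping rather than conceptual, so I do not foresee a serious obstacle. The one subtle point requiring care is the consistent handling of the $t$-grading under the graded action of Definition~\ref{defn:graded_plethysm}: I would want to verify that the Adams operation $\psi_n$ acts on the formal variable $t$ in the manner implicit in that definition, so that the identity $p_n(\mu) \tcirc \Delta \mathsf{e}_t^{\bbS}(\M) = \psi_n(\overline{\partial}_\mu \mathsf{e}_t^{\bbS}(\M))$ is well-defined in $\hat{\Lambda}(\!(t)\!)$, and that the prefactor $t^{|\nu(G)|/2}$ from Proposition~\ref{prop:ecoarse} propagates unchanged through the calculation.
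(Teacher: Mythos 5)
Your proof is correct and follows exactly the route the paper intends (the Corollary is stated without an explicit proof, as an immediate consequence of Proposition~\ref{prop:ecoarse}, Lemma~\ref{lem:ungraded-PP-formula}, Theorem~\ref{thm-tcirc}, and Definition~\ref{defn:partialbar}). You also correctly flagged the one point that deserves a sentence of verification: that the graded action of Definition~\ref{defn:graded_plethysm} is compatible with extending the Adams operations to $\hat{\Lambda}(\!(t)\!)$ by $\psi_n(t) = t^n$, so that $p_n(\mu)\tcirc \Delta\mathsf{e}_t^{\bbS}(\M) = \psi_n(\overline{\partial}_\mu\mathsf{e}_t^{\bbS}(\M))$ holds at the genus-graded level.
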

Corollary \ref{cor:single-graph-all-genus} implies Theorem \ref{thm:all_graphs}, whose statement we now recall. We have set the notation
\[\overline{\cat{a}} = \cat{e}^{\bbS}_t(\Mbar)\quad \mbox{and} \quad \cat{a}  = \cat{e}_t^{\bbS}(\M),\]
and for a $2$-partition $\Theta$ we set $||\Theta|| = \sum_{\mu \in \Part^\star} |\Theta(\mu)|\cdot|\mu|$.

\begin{customthm}{B}
The total generating function is given by
\[ \overline{\cat{a}} = \sum_{\Theta \in \Partt} O(\Theta)\cdot t^{||\Theta||/2}  \prod_{\mu \in \Part^\star} \psi_{\Theta(\mu)} (\overline{\partial}_\mu \cat{a}) \]
where $O(\Theta)\in \mathbb{Q}$ is defined by
\[O(\Theta) := \sum_{G \in \mathrm{Iso}(\Graph)} \frac{|\Aut^{\Theta}(G)|}{|\Aut(G)|}. \]
\end{customthm}

\begin{proof}[Proof of Theorem \ref{thm:all_graphs}]
We have
\[ \overline{\cat{a}} = \sum_{G \in \mathrm{Iso}(\cat{Graph})} \cat{e}_t^{\bbS}(\Mbar_G). \]
Applying the formula of Corollary \ref{cor:single-graph-all-genus}, we obtain
\[\overline{\cat{a}} = \sum_{G \in \mathrm{Iso}(\cat{Graph})}t^{|\nu(G)|/2}\sum_{\Theta \in \Partt}\frac{|\Aut^{\Theta}(G)|}{|\Aut(G)|}  \prod_{\mu \in \Part^\star} \psi_{\Theta(\mu)}(\overline{\partial}_\mu \cat{a}).\]
Note that if $\Aut^\Theta(G) \neq \varnothing$, then $||\Theta|| = |\nu(G)|$. Therefore, we can switch the summations over $G$ and $\Theta$ to obtain
\[\overline{\cat{a}} = \sum_{\Theta \in \Partt} t^{||\Theta||/2} \sum_{G \in \mathrm{Iso}(\cat{Graph})} \frac{|\Aut^{\Theta}(G)|}{|\Aut(G)|} \prod_{\mu \in \Part^\star} \psi_{\Theta(\mu)}(\overline{\partial}_\mu \cat{a}), \]
and Theorem \ref{thm:all_graphs} is proved.
\end{proof}


\begin{rem}
Theorem \ref{thm:all_graphs} is proved by summing the contribution of each coarse dual graph $G \in \mathrm{Ob}(\Graph)$ individually. For us, the P\'olya--Petersen character is crucial for this purpose: we do not, for example, see a way to express these contributions in terms of the character
\[ z_G^\mathrm{\mathrm{Half}}:=\Ind_{\Aut(G)}^{\Aut(H(G))} \triv \in \Lambda_{|H(G)|}, \]
which remembers only the action of $\Aut(G)$ on the set of half-edges $H(G)$. One na\"ive approach to graph strata would be to use the adjoint operator $D(z_G^{\mathrm{Half}})$ with respect to multiplication by $z_G^\mathrm{\mathrm{Half}}$, and guess that the generating function
\[ D(z_G^{\mathrm{Half}})( h_{|V(G)|} \circ \cat{a}) \]
is the contribution from the graph $G$, where $h_k \in \Lambda$ denotes the $k$th homogeneous symmetric function. This is not refined enough to specify individual graph strata. For example, if $P$ is the connected graph with one edge and two vertices, then
\[z_{P}^{\mathrm{
Half
}} = h_2 \in \Lambda_2, \]
and one checks that
\[D(z_{P}^{\mathrm{
Half
}})(h_2 \circ\cat{a}) = \zeta_{P} \tcirc \cat{a} + \cat{a} \cdot (\zeta_L \tcirc \cat{a}), \]
where $L$ is the loop graph with one vertex and one edge. In other words, the formula receives contribution from both the graph $P$ and $L.$

\end{rem}

\subsection{Vertex-weighted characters} We now turn to vertex weighted graphs and the fine graph stratification that they define. This will lead to the proof of Theorem \ref{thm:fixed-genus-thm}. The P\'olya--Petersen character of a vertex weighted graph is defined via the embedding of its automorphism group described in Lemma \ref{lem:refsubgrp}.

\begin{defn}
    Let $\mathbf{G}\in \cat{VGraph}_g$ be a vertex weighted graph of genus $g.$ The P\'olya--Petersen character of $\G$ is defined as $$\zeta_{\G} = \Ind_{\mathrm{Aut}(\G)}^{\bbS_{\underline{\nu}(\G)}}\mathrm{triv}\in R(\bbS_{\underline{\nu}(\G)})\subset \bigotimes_{h = 0}^g \Lambda^{[2]}.$$
\end{defn}

The terms in the tensor product on the right hand side are in bijection with the possible vertex weights on the graph. In this way, the P\'olya--Petersen character picks out the precise genus contribution from each vertex and calculates the equivariant Euler characteristic of the corresponding fine graph stratum. 

To work with these genus-decorated characters, we write 
\[\bigotimes_{h = 0}^{g} \WRing = \QQ[p^{(i)}_1(\mu), p^{(i)}_{2}(\mu), \ldots \mid \mu \in \Part^\star,\, 0\leq i \leq g],\]
and define
\[\mathrm{proj}_i : \bigoplus_{h = 0}^{g} \hat{\Lambda} \otimes \hat{\Lambda} \to \hat{\Lambda}\otimes \hat{\Lambda}\]
as the projection onto the $i$th factor of the direct sum. The proof of the following formula follows from the same calculation as in Proposition \ref{prop:ecoarse}.
\begin{rem}
    The map $\mathrm{proj}_i$ defined above is not to be confused with the projection maps $\mathrm{proj}_\mu: \hat{\Lambda} \otimes \hat{\Lambda}\to \hat{\Lambda}$ from Definition \ref{defn:projmu} where $\mu\in \Part^\star.$ 
\end{rem}

\begin{lem}\label{lem:gradedPPlem}
    Define $$\tcirc_g: \left(\bigotimes_{h=0}^g\WRing\right)\times \bigoplus_{h=0}^g(\hat{\Lambda}\otimes\hat{\Lambda})\to \hat{\Lambda}$$ by
    \[p_n^{(i)}(\mu) \tcirc_g q = \psi_n(\mathrm{proj}_\mu(\mathrm{proj}_i(q))), \]
    and requiring that $f \mapsto f\tcirc_g q$ is an algebra homomorphism for all $q \in \hat{\Lambda} \otimes \hat{\Lambda}$. Then
    $$\mathsf{e}(\Mbar_\G ) = \zeta_\G\tcirc_g \left(\bigoplus_{h=0}^g \Delta\mathsf{e}(\mathcal{M}_{h})\right).$$
\end{lem}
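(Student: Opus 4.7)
The plan is to adapt the proof of Proposition \ref{prop:ecoarse} to the vertex-weighted setting, organizing the box-product of moduli factors by vertex weight. Starting from the fine stratum identification (\ref{eqn:fine_strata}),
\[\Mbar_{\G} \cong \left(\midboxtimes_{v \in V(\G)} \Res^{\bbS}_{S_{\val(v)} \times \bbS} \calM_{w(v)}\right)\big/\Aut(\G),\]
first I would regroup the box product according to vertex weight: for each $h \in \{0, \ldots, g\}$, collect the factors corresponding to weight-$h$ vertices into an $\bbS_{\underline\nu(\G)^{(h)}}$-variety
\[\calZ_h := \midboxtimes_{v \in V(\G):\, w(v) = h} \Res^{\bbS}_{S_{\val(v)} \times \bbS} \calM_{h}.\]
By Lemma \ref{lem:refsubgrp}, $\Aut(\G)$ embeds inside $\bbS_{\underline\nu(\G)} = \prod_{h=0}^g \bbS_{\underline\nu(\G)^{(h)}}$, so the outer box-product $\midboxtimes_{h} \calZ_h$ carries an $\bbS_{\underline\nu(\G)}$-action of which $\Mbar_\G$ is the $\Aut(\G)$-quotient.

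Next, applying the identity $X/H \cong (\Ind_H^G X)/G$ as in Proposition \ref{prop:ecoarse}, I would rewrite
\[\Mbar_\G \cong \left(\Ind_{\Aut(\G)}^{\bbS_{\underline\nu(\G)}}\Spec \C \,\times\, \midboxtimes_{h=0}^{g} \calZ_h\right)\big/\bbS_{\underline\nu(\G)}.\]
Taking $\bbS$-equivariant Serre characteristics, the first factor contributes the $\bbS_{\underline\nu(\G)}$-Frobenius characteristic
\[\mathrm{ch}\!\left(\Ind_{\Aut(\G)}^{\bbS_{\underline\nu(\G)}}\triv\right) = \zeta_\G \in R(\bbS_{\underline\nu(\G)}) \subset \bigotimes_{h=0}^g \WRing,\]
while each $\calZ_h$ contributes $\Delta\, \mathsf{e}(\calM_h)$ in the $h$-th summand of $\bigoplus_{h=0}^g (\hat\Lambda \otimes \hat\Lambda)$. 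The degree shift by $1-\chi(\G)$ does not appear here because we are working with $\mathsf{e}$ rather than $\mathsf{e}_t$, and the number of edges is not being tracked.

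Finally I would identify the resulting combination with $\zeta_\G \tcirc_g \bigoplus_{h} \Delta\,\mathsf{e}(\calM_h)$ by a multi-variable extension of Theorem \ref{thm:tcirc_varieties}. Concretely, since $\bigotimes_{h=0}^g \WRing$ categorifies polynomial functors of a $(g+1)$-tuple of $\bbS$-modules, the operation $\tcirc_g$ is simply the plethysm in each tensor slot separately followed by multiplication; its characterizing identity $p_n^{(i)}(\mu) \tcirc_g q = \psi_n(\mathrm{proj}_\mu(\mathrm{proj}_i q))$ reduces, via the algebra homomorphism property in the first argument, to the single-variable formula of Theorem \ref{thm-tcirc} applied within the $i$-th tensor factor. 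The main obstacle is bookkeeping: verifying that induction from $\Aut(\G)$ through $\bbS_{\underline\nu(\G)}$ exactly matches the $\tcirc_g$-composition, including the slotwise action on each genus factor. Once the multi-variable generalization of Theorem \ref{thm:tcirc_varieties} is stated and checked on the power-sum generators, the lemma follows.
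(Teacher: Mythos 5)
Your proposal is correct and is essentially the paper's intended proof: the paper simply states that the lemma ``follows from the same calculation as in Proposition~\ref{prop:ecoarse},'' and you have carried out exactly that calculation, starting from the fine stratum identification~(\ref{eqn:fine_strata}), applying the quotient-by-subgroup and induction/restriction isomorphisms through the embedding $\Aut(\G) \hookrightarrow \bbS_{\underline\nu(\G)}$ of Lemma~\ref{lem:refsubgrp}, and matching the resulting expression with $\tcirc_g$ via the power-sum characterization. Your observation that no $t^{|E(G)|}$ factor appears is also right, since~(\ref{eqn:fine_strata}) is an identity of ungraded $\bbS$-varieties at fixed genus $g$, unlike the genus-graded~(\ref{eqn:coarse_strata}) used in Proposition~\ref{prop:ecoarse}.
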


We will apply Lemma \ref{lem:gradedPPlem} to derive Theorem \ref{thm:fixed-genus-thm}. Let $\mathbf{G}$ be a vertex-weighted graph with automorphism group $\Aut_{\cat{VGraph}}(\mathbf{G}),$ and let $$\underline{\boldsymbol{\tau}} = (\boldsymbol{\tau}_0, \dots, \boldsymbol{\tau}_g)\in \Aut(\mathbf{G})\subset \bbS_{\underline{\nu}(\mathbf{G})},$$ so that each $\boldsymbol{\tau}_h\in \bbS_{\underline{\nu}(\mathbf{G})^{(h)}}.$ As before, we associate a 2-partition $\Theta^{\boldsymbol{\tau}_{h}}\in \Partt_{\underline{\nu}(\mathbf{G})^{(h)}}$ encoding the conjugacy class of $\boldsymbol{\tau}_h$. The following lemma upgrades the statements of Lemma \ref{lem:ungraded-PP-formula} and Corollary \ref{cor:single-graph-all-genus} to the genus-graded setting. The first part is the usual character formula for finite groups, and the second part follows from the stratification formula (\ref{eqn:fine_strata}).

\begin{lem}\label{lem:vgraph-Serre}
    Let $\G\in \mathrm{Ob}(\cat{VGraph}_g),$ then $$\zeta_{\G}=\frac{1}{|\mathrm{Aut}(\G)|}\sum_{\underline{\boldsymbol{\tau}}\in\Aut(\G)}\bigotimes_{h=0}^g p_{\Theta^{\boldsymbol{\tau}_{h}}}\in \bigotimes_{h=0}^g\Lambda^{[2]}.$$ For each $\underline{\Theta} = (\Theta_0,\dots, \Theta_g)\in \prod_{h = 0}^g\Partt_{\underline{\nu}(\G)^{(h)}},$ define $$\mathrm{Aut}^{\underline{\Theta}}(\G):=\{\underline{\boldsymbol{\tau}}\in \mathrm{Aut}(\G)\mid (\Theta^{\boldsymbol{\tau}_{0}},\dots, \Theta^{\boldsymbol{\tau}_{g}}) = \underline{\Theta}\}\subset \Aut(\G).$$ Then $$\mathsf{e}^{\bbS}(\Mbar_{\G}) = \sum_{\underline{\Theta} \in \prod_{h=0}^g\Partt}\frac{|\Aut^{\underline{\Theta}}(\G)|}{|\Aut(\G)|}  \prod_{h=0}^g\prod_{\mu \in \Part^\star} \psi_{\Theta_h(\mu)}(\overline{\partial}_\mu \mathsf{e}^{\bbS}(\M_h)).$$
\end{lem}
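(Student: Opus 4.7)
The plan is to prove the two formulas in sequence: the character formula for $\zeta_\G$ first, and then feed it into Lemma \ref{lem:gradedPPlem} to obtain the Serre characteristic formula.

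For the character formula, I start from the general identity in Lemma \ref{lem:pre_ch_formula} applied to $V = \Ind_{\Aut(\G)}^{\bbS_{\underline{\nu}(\G)}} \triv$. The trace of an element $\boldsymbol{x} \in \bbS_{\underline{\nu}(\G)}$ on $V$ is given by the standard induced-character formula $\tfrac{1}{|\Aut(\G)|}|\{g : g^{-1}\boldsymbol{x}g \in \Aut(\G)\}|$. Substituting this into Lemma \ref{lem:pre_ch_formula}, switching the order of summation, and making the change of variables $\underline{\boldsymbol{\tau}} = g^{-1}\boldsymbol{x}g$ (using that $p_{\Theta^{\boldsymbol{x}}}$ depends only on the conjugacy class of $\boldsymbol{x}$) collapses the $g$-sum to a factor of $|\bbS_{\underline{\nu}(\G)}|$ and yields $\zeta_\G = \tfrac{1}{|\Aut(\G)|} \sum_{\underline{\boldsymbol{\tau}} \in \Aut(\G)} p_{\Theta^{\underline{\boldsymbol{\tau}}}}$. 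The tensor product decomposition $\bbS_{\underline{\nu}(\G)} = \prod_{h=0}^g \bbS_{\underline{\nu}(\G)^{(h)}}$ from Lemma \ref{lem:refsubgrp} then factors $p_{\Theta^{\underline{\boldsymbol{\tau}}}} = \bigotimes_{h=0}^g p_{\Theta^{\boldsymbol{\tau}_h}}$, completing the first half.

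For the Serre characteristic, I apply Lemma \ref{lem:gradedPPlem} to get $\mathsf{e}^{\bbS}(\Mbar_\G) = \zeta_\G \tcirc_g \bigoplus_{h=0}^g \Delta \mathsf{e}^{\bbS}(\M_h)$, substitute the formula just derived for $\zeta_\G$, and use multiplicativity of $\tcirc_g$ in its first argument to reduce to the action of the generators. By the defining property of $\tcirc_g$ together with the identification $\mathrm{proj}_\mu(\Delta f) = \overline{\partial}_\mu f$ implicit in Lemma \ref{lem:Dpmu}, each generator acts as $p_n^{(h)}(\mu) \tcirc_g \bigoplus_i \Delta f_i = \psi_n(\overline{\partial}_\mu f_h)$. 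Multiplicativity then yields
\[ \Big(\bigotimes_{h=0}^g p_{\Theta^{\boldsymbol{\tau}_h}}\Big) \tcirc_g \bigoplus_i \Delta \mathsf{e}^{\bbS}(\M_i) \;=\; \prod_{h=0}^g \prod_{\mu \in \Part^\star} \psi_{\Theta^{\boldsymbol{\tau}_h}(\mu)}\bigl(\overline{\partial}_\mu \mathsf{e}^{\bbS}(\M_h)\bigr). \]

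To finish, I regroup the sum over $\underline{\boldsymbol{\tau}} \in \Aut(\G)$ according to its cycle-type tuple $\underline{\Theta} = (\Theta^{\boldsymbol{\tau}_0}, \ldots, \Theta^{\boldsymbol{\tau}_g}) \in \prod_h \Partt_{\underline{\nu}(\G)^{(h)}}$, producing a factor of $|\Aut^{\underline{\Theta}}(\G)|$ for each such tuple and delivering the claimed formula. I expect the main obstacle to be purely notational: one must carefully verify that the refinement of conjugacy classes in $\bbS_{\underline{\nu}(\G)}$ provided by Lemma \ref{lem:refsubgrp} is precisely what the $\tcirc_g$-action sees, so that the weight-separation (via $\mathrm{proj}_h$) and the cycle-type separation (via $\mathrm{proj}_\mu$) align with the tensor-product factorization of the character. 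Once this bookkeeping is confirmed, no further new ideas are needed.
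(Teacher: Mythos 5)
Your proposal is correct and fleshes out precisely what the paper's one-line justification intends: part one is the classical induced-character formula read through Lemma \ref{lem:pre_ch_formula} (with the tensor factorization supplied by Lemma \ref{lem:refsubgrp}), and part two is Lemma \ref{lem:gradedPPlem} evaluated on monomial generators via $p_n^{(h)}(\mu)\tcirc_g q = \psi_n(\mathrm{proj}_\mu(\mathrm{proj}_h(q)))$ and $\mathrm{proj}_\mu(\Delta f) = \overline{\partial}_\mu f$, followed by grouping $\Aut(\G)$ by cycle-type tuples. This is the same route the paper takes for the ungraded analogues (Lemma \ref{lem:ungraded-PP-formula} and Corollary \ref{cor:single-graph-all-genus}), so no concerns.
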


Applying the formula for P\'olya--Petersen characters of vertex-weighted graphs in Lemma \ref{lem:vgraph-Serre} to the reduced fine dual graph stratification (\ref{eqn:reduced_stratum}), we obtain the following formula for the Serre characteristic of a reduced fine dual graph stratum.

\begin{lem}\label{lem:reduced-fine-Serre}
    Let $\mathfrak{G}\in \mathrm{Ob}(\hat{\Gamma}_g)$, and let $\tcirc_g$ be as above. Set \[\cat{c}:=\mathsf{e}^{S_2 \times \bbS}(\mathcal{C}) - \mathsf{e}^{S_2 \times \bbS}(\Res^{\bbS}_{S_2\times \bbS} \mathcal{M}_{0})\in K_0(\cat{MHS}) \otimes{\Lambda} \otimes \hat{\Lambda}. \] Then 

    $$\mathsf{e}^{\bbS}(\Mbar_\mathfrak{G}^{\mathrm{nrt}} ) = \zeta_\mathfrak{G}\tcirc_g \left((\Delta\mathsf{e}^{\bbS}(\mathcal{M}_0)+\mathsf{c})\oplus \bigoplus_{h=1}^g \Delta\mathsf{e}^{\bbS}(\mathcal{M}_{h})\right).$$
    \end{lem}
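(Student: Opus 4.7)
The plan is to imitate the derivation of Lemma \ref{lem:gradedPPlem}, substituting caterpillars for bivalent weight-$0$ components. Starting from the reduced-stratum description (\ref{eqn:reduced_stratum}), rewrite the $\Aut(\mathfrak{G})$-quotient as a $\bbS_{\tup{\nu}(\mathfrak{G})}$-quotient of the corresponding induced box product, exactly as in the proofs of Proposition \ref{prop:ecoarse} and Lemma \ref{lem:gradedPPlem}. This realizes $\Mbar_\mathfrak{G}^{\mathrm{nrt}}$ as $\calZ\tcirc_g \calY$, where $\calZ$ is the bounded $\bbS^{[2]}$-variety supported in the single degree $\tup{\nu}(\mathfrak{G})$ with value $\Ind_{\Aut(\mathfrak{G})}^{\bbS_{\tup{\nu}(\mathfrak{G})}}\mathrm{pt}$ (so that $\mathsf{e}^{\bbS^{[2]}}(\calZ) = \zeta_\mathfrak{G}$), and $\calY$ is the genus-graded $(\bbS\times\bbS)$-variety equal to $\Delta\mathcal{M}_h$ in every weight $h\geq 1$, and equal to $\Delta\mathcal{M}_0$ in every valence except valence $2$, where it is replaced by $\mathcal{C}$. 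Applying the genus-graded analogue of Theorem \ref{thm:tcirc_varieties} then yields
\[
\mathsf{e}^\bbS(\Mbar_\mathfrak{G}^{\mathrm{nrt}}) = \zeta_\mathfrak{G}\tcirc_g \mathsf{e}^{\bbS\times\bbS}(\calY).
\]

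It remains to identify $\mathsf{e}^{\bbS\times\bbS}(\calY)$ with $(\Delta\mathsf{e}^\bbS(\mathcal{M}_0)+\mathsf{c})\oplus\bigoplus_{h\geq 1}\Delta\mathsf{e}^\bbS(\mathcal{M}_h)$. For $h\geq 1$ there is nothing to check. For $h=0$, the key combinatorial observation is that $V^b(\mathfrak{G})$ consists precisely of the weight-$0$ valence-$2$ vertices of $\mathfrak{G}$: every such vertex arose as a barycentric subdivision, while any weight-$0$ vertex coming from the ambient stable graph has valence at least $3$ by the stability condition. Hence the weight-$0$ slot of $\calY$ is the $(\bbS\times\bbS)$-variety with $(2,\cdot)$-component $\mathcal{C}$ and $(i,\cdot)$-component $\Res^\bbS_{S_i\times \bbS}\mathcal{M}_0$ for $i\neq 2$. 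By the very definition of $\mathsf{c}$, this Serre characteristic equals
\[
\Delta\mathsf{e}^\bbS(\mathcal{M}_0) - \mathsf{e}^{S_2\times\bbS}(\Res^\bbS_{S_2\times\bbS}\mathcal{M}_0) + \mathsf{e}^{S_2\times\bbS}(\mathcal{C}) = \Delta\mathsf{e}^\bbS(\mathcal{M}_0) + \mathsf{c}.
\]

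The main conceptual subtlety is that the valence-selective swap at weight-$0$ vertices can be packaged as a single additive correction inside $\tcirc_g$. This is a direct consequence of the characterization of $\tcirc_g$ in Lemma \ref{lem:gradedPPlem}: each generator $p_n^{(h)}(\mu)$ acts on the input through the linear projection $\mathrm{proj}_\mu\circ\mathrm{proj}_h$, so a correction living in $\Lambda_2\otimes\hat{\Lambda}$ and placed in the weight-$0$ slot contributes only to monomials in $\zeta_\mathfrak{G}$ involving $p_n^{(0)}(\mu)$ with $|\mu|=2$, i.e.\ exactly to the bivalent weight-$0$ vertices in $V^b(\mathfrak{G})$. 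The remaining step—verifying that the box-product-over-vertices formula (\ref{eqn:reduced_stratum}) is legitimately the variety-level $\tcirc_g$-composition invoked above—is a direct transcription of the calculation appearing in the proof of Proposition \ref{prop:ecoarse}, carried through with genus labels and with $\mathcal{C}$ inserted at $V^b(\mathfrak{G})$.
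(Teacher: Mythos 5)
Your proof is correct and elaborates the argument the paper leaves implicit (the paper only states that the lemma ``follows from applying Lemma~\ref{lem:vgraph-Serre} to the reduced fine dual graph stratification (\ref{eqn:reduced_stratum})''). You correctly transcribe the stratification (\ref{eqn:reduced_stratum}) into a $\tcirc_g$-composition, identify $V^{b}(\mathfrak{G})$ as exactly the subdivision vertices via the stability inequality $2w(v)-2+\val(v)>0$, and verify the bookkeeping identity $\Delta\mathsf{e}^\bbS(\mathcal{M}_0)+\mathsf{c}=\Delta\mathsf{e}^\bbS(\mathcal{M}_0)-\mathsf{e}^{S_2\times\bbS}(\Res\mathcal{M}_0)+\mathsf{e}^{S_2\times\bbS}(\mathcal{C})$, including the fact that the correction is concentrated in $\Lambda_2\otimes\hat\Lambda$ and so only feeds the $p_n^{(0)}(\mu)$ with $|\mu|=2$. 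One terminological slip: the auxiliary object $\calY$ you build should be described as a weight-indexed tuple of $(\bbS\times\bbS)$-varieties (one per $h\in\{0,\dots,g\}$) matching the domain $\bigoplus_{h=0}^g(\hat\Lambda\otimes\hat\Lambda)$ of $\tcirc_g$ from Lemma~\ref{lem:gradedPPlem}, rather than a ``genus-graded $(\bbS\times\bbS)$-variety'' in the paper's sense, which carries the $t$-variable tracking arithmetic genus rather than vertex weight; similarly the analogue of Theorem~\ref{thm:tcirc_varieties} you invoke is the weight-indexed one underlying Lemma~\ref{lem:gradedPPlem}, not the $t$-graded variant of Definition~\ref{defn:graded_plethysm}. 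This does not affect the substance of the argument.
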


The $\bbS\times\bbS$-module $\Delta(\mathcal{M}_0) + \mathsf{c}$ is precisely the contribution from genus zero vertices in (\ref{eqn:reduced_stratum}): the caterpillar generating function $\cat{e}(\calC)$ is inserted at bivalent vertices. We can express Lemma \ref{lem:reduced-fine-Serre} in terms of power sums as follows.

\begin{lem}\label{lem:reduced-fine-Serre-clean}
    For an integer $h \geq 0$ and $\mu\in \Part^\star$, define $$\mathsf{w}_h^{\mu} = \begin{cases}      
 \displaystyle{\frac{1}{1 - \mathsf{a}_0''}} &\text{if } \,h = 0,\,\mu = 1^2\\
   \\
 \displaystyle{\frac{1 + 2 \dot{\mathsf{a}}_0}{1 - \psi_2(\mathsf{a}_0'')}} &\text{if } \,h = 0,\, \mu = 2^1\\ \\ \displaystyle{\overline{\partial}_{\mu}\mathsf{e}^{\bbS}(\mathcal{M}_h)} &\text{otherwise}.
    \end{cases}$$
     Then 
    $$\mathsf{e}^{\bbS}(\Mbar_{\mathfrak{G}}^{\mathrm{nrt}}) = \sum_{\underline{\Theta} \in \prod_{h=0}^g\Partt}\frac{|\Aut^{\underline{\Theta}}(\mathfrak{G})|}{|\Aut(\mathfrak{G})|}   \prod_{h = 0}^g \prod_{\mu \in \Part^\star}  \psi_{\Theta_{h}(\mu)}(\mathsf{w}_h^{\mu}).$$
\end{lem}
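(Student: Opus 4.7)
The plan is to combine Lemma \ref{lem:reduced-fine-Serre} with the character formula for $\zeta_\mathfrak{G}$, and then unpack the resulting $\tcirc_g$-action one power sum at a time using Theorem \ref{thm-tcirc} and the explicit caterpillar formula from \S\ref{sec:catails}. More precisely, I would apply Lemma \ref{lem:vgraph-Serre} (which holds for any $\mathfrak{G} \in \mathrm{Ob}(\hat{\Gamma}_g) \subset \mathrm{Ob}(\cat{VGraph}_g)$) to write
\[ \zeta_\mathfrak{G} \;=\; \frac{1}{|\Aut(\mathfrak{G})|}\sum_{\underline{\boldsymbol{\tau}} \in \Aut(\mathfrak{G})} \bigotimes_{h=0}^g p_{\Theta^{\boldsymbol{\tau}_h}}, \]
and further expand each factor via $p_{\Theta^{\boldsymbol{\tau}_h}} = \prod_{\mu \in \Part^\star} \prod_{i \geq 1} p_i^{(h)}(\mu)^{\Theta^{\boldsymbol{\tau}_h}(\mu)_i}$. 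Since by construction $f \mapsto f \tcirc_g q$ is an algebra homomorphism, the computation collapses to determining $\mathrm{proj}_\mu(\mathrm{proj}_h(q))$ for each pair $(h,\mu)$, where $q := (\Delta \mathsf{a}_0 + \mathsf{c}) \oplus \bigoplus_{h=1}^{g} \Delta \mathsf{a}_h$ is the input from Lemma \ref{lem:reduced-fine-Serre}.

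The next step is a short case analysis of these projections. For $h \geq 1$, we have $\mathrm{proj}_h(q) = \Delta \mathsf{a}_h$, and the identity $\mathrm{proj}_\mu \circ \Delta = \overline{\partial}_\mu$ --- which follows by comparing Theorem \ref{thm-tcirc} with Lemma \ref{lem:Dpmu} --- immediately gives $\overline{\partial}_\mu \mathsf{a}_h = \mathsf{w}_h^\mu$. For $h = 0$, we have $\mathrm{proj}_0(q) = \Delta \mathsf{a}_0 + \mathsf{c}$. Since $\mathsf{c}$ is supported in degree $2$ of the first tensor factor (both $\mathcal{C}$ and $\Res^{\bbS}_{S_2 \times \bbS}\mathcal{M}_0$ being $(S_2 \times \bbS)$-varieties), the projection still reduces to $\overline{\partial}_\mu \mathsf{a}_0 = \mathsf{w}_0^\mu$ whenever $|\mu| \neq 2$. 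The two exceptional cases $(h,\mu) = (0,1^2)$ and $(0,2^1)$ form the heart of the calculation: by definition, $\Delta \mathsf{a}_0 + \mathsf{c}$ replaces the degree-$2$ part of $\Delta \mathsf{a}_0$ with $\mathsf{e}^{S_2 \times \bbS}(\mathcal{C})$, so reading off the coefficients of $\tfrac{1}{2}p_1^2$ and $\tfrac{1}{2}p_2$ in
\[ \mathsf{e}^{S_2 \times \bbS}(\mathcal{C}) \;=\; \tfrac{1}{2}p_1^2 \otimes \tfrac{1}{1 - \mathsf{a}_0''} \;+\; \tfrac{1}{2}p_2 \otimes \tfrac{1 + 2 \dot{\mathsf{a}}_0}{1 - \psi_2(\mathsf{a}_0'')}, \]
in accordance with the normalization of $\mathrm{proj}_\mu$ in Definition \ref{defn:projmu}, yields exactly $\mathsf{w}_0^{1^2} = \frac{1}{1-\mathsf{a}_0''}$ and $\mathsf{w}_0^{2^1} = \frac{1+2\dot{\mathsf{a}}_0}{1-\psi_2(\mathsf{a}_0'')}$.

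Finally, I would reassemble the pieces: each monomial $\bigotimes_h p_{\Theta^{\boldsymbol{\tau}_h}}$ acts on $q$ to produce $\prod_{h, \mu} \psi_{\Theta^{\boldsymbol{\tau}_h}(\mu)}(\mathsf{w}_h^\mu)$, and regrouping the sum over $\underline{\boldsymbol{\tau}} \in \Aut(\mathfrak{G})$ by the fibers of the cycle-type map $\underline{\boldsymbol{\tau}} \mapsto (\Theta^{\boldsymbol{\tau}_0}, \ldots, \Theta^{\boldsymbol{\tau}_g})$ produces the stated coefficient $|\Aut^{\underline{\Theta}}(\mathfrak{G})|/|\Aut(\mathfrak{G})|$ in front of the product $\prod_{h,\mu}\psi_{\Theta_h(\mu)}(\mathsf{w}_h^\mu)$. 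The main obstacle is purely bookkeeping: one must verify that the denominator $\prod_i i^{\mu_i}\mu_i!$ inside $\mathrm{proj}_\mu$ matches the $\tfrac{1}{2}p_1^2$ and $\tfrac{1}{2}p_2$ normalizations of the caterpillar formula, so that the coefficient extraction in the exceptional cases yields $\mathsf{w}_0^{1^2}$ and $\mathsf{w}_0^{2^1}$ exactly, rather than some rational multiple. Once this normalization check is in place, everything else is direct substitution.
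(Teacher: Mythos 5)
Your proposal is correct and follows essentially the same path as the paper's own proof: invoke Lemma \ref{lem:reduced-fine-Serre}, expand $\zeta_\mathfrak{G}$ as a sum of power-sum monomials over $\Aut(\mathfrak{G})$ via Lemma \ref{lem:vgraph-Serre}, use the algebra-homomorphism property of $\tcirc_g$ together with Theorem \ref{thm-tcirc} to reduce to the projections $\mathrm{proj}_\mu(\mathrm{proj}_h(q))$, handle $(h,\mu)$ with $2h-2+|\mu|>0$ via $\mathrm{proj}_\mu\circ\Delta=\overline{\partial}_\mu$, and read the exceptional $h=0$, $|\mu|=2$ cases off the caterpillar formula. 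The one thing you flag as needing verification at the end --- that the normalization $\prod_i i^{\mu_i}\mu_i!$ in Definition \ref{defn:projmu} matches the $\tfrac12 p_1^2$ and $\tfrac12 p_2$ prefactors in $\mathsf{e}^{S_2\times\bbS}(\mathcal C)$ --- is immediate ($1^2\cdot 2!=2$ for $\mu=1^2$ and $2^1\cdot 1!=2$ for $\mu=2^1$), so it is not really an obstacle; you could close the argument by just writing out those two products.
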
 
\begin{proof}
    For $h\neq 0$, we define $\mathsf{s}_h^{\mu}:=p_{\Theta_{h}(\mu)}\tcirc(\Delta\mathsf{e}^{\bbS}(\mathcal{M}_h)).$ For $h = 0,$ set $\mathsf{s}_0^{\mu}:=p_{\Theta_{0}(\mu)}(\mu)\tcirc(\Delta\mathsf{e}^{\bbS}(\mathcal{M}_0)+\mathsf{c}).$ Then by expanding the formula of Lemma \ref{lem:reduced-fine-Serre} analogously to Corollary \ref{cor:single-graph-all-genus}, we have $$\mathsf{e}^{\bbS}(\Mbar_{\mathfrak{G}}^{\mathrm{nrt}}) = \sum_{\underline{\Theta} \in \prod_{h=0}^g\Partt}\frac{|\Aut^{\underline{\Theta}}(\G)|}{|\Aut(\G)|}   \prod_{h = 0}^g \prod_{\mu \in \Part^\star}  \mathsf{s}_h^{\mu}.$$ It suffices to prove that $\mathsf{s}_h^{\mu} = \psi_{\Theta_{h}(\mu)}(\mathsf{w}_h^{\mu}).$ For $h\neq 0,$ this is true by applying Definition \ref{defn:partialbar}. The formula in Theorem \ref{thm-tcirc} yields $$p_{\Theta_{0}(\mu)}(\mu)\tcirc(\Delta\mathsf{e}^{\bbS}(\mathcal{M}_0)+\mathsf{c}) = \psi_{\Theta_{0}(\mu)}(\mathrm{proj}_{|\mu|}(\Delta\mathsf{e}^{\bbS}(\mathcal{M}_0)+\mathsf{c})).$$ When $|\mu|\neq 2$, $\mathrm{proj}_{\mu}(\Delta\mathsf{e}(\mathcal{M}_0)+\mathsf{c}) = \mathrm{proj}_{\mu}(\Delta\mathsf{e}(\mathcal{M}_0)),$ so the description of $\mathsf{w}^{\mu}_0$ follows from the same reasoning as earlier. When $|\mu|=2, $ we have $\mathrm{proj}_{\mu}(\Delta\mathsf{e}^{\bbS}(\mathcal{M}_0)+\mathsf{c}) = \mathrm{proj}_\mu(\mathsf{e}^{S_2 \times \bbS}(\mathcal{C})),$ and $$\mathsf{e}^{S_2 \times \bbS}(\mathcal{C})= \frac{1}{2}p_1^2\otimes \left(\frac{1}{1 - \mathsf{a}_0''}\right) + \frac{1}{2}p_2\otimes \frac{1 + 2 \dot{\mathsf{a}}_0}{1 - \psi_2(\mathsf{a}_0'')},$$ (see \cite[\S6]{genusonechar} or \cite{BergstromMinabe1})\footnote{The characteristic $\cat{e}^{S_2 \times \bbS}(\calC)$ can also be computed via a mild generalization of the P\'olya--Petersen character which allows for graphs with legs, but we omit this derivation here.} so that $$\mathrm{proj}_{1^2}(\mathsf{e}^{S_2 \times \bbS}(\mathcal{C})) = \frac{1}{1 - \mathsf{a}_0''} \quad \mbox{and}\quad\mathrm{proj}_{2^1}(\mathsf{e}^{S_2 \times \bbS}(\mathcal{C})) = \frac{1 + 2 \dot{\mathsf{a}}_0}{1 - \psi_2(\mathsf{a}_0'')},$$ which gives $\mathsf{w}_0^{\mu}$ for $\mu = 1^2, 2^1.$
\end{proof}
Now we may use Lemma \ref{lem:reduced-fine-Serre-clean} to prove Theorem \ref{thm:fixed-genus-thm}.
\begin{proof}[Proof of Theorem \ref{thm:fixed-genus-thm}]
In view of the stratification (\ref{eqn:reduced_stratification}), summing over all objects $\mathfrak{G}\in \mathrm{Ob}(\hat{\Gamma}_g)$ yields the formula $$\mathsf{e}^{\bbS}(\Mbar_{g}^{\mathrm{nrt}}) = \sum_{\mathfrak{G}\in \mathrm{Ob}(\hat{\Gamma}_g)} \sum_{\underline{\Theta} \in \prod_{h=0}^g\Partt}\frac{|\Aut^{\underline{\Theta}}(\mathfrak{G})|}{|\Aut(\mathfrak{G})|}   \prod_{h = 0}^g \prod_{\mu \in \Part^\star}  \psi_{\Theta_{h}(\mu)}(\mathsf{w}_h^{\mu}).$$ Theorem \ref{thm:fixed-genus-thm} follows from switching the sum over $\hat{\Gamma}_g$ and $2$-partitions, and then applying Lemma \ref{lem:attaching_rat_tails}.
\end{proof}
\section{Combinatorial subspaces of the moduli space of curves}\label{sec:comb_subspaces}
Our techniques are well-suited for deriving formulas for combinatorially defined subspaces of the $\bbS$-variety $\Mbar_g$. 
\subsection{Graph-genus generating functions}
\begin{defn}Let $\gamma \in \Z_{\geq 0}$. We set
\[ \M^{(\gamma)}_g \subset \Mbar_g \]
for the $\bbS$-subspace consisting of pointed stable curves $C$ whose coarse dual graph $G_C$ satisfies
\[\dim_{\QQ} H_1(G_C;\QQ) = \gamma; \]
this is equivalent to requiring that
\begin{equation}\label{eqn:graph_genus}
    E(G_C) - V(G_C) + 1= \gamma.
\end{equation}
This subspace is especially interesting when $\gamma = 0$, when we have that
\[\M^{(0)}_g = \M^{\mathrm{ct}}_g \]
is the moduli space of curves of compact type. Let
\[ \overline{\cat{a}}_g^{(\gamma)} = \cat{e}^{\bbS}(\M^{(\gamma)}_g). \]
\end{defn}
Then emulating the techniques of previous sections, we derive the following formula for $\overline{\cat{a}}^{(\gamma)}_g$. Let $\hat{\Gamma}_g^{(\gamma)} \subset \hat{\Gamma}_g$ denote the subgroupoid of graphs satisfying (\ref{eqn:graph_genus}).
\begin{thm}\label{thm:graph-genus-gen-fun} Suppose $g \geq 2$. Then we have
\[\overline{\cat{a}}_g^{(\gamma)} = \left(\sum_{\Theta_0, \ldots, \Theta_g \in \Partt} K^{(\gamma)}(\Theta_0, \ldots, \Theta_g)  \prod_{h = 0}^g\prod_{\mu \in \Part^\star} \psi_{\Theta(\mu)} (\cat{w}_h^{\mu})\right) \circ (p_1 + \overline{\cat{a}}_0'),\]
where
\[ \cat{w}_{h}^{\mu}:= \begin{cases}
\overline{\partial}_{\mu}\mathsf{a}_h &\text{if }2h-2 + |\mu| > 0,\\[10pt] 
 \displaystyle{\frac{1}{1 - \mathsf{a}_0''}} &\text{if } h = 0,\,\mu = 1^2,\\[12pt]
 \displaystyle{\frac{1 + 2 \dot{\mathsf{a}}_0}{1 - \psi_2(\mathsf{a}_0'')}} &\text{if } h=0,\,\mu = 2^1.
 \end{cases}
\]
and
\[ K^{(\gamma)}(\Theta_0, \ldots, \Theta_g) := \sum_{\G \in \mathrm{Iso}(\hat{\Gamma}_g^{(\gamma)})} \frac{|\Aut^{\Theta_0, \ldots, \Theta_g}(\G)|}{|\Aut(\G)|}. \]
\end{thm}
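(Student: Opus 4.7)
The plan is to imitate the derivation of Theorem~\ref{thm:fixed-genus-thm}, with the key observation that the first Betti number of the coarse dual graph is preserved by the two grafting operations used in \S\ref{sec:catails}.

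First I would set up an $\bbS$-subspace $\Mbar_g^{(\gamma),\mathrm{nrt}} := \Mbar_g^{(\gamma)} \cap \Mbar_g^{\mathrm{nrt}}$ and establish the ``rational tails'' decomposition
\[ \overline{\cat{a}}_g^{(\gamma)} \;=\; \cat{e}^{\bbS}(\Mbar_g^{(\gamma),\mathrm{nrt}}) \circ \bigl(p_1 + \overline{\cat{a}}_0'\bigr), \]
in direct analogy with Lemma~\ref{lem:attaching_rat_tails}. This requires checking that if $C \in \Mbar_g^{\mathrm{nrt}}$ has coarse dual graph with first Betti number $\gamma$, then attaching any collection of genus zero pointed trees at the marked points produces a curve whose dual graph still has first Betti number $\gamma$; conversely, the operation of contracting all rational tails of an arbitrary $C' \in \Mbar_g^{(\gamma)}$ produces an element of $\Mbar_g^{(\gamma),\mathrm{nrt}}$. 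Both directions follow because attaching or removing a tree at a vertex does not change $H_1$ of the graph.

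Next I would restrict the reduced fine dual graph stratification (\ref{eqn:reduced_stratification}) to the subgroupoid $\hat{\Gamma}_g^{(\gamma)}$, giving
\[ \Mbar_g^{(\gamma),\mathrm{nrt}} \;=\; \coprod_{\mathfrak{G}\in \mathrm{Iso}(\hat{\Gamma}_g^{(\gamma)})} \Mbar_{\mathfrak{G}}^{\mathrm{nrt}}. \]
This requires verifying that $b_1(G_C) = b_1(\mathfrak{G}_C)$ whenever $C \in \Mbar_g^{\mathrm{nrt}}$, which holds because the two steps defining $\mathfrak{G}_C$ from $\G_C$---smoothing bivalent weight-zero vertices and subdividing each remaining edge once---both preserve the homotopy type of the geometric realization.

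With the stratification in place, I would apply Lemma~\ref{lem:reduced-fine-Serre-clean} to each stratum $\Mbar_{\mathfrak{G}}^{\mathrm{nrt}}$ for $\mathfrak{G}\in \hat{\Gamma}_g^{(\gamma)}$, obtaining
\[ \cat{e}^{\bbS}(\Mbar_g^{(\gamma),\mathrm{nrt}}) \;=\; \sum_{\mathfrak{G}\in \mathrm{Iso}(\hat{\Gamma}_g^{(\gamma)})} \sum_{\underline{\Theta}} \frac{|\Aut^{\underline{\Theta}}(\mathfrak{G})|}{|\Aut(\mathfrak{G})|} \prod_{h=0}^{g}\prod_{\mu \in \Part^\star} \psi_{\Theta_h(\mu)}(\cat{w}_h^\mu). \]
Swapping the two sums produces the coefficients $K^{(\gamma)}(\Theta_0,\ldots,\Theta_g)$, and composing with $p_1 + \overline{\cat{a}}_0'$ yields the claimed formula. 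The main obstacle, such as it is, is confirming that the ``$b_1$ is preserved'' argument in the first step gives a clean bijection at the level of $\bbS$-varieties (not merely a set-theoretic match), so that it descends correctly to equivariant Serre characteristics; once this is in hand, the remainder is a mechanical restriction of the argument proving Theorem~\ref{thm:fixed-genus-thm}.
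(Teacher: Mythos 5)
Your proof is correct and follows the approach the paper intends (the paper itself only says ``emulating the techniques of previous sections'' without spelling out the argument). You correctly identify the two facts that make the restriction go through --- that attaching rational trees and that smoothing bivalent vertices plus barycentric subdivision both preserve $b_1$ of the geometric realization --- and the rest is indeed a mechanical restriction of the machinery used to prove Theorem~\ref{thm:fixed-genus-thm}. The concern you flag at the end about the ``$b_1$ preserved'' claim being a genuine $\bbS$-variety isomorphism rather than a set-theoretic one is resolved exactly as you suggest: the isomorphism in question is the restriction of the one underlying Lemma~\ref{lem:attaching_rat_tails} (and of~(\ref{eqn:reduced_stratum})) to the open-and-closed sublocus where $b_1$ of the coarse dual graph equals $\gamma$, so equivariance and compatibility with Serre characteristics are inherited.
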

See Figure \ref{fig:genus3ct} for the contribution of each graph to the generating function $\cat{a}_3^{(0)}$ for $S_n$-equivariant Serre characteristics of moduli spaces of curves of compact type in genus $3$.

\begin{figure}[h]
    \centering
    \includegraphics[scale=1]{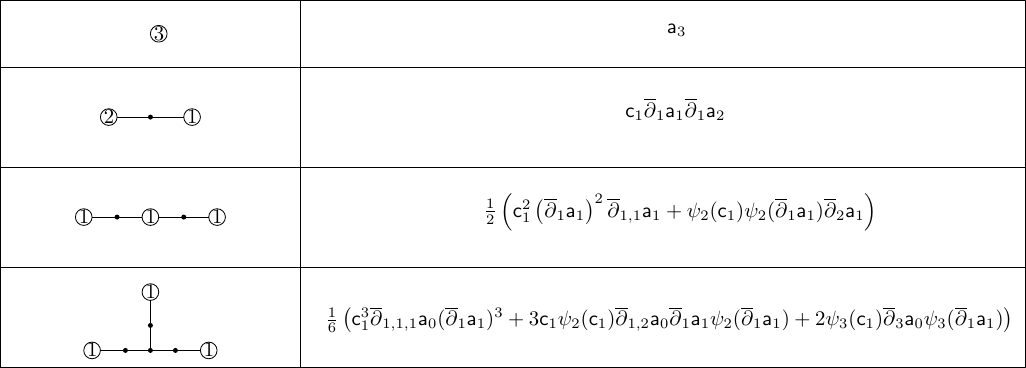}
    \caption{The generating function $\cat{a}^{(0)}_3$ is computed by taking the sum of each of the graph contributions in the table, and then performing plethysm with $p_1 + \overline{\cat{a}}_0'$. As in Figure \ref{fig:genus-two-fig}, we set $\cat{c}_1 = \frac{1}{1 - \cat{a}_0''}$ and $\cat{c}_2 = \frac{1 + 2 \dot{\cat{a}}_0}{1 - \psi_2(\cat{a_0'')}}$.}
    \label{fig:genus3ct}
\end{figure}
\begin{rem}
    By a direct translation of the methods of \cite[\S4]{GetzlerPandharipande} one can derive an explicit recursive formula that determines the generating function
    \[ \sum_{g \geq 0} \cat{a}_{g}^{(0)} t^{g-1} \]
    for all Serre characteristics of moduli spaces of curves of compact type, in terms of the generating function $\cat{a}$ for moduli spaces of smooth pointed curves.
    We omit this derivation here, but point out that an advantage of Theorem \ref{thm:graph-genus-gen-fun} is that it gives a formula for $\cat{a}_g^{(0)}$ for each individual genus $g$, in terms of explicit tree sums.
\end{rem}

\subsection{Cores, higher genus caterpillars, and curves of compact type}
An alternative formula for $\overline{\cat{a}}$, independent from both Theorem \ref{thm:fixed-genus-thm} and the Getzler--Kapranov formula (Theorem \ref{thm:GKformula}), can be derived using a combinatorial decomposition of $\Mbar$ as follows. 
\begin{defn}
Let
\[ \Mbar^{\mathrm{core}} \subset \Mbar \]
denote the moduli space of pointed curves $C$ whose \emph{coarse} dual graphs $G_C$ are equal to their own cores. Coarse dual graphs do not have vertex weights, so in this context the core of a connected graph $G \in \mathrm{Ob}(\cat{Graph})$ is simply the minimal connected subgraph $G' \subset G$ such that \[\dim_{\QQ} H_1(G';\QQ) = \dim_{\QQ}H_1(G;\QQ).\]
Now let
\[ \Graph_{\circ} \subset \Graph \]
be the subgroupoid of connected graphs which do not have any vertices of valence $1$ or $2$; this implies that in particular every graph in $\Graph_{\circ}$ is equal to its own core. Finally, let $\widehat{\Graph_{\circ}}$ be the groupoid obtained by barycentrically subdividing each edge of a graph in $\Graph_{\circ}$. The following formula follows from applying the same grafting techniques from Section \ref{sec:catails} to the setting of coarse graphs.
\end{defn}
\begin{thm}\label{thm:core_gen_fun}
\[\overline{\cat{a}} = \left(\sum_{\Theta \in \Partt} O_\circ(\Theta) \cdot t^{||\Theta||/2}\prod_{\mu \in \Part^\star} \psi_{\Theta(\mu)}(\cat{b}_\mu)\right) \circ \left(p_1 + \frac{\partial}{\partial p_1}\sum_{g \geq 0} \cat{a}^{(0)}_g t^{g}\right), \]
where
\[\cat{b}_\mu = \begin{cases} \frac{1}{t\cdot(1 - t
\cdot\cat{a}'')} &\text{if }\mu = 1^{2}\\
\frac{1 + 2t\cdot\dot{\cat{a}}}{t\cdot(1 - \psi_2(t\cdot\cat{a}''))} &\text{if }\mu = 2^1\\
\overline{\partial}_{\mu} \cat{a} &\text{otherwise},
\end{cases}\]

and 
\[O_\circ(\Theta) = \sum_{G \in \mathrm{Iso}(\widehat{\Graph}_\circ)} \frac{|\Aut^\Theta(G)|}{|\Aut(G)|}. \]
\end{thm}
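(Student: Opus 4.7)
The plan is to parallel the derivation of Theorem \ref{thm:fixed-genus-thm}, now working with coarse dual graphs in place of fine dual graphs and with the grafting operations adapted to accommodate all genera simultaneously. The first step is to establish the coarse analog of Lemma \ref{lem:attaching_rat_tails}:
\[\overline{\cat{a}} = \cat{e}_t^{\bbS}(\Mbar^{\mathrm{core}}) \circ \left(p_1 + \frac{\partial}{\partial p_1}\sum_{g \geq 0} \cat{a}^{(0)}_g t^g\right).\]
This reflects the fact that every pointed stable curve is obtained from a unique curve in $\Mbar^{\mathrm{core}}$ by attaching, at each marked point, a compact-type subcurve with one distinguished interior point; the weight $t^g$ encodes the contribution of each attached subcurve to the total arithmetic genus.

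Next I would stratify $\Mbar^{\mathrm{core}}$ by the isomorphism type of the reduced coarse dual graph $\widehat{G_\circ} \in \widehat{\Graph_\circ}$, obtained from the coarse dual graph $G_C$ by contracting bivalent vertices and then performing barycentric subdivision. An analog of the stratum formula (\ref{eqn:reduced_stratum}) then gives
\[\Mbar^{\mathrm{core}}_{\widehat{G_\circ}} \cong \left[\left(\midboxtimes_{v \in V^{b}(\widehat{G_\circ})} \calC^{\mathrm{all}}\right) \boxtimes \left(\midboxtimes_{v \in V^{\mathrm{stab}}(\widehat{G_\circ})} \Res^{\bbS}_{S_{\val(v)} \times \bbS} \M\right)\right] / \Aut(\widehat{G_\circ}),\]
where $\calC^{\mathrm{all}}$ is the genus-graded $(S_2 \times \bbS)$-variety parameterizing chains of irreducible nodal components of arbitrary arithmetic genus with two distinguished markings on the extreme components, and $\M = \bigsqcup_{g \geq 0} \M_g$. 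The degree shift arising from the first Betti number of $\widehat{G_\circ}$ is accounted for via the genus-graded analog of Lemma \ref{lem:degree_shift}.

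The main computational input is then the higher-genus caterpillar generating function: I would show
\[\cat{e}_t^{S_2 \times \bbS}(\calC^{\mathrm{all}}) = \frac{1}{2}p_1^2 \otimes \frac{1}{t(1-t\cat{a}'')} + \frac{1}{2}p_2 \otimes \frac{1 + 2t\dot{\cat{a}}}{t(1-\psi_2(t\cat{a}''))},\]
which generalizes the genus-zero formula cited in the proof of Lemma \ref{lem:reduced-fine-Serre-clean}. The same dihedral-symmetry arguments apply, with $\cat{a}_0$ replaced by $t\cdot\cat{a}$ so as to aggregate over all genera; the overall factor of $1/t$ arises from the convention $\cat{e}_t = \sum_g \cat{e}^{(g)} t^{g-1}$ applied to the degenerate caterpillar $\calC(0) = \Spec \C$ of arithmetic genus zero.

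Finally, summing the stratum contributions over $\mathrm{Iso}(\widehat{\Graph_\circ})$ and applying the P\'olya--Petersen formalism of \S\ref{sec:ppchar} as in the proof of Theorem \ref{thm:fixed-genus-thm}, then interchanging the sums over graphs and $2$-partitions as in the proof of Theorem \ref{thm:all_graphs}, yields the stated formula: the projections $\mathrm{proj}_{1^2}$ and $\mathrm{proj}_{2^1}$ applied to $\cat{e}_t^{S_2 \times \bbS}(\calC^{\mathrm{all}})$ extract the special generating functions $\cat{b}_{1^2}$ and $\cat{b}_{2^1}$, while the other $\cat{b}_\mu$ arise from $\overline{\partial}_\mu \cat{a}$ via the standard correspondence of Definition \ref{defn:partialbar}. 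I anticipate the main obstacle to be the caterpillar formula above, which requires careful bookkeeping of $t$-degrees across the various genus-graded box products.
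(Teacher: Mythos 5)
Your proposal follows essentially the same route as the paper's (sketch) proof: stratify $\Mbar^{\mathrm{core}}$ by reduced coarse dual graphs in $\widehat{\Graph}_\circ$, insert all-genus caterpillars at the bivalent subdivision vertices, and graft compact-type trees via plethysm with $p_1 + \frac{\partial}{\partial p_1}\sum_g \cat{a}_g^{(0)} t^g$. You usefully make explicit some details the paper leaves implicit, in particular the all-genus caterpillar characteristic $\cat{e}_t^{S_2\times\bbS}(\calC^{\mathrm{all}})$ together with its $t$-bookkeeping and the coarse analogue of Lemma \ref{lem:attaching_rat_tails}.
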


\begin{proof}
We sketch the proof; it can be made formal by imitating the techniques in Section \ref{sec:graph_strat}. The formula reflects the combinatorial fact that all connected graphs in $\cat{Graph}$ can be obtained from graphs in $\widehat{\Graph}_\circ$ in the following two steps:
\begin{itemize}
\item subdivide edges arbitrarily;
\item attach arbitrary trees to the resulting graph.
\end{itemize}    
We note that the vertices that are present in both edge subdivisions and rational tree attachments may carry positive genus weightings. Subdividing edges is accounted for by the $\cat{b}_\mu$ terms when $|\mu| = 2$, while the $\overline{\partial}_\mu \cat{a}$ terms are inserted at vertices with valence greater than $2$. The operation of taking plethysm with
\[p_1 + \frac{\partial}{\partial p_1} \sum_{g \geq 0} \cat{a}_g^{(0)} t^g \]
accounts for attaching trees on the core; this operation is the compact-type analogue of Lemma \ref{lem:attaching_rat_tails}.
\end{proof}

Our main interest in Theorem \ref{thm:core_gen_fun} is the possibility for future applications: we expect that the calculation of the factors $O_\circ(\Theta)$ appearing in the formula should be simpler to compute than the terms $O(\Theta)$ in Theorem \ref{thm:all_graphs}, though we do not yet have a systematic approach for computing these numbers in general.

\section{$\C^\star$-fixed stable maps}\label{sec:stable-maps}
Let $X$ be a smooth projective variety over $\C$ and let $\Mbar_{g,n}(X,\beta)$ be the moduli space of Kontsevich stable maps to $X.$ When $X$ admits a suitable $\C^\star$-action, the induced $\C^\star$-fixed points $\Mbar_{g,n}(X,\beta)^{\C^\star}$ have been classified by Kontsevich \cite{KontsevichTorusActions} and Graber--Pandharipande \cite{graberpand}: they are a disjoint union of smooth, proper Deligne--Mumford substacks of $\Mbar_{g,n}(X,\beta)$ indexed by decorated graphs. In the context of Gromov--Witten theory, \textit{torus localization} reduces virtual intersection theory on the reducible space $\Mbar_{g,n}(X,\beta)$ to calculations on $\Mbar_{g,n}(X,\beta)^{\C^\star}$ and is one of the most powerful techniques in the subject.

We revisit $\Mbar_{g,n}(X,\beta)^{\C^\star}$ on the same footing as $\Mbar_{g,n}$ and calculate its $S_n$-equivariant Serre characteristics using P\'olya--Petersen characters. As pointed out in the introduction, the general formula that $\chi^{S_n}(Y) = \chi^{S_n}(Y^{\C^\star})$ for varieties $Y$ with a $\C^\star$-action implies that $$\chi^{S_n}(\Mbar_{g,n}(X,\beta)) = \chi^{S_n}(\Mbar_{g,n}(X,\beta)^{\C^\star}),$$ so our calculation will recover the $S_n$-equivariant Euler characteristics of $\Mbar_{g,n}(X,\beta).$

\begin{defn}\label{def:good-action}
    Let $X$ be a smooth projective variety with a $\C^\star$-action. The $\C^\star$-action on $X$ is \textit{good} if the fixed points of the $\C^\star$-action are isolated, and that every $\C^\star$-invariant curve is a smooth rational curve between two distinct $\C^\star$-fixed points.
\end{defn}
\begin{exmp}
    These hypotheses are satisfied in the following settings:
\begin{itemize}
    \item $X$ is a smooth complete toric variety, and $\C^\star$ acts as a general $1$-parameter subgroup of the dense torus,
    \item Let $\mathbf{d} = (d_1,\dots, d_\ell)$ with $0\leq d_1<\dots<d_\ell\leq n.$ The partial flag variety $X = \mathrm{Fl}(\mathbf{d}, n)$ parametrizes flags of linear subspaces $$\{0\subseteq V_1\subsetneq\dots\subsetneq V_{\ell}\subsetneq \C^n\mid \dim V_i = d_i\},$$ and $\C^\star$ acts diagonally with generic weights on $\C^n$. In particular, the Grassmannian $\mathrm{Gr}(k, n)$ of $k$-dimensional subspaces of $\C^n$ and the complete flag variety $\mathrm{Fl}(n)$ arise as special cases.
\end{itemize}
\end{exmp}

In this setting, there is a natural way to associate a graph to the $\C^\star$-action on $X$. 
\begin{defn}
Let $X$ be a smooth projective variety with a good $\C^\star$-action. The \textit{torus graph} $G_X$ of the action has a vertex for each $\C^\star$-fixed point, and an edge between two vertices for each $\C^\star$-invariant curve between the corresponding fixed points. Given an edge $e \in E(G_X)$, we write $\beta_e \in H_2(X;\Z)$ for the homology class of the $\C^\star$-invariant curve represented by $e$.
\end{defn}

\begin{rem}
    In the examples given above, the torus graphs agree with the GKM graphs of the torus action constructed in the work of Goresky--Kottwitz--MacPherson \cite{GKM}. \begin{itemize}
        \item When $X$ is a projective toric variety, the torus graph is the 1-skeleton of the polytope of $X.$
        \item When $X = \mathrm{Gr}(k,n)$ and $\mathrm{Fl}(n),$ the torus graphs are the 1-skeleta of the hypersimplex $\Delta(k,n)$ and the permutahedron, respectively.
    \end{itemize}
\end{rem}

We will require the notion of graph homomorphisms.
\begin{defn}
Let $G$ and $G'$ be two graphs. A \textit{graph homomorphism} $f: G \to G'$ is the data of maps \[f_V: V_G \to V_{G'},\]
\[f_H: H_G \to H_{G'} \]
which are compatible with the root maps and involutions. 
\end{defn}



\subsection{The groupoid of a good $\C^\star$-action}
The fixed point locus $\Mbar_{g,n}(X,\beta)^{\C^\star}$ admits a stratification indexed by decorated graph homomorphisms. The following definition is a combinatorial reformulation of the classification in \cite[§3.2]{KontsevichTorusActions} and \cite[§4]{graberpand}.

\begin{defn}
Suppose $X$ is a variety with a good $\C^\star$-action. We define the groupoid $\sfgamma_X$ to be the groupoid of triples $(G, f, \delta)$ where
\begin{itemize}
    \item $G$ is a connected graph,
    \item $f: G \to G_X$ is a graph homomorphism, and
    \item $\delta: E(G) \to \Z_{> 0}$ is a weighting of the edges of $G$ by positive integers.
    \end{itemize}
An isomorphism between $(G, f, \delta)$ and $(G', f', \delta')$ is a graph isomorphism $\varphi: G \to G'$ such that the diagrams
\[\begin{tikzcd}
&E(G) \arrow[rr,"\varphi"] \arrow[dr, "\delta"] & & E(G') \arrow[dl, "\delta'"']\\
&  &\Z_{> 0} & 
\end{tikzcd}\]
and 
\[\begin{tikzcd}
&G \arrow[rr,"\varphi"] \arrow[dr, "f"] & & G' \arrow[dl, "f'"']\\
&  &G_X & 
\end{tikzcd}\]
each commute.

We write $\sfgamma_{X, \beta} \subset \sfgamma_X$ for the subgroupoid of $\sfgamma_X$ consisting of maps $(G,f,\delta)$ such that
\[ \sum_{e \in E(G)} \delta(e)\cdot \beta_{f(e)} = \beta \in H_2(X; \Z). \]
\end{defn}

Recall the notation
\[\widehat{\cat{a}} = t^{-1}(h_1 + h_2) + \sum_{2g - 2 + n > 0} \cat{e}^{S_n}(\Mbar_{g, n}) \cdot t^{g-1}. \]
where \[h_1 = p_1 \quad\mbox{and}\quad h_2 = \frac{p_1^2 + p_2}{2}. \]
The terms $h_1$ and $h_2$ can be thought of as formally setting $\Mbar_{0,1}$ and $\Mbar_{0,2}$ to be points.

\begin{defn}
For a variety $X$ with a good $\C^\star$-action, we define
\[\overline{\cat{a}}^{\C^\star}_{X, \beta} := \sum_{g,n} \cat{e}^{S_n}(\Mbar_{g, n}(X, \beta)^{\C^\star})\cdot t^{g - 1} \in K_0(\cat{MHS}) \otimes \LLambda, \]
where the sum is over all $g, n \geq 0$ such that $\Mbar_{g, n}(X, \beta)$ is nonempty. If $X= \Spec \C$ is a point, then
\[ \overline{\cat{a}}_{\Spec \C, 0}^{\C^\star} = \overline{\cat{a}}, \]
where $\overline{\cat{a}}$ is defined as in (\ref{eqn:abar_defn}).
\end{defn}
For the remainder of this section, we will assume that $X$ is nontrivial, i.e., $X \neq \Spec \C$ and $X$ admits a good $\C^\star$-action.

By definition, the automorphism group $\Aut(G, f, \delta)$ is a subgroup of $\Aut_{\cat{Graph}}(G).$ Hence, as before, we have an embedding 
\[ \Aut(G, f, \delta) \hookrightarrow \bbS_{\nu(G)} \]
for any $(G, f, \delta) \in \mathrm{Iso}(\cat{\Gamma}_X)$, well-defined up to conjugacy. 

\begin{defn}
    The P\'olya--Petersen character of $(G, f, \delta)\in \mathrm{Iso}(\sfgamma_{X, \beta})$ is defined as \[ \zeta_{(G, f, \delta)}:= \ch(\Ind_{\Aut(G, f, \delta)}^{\bbS_{\nu(G)}} \mathrm{triv}) \in \WRing. \]
\end{defn}

We can reformulate prior work Graber--Pandharipande \cite{graberpand} and Kontsevich \cite{KontsevichTorusActions} on $\C^\star$-fixed stable maps in the following proposition.

\begin{prop}\label{prop:localized_stable_maps}
\[\overline{\cat{a}}^{\C^\star}_{X, \beta} = \sum_{(G, f, \delta) \in \mathrm{Iso}(\cat{\Gamma}_{X, \beta})} t^{|E(G)|}\cdot(\zeta_{(G, f, \delta)} \tcirc \Delta \widehat{\cat{a}}). \]
\end{prop}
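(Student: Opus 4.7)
The plan is to mimic the proof of Proposition \ref{prop:ecoarse}, replacing the coarse dual graph stratification of $\Mbar$ with the classification of $\C^\star$-fixed stable maps due to Kontsevich \cite{KontsevichTorusActions} and Graber--Pandharipande \cite{graberpand}. First, I would recall their classification: for each $(G, f, \delta) \in \mathrm{Iso}(\sfgamma_{X,\beta})$ one obtains an $S_n$-equivariant locally closed stratum
\[\Mbar^{\C^\star}_{(G,f,\delta)} \subset \Mbar_{g,n}(X,\beta)^{\C^\star}\]
parametrizing stable maps whose source has coarse dual graph $G$, whose irreducible components over the fixed point $f(v)$ form a stable curve with $\val(v) + m_v$ marked points ($\val(v)$ edge-markings plus $m_v$ free markings distributed from the $n$ total marks), and whose edges $e \in E(G)$ correspond to $\delta(e)$-fold fully-ramified covers of the $\C^\star$-invariant $\P^1$ represented by $f(e)$. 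Each such edge cover is unique up to a $\Z/\delta(e)$-reparameterization that acts trivially on the coarse moduli, so edges contribute no continuous moduli and the source has total arithmetic genus $\sum_v g_v + h^1(G)$.

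Summing over all $g, n$, the above description yields a canonical isomorphism of genus-graded $\bbS$-varieties
\[ \Mbar^{\C^\star}_{(G, f, \delta)} \cong \left[ \left( \midboxtimes_{v \in V(G)} \Res^{\bbS}_{S_{\val(v)} \times \bbS} \Mbar \right) / \Aut(G, f, \delta) \right][h^1(G)], \]
parallel to (\ref{eqn:coarse_strata}). Here $\Aut(G, f, \delta)$ acts on the box product as a subgroup of $\bbS_{\nu(G)}$ via (\ref{eqn:aut_embedding}), and $[h^1(G)]$ denotes the genus shift that accounts for the cycles of $G$ contributing to the arithmetic genus of the source.

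From here the derivation proceeds exactly as in the proof of Proposition \ref{prop:ecoarse}. Applying the identity $X/H \cong (\Ind_H^G \mathrm{pt} \times X)/G$ rewrites the $\Aut(G, f, \delta)$-quotient as a $\bbS_{\nu(G)}$-quotient of a product whose first factor is $\Ind_{\Aut(G,f,\delta)}^{\bbS_{\nu(G)}} \mathrm{pt}$, whose $\cat{e}^{\bbS^{[2]}}$ is by definition $\zeta_{(G,f,\delta)}$. Taking equivariant Serre characteristics via Theorem \ref{thm:tcirc_varieties}, and combining the degree shift $t^{|V(G)|-1}$ from Lemma \ref{lem:degree_shift} with the genus shift $t^{h^1(G)}$ via the identity $h^1(G) + |V(G)| - 1 = |E(G)|$, gives
\[ \cat{e}_t^{\bbS}(\Mbar^{\C^\star}_{(G,f,\delta)}) = t^{|E(G)|} \cdot (\zeta_{(G,f,\delta)} \tcirc \Delta \overline{\cat{a}}). \]
Summing over $\mathrm{Iso}(\sfgamma_{X, \beta})$ yields the proposition.

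The main obstacle is establishing the stratum identification in the second paragraph: one must confirm that the edges contribute only the discrete data $\delta$ to the stratum (no continuous moduli), and that the $\Aut(G, f, \delta)$-action corresponds precisely to the half-edge permutation embedding into $\bbS_{\nu(G)}$. Both assertions rest on the Graber--Pandharipande analysis \cite{graberpand} together with the defining hypotheses of a good $\C^\star$-action (isolated fixed points, each $\C^\star$-invariant curve smooth and equal to $\P^1$). Once these geometric inputs are secured, the remaining symmetric-function bookkeeping is purely formal and parallels the proof of Proposition \ref{prop:ecoarse} verbatim.
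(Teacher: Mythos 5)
Your proposal is correct and follows essentially the same path as the paper's own proof: both invoke the Kontsevich/Graber--Pandharipande classification of $\C^\star$-fixed stable maps to identify each $(G,f,\delta)$-indexed piece with $\bigl[\bigl(\boxtimes_{v}\Res^{\bbS}_{S_{\val(v)}\times\bbS}\Mbar\bigr)/\Aut(G,f,\delta)\bigr]$ up to a genus shift (you write $[h^1(G)]$, the paper writes $[1-\chi(G)]$; these agree), then take graded Serre characteristics exactly as in Proposition \ref{prop:ecoarse} and sum over isomorphism classes. The additional geometric detail you supply about edges being $\delta(e)$-fold ramified covers with no continuous moduli is precisely what the paper silently delegates to the cited references, so the argument is sound and matches in structure.
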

\begin{proof}
    Let $\Mbar(X,\beta)^{\mathbb{C}^\star}$ be the genus-graded $\bbS$-variety given by $$\Mbar(X,\beta)^{\mathbb{C}^\star}(g,n):=\Mbar_{g,n}(X,\beta)^{\mathbb{C}^\star}.$$ As explained in \cite[§3.2]{KontsevichTorusActions} and \cite[§4]{graberpand}, each $(G, f, \delta)$ specifies a union of connected components in $\Mbar(X,\beta)^{\mathbb{C}^\star}$ isomorphic to\footnote{The statements given in loc. lit. concern triples $(G,f,\delta)$ together with a genus weighting, and the formula takes a union across all genus weightings of $(G,f,\delta).$} $$\left[\left(\midboxtimes_{\hspace{1pt}v\in V(G)} \mathrm{Res}^{\mathbb{S}}_{S_{\mathrm{val}(v)}\times \bbS} \Mbar^\dagger\right)/\Aut(G)\right][1-\chi(G)],$$
    where $\Mbar^{\dagger}$ is the genus-graded $\bbS$-space such that
    \[ \Mbar^\dagger(g, n) = \begin{cases}
        \Mbar(g,n) & \text{if }(g, n) \notin\{(0, 1), (0, 2)\}\\
        \Spec \C &\text{if }(g, n) \in \{(0, 1), (0, 2)\}
    \end{cases}. \]  
    
    Upon taking graded Serre characteristics, this gives precisely $t^{|E(G)|}\cdot(\zeta_{(G,f,\delta)}\tcirc \Delta\widehat{\cat{a}}).$
\end{proof}

Computing the P\'olya--Petersen characters appearing in Proposition \ref{prop:localized_stable_maps}, we obtain Theorem \ref{thm:torus_fixed_all_graphs} on Euler characteristics of moduli spaces of stable maps. In the following, given $\Theta\in \Partt,$ we define a union of conjugacy classes $\Aut^{\Theta}(G,f,\delta)\subset \Aut(G,f,\delta)$ in the same way as Definition \ref{defn:AutTheta}. 

\begin{customthm}{C}\label{thm:torus_fixed_all_graphs}
The formula
\[\overline{\cat{a}}^{\C^\star}_{X, \beta} = \sum_{\Theta \in \Partt} O_{X, \beta}(\Theta)\cdot t^{||\Theta||/2}  \prod_{\mu \in \Part^\star} \psi_{\Theta(\mu)}\left(\overline{\partial}_{\mu}\widehat{\cat{a}}\right)  \]
holds, where $O_{X, \beta}(\Theta) \in \QQ$ is defined by
\[O_{X, \beta}(\Theta) := \sum_{(G, f, \delta) \in \mathrm{Iso}(\cat{\Gamma}_{X, \beta})} \frac{|\Aut^{\Theta}(G, f, \delta)|}{|\Aut(G, f, \delta)|} \]
\end{customthm}

As remarked in the introduction, Theorem \ref{thm:torus_fixed_all_graphs} expresses $\overline{\cat{a}}_{X, \beta}^{\C^\star}$ as a polynomial in the variables $\psi_k(\overline{\partial}_\mu\overline{\cat{a}})$, for finitely many $k$ and $\mu$.

\begin{exmp}
When $X = \P^r$, we have $H_2(X;\Z) \cong \Z \cdot [L]$ where $[L]$ is the class of a line; thus $\beta$ is determined by an integer $d$. In this case the torus-fixed graph $G_{\P^{r}}$ is the complete graph with vertex set $(\P^r)^{\C^{\star}}$, and a graph homomorphism from an arbitrary graph $G$ to $G_{\P^r}$ is determined by an $(r +1)$-coloring of $G$. Graphs in the groupoid $\cat{\Gamma}_{\P^r, d}$ can have at most $d + 1$ vertices and at most $d$ edges. See Figure \ref{fig:stable_map_exmp} for the calculation of the generating function $\overline{\cat{a}}^{\C^\star}_{\P^r, 3}$ for degree $3$ torus-fixed stable maps to $\P^r$, and Tables \ref{table:stable_maps_no_markings} and \ref{table:degree3-frob-chars} for sample calculations.
\end{exmp}
\begin{rem}
By expanding the definition of the groupoid $\cat{\Gamma}_{X, \beta}$ to allow for contractions of edges in the graph morphism $G \to G_X$, one could give a formula for $\cat{\overline{a}}_{X, \beta}^{\C^\star}$ as a function of the power series $\cat{a}$, and then recover Theorem \ref{thm:all_graphs} as a special case of the resulting formula by taking $X = \Spec \C$. For ease of exposition, we do not work this out here, and note that plugging Theorem \ref{thm:all_graphs} into Theorem \ref{thm:torus_fixed_all_graphs} gives a formula for $\cat{\overline{a}}_{X, \beta}^{\C^\star}$ in terms of $\cat{a}$.
\end{rem}

\begin{figure}[h]
    \centering
    \includegraphics[scale=1]{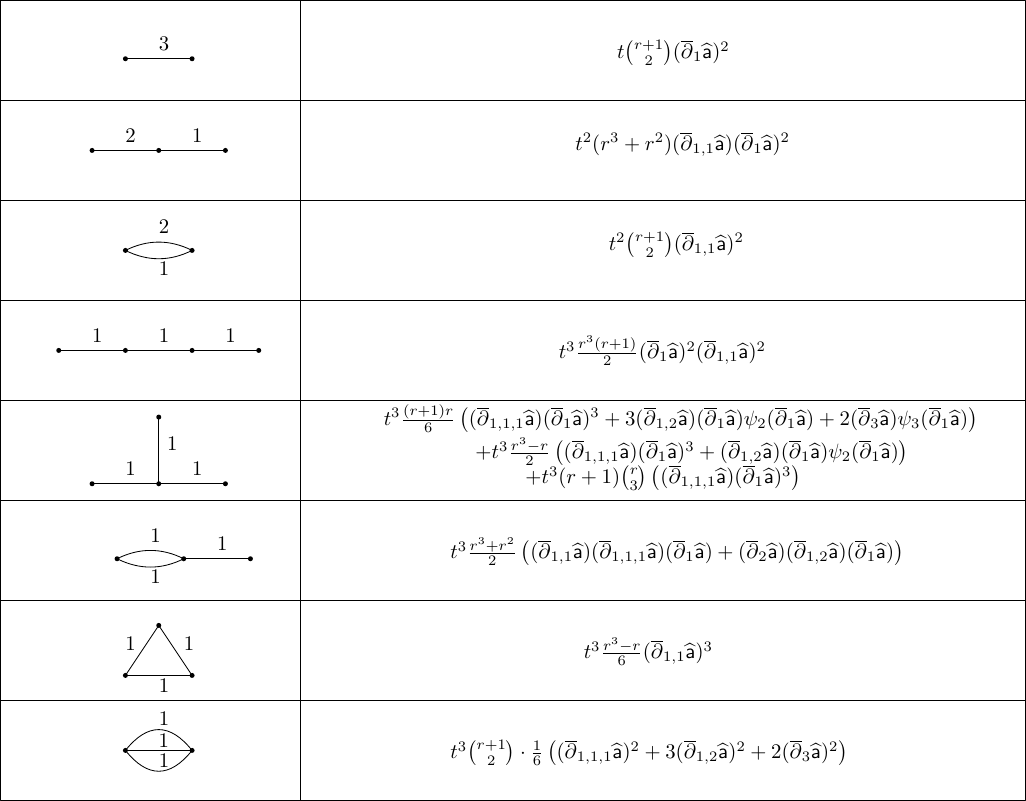}
\caption{The generating function $\overline{\cat{a}}_{\P^r, 3}^{\C^\star}$ is obtained by summing each of the graph contributions in the table.}
\label{fig:stable_map_exmp}
\end{figure}

\appendix
\section{Background on wreath products}\label{appendix:wreath_products}
Let $G$ be an arbitrary finite group, and let $G_*$ denote the set of conjugacy classes of $G$.

\begin{defn}Define the wreath product of $G$ and $S_n$ as \[G_n := G \wr S_n := G^n \rtimes S_n,\] where $S_n$ acts on $G^n$ by permutation. Concretely, $G_n$ can be seen as the group of $n\times n$ permutation matrices with entries in $G$. Note that $G_0$ is the trivial group.
\end{defn}
Conjugacy classes in $G_n$ have been completely classified, as we now recall.
\begin{defn}
Let $\omega = (g_1,\dots,g_n; \sigma) \in G_n.$ For each conjugacy class $c \in G_*$, define  $\lambda_\omega(c) \in \Part^\star$ by setting ${\lambda_\omega(c)}_i$ to be the number of $i$-cycles in $\sigma$ such that the product of the corresponding elements of $G$, in the cyclic order determined by the cycle, lies in $c$. This defines $\lambda_\omega: G_*\to \Part^\star$ as the \textit{cycle type} of $\omega$.
\end{defn}
A map $G_*\to \Part^\star$ that corresponds to a cycle type in $G\wr S_n$ can be thought of as a partition of $n$ with the additional data of an element of $G_*$ for each part of the partition. This is equivalent to requiring that $\sum_{[g]\in G_*}|\lambda_\omega([g])| = n.$
\begin{lem}\cite[§I.B.3]{Macdonald}\label{lem:wrconj}
 The conjugacy classes of $G_n$ are in bijection with maps $f: G_*\to \Part^\star$ such that $\sum_{[g]\in G_*}|f([g])| = n.$
\end{lem}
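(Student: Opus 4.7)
The plan is to establish the bijection via the cycle-type map $\omega \mapsto \lambda_\omega$ defined just before the lemma, in three steps: showing $\lambda_\omega$ is conjugation-invariant, showing injectivity on conjugacy classes, and showing surjectivity.

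First, I would compute $\omega' \omega (\omega')^{-1}$ directly using the semidirect product multiplication in $G_n = G^n \rtimes S_n$. Writing $\omega = (g_1, \ldots, g_n; \sigma)$ and $\omega' = (h_1, \ldots, h_n; \tau)$, the permutation part of the conjugate is $\tau \sigma \tau^{-1}$, which has the same cycle lengths as $\sigma$, with each cycle $(k_1, \ldots, k_j)$ replaced by $(\tau(k_1), \ldots, \tau(k_j))$. A short calculation shows that the corresponding cycle product $g_{k_1} \cdots g_{k_j} \in G$ is replaced by a $G$-conjugate of itself. Combined with the observation that cyclically rotating the starting index of a cycle conjugates the cycle product within $G$, this shows that $\lambda_\omega$ descends to a well-defined function on $(G_n)_*$.

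For injectivity, I would reduce each element to a normal form depending only on $\lambda_\omega$. First, conjugating by a suitable element of $S_n \subset G_n$ arranges the cycles of $\sigma$ into a fixed order on consecutive index blocks of prescribed sizes. Then, within each cycle of length $k$ supported on a block $\{i, \ldots, i+k-1\}$, the key lemma to verify is that $(g_i, \ldots, g_{i+k-1}; \kappa)$ is conjugate, inside the copy of $G \wr S_k$ acting on that block, to $(\gamma, 1, \ldots, 1; \kappa)$, where $\gamma$ is any chosen representative of the $G$-conjugacy class of the cycle product. This in turn follows by successively conjugating by elements of $G^k$ to absorb all but one of the $g_j$ into a single position, and then conjugating further by a diagonal element to move the surviving entry to $\gamma$. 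Surjectivity is straightforward: given any $f: G_* \to \Part^\star$ with $\sum_c |f(c)| = n$, one produces $\omega \in G_n$ realizing $f$ by placing, for each conjugacy class $c$ and each part $k$ of $f(c)$, a disjoint $k$-cycle in $S_n$ with one coordinate equal to a representative of $c$ and the remaining $k-1$ coordinates equal to the identity.

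The main obstacle is the normal-form calculation in the injectivity step. The explicit bookkeeping reduces to checking that conjugation by $(h_1, \ldots, h_k; 1)$ sends $(g_1, \ldots, g_k; \kappa)$ to an element whose $j$th coordinate is $h_j g_j h_{\kappa^{-1}(j)}^{-1}$, with permutation part unchanged. Choosing the $h_j$ telescopically along the cycle $\kappa$ trivializes all but one coordinate, while a final conjugation handles the residual freedom in the surviving cycle product. Once this identity is secured, the remaining steps are formal, and together they establish the claimed bijection.
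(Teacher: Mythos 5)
Your proof is correct, and it is the standard argument (conjugation formula, invariance of the type map, reduction to a normal form via telescoping in each cycle, surjectivity by explicit construction). The paper does not prove this lemma but cites Macdonald \cite[\S I.B.3]{Macdonald}; Macdonald's own argument proceeds in essentially the same way, so your blind reconstruction matches the cited source.
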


For the remainder of this section, let $k$ denote a splitting field of $G.$

\begin{defn}Let $R(G_n)$ be the $k$-vector space of virtual $G_n$-representations. Define
    \[ \Lambda(G) := \bigoplus_{n \geq 0} R(G_n) .\]
    $\Lambda(G)$ has a graded ring structure via the $\boxtimes$-product:
    $ V \boxtimes W = \Ind_{G_m \times G_n}^{G_{m + n}} V \otimes W,$ where for each $m,n$, there is an embedding $G_m \times G_n \to G_{m + n}$, well-defined up to conjugation.
    \end{defn}

\begin{thm}\cite[§I.B]{Macdonald}\label{eq:wreathchar}
    The ring $\Lambda(G)$ is isomorphic to the polynomial ring \[k[p_i(c) \mid i\geq 1, \,c\in G_* ].\] The isomorphism sends a $G_n$-representation $V$ to its character
    \[ \ch_{G_n}(V) : = \frac{1}{|G|^n\cdot n!} \sum_{\omega \in G_n} \mathrm{Tr}_V(\omega) p_{\lambda_\omega}, \]
    where for $\rho: G_*\to \Part_n^\star$, writing $\rho(c) = (1^{\rho(c)_1}\dots)$, $p_\rho$ is defined as $$p_\rho:=\prod_{c\in G_*}\prod_{i\geq 1}p_i^{\rho(c)_i}(c).$$
\end{thm}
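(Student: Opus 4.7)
The plan is to exhibit the character map $\ch: \Lambda(G) \to k[p_i(c) \mid i \geq 1, c \in G_*]$ as a graded ring homomorphism, then check it is an isomorphism by combining multiplicativity, injectivity of characters in characteristic zero, and a dimension count against Lemma \ref{lem:wrconj}. Throughout, I identify $R(G_n) \otimes_{\Z} k$ with the space of $k$-valued class functions on $G_n$; this uses that $k$ is a splitting field for each $G_n$ (which follows from $k$ being a splitting field for $G$ together with the standard construction of irreducible representations of wreath products as induced from Young subgroups of Clifford-theoretic type). Grading: $\deg p_i(c) = i$ on the polynomial side, and $R(G_n)$ sits in degree $n$ on the $\Lambda(G)$ side. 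Well-definedness of $\ch$ in degree $n$ is immediate, since every element $\omega \in G_n$ has $|\lambda_\omega| = n$, so each monomial $p_{\lambda_\omega}$ appearing in the formula lies in the degree-$n$ part.

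First I would verify multiplicativity, i.e. $\ch_{G_{m+n}}(V \boxtimes W) = \ch_{G_m}(V) \cdot \ch_{G_n}(W)$ for $V \in R(G_m)$ and $W \in R(G_n)$. This is the main technical step. By the Frobenius induction formula applied to $G_m \times G_n \hookrightarrow G_{m+n}$,
\[
\operatorname{Tr}_{V \boxtimes W}(\omega) = \frac{1}{|G_m \times G_n|} \sum_{\substack{g \in G_{m+n} \\ g^{-1}\omega g \in G_m \times G_n}} \operatorname{Tr}_V(\pi_1 g^{-1}\omega g)\,\operatorname{Tr}_W(\pi_2 g^{-1}\omega g).
\]
Inserting this into the defining formula for $\ch_{G_{m+n}}(V\boxtimes W)$, reindexing the double sum, and using the combinatorial identity $\lambda_{(\omega_1,\omega_2)} = \lambda_{\omega_1} + \lambda_{\omega_2}$ (pointwise addition of $G_*$-indexed partitions) which gives $p_{\lambda_{(\omega_1,\omega_2)}} = p_{\lambda_{\omega_1}}\,p_{\lambda_{\omega_2}}$, yields the product formula after cancelling the factorial index $|G_{m+n}|/|G_m \times G_n|$.

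Next, I would show the character map is an isomorphism after tensoring with $k$. In each fixed degree $n$: injectivity is standard, since distinct virtual characters have distinct trace functions, and the $p_\rho$ indexed by cycle types $\rho: G_* \to \Part^\star$ with $\sum_c |\rho(c)| = n$ are pairwise linearly independent monomials in the polynomial ring. For surjectivity onto degree $n$, I compare dimensions: Lemma \ref{lem:wrconj} identifies $(G_n)_*$ with the set of such $\rho$, so $\dim_k R(G_n) \otimes_\Z k = \#(G_n)_*$ equals the number of monomials $p_\rho$ of total degree $n$, which is the $k$-dimension of the degree-$n$ part of $k[p_i(c)]$.

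Finally, to conclude that $\Lambda(G) \cong k[p_i(c)]$ as a polynomial ring (not merely as graded vector spaces), I would exhibit explicit preimages of the generators $p_i(c)$ under $\ch$: for each $i \geq 1$ and $c \in G_*$, the scaled indicator class function on $G_i$ supported on the conjugacy class of cycle type $\rho_{i,c}: c \mapsto (i^1),\,c' \mapsto \varnothing$ lies in $R(G_i)\otimes_\Z k$ (splitting field), and a direct evaluation of the trace-sum formula sends it to the generator $p_i(c)$. Combining this with the already-established multiplicativity shows that $\ch$ is a surjective ring homomorphism whose image contains all the $p_i(c)$, and the dimension count above promotes this to an isomorphism of graded rings. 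Since the $p_\rho$ are linearly independent in each degree, the $p_i(c)$ are algebraically independent, so $\Lambda(G)$ is genuinely the polynomial ring on them. The main obstacle is the bookkeeping in the multiplicativity step; everything else reduces to standard character-theoretic facts once $k$ is taken to be a splitting field.
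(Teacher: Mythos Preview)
The paper does not give its own proof of this theorem; it is stated in Appendix~\ref{appendix:wreath_products} as background and attributed to Macdonald \cite[\S I.B]{Macdonald}. Your argument is correct and is essentially the standard one found in Macdonald's treatment: establish that $\ch$ is multiplicative via the Frobenius character formula for induced representations together with the additivity of cycle types under $G_m \times G_n \hookrightarrow G_{m+n}$, then conclude the graded isomorphism by the dimension count against Lemma~\ref{lem:wrconj}. Your final paragraph is slightly redundant---once you have a degree-preserving linear isomorphism onto the polynomial ring and multiplicativity, the algebraic independence of the $p_i(c)$ is automatic from the target being a polynomial ring by fiat---but nothing is wrong.
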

The elements $p_i(c)$ are analogues of the power sum symmetric functions; in $\Lambda(G)$ there is one set of power sum symmetric functions for each element of $G_*$. This furnishes an isomorphism of $k$-algebras
\[ \Lambda(G) \cong \bigotimes_{c \in G_*} \Lambda. \]







\begin{exmp}
When $G = S_m$, \[G_* = \Part^\star_m \] so the ring $\Lambda(S_m) = \C[p_i(\mu) \mid i\geq 1, \, \mu \in \Part^\star_m ]$ has one set of variables for each partition of $m$, and they freely generate the ring.
\end{exmp}

\subsection{Generalized plethysm}

Extending the plethysm action of $\Lambda$ on itself, there is a plethysm action $\circ_{G}$ of $\Lambda(G)$ on $R(G) \otimes \Lambda$. In particular, for $G = S_m$ we get an action $\circ_m$ of $\Lambda(S_m)$ on $\Lambda_m \otimes \Lambda$ for all $m \geq 0$.

\begin{defn}
Define $\Lambda_G$ as \[\Lambda_G := R(G) \otimes_{k} \Lambda \cong \bigoplus_{c \in G_*} \Lambda. \]
The ring $\Lambda_G$ inherits a grading from $\Lambda$ and is the Grothendieck group of bounded $(G \times \mathbb{S})$-modules, tensored with $k$. 
\end{defn}
We can also set
\[\hat{\Lambda}_G := R(G) \otimes_k \hat{\Lambda}, \]
and $\hat{\Lambda}_G$ is the Grothendieck group of not necessarily bounded $(G \times \mathbb{S})$-modules, tensored with $k$. It inherits a filtration from the degree filtration on $\hat{\Lambda}$, and we set
\[ F_1 \Lambda_G := R(G) \otimes _{k} \left( \bigoplus_{n \geq 1} \Lambda_n\right) 
\subset \Lambda .\]

The vector space $\hat{\Lambda}_G$ admits a projection map
\[ \mathrm{proj}_c : \Lambda_G \to \Lambda \]
for each $c \in G_*$: each element  $q \in \hat{\Lambda}_G$ can be written uniquely as a sum
\[q = \sum_{c \in G_*} \frac{c^\vee}{z_c} \otimes f_c \] 
where 
\begin{itemize}
\item $f_c \in \hat{\Lambda}$;
\item $c^\vee \in R(G)$ is the class function taking value $1$ on the conjugacy class $c$ and $0$ elsewhere;
\item $z_c$ is the order of the centralizer of any element in the conjugacy class $c$.
\end{itemize}

\begin{defn}
   With the same notation as above, we define the projection of $q \in \hat{\Lambda}_G$ to $c\in G_*$ as \[ \mathrm{proj}_c(q):  = f_c.\]
\end{defn}
\begin{defn}
    A $G\wr \mathbb{S}$-module $\mathcal{V}$ is a collection $\{\mathcal{V}(n)\mid n\geq 0\}$ where each $\mathcal{V}(n)$ is a finite-dimensional $G_n$-representation over $k$. We say $\mathcal{V}$ is \textit{bounded} if $\mathcal{V}(n) = 0$ for $n \gg 0$. The ring $\Lambda(G)$ is isomorphic to the Grothendieck ring of the category of bounded $G\wr \mathbb{S}$-modules, tensored with $k$.
\end{defn}

\begin{lem}\cite{semiclassicalremark}\label{lem-Gwrformulas}
    Let $\mathcal{V}$ be a $G\times \bbS$-module, and let $\mathcal{W}$ be a bounded $G\wr \bbS$-module. The formula $$(\mathcal{W}\circ_{G}\mathcal{V})(n):=\bigoplus_{k\geq 0}\left(\mathcal{W}(k)\otimes \mathcal{V}^{\otimes k}(n)\right)^{G_k}$$ defines an $\mathbb{S}$-module $\mathcal{W}\circ_{G}\mathcal{V}$ and extends to a map \[\circ_G:  \Lambda(G) \times F_1 \hat{\Lambda}_G \to \hat{\Lambda}, \]
    determined by the following properties:
    \begin{enumerate}[(1)]
    \item for all $q \in F_1 \hat{\Lambda}_G$, the map $\Lambda(G) \to \Lambda$ defined by $f \mapsto f \circ_G q$ is an algebra homomorphism,
    \item for all $q \in \hat{\Lambda}_G$, we have
    \[ p_n(c) \circ_G q = \psi_n\left( \mathrm{proj}_c(q) \right). \]
    \end{enumerate}
    \end{lem}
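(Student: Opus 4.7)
My plan is to verify the statement in four stages: (i) well-definedness of $\mathcal{W}\circ_G \mathcal{V}$ on the module level and passage to Grothendieck groups; (ii) multiplicativity, giving property (1); (iii) an explicit character computation, giving property (2); and (iv) uniqueness. The sticking point is stage (iii); stages (i), (ii), and (iv) are formal.

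First, to see that the formula makes sense, observe that $\mathcal{V}^{\otimes k}(n)$ is naturally a $(G_k\times S_n)$-module, where $G^k$ acts factor-wise, $S_k$ permutes the $k$ tensor factors, and $S_n$ acts diagonally through its action on each $\mathcal{V}(n_i)$; the $G_k$- and $S_n$-actions commute, so the $G_k$-invariants carry a residual $S_n$-action. Boundedness of $\mathcal{W}$ ensures $\mathcal{W}(k)=0$ for $k\gg 0$ so the direct sum is finite. The construction is exact in both arguments (induction and $G_k$-invariants in characteristic zero are exact), so it descends to a bi-additive map on Grothendieck groups $\Lambda(G)\times \Lambda_G \to \hat\Lambda$. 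To extend to $F_1 \hat\Lambda_G$ one observes that if $\mathcal{V}(0)=0$ then $\mathcal{V}^{\otimes k}(n)=0$ whenever $k>n$, so for fixed $n$ only finitely many $k$ contribute, and the resulting $\bbS$-module is well-defined even without boundedness on $\mathcal{V}$. This is the reason the $F_1$ hypothesis appears.

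For property (1), I would prove the module-level identity
\[(\mathcal{W}_1\boxtimes \mathcal{W}_2)\circ_G \mathcal{V} \;\cong\; (\mathcal{W}_1\circ_G \mathcal{V})\boxtimes (\mathcal{W}_2\circ_G\mathcal{V})\]
by combining the definitions: $(\mathcal{W}_1\boxtimes\mathcal{W}_2)(k) = \bigoplus_{k_1+k_2=k}\mathrm{Ind}_{G_{k_1}\times G_{k_2}}^{G_k}\mathcal{W}_1(k_1)\otimes \mathcal{W}_2(k_2)$, and the associated tensor power $\mathcal{V}^{\otimes k}(n)$ decomposes correspondingly. Taking $G_k$-invariants of the induced representation reduces by Frobenius reciprocity to taking $(G_{k_1}\times G_{k_2})$-invariants of $\mathcal{W}_1(k_1)\otimes \mathcal{V}^{\otimes k_1}(n_1)\otimes \mathcal{W}_2(k_2)\otimes \mathcal{V}^{\otimes k_2}(n_2)$, and comparing decompositions of $\mathcal{V}^{\otimes k}(n)$ with those of $\bigoplus\mathcal{V}^{\otimes k_1}\boxtimes \mathcal{V}^{\otimes k_2}$ gives the claim. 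This is the wreath-product analogue of the classical identity $(fg)\circ h = (f\circ h)(g\circ h)$.

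The main obstacle is property (2). Here I would argue as follows. The class function $p_n(c)$ on $G_n$ is, up to normalization, the indicator of the conjugacy class of $\omega_0 := (g_c,e,\dots,e;\kappa_n)$ for $g_c\in c$ and $\kappa_n$ an $n$-cycle: specifically $p_n(c)(\omega) = z_{(n,c)}\cdot \mathbf{1}_{[\omega_0]}(\omega)$, where $z_{(n,c)}=|\mathrm{Cent}_{G_n}(\omega_0)|$. Therefore, for $\sigma\in S_m$,
\[\mathrm{Tr}_{(p_n(c)\circ_G\mathcal{V})(m)}(\sigma) = \tfrac{1}{|G_n|}\sum_{\omega\in G_n} p_n(c)(\omega)\,\mathrm{Tr}(\omega,\sigma\,|\,\mathcal{V}^{\otimes n}(m)) = \mathrm{Tr}(\omega_0,\sigma\,|\,\mathcal{V}^{\otimes n}(m)),\]
using $z_{(n,c)}|[\omega_0]| = |G_n|$. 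Using the standard trace identity for cyclic permutations on tensor powers, $\mathrm{Tr}_{V^{\otimes n}}(\sigma_n\cdot(f\otimes e\otimes\cdots\otimes e)) = \mathrm{Tr}_V(f)$ for an $n$-cycle $\sigma_n$, and unpacking the $\bbS$-module structure on $\mathcal{V}^{\otimes n}(m)$, the trace is zero unless $m=n\ell$ and $\sigma$ has cycle type $\eta$ obtainable by ``merging'' $n$-blocks of cycles. An explicit cycle-by-cycle bookkeeping—identical to the one used to prove that on ordinary symmetric functions the Adams operation $\psi_n$ acts on power sums by $p_j\mapsto p_{nj}$—identifies the resulting class function on $S_m$ with the value of $\psi_n(f_c)$ at $\sigma$, where $f_c = \mathrm{proj}_c(\ch(\mathcal{V}))$. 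This is the type-$B$/wreath version of Adams-operation computations recorded in \cite{semiclassicalremark} and Macdonald \cite{MacDonald1980}, and the calculation is essentially the one done for $\ch_{G_n}$ in Theorem \ref{eq:wreathchar}.

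Finally, uniqueness is immediate: $\Lambda(G) = k[p_i(c)\mid i\geq 1,c\in G_*]$ is a polynomial algebra on the $p_i(c)$ by Theorem \ref{eq:wreathchar}, so any algebra homomorphism $\Lambda(G)\to \hat\Lambda$ is determined by its values on the generators, and condition (2) fixes these. This gives both existence and uniqueness of the map characterized by (1) and (2), matching the one defined via the module formula.
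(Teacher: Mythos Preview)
The paper does not actually prove this lemma; it is stated with a citation to \cite{semiclassicalremark} (Petersen) and no argument is given. Your four-stage sketch is the standard derivation, and it is correct. Stage (iii) is indeed where the content lies, and your outline captures it: the key facts are (a) that the $n$-cycle $\kappa_n$ in $\omega_0$ cyclically permutes the summands of $\mathcal{V}^{\boxtimes n}(m) = \bigoplus_{\vec m}\Ind^{S_m}_{\prod S_{m_i}}\bigotimes_i \mathcal{V}(m_i)$ according to $\vec m \mapsto \kappa_n\cdot \vec m$, so only $\vec m = (\ell,\dots,\ell)$ (hence $n\mid m$) survives; and (b) on that summand, the trace identity $\mathrm{Tr}_{W^{\otimes n}}((g\otimes e\otimes\cdots\otimes e)\cdot \kappa_n) = \mathrm{Tr}_W(g)$ together with the induced-character formula yields exactly the Adams operation $\psi_n$ applied to the $c$-isotypic projection. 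One small imprecision: your phrase ``$S_n$ acts diagonally through its action on each $\mathcal{V}(n_i)$'' should rather say that $S_m$ acts via the induced representation on each summand, while the $S_k$-factor of $G_k$ permutes the $k$ tensor factors (and hence the summands indexed by $\vec m$); but this is cosmetic and the subsequent computation makes clear you have the right picture.
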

    
    \begin{rem}
    When $G = S_1$, the splitting field is $\QQ,$ and there are canonical identifications $\Lambda(G) \cong \Lambda$, and $\Lambda_G = (\mathbb{Q}\cdot p_1)\otimes \Lambda$. Under these identifications, we have
    \[ p_1 \circ_{S_1} (p_1 \otimes f) = f,  \]
    hence $\circ_{S_1}$ recovers the usual plethysm of symmetric functions.
    \end{rem}

\section{Polynomial functors}\label{sec:appendixPFun}
For this section, fix a field $k$ of characteristic zero. Plethysms can be understood conceptually as compositions of polynomial functors between $k$-linear categories, which we now recall following the foundational text of Macdonald \cite{MacDonald1980}. We work with the following class of categories.

\begin{defn}
    A $k$\textit{-linear abelian category} $\catC$ is an abelian category such that there exists an embedding \[k\hookrightarrow \mathrm{End}(\mathbf{1}_{\catC}, \mathbf{1}_{\catC}),\] the ring of natural transformations from the identity functor $\mathbf{1}_{\catC}: \catC\to \catC$ to itself.
\end{defn}
One example of a $k$-linear abelian category is the category $\mathsf{Vect}_k$ of finite-dimensional $k$-vector spaces.
\begin{rem}
    Let $X,Y,Z$ be objects in a $k$-linear category $\catC.$ Then there are inclusion maps ${k\hookrightarrow \mathrm{Hom}_{\catC}(X,X)}$ such that $1\mapsto \mathrm{id}_{X}.$ All Hom sets $\mathrm{Hom}_{\catC}(X,Y)$ are $k$-vector spaces, and composition maps $\mathrm{Hom}_{\catC}(X,Y)\times \mathrm{Hom}_{\catC}(Y,Z)\to \mathrm{Hom}_{\catC}(X, Z)$ are $k$-bilinear.
\end{rem}

\begin{defn}
    Let $V,W$ be $k$-vector spaces. A map $\varphi: V\to W$ is said to be \textit{polynomial} if for any $v_1, \ldots, v_n \in V$, the map $k^n  \to W$
    defined by
    \[ (\alpha_1, \ldots, \alpha_n) \mapsto \varphi(\alpha_1 v_1 + \cdots + \alpha_n v_n) \]
    is polynomial in the variables $\alpha_1, \ldots, \alpha_n$ with coefficients in $W$. 
\end{defn}


\begin{exmp}
    $k${-linear categories} typically arise as categories of vector spaces with additional structures. Examples of interest include the category of $G$-representations over $k$ and the category of mixed Hodge structures for $k=\QQ$.
\end{exmp}
\begin{defn}\cite[§1]{MacDonald1980}\label{defn-polyfun}
   Let $\catC_1, \catC_2$ be $k$-linear categories. A functor $F: \catC_1\to \catC_2$ is a \textit{polynomial functor} if for all $U,V\in \Ob(\catC_1),$ the map $$F: \mathrm{Hom}_{\catC_1}(U, V)\to \mathrm{Hom}_{\catC_2}(F(U), F(V))$$ is a polynomial mapping between the $k$-vector spaces. 
\end{defn}

\subsection{Homogeneous components}
Let $F : \cat{C}_1 \to \cat{C}_2$ be a polynomial functor. By polynomiality of $F$, for any $X \in \Ob(\cat{C}_1)$ and $\alpha \in k$, the endomorphism $F(\alpha)\in \mathrm{End}(FX)$ is polynomial in $\alpha$: there exists unique elements $\varphi_0(X), \ldots, \varphi_N(X) \in \mathrm{End}_{\cat{C}_2}(FX)$ such that
\begin{equation}\label{eq:poly-fun-defn}
F(\alpha) = \sum_{i = 0}^{N} \varphi_i(X) \alpha^i,
\end{equation}
for all $\alpha \in k$, where $N$ may depend on $F$ and $X$.

\begin{defn}
    A polynomial functor $F : \cat{C}_1 \to \cat{C}_2$ is homogeneous of degree $k$ if for all $X$, $\varphi_i(X) = 0$ for all $i\neq k.$
\end{defn}

\begin{lem}\cite[§2]{MacDonald1980}
    The assignment $F_iX: = \operatorname{im}\varphi_i(X)$ induces polynomial functors $F_i: \cat{C}_1 \to \cat{C}_2$ homogeneous of degree $i,$ such that $F = \bigoplus_{i=0}^\infty F_i$ as functors.
\end{lem}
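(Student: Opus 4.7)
The plan is to exploit functoriality to show that the operators $\varphi_i(X) \in \mathrm{End}_{\cat{C}_2}(FX)$ form a complete orthogonal system of idempotents, and then show that morphisms of $\cat{C}_1$ intertwine these idempotents at different objects. This will produce the direct sum decomposition and the functors $F_i$ simultaneously.

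First, I would apply $F$ to a product $\alpha\beta$ with $\alpha, \beta \in k$. Functoriality gives $F(\alpha\beta \cdot \mathrm{id}_X) = F(\alpha \cdot \mathrm{id}_X)\circ F(\beta \cdot \mathrm{id}_X)$, so expanding both sides using (\ref{eq:poly-fun-defn}) and comparing coefficients of $\alpha^i \beta^j$ yields
\[ \varphi_i(X)\,\varphi_j(X) = \delta_{ij}\,\varphi_i(X). \]
Setting $\alpha = 1$ in (\ref{eq:poly-fun-defn}) gives $\sum_i \varphi_i(X) = \mathrm{id}_{FX}$. Thus the $\varphi_i(X)$ are orthogonal idempotents summing to the identity, so $FX = \bigoplus_{i} \operatorname{im} \varphi_i(X) = \bigoplus_i F_iX$ in $\cat{C}_2$.

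Next, for a morphism $f: X \to Y$ in $\cat{C}_1$, I would use that scalar multiplication in a $k$-linear category satisfies $\alpha \cdot f = f \circ (\alpha \cdot \mathrm{id}_X) = (\alpha \cdot \mathrm{id}_Y) \circ f$. Applying $F$ to both sides and expanding polynomially in $\alpha$, the comparison of coefficients of $\alpha^i$ gives
\[ F(f)\circ \varphi_i(X) = \varphi_i(Y) \circ F(f). \]
Consequently $F(f)$ restricts to a well-defined morphism $F_i(f): F_iX \to F_iY$, and these restrictions assemble into a functor $F_i$ because the idempotent decomposition is preserved under composition. Polynomiality of $F_i$ on Hom-sets follows because $F_i(f) = F(f)\circ \varphi_i(X)|_{F_iX}$ and $F$ itself is polynomial. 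To verify $F_i$ is homogeneous of degree $i$, evaluate at $\alpha \cdot \mathrm{id}_X$: the restriction of $F(\alpha \cdot \mathrm{id}_X) = \sum_j \varphi_j(X)\alpha^j$ to $F_iX$ is $\alpha^i \cdot \mathrm{id}_{F_iX}$, which is exactly the condition that $F_i$ is homogeneous of degree $i$. The identity $F = \bigoplus_i F_i$ is then the natural decomposition $FX = \bigoplus_i F_i X$ with the compatible action on morphisms established above.

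The main subtlety will be the step that upgrades the pointwise idempotent decomposition of each $FX$ to a functorial decomposition; this is exactly where $k$-linearity of $\cat{C}_1$ is essential, since the identity $\alpha \cdot f = f \circ (\alpha \cdot \mathrm{id}_X)$ is what forces $F(f)$ to commute with the spectral projections $\varphi_i$. Once this intertwining relation is in hand, polynomiality and homogeneity of each $F_i$ are formal consequences of the polynomial expansion of $F$, and no further work is needed beyond checking that finitely many $\varphi_i$ are nonzero for each $X$ (which is exactly the content of (\ref{eq:poly-fun-defn})).
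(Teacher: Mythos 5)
Your proof is correct and follows essentially the same approach as the cited source (Macdonald \S2), which the paper also replicates verbatim when proving the analogous decomposition for polynomial functors on $\bbS$-modules in \S\ref{sec:pfpolyfun}: establish orthogonal idempotency of the $\varphi_i(X)$ by expanding $F(\alpha\beta) = F(\alpha)F(\beta)$, then upgrade to a functorial splitting via the intertwining relation $F(f)\varphi_i(X) = \varphi_i(Y)F(f)$ obtained from $f\circ(\alpha\cdot\mathrm{id}_X) = (\alpha\cdot\mathrm{id}_Y)\circ f$.
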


\begin{defn}
    The functor $F_i$ is called the \textit{homogeneous degree $i$ component} of $F$. We say $F$ is \textit{bounded} if $F_i = 0$ for $i \gg 0$. Let $\mathbf{P}[\cat{C}_1, \cat{C}_2]$ be the category whose objects are bounded polynomial functors $\cat{C}_1 \to \cat{C}_2$, and whose morphisms are natural transformations. Then $\mathbf{P}[\cat{C}_1, \cat{C}_2]$ is a $k$-linear category. The full subcategory of polynomial functors which are homogeneous of degree $i$ is denoted as $\mathbf{P}_i[\cat{C}_1, \cat{C}_2].$
\end{defn}

\begin{rem}
    While $F = \bigoplus_{i = 0}^{\infty} F_i$ may well involve infinitely many terms for an unbounded polynomial functor $F,$ for each given $X$ only finitely many terms $F_i X$ will be non-zero, since there are only finitely many nonzero terms on the right-hand side of (\ref{eq:poly-fun-defn}).
\end{rem}

\subsection{Linearization}
The connection between polynomial functors and (wreath) symmetric sequences is established by the linearization construction for polynomial functors, which we now describe.
Let $\catC$ be a $k$-linear category, and let
\[\cat{C}^n := \underbrace{\cat{C} \times \cdots \times \cat{C}}_{n}\]
be the $n$-fold product of the category $\cat{C}$, which is again a $k$-linear category. 

Let $U: \cat{C}_1^n \to \cat{C}_2$ be a polynomial functor. For any object \[\tup{X} =  (X_1, \ldots, X_n) \in \cat{C}_1^n,\] the entrywise scaling action $k^{n} \to \End_{\cat{C}_2}(\tup{X})$ composes to a polynomial mapping
\[ k^n \to \End_{\cat{C}_1^n}(\tup{X}) \xrightarrow{U} \End_{\cat{C}_2}(U\tup{X}). \]
As before, decomposing this polynomial into homogeneous parts lifts to a decomposition of functors \[U  = \bigoplus_{m_1, \ldots, m_n \geq  0} U_{m_1, \ldots, m_n}. \]
If $F: \cat{C}_1 \to \cat{C}_2$ is a polynomial functor, then we obtain a polynomial functor $F^{(n)} : \cat{C}_1^n \to \cat{C}_2$ by $F(X_1, \ldots, X_n) = F(X_1 \oplus \cdots \oplus X_n)$.
\begin{defn}
Let $F: \cat{C}_1 \to \cat{C}_2$ be a polynomial functor homogeneous of degree $n$. Define the \textit{linearization} $L_F$ of $F$ as the functor $L_F: \cat{C}_1^n \to \cat{C}_2$ defined by
\[L_F(X_1, \ldots, X_n) = F^{(n)}_{1,\ldots, 1}(X_1, \ldots, X_n). \]
\end{defn}
By construction, $L_F$ is a \textit{multilinear} functor: for any $\tup{X} \in \cat{A}^n$, the map
\[ k^n \to \End_{\cat{A}^n}(\tup{X}) \xrightarrow{L_F} \End_{\cat{B}}(F\tup{X}) \]
is linear in each variable. Let $\Delta : \cat{C}_1 \to \cat{C}_1^{n}$ be the diagonal functor, and define $L_F^{(n)}: \mathsf{C}_1\to \mathsf{C}_2$ as the composition of functors $$L_F^{(n)}:=L_F\circ\Delta: \catC_1\to \catC_1^n\to \catC_2.$$

The construction $L_F^{(n)}(\-)$ recovers the homogeneous polynomial functor $F$ as follows.
\begin{thm}\cite[5.7]{MacDonald1980} \label{thm:maclin}
Let $F: \catC_1\to \catC_2$ be a polynomial functor homogeneous of degree $n.$ The functor $L_F^{(n)}$ admits an $S_n$-action, so that for all $X \in \cat{C}_1$, $L_F^{(n)}(X) = L_F(X,\dots,X)\in \Ob(\catC_2)$ admits the structure of an $S_n$-representation in $\catC_2$.

Assume further that $\catC_1$ is the category of finite projective left $A$-modules, where $A$ is a $k$-algebra. If we let $T^n(\-)$ denote the $n$-th tensor power over $k$, then the construction $$F(P)\mapsto  \left(L_F^{(n)}(A)\otimes_{T^n A} (T^n P)\right)^{S_n}$$ is an isomorphism of functors.
\end{thm}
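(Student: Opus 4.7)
\emph{Proof plan.} For the $S_n$-action, the functor $\oplus \colon \catC_1^n \to \catC_1$ is symmetric monoidal, so for each $\sigma \in S_n$ there is a canonical isomorphism $X_{\sigma(1)} \oplus \cdots \oplus X_{\sigma(n)} \xrightarrow{\sim} X_1 \oplus \cdots \oplus X_n$. Post-composition with $F$ endows $F^{(n)}(X_1, \ldots, X_n) = F(X_1 \oplus \cdots \oplus X_n)$ with a natural $S_n$-equivariance. The multi-homogeneous decomposition $F^{(n)} = \bigoplus F^{(n)}_{m_1,\ldots,m_n}$ is $S_n$-compatible: $\sigma$ sends the $(m_1, \ldots, m_n)$-summand to the $(m_{\sigma^{-1}(1)}, \ldots, m_{\sigma^{-1}(n)})$-summand, so the summand $L_F = F^{(n)}_{1,\ldots,1}$ is stable. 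Pulling back along the diagonal $\Delta$ gives the required $S_n$-structure on each $L_F^{(n)}(X) = L_F(X, \ldots, X)$ inside $\catC_2$.

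Now take $\catC_1$ to be the category of finite projective left $A$-modules. I would first establish representability of $L_F$: multilinearity endows $L_F(A, \ldots, A)$ with a right $T^n A$-module structure coming from the $n$ commuting slot-wise right $A$-actions. Multilinear functors preserve finite direct sums in each slot, so for free modules $P_i = A^{\oplus d_i}$ the canonical comparison map
\[ L_F(A, \ldots, A) \otimes_{T^n A} (P_1 \otimes_k \cdots \otimes_k P_n) \to L_F(P_1, \ldots, P_n) \]
is an isomorphism, and the general projective case follows by passing to retracts. Setting all $P_i = P$ identifies $L_F(P, \ldots, P) \cong L_F^{(n)}(A) \otimes_{T^n A} T^n P$ naturally in $P$.

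The crux is then to construct a natural isomorphism $F(P) \cong L_F(P, \ldots, P)^{S_n}$. Let $d \colon P \to P^{\oplus n}$ be the diagonal and $s \colon P^{\oplus n} \to P$ the sum, and view $L_F(P, \ldots, P) \subseteq F(P^{\oplus n})$ as the $(1, \ldots, 1)$-summand of the multi-homogeneous decomposition. Define $\iota_P \colon F(P) \to L_F(P, \ldots, P)$ as the component of $F(d)$ landing in this summand, and $\pi_P \colon L_F(P, \ldots, P) \to F(P)$ as the restriction of $F(s)$. Since $d$ and $s$ are $S_n$-equivariant for the trivial $S_n$-action on $P$, the image of $\iota_P$ lies in $L_F(P, \ldots, P)^{S_n}$ and $\pi_P$ is $S_n$-invariant. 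The identity $\pi_P \circ \iota_P = n! \cdot \mathrm{id}_{F(P)}$ comes from a polarization computation: writing $e_j \colon P \to P^{\oplus n}$ and $q_i \colon P^{\oplus n} \to P$ for the coordinate maps and invoking degree-$n$ homogeneity,
\[ F\!\Bigl(\textstyle\sum_i \alpha_i q_i\Bigr) \circ F\!\Bigl(\textstyle\sum_j \beta_j e_j\Bigr) = F\!\Bigl(\textstyle\sum_i \alpha_i \beta_i \cdot \mathrm{id}_P\Bigr) = \Bigl(\textstyle\sum_i \alpha_i \beta_i\Bigr)^{\!n} \mathrm{id}_{F(P)}, \]
and extracting the coefficient of $\alpha_1 \cdots \alpha_n \beta_1 \cdots \beta_n$ on both sides yields $\pi_P \iota_P$ on the left (by the formula $\sum_m (\alpha\beta)^m F(s)\iota_m\pi_m F(d)$ from the multi-homogeneous action) and $n!$ on the right by the multinomial theorem.

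The main obstacle will be the companion identity $\iota_P \circ \pi_P = n! \cdot \mathrm{id}$ on $L_F(P, \ldots, P)^{S_n}$, which guarantees that $\iota_P$ surjects onto the invariants. I would approach this by either (a) a further polarization computation on $F(d \circ s) = F(\sum_{i,j} e_i q_j)$ tracking how the result decomposes along the $(1, \ldots, 1)$-summand and how the $S_n$-averaging extracts the $n!$ factor, or (b) reducing via the representability step to the single case $P = A$ and verifying the statement directly on the $T^n A$-module $L_F^{(n)}(A)$, then extending by naturality of all constructions in $P$. Once both identities hold, division by $n!$ (invertible since $\mathrm{char}(k) = 0$) makes $\iota_P$ and $n!^{-1}\pi_P$ mutually inverse isomorphisms. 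Combining with the representability identification yields the natural isomorphism $F(P) \cong (L_F^{(n)}(A) \otimes_{T^n A} T^n P)^{S_n}$ claimed in the theorem.
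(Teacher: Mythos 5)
Your plan follows Macdonald's proof of this theorem (and the paper's parallel argument in Appendix~B.3 for the $\bbS^{[2]}$-case) closely, and the parts you carry out---the $S_n$-action via the symmetric monoidal structure of $\oplus$, the representability $L_F(P,\dots,P)\cong L_F^{(n)}(A)\otimes_{T^nA}T^nP$, and the polarization giving $\pi_P\circ\iota_P = n!\cdot\mathrm{id}_{F(P)}$---are all sound. But the step you flag as ``the main obstacle'' is indeed where the substance of the argument lives, and you leave it unexecuted. Your option (a) is the right route; what needs to be supplied is that the target of that computation is \emph{not} a scalar on all of $L_F^{(n)}(P)$. The identity to prove first is
\[
q\circ F(d\circ s)\circ j \;=\; \sum_{\sigma\in S_n}\tilde{F}(\sigma)
\]
as endomorphisms of $L_F^{(n)}(P)$, where $\tilde{F}(\sigma)=q\circ F(\sigma)\circ j$ is the $S_n$-action on the multilinear summand. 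Concretely, writing $d\circ s=\sum_{i,j}e_iq_j$ and introducing scalars $\alpha_i$ in front of $e_i$ and $\beta_j$ behind $q_j$, the coefficient of $\alpha_1\cdots\alpha_n\beta_1\cdots\beta_n$ in $q\circ F\bigl(\sum_{i,j}\alpha_i\beta_j\,e_iq_j\bigr)\circ j$ receives a contribution only from those assignments $j\mapsto i$ that use each source index and each target index exactly once, i.e.\ bijections; collecting these gives $\sum_{\sigma}qF(\sigma)j$. It is only after restricting to $L_F^{(n)}(P)^{S_n}$ that every $\tilde{F}(\sigma)$ becomes the identity and the sum collapses to $n!\cdot\mathrm{id}$. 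This is precisely what the paper's Appendix lemma $v\circ F(i\circ p)\circ v=\sum_{\boldsymbol{\sigma}}F(\boldsymbol{\sigma})\circ v$ does in the $\bbS^{[2]}$-setting, and you should carry it out in the single-grading case rather than leave it as a plan. Your option (b) does not bypass this: $L_F^{(n)}(A)$ is an abstract $T^nA$-module with a commuting $S_n$-action, so there is nothing to ``verify directly'' there without the same coefficient bookkeeping.
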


\subsection{Application to wreath products}
The following correspondence results follow from the last theorem by taking $A = k$ or $A = kG$ (the group ring of $G$). As above, let $\cat{Vect}_k$ denote the category of finite-dimensional vector spaces, and write $G{\-}\cat{Vect}_k$ for the category of $G$-representations in $\cat{Vect}_k$. Again, when discussing $G$-vector spaces, we work over a splitting field $k$ of $G.$

\begin{cor}\cite[§6]{MacDonald1980}
    There is an equivalence of categories between $G\wr S_i$-representations over $k$ and $\mathbf{P}_i[G{\-}\mathsf{Vect}_{k}, \mathsf{Vect}_{k}]$. The equivalence sends a $G\wr S_i$-representation $U$ to the functor \[V\mapsto (U\otimes V^{\otimes i})^{G\wr S_i},\]where we note that $V^{\otimes i}$ carries a $G\wr S_i$-representation. Assembling the graded pieces, there is an equivalence of categories between bounded\footnote{{$G\wr \mathbb{S}$-modules should be bounded because each item on the right hand side has finite degree.}} $G\wr \mathbb{S}$-modules over $k$ and $\mathbf{P}[G{\-}\mathsf{Vect}_{k}, \mathsf{Vect}_{k}].$

    Taking $G$ as the trivial group, there is an equivalence of categories between bounded $\mathbb{S}$-modules over $k$ and $\mathbf{P}[\mathsf{Vect}_{k}, \mathsf{Vect}_{k}].$
\end{cor}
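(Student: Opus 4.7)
The plan is to specialize Macdonald's linearization theorem (Theorem \ref{thm:maclin}) to the case $A = kG$. Since $k$ is a splitting field of characteristic zero, $kG$ is semisimple and the category of finite projective left $kG$-modules is canonically equivalent to $G{\-}\mathsf{Vect}_k$, so the second half of Theorem \ref{thm:maclin} applies.

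First I would construct the forward functor $\Phi\colon \mathrm{Rep}_k(G \wr S_i) \to \mathbf{P}_i[G{\-}\mathsf{Vect}_k, \mathsf{Vect}_k]$ by sending a $G \wr S_i$-representation $U$ to the functor $F_U(V) := (U \otimes V^{\otimes i})^{G \wr S_i}$, where $V^{\otimes i}$ carries the wreath action in which $G^i$ acts componentwise and $S_i$ permutes the tensor factors. Each $F_U$ is polynomial and homogeneous of degree $i$, because $V \mapsto V^{\otimes i}$ is such a functor and taking $G \wr S_i$-invariants is exact in characteristic zero, preserving both polynomiality and the degree; functoriality of $\Phi$ in $U$ is clear.

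Conversely, given $F \in \mathbf{P}_i[G{\-}\mathsf{Vect}_k, \mathsf{Vect}_k]$, I would set $\Psi(F) := L_F^{(i)}(kG)$. By Theorem \ref{thm:maclin}, this object carries a natural $S_i$-action permuting the $kG$-factors; it also carries a $G^i$-action arising from the functoriality of the multilinear functor $L_F$ in each of its $i$ arguments, each of which is a $kG$-module. These combine to a $G \wr S_i$-action because the $S_i$-action on $L_F^{(i)}(kG)$ intertwines the coordinate permutations used to define the semidirect product $G^i \rtimes S_i$. To verify $\Phi \circ \Psi \cong \mathrm{id}$, I would rewrite the isomorphism
\[ F(P) \cong \left(L_F^{(i)}(kG) \otimes_{T^i(kG)} T^i P \right)^{S_i}\]
from Theorem \ref{thm:maclin} using the fact that, in characteristic zero, tensoring over $T^i(kG) = k[G^i]$ equals taking $G^i$-invariants of the ordinary $k$-tensor product (with one action flipped); this yields $F(P) \cong (L_F^{(i)}(kG) \otimes P^{\otimes i})^{G \wr S_i} = F_{\Psi(F)}(P)$ naturally in $P$. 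The reverse composition $\Psi \circ \Phi \cong \mathrm{id}$ follows by isolating the $(1, \ldots, 1)$-multilinear part of $F_U(V_1 \oplus \cdots \oplus V_i)$ and specializing at $V_j = kG$, tracking the induced $G \wr S_i$-action.

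The main obstacle is purely book-keeping rather than any deeper input: one has to check that the $G^i$-action arising from functoriality of $L_F$ in each coordinate and the $S_i$-action from Theorem \ref{thm:maclin} interact in precisely the semidirect-product pattern of $G \wr S_i$, and that the natural isomorphisms above are compatible with morphisms of $G \wr S_i$-representations so that $\Phi$ and $\Psi$ really extend to functors. Once the degree-$i$ equivalence is established, the second statement about bounded $G \wr \mathbb{S}$-modules follows by summing over $i$, since every bounded polynomial functor decomposes uniquely as a finite direct sum of its homogeneous components (as discussed in the excerpt). The $G = \{e\}$ case is an immediate specialization, using $\{e\} \wr S_i = S_i$ and $\{e\}{\-}\mathsf{Vect}_k = \mathsf{Vect}_k$.
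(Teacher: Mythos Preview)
Your proposal is correct and takes essentially the same approach as the paper: the paper simply remarks that the corollary follows from Theorem \ref{thm:maclin} by taking $A = kG$ (or $A = k$ for the trivial group), which is exactly the specialization you carry out. Your write-up supplies the book-keeping details that the paper omits, but the underlying idea is identical.
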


After identifying $G\wr\mathbb{S}$-modules with polynomial functors, we describe generalized plethysm as a composition of polynomial functors. To to this, it is helpful to understand $G\times \bbS$-modules as polynomial functors as well.

\begin{lem}
    There is an equivalence of categories between bounded $G\times \mathbb{S}$-modules over $k$ with $\mathbf{P}[\mathsf{Vect}_{k}, G{\-}\mathsf{Vect}_{k}].$
\end{lem}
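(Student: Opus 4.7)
The plan is to piggyback directly on the Macdonald correspondence $\bbS\text{-}\cat{Mod}^{\mathrm{bd}} \simeq \mathbf{P}[\cat{Vect}_k,\cat{Vect}_k]$ stated immediately before the lemma, and observe that enriching the \emph{target} category from $\cat{Vect}_k$ to $G\text{-}\cat{Vect}_k$ amounts to carrying along a $G$-action that commutes with all the $S_n$-actions produced by the linearization machinery. Since $G\text{-}\cat{Vect}_k$ is itself a $k$-linear abelian category, Theorem~\ref{thm:maclin} applies verbatim with it as codomain.

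First I would construct the functor
\[
\Phi:(G\times \bbS)\text{-}\cat{Mod}^{\mathrm{bd}}\longrightarrow \mathbf{P}[\cat{Vect}_k,G\text{-}\cat{Vect}_k],\qquad
\Phi(\calV)(W):=\bigoplus_{n\ge 0}\calV(n)\otimes_{k[S_n]}W^{\otimes n},
\]
where $G$ acts only on the $\calV(n)$ factor. Because the $G$- and $S_n$-actions on $\calV(n)$ commute, this tensor product makes sense and inherits a $G$-action; boundedness of $\calV$ makes the direct sum finite, so $\Phi(\calV)(W)$ lands in $G\text{-}\cat{Vect}_k$. A direct inspection of the homogeneity decomposition shows that the degree-$n$ component is $W\mapsto \calV(n)\otimes_{k[S_n]}W^{\otimes n}$, so $\Phi(\calV)$ is bounded polynomial.

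In the reverse direction, given $F\in \mathbf{P}[\cat{Vect}_k,G\text{-}\cat{Vect}_k]$ I would decompose $F=\bigoplus_n F_n$ into its homogeneous components and set
\[
\Psi(F)(n):=L_{F_n}^{(n)}(k).
\]
Theorem~\ref{thm:maclin}, applied in the $k$-algebra case $A=k$ with target $G\text{-}\cat{Vect}_k$, endows $L_{F_n}^{(n)}(k)$ with an $S_n$-action in $G\text{-}\cat{Vect}_k$; since objects of $G\text{-}\cat{Vect}_k$ carry a $G$-action by definition and the $S_n$-action is built out of morphisms in $G\text{-}\cat{Vect}_k$, these two actions automatically commute, producing a $G\times S_n$-representation. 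Boundedness of $F$ forces $\Psi(F)(n)=0$ for $n\gg 0$. Functoriality on both sides is immediate: a morphism of $G\times \bbS$-modules induces a natural transformation of the associated functors, and a natural transformation of polynomial functors restricts on linearizations to a $G\times S_n$-equivariant map.

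The main obstacle is verifying that $\Phi$ and $\Psi$ are mutually quasi-inverse, and both directions reduce to the inversion formula of Theorem~\ref{thm:maclin} applied separately to each homogeneous piece. For $\Psi\circ\Phi\simeq \mathrm{id}$ one computes that the degree-$n$ linearization of the functor $W\mapsto \calV(n)\otimes_{k[S_n]}W^{\otimes n}$ evaluated at $W=k$ recovers $\calV(n)$ with its original $G\times S_n$-action, because the $S_n$-action coming from linearization coincides with the tautological one on $\calV(n)$ and the $G$-action is transported unchanged. For $\Phi\circ\Psi\simeq \mathrm{id}$, the ungraded Macdonald equivalence (already recorded as the trivial-group case just before the lemma) gives an isomorphism of underlying polynomial functors $\cat{Vect}_k\to\cat{Vect}_k$; the only extra content is that this isomorphism is $G$-equivariant, which follows because the Macdonald isomorphism is natural in morphisms of the target category, and the $G$-action on $F(W)$ is precisely such a natural family of endomorphisms. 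Naturality in $\calV$ and $F$ is a straightforward consequence of the analogous naturality in the trivial-group case.
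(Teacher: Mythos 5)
Your proof is correct and is essentially the same argument as the paper's: both directions transport the $G$-action along the trivial-group Macdonald equivalence, using the fact that $G\text{-}\cat{Vect}_k$ is $k$-linear abelian so the linearization machinery applies verbatim with it as target. The paper packages the forward direction more compactly by observing that a bounded $G\times\bbS$-module is a $G$-object in bounded $\bbS$-modules, hence $F_{\calV}$ is a $G$-object in $\mathbf{P}[\cat{Vect}_k,\cat{Vect}_k]$, which is the same thing as a functor to $G\text{-}\cat{Vect}_k$; your version spells out the explicit formula $\Phi(\calV)(W)=\bigoplus_n\calV(n)\otimes_{k[S_n]}W^{\otimes n}$ and the quasi-inverse checks, but the underlying mechanism is identical.
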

\begin{proof}
    A bounded $G\times \mathbb{S}$-module $\mathcal{V}$ is a $G$-object in the category of bounded $\mathbb{S}$-modules. Hence the polynomial functor $F_{\mathcal{V}}$ is a $G$-object in $\mathbf{P}[\mathsf{Vect}_k,\mathsf{Vect}_k].$ This implies that $G$ acts by natural transformations on $F_{\mathcal{V}},$ namely there is a group homomorphism $G\to \mathsf{End}(F_{\mathcal{V}}).$ Plugging in $V\in \mathsf{Ob}(\mathsf{Vect}_k),$ this gives a group homomorphism $G\to \mathsf{End}(F_{\mathcal{V}}(V))$ that is compatible with morphisms $\varphi: V\to W$ in $\mathsf{Vect}_k.$ Therefore, the functor $F_{\mathcal{V}}$ defines a functor from $\mathsf{Vect}_k$ to $G\- \mathsf{Vect}_k.$ It is polynomial because the underlying $\bbS$-module is so. 
    
    In the opposite direction, given a polynomial functor $F\in \mathbf{P}[\mathsf{Vect}_k, G\-\mathsf{Vect}_k],$ the linearization procedure described above gives an $\bbS$-module with a commuting $G$-action, hence a $G\times \bbS$-module. The two operations are $G$-equivariant and inverse to each other on the level of bounded $\bbS$-modules and $\mathbf{P}[\mathsf{Vect}_k, \mathsf{Vect}_k],$ hence they give the equivalence of categories between bounded $G\times \bbS$-modules and $\mathbf{P}[\mathsf{Vect}_k, G\-\mathsf{Vect}_k].$
\end{proof}

\begin{cor}
    There are isomorphisms of rings \[\Lambda_G\cong K_0(\mathbf{P}[\mathsf{Vect}_{k}, G{\-}\mathsf{Vect}_{k}])\]
    and
    \[\Lambda(G)\cong K_0(\mathbf{P}[G\-\mathsf{Vect}_{k}, \mathsf{Vect}_{k}])\]
    Applying $K_0(\-)$ on the composition of polynomial functors $$\mathbf{P}[G{\-}\mathsf{Vect}_{k}, \mathsf{Vect}_{k}]\times \mathbf{P}[\mathsf{Vect}_{k}, G{\-}\mathsf{Vect}_{k}]\to \mathbf{P}[\mathsf{Vect}_{k}, \mathsf{Vect}_{k}]$$ recovers the plethysm $\circ_{G}: \Lambda(G)\times \Lambda_G\to \Lambda.$ 
\end{cor}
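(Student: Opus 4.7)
The plan is to read off both ring isomorphisms directly from the two equivalences of categories stated just above the corollary, and then to verify the plethystic compatibility by comparing composition of polynomial functors against the explicit $\circ_G$-formula from Lemma \ref{lem-Gwrformulas}. Throughout I will equip each polynomial functor category with the pointwise tensor product (taking values in $\mathsf{Vect}_k$ or in $G\text{-}\mathsf{Vect}_k$ respectively), so that $K_0(\mathbf{P}[\cdots])$ is a commutative ring, and I will equip the two module categories with their box products, so that the equivalences become ring equivalences.

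\textbf{Step 1: the isomorphism $\Lambda(G)\cong K_0(\mathbf{P}[G\text{-}\mathsf{Vect}_k,\mathsf{Vect}_k])$.} The ring $\Lambda(G)$ is defined as the Grothendieck ring of bounded $G\wr\bbS$-modules under $\boxtimes$, and the preceding corollary gives an equivalence of abelian categories between bounded $G\wr\bbS$-modules and $\mathbf{P}[G\text{-}\mathsf{Vect}_k,\mathsf{Vect}_k]$, sending $\mathcal{W}$ to the functor $F_{\mathcal{W}}(V):=\bigoplus_k (\mathcal{W}(k)\otimes V^{\otimes k})^{G_k}$. I will check this equivalence is monoidal by showing $F_{\mathcal{W}}\otimes F_{\mathcal{W}'}\cong F_{\mathcal{W}\boxtimes\mathcal{W}'}$ naturally in $V$: on the left, expand the direct sums and use the standard isomorphism $(U_m\otimes V^{\otimes m})^{G_m}\otimes (U_n'\otimes V^{\otimes n})^{G_n}\cong (\mathrm{Ind}_{G_m\times G_n}^{G_{m+n}}(U_m\otimes U_n')\otimes V^{\otimes m+n})^{G_{m+n}}$, which is exactly the summand of $F_{\mathcal{W}\boxtimes \mathcal{W}'}$ in degree $m+n$ coming from $(m,n)$. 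Taking $K_0$ yields the ring isomorphism.

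\textbf{Step 2: the isomorphism $\Lambda_G\cong K_0(\mathbf{P}[\mathsf{Vect}_k,G\text{-}\mathsf{Vect}_k])$.} By the preceding lemma, bounded $(G\times\bbS)$-modules are equivalent to $\mathbf{P}[\mathsf{Vect}_k,G\text{-}\mathsf{Vect}_k]$ via $\mathcal{V}\mapsto F_{\mathcal{V}}(W):=\bigoplus_n(\mathcal{V}(n)\otimes W^{\otimes n})^{S_n}$. The same induction-versus-invariants argument used in Step~1, this time with the outer $G$-action preserved on both sides, shows $F_{\mathcal{V}}\otimes F_{\mathcal{V}'}\cong F_{\mathcal{V}\boxtimes \mathcal{V}'}$ as functors valued in $G\text{-}\mathsf{Vect}_k$. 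On Grothendieck rings this reproduces the ring structure on $R(G)\otimes\Lambda$, where one tensors $G$-representations and induces across symmetric group degrees. This gives the second ring isomorphism.

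\textbf{Step 3: composition of polynomial functors recovers $\circ_G$.} Given $\mathcal{W}\in \mathrm{Ob}(G\wr\bbS\text{-Mod})$ and $\mathcal{V}\in \mathrm{Ob}((G\times\bbS)\text{-Mod})$, the functor $F_{\mathcal{W}}\circ F_{\mathcal{V}}:\mathsf{Vect}_k\to\mathsf{Vect}_k$ evaluates to
\[ W\longmapsto \bigoplus_k\bigl(\mathcal{W}(k)\otimes F_{\mathcal{V}}(W)^{\otimes k}\bigr)^{G_k}. \]
Distributing the tensor power and using the multilinearity of induction, this is naturally isomorphic to $\bigoplus_n\bigl((\mathcal{W}\circ_G\mathcal{V})(n)\otimes W^{\otimes n}\bigr)^{S_n}$, where $(\mathcal{W}\circ_G\mathcal{V})(n)=\bigoplus_k(\mathcal{W}(k)\otimes\mathcal{V}^{\otimes k}(n))^{G_k}$ is exactly the $\bbS$-module in Lemma \ref{lem-Gwrformulas}. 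Thus $F_{\mathcal{W}}\circ F_{\mathcal{V}}$ is the polynomial functor attached to $\mathcal{W}\circ_G\mathcal{V}$. Passing to Grothendieck rings gives a map $\Lambda(G)\times\Lambda_G\to\Lambda$ induced by composition, and it agrees with $\circ_G$ on classes of genuine modules; it therefore agrees with $\circ_G$ on all of $\Lambda(G)\times\Lambda_G$ by the characterization of $\circ_G$ in Lemma \ref{lem-Gwrformulas} (algebra homomorphism in the first variable, together with its value on the generators $p_n(c)$, which can be checked directly on the single-summand modules corresponding to the character $p_n(c)$).

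The main obstacle is Step~1/Step~2: one must carefully verify that under the equivalences, the box product on modules matches the pointwise tensor product of polynomial functors. This reduces to the induction-invariants identity used repeatedly in Macdonald's theory, and once it is established the ring isomorphisms and the plethystic compatibility are essentially formal consequences of the functoriality of $K_0$.
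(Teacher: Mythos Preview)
Your proposal is correct. The paper states this corollary without proof, treating both ring isomorphisms and the plethystic compatibility as immediate consequences of the two categorical equivalences established just before it (Macdonald's equivalence for $G\wr\bbS$-modules and the lemma on $G\times\bbS$-modules). Your Steps 1--3 spell out precisely the verifications the paper leaves implicit: that the equivalences are monoidal for the box product versus pointwise tensor, and that composition of the associated polynomial functors reproduces the explicit $\circ_G$-formula of Lemma~\ref{lem-Gwrformulas}. The induction--invariants identity you invoke in Steps 1 and 2 is the standard mechanism, and your unpacking of $F_{\mathcal{W}}\circ F_{\mathcal{V}}$ in Step 3 matches the module-level formula for $\mathcal{W}\circ_G\mathcal{V}$ on the nose. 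There is nothing to correct; you have simply written out what the paper elides.
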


\begin{rem}
    The categories and their Grothendieck rings above are graded by $i\in \mathbb{N}$ via $G\wr S_i$-representations and homogeneous degree $i$ polynomials respectively. As the equivalence respects the grading, the respective Grothendieck rings remain isomorphic upon completions.
\end{rem}


\begin{rem}
    Let $\mathsf{C}$ be a $k$-linear category. The above equivalences of categories can be generalized to an equivalence between bounded $G\wr \mathbb{S}$-modules in $\mathsf{C}$ and bounded polynomial functors $\mathbf{P}[[G, \mathsf{C}], \mathsf{C}],$ where $[G, \mathsf{C}]$ denotes the category of $G$-representations in $\mathsf{C}$. Tensoring with $\C$, their Grothendieck rings are isomorphic to ${K_0(\mathsf{C})\otimes_{\Z} \Lambda(G)}$ by Getzler's Peter--Weyl theorem \cite[Theorem 3.2]{GetzlerPreprint}. Of interest to us will be the case $\mathsf{C} = \mathsf{MHS}_{\mathbb{Q}},$ the category of mixed Hodge structures over $\QQ$.
\end{rem}

\subsection{Proof of Theorem \ref{thm:s2polyfun}}\label{sec:pfpolyfun}
We prove the theorem via a few lemmas. In the following, fix some field $k$ of characteristic zero, and continue to let $\mathsf{Vect}_k$ denote the category of finite-dimensional $k$-vector spaces, and let $\bbS\-\cat{Vect}_k$ denote the category of $\bbS$-modules over $k$.
\begin{lem}
    For a bounded $\mathbb{S}^{[2]}$-module $\mathcal{W},$ the functor ${F_\mathcal{W}: \bbS\-\cat{Vect}_k \to \cat{Vect}_k}$ given by \[F_{\calW}:\mathcal{V}\mapsto \bigoplus_{\nu\in \Part}\mathcal{W}(\nu)\otimes_{\mathbb{S}_{\nu}}\mathcal{V}^{\otimes \nu}\] is polynomial.
\end{lem}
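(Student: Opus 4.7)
The plan is to verify directly from Definition \ref{defn-polyfun} that for any pair of $\bbS$-modules $\calV_1, \calV_2$, the induced map on morphism spaces
\[\Hom_{\bbS\-\cat{Vect}_k}(\calV_1,\calV_2) \longrightarrow \Hom_{\cat{Vect}_k}(F_{\calW}\calV_1,F_{\calW}\calV_2)\]
is a polynomial map of $k$-vector spaces. The identification
\[\Hom_{\bbS\-\cat{Vect}_k}(\calV_1,\calV_2) \;=\; \prod_{i\geq 0}\Hom_{S_i}(\calV_1(i),\calV_2(i))\]
exhibits a morphism $\varphi$ as a tuple $(\varphi_i)_{i\geq 0}$ of equivariant linear maps, and only finitely many of these will actually contribute because $\calW$ is bounded.

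First I would use boundedness of $\calW$: the set $P := \{\nu\in\Part : \calW(\nu)\neq 0\}$ is finite, so the direct sum defining $F_{\calW}\calV$ is a finite sum
\[F_{\calW}\calV \;=\; \bigoplus_{\nu\in P}\calW(\nu)\otimes_{\bbS_\nu}\calV^{\otimes\nu}.\]
Moreover, the set $I := \{i\geq 0 : \exists\,\nu\in P,\ \nu_i\neq 0\}$ is finite, so only the finitely many components $\varphi_i$ with $i\in I$ can enter the formula. Since a finite direct sum of polynomial maps is polynomial, it suffices to check polynomiality for each summand $\nu\in P$ separately.

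Next I would fix $\nu\in P$ and analyze the assignment $\varphi\mapsto \mathrm{id}_{\calW(\nu)}\otimes_{\bbS_\nu}\varphi^{\otimes\nu}$, where
\[\varphi^{\otimes\nu} \;=\; \bigotimes_{i\geq 0}\varphi_i^{\otimes\nu_i} : \calV_1^{\otimes\nu}\longrightarrow\calV_2^{\otimes\nu}.\]
For each $i$, the map $\varphi_i\mapsto \varphi_i^{\otimes\nu_i}$ from $\Hom(\calV_1(i),\calV_2(i))$ to $\Hom(\calV_1(i)^{\otimes\nu_i},\calV_2(i)^{\otimes\nu_i})$ is a homogeneous polynomial of degree $\nu_i$ (in characteristic zero this is the standard fact that tensor powers are polynomial, as reviewed in \cite[\S1]{MacDonald1980}). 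Taking the tensor product over $i\in I$ yields a map which is multi-homogeneous of multi-degree $(\nu_i)_{i\in I}$ in the tuple $(\varphi_i)_{i\in I}$, hence polynomial. This tensor power is manifestly $\bbS_\nu$-equivariant for the natural action on source and target, so tensoring with $\calW(\nu)$ on the left and passing to $\bbS_\nu$-coinvariants is a $k$-linear operation and preserves polynomiality.

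Summing the contributions over the finite set $P$, the overall map is polynomial, and the same computation shows that $F_{\calW}$ decomposes as $F_{\calW} = \bigoplus_{\nu\in P}(F_{\calW})_\nu$ where the summand indexed by $\nu$ has multi-degree $\nu$. There is no substantive obstacle here: the argument is a direct application of the definition combined with the boundedness hypothesis, and the only thing that requires care is bookkeeping the $\bbS_\nu$-action on the tensor factors so as to descend to coinvariants.
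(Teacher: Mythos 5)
Your proof is correct and takes essentially the same route as the paper's: decompose the morphism space of $\bbS$-modules component-wise, observe that $f\mapsto f^{\otimes\nu}=\bigotimes_i f_i^{\otimes\nu_i}$ is polynomial in $(f_i)$, and conclude via boundedness of $\calW$ and $k$-linearity of tensoring and passing to $\bbS_\nu$-coinvariants. Your version merely spells out the finiteness bookkeeping (the finite sets $P$ and $I$) and the multi-homogeneous degree more explicitly than the paper does.
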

\begin{proof}
    Let $\mathcal{V}_1, \mathcal{V}_2$ be $\mathbb{S}$-modules, then $$\mathrm{Hom}_{\mathbb{S}}(\mathcal{V}_1, \mathcal{V}_2) =\bigoplus_{n\geq 0} \mathrm{Hom}_{S_i}(\mathcal{V}_1(i), \mathcal{V}_2(i)).$$ Let $(f_i)_{i\geq 0}\in \mathrm{Hom}_{\mathbb{S}}(\mathcal{V}_1, \mathcal{V}_2).$ Applying the functor $F_{\nu}: \mathcal{V}\mapsto \mathcal{V}^{\otimes \nu}$ to $(f_i)_{i\geq 0}$ gives $f^{\otimes \nu} = \bigotimes_{i\geq 0} f_i^{\otimes \nu_i}: \mathcal{V}_1^{\otimes \nu}\to \mathcal{V}_2^{\otimes \nu},$ which is polynomial in $(f_i)_{i\geq 0}.$ Therefore, $F_{\mathcal{W}}$ are also polynomial, as they arise from taking the coinvariants of finite sums of $F_{\nu}.$
\end{proof}

We decompose $F$ into homogeneous parts via scalar multiplications. Let $k^{\Z_{\geq0}} := \bigoplus_{i \geq 0} k$ with its $k$-algebra structure and let $\mathcal{V}$ be an $\mathbb{S}$-module. The entry-wise scaling action is $S_i$-equivariant for each $i$ and gives a $k$-algebra homomorphism $k^{\Z_{\geq 0}} \to \mathrm{End}_{\mathbb{S}}(\mathcal{V}).$
Let $F \in \mathbf{P}[\mathbb{S}\-\mathsf{Vect}, \mathsf{Vect}_{}]$. Post-composing with $F$ gives a map
\[\varphi_{F, \mathcal{V}}: k^{\Z_{\geq 0}} \to \mathrm{End}_{\mathbb{S}}(\mathcal{V}) \xrightarrow{F_{\mathcal{V}}} \mathrm{End}(F\mathcal{V}). \]Abusing notation, we denote the image of $\tup{\alpha} = (\alpha_0, \alpha_1, \alpha_2, \ldots)\in k^{\mathbb{Z}_{\geq 0}}$ under this map as $F(\tup{\alpha}).$

Because $F$ is a bounded polynomial functor, there exists $u_\pi\in \mathrm{End}(F\mathcal{V})$ which is zero for all but finitely many $\pi\in \Part$, such that $F(\tup{\alpha}) = \sum_{\pi\in \Part}u_{\pi}\prod_{i\geq 0}\alpha_i^{\pi_i}.$

\begin{lem}
    The construction $F_\pi(\mathcal{V}):=u_{\pi}(\mathcal{V})$ defines a direct sum $F = \bigoplus_{\pi\in \Part}F_\pi$ as functors.
\end{lem}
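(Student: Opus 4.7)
The plan is to exhibit $\{u_\pi(\mathcal{V})\}_{\pi \in \Part}$ as a complete family of pairwise orthogonal idempotents in $\End(F\mathcal{V})$, natural in $\mathcal{V}$, and then to define $F_\pi(\mathcal{V}) := \operatorname{im}(u_\pi(\mathcal{V}))$ so that the decomposition $F = \bigoplus_\pi F_\pi$ will follow from the resulting block decomposition of $F\mathcal{V}$. Idempotence and orthogonality come from the fact that entry-wise multiplication in $k^{\Z_{\geq 0}}$ matches composition in $\End_{\bbS}(\mathcal{V})$, since the $e_i$ are pairwise orthogonal idempotents under composition; functoriality of $F$ then yields $F(\tup{\alpha}) \circ F(\tup{\beta}) = F(\tup{\alpha}\tup{\beta})$, and expanding each side as a polynomial in two sets of independent variables $\{\alpha_i\}, \{\beta_i\}$ and comparing the coefficient of $\prod_i \alpha_i^{\pi_i}\beta_i^{\sigma_i}$ produces the relation $u_\pi \circ u_\sigma = \delta_{\pi,\sigma}\, u_\pi$.

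For the completeness relation $\sum_\pi u_\pi(\mathcal{V}) = \mathrm{id}_{F\mathcal{V}}$, I would substitute $\tup{1}_J := \sum_{j \in J} e_j$ into the polynomial expansion, which yields $F(\tup{1}_J) = \sum_{\mathrm{supp}(\pi) \subseteq J} u_\pi(\mathcal{V})$. By the preceding lemma only finitely many $u_\pi(\mathcal{V})$ are nonzero, so for $J$ large enough to contain the union of their supports the right-hand side equals $\sum_{\pi \in \Part} u_\pi(\mathcal{V})$; taking $J$ also to cover the support of $\mathcal{V}$ makes $\tup{1}_J$ act as $\mathrm{id}_{\mathcal{V}}$, and hence $F(\tup{1}_J) = \mathrm{id}_{F\mathcal{V}}$. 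To upgrade the pointwise block decomposition to a direct sum of functors, for any morphism $\varphi : \mathcal{V} \to \mathcal{W}$ in $\bbS\-\cat{Vect}$ I would use that entry-wise scaling commutes with $\bbS$-linear maps, i.e., $\varphi \circ \tup{\alpha}_{\mathcal{V}} = \tup{\alpha}_{\mathcal{W}} \circ \varphi$; applying $F$ and comparing polynomial coefficients in $\{\alpha_i\}$ yields $F\varphi \circ u_\pi(\mathcal{V}) = u_\pi(\mathcal{W}) \circ F\varphi$, so $F\varphi$ restricts to a morphism $F_\pi \mathcal{V} \to F_\pi \mathcal{W}$ and $F = \bigoplus_\pi F_\pi$ as functors.

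The principal obstacle is the completeness step for $\bbS$-modules $\mathcal{V}$ with infinite support, since then no finite-support $\tup{1}_J$ can act as $\mathrm{id}_{\mathcal{V}}$. The standard resolution exploits that $F$ takes values in finite-dimensional vector spaces: one shows, using Macdonald's linearization (Theorem \ref{thm:maclin}) applied homogeneous-degree-by-homogeneous-degree to $F$, that any bounded polynomial functor $F$ factors through a truncation $\mathcal{V} \mapsto \bigoplus_{i \in I_F} \mathcal{V}(i)$ for a finite set $I_F \subset \Z_{\geq 0}$ depending only on $F$. Once this reduction is in place one may take $J = I_F$ in the argument above, and the bounded-case completeness argument applies verbatim.
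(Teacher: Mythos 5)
Your proposal follows the same route as the paper's proof: orthogonality and idempotence of the $u_\pi$ come from comparing coefficients in $F(\tup{\alpha})F(\tup{\beta}) = F(\tup{\alpha}\tup{\beta})$, and naturality from comparing coefficients in $F(\tup{\alpha})F(\boldsymbol{f}) = F(\boldsymbol{f})F(\tup{\alpha})$. The substantive difference is that you make the completeness relation $\sum_{\pi} u_\pi(\mathcal{V}) = \mathrm{id}_{F\mathcal{V}}$ explicit and observe that when $\mathcal{V}$ has infinite support it does not follow by a naive substitution, since the all-ones sequence does not lie in $k^{\Z_{\geq 0}} = \bigoplus_{i \geq 0} k$. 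This is a genuine subtlety that the paper passes over: it moves directly from ``orthogonal idempotents'' to ``direct sum decomposition,'' deferring to Macdonald \S 2 ``verbatim,'' but Macdonald's setting is $G\-\mathsf{Vect}_k$ for a finite group $G$, where objects are automatically finite-dimensional and the analogue of the all-ones element is available in the relevant algebra. The same point is also implicit in the paper's unproved assertion, stated just above the lemma, that only finitely many $u_\pi(\mathcal{V})$ are nonzero.

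That said, the specific resolution you propose does not quite close the gap. You invoke Theorem~\ref{thm:maclin} to conclude that a bounded polynomial functor $F: \bbS\-\mathsf{Vect} \to \mathsf{Vect}$ factors through a finite truncation $\mathcal{V} \mapsto \bigoplus_{i\in I_F}\mathcal{V}(i)$. But the linearization isomorphism $F(P) \cong \left(L_F^{(n)}(A)\otimes_{T^n A} T^n P\right)^{S_n}$ in that theorem is proved with $\cat{C}_1$ the category of finite projective left $A$-modules; to reach $\bbS\-\mathsf{Vect}$ one would take $A = \bigoplus_m k[S_m]$, and then ``finite projective $A$-module'' means ``direct summand of $A^N$ for a single $N$,'' which excludes $\bbS$-modules $\mathcal{V}$ with $\dim \mathcal{V}(m)$ growing faster than a fixed multiple of $m!$. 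So the cited theorem does not cover the whole domain of $F$. The truncation claim is indeed the right intermediate statement, and it is plausible from finite-dimensionality of $F(\mathcal{V})$ together with the split inclusions $F\bigl(\bigoplus_{j\in J}\mathcal{V}(j)\bigr) \hookrightarrow F(\mathcal{V})$, whose images stabilize in $F(\mathcal{V})$; but showing the stable image is all of $F(\mathcal{V})$ is precisely the completeness relation in disguise, so an independent argument must be supplied rather than delegated to Theorem~\ref{thm:maclin}.
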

\begin{proof}
    We follow the discussion in \cite[§2]{MacDonald1980} verbatim. Because $F(\tup{\alpha}\tup{\beta}) = F(\tup{\alpha})F(\tup{\beta}),$ expanding we have $$\left(\sum_{\pi\in \Part} u_\pi \prod_{i\geq 0}\alpha_i^{\pi_i} \right)\left(\sum_{\pi\in \Part} u_\pi \prod_{i\geq 0}\beta_i^{\pi_i} \right) = \sum_{\pi\in \Part} u_\pi \prod_{i\geq 0}(\alpha_i\beta_i)^{\pi_i}.$$ Equating coefficients on both sides, there is $u_\pi^2  =u_\pi$ and $u_\pi u_\mu = 0.$ For $\mathcal{V}\in \Ob(\mathbb{S}\-\mathsf{Vect}_{}),$ define $F_{\pi}(\mathcal{V}):=u_\pi(F (\mathcal{V})),$ we have a direct sum decomposition $F (\mathcal{V})=\bigoplus_{\pi\in \Part} F_{\pi}(\mathcal{V}).$

    It remains to show that the above decomposition is functorial. Let $\mathcal{V}'\in \Ob(\mathbb{S}\-\mathsf{Vect}_{})$ and let $\boldsymbol{f} = (f_i)_{i\geq 0}\in \End_{\mathbb{S}}(\mathcal{V}, \mathcal{V}').$ Because the entrywise scaling action of $\tup{\alpha}$ commutes with $(f_i)_{i\geq 0},$ applying $F$ gives $F(\tup{\alpha})F(\boldsymbol{f}) = F(\boldsymbol{f}) F(\tup{\alpha}).$ Expanding into $$\sum_{\pi\in \Part}\left(u_{\pi}\prod_{i\geq 0}\alpha_i^{\pi_i} \right)F(\boldsymbol{f}) = \sum_{\pi\in \Part} F(\boldsymbol{f})\left(u_{\pi}\prod_{i\geq 0}\alpha_i^{\pi_i} \right)$$ and equating coefficients again, we see that $u_\pi F(\boldsymbol{f}) = F(\boldsymbol{f}) u_\pi,$ so that $F\mapsto u_\pi$ is a natural transformation of functors. Therefore, $F = \bigoplus_{\pi\in \Part}F_\pi$ as functors as desired.
\end{proof}

\begin{defn}
    Let $F\in \mathbf{P}[\mathbb{S}\-\mathsf{Vect}_{}, \mathsf{Vect}_{}]$ and let $\pi\in \Part.$ The degree $\pi$ homogeneous part of $F$ is defined as $u_\pi\in \mathbf{P}[\mathbb{S}\-\mathsf{Vect}_{}, \mathsf{Vect}_{}].$ $F$ is said to be homogeneous of degree $\pi$ if $u_{\nu}=0$ for all $\nu\neq \pi.$ Denote the full subcategory of homogeneous degree $\pi$ polynomial functors as $\mathbf{P}_\pi[\mathbb{S}\-\mathsf{Vect}_{}, \mathsf{Vect}_{}].$
\end{defn}

Let $F\in \mathbf{P}_\pi[\mathbb{S}\-\mathsf{Vect}_{}, \mathsf{Vect}_{}],$ we adapt Macdonald's construction to linearize the functor $F_\pi$ as a $\mathbb{S}_\pi$-representation.

\begin{defn}
    Let $\left(\mathbb{S}\-\mathsf{Vect}_{}\right)^{\pi}$ denote the auxiliary category $\prod_{i\geq 0} \left(S_{i}\-\mathsf{Vect}_{}\right)^{\pi_i},$ which is $k$-linear, and let $P^\pi: \left(\mathbb{S}\-\mathsf{Vect}_{}\right)^{\pi}\to  \mathbb{S}\-\mathsf{Vect}_{}$ be the functor given by $$\boldsymbol{\mathcal{V}} = \left(\mathcal{V}(i)^{(1)},\dots, \mathcal{V}(i)^{(\pi_i)}\right)_{i\geq 0}\mapsto \left(\bigoplus_{j=1}^{\pi_i}\mathcal{V}(i)^{(j)}\right)_{i\geq 0}.$$

    A homogeneous polynomial functor $F\in \mathbf{P}_\pi[\mathbb{S}\-\mathsf{Vect}_{}, \mathsf{Vect}_{}]$ induces a polynomial functor $F^{\pi}: \left(\mathbb{S}\-\mathsf{Vect}_{}\right)^{\pi}\to \mathsf{Vect}_{}$ given by the composition $F^{\pi} := F\circ P^\pi.$
\end{defn}

\begin{defn}
Let $k^{\pi}$ be the $k$-algebra $\prod_{i\geq 0}k^{\pi_i}.$ We use indices $(\tup{\alpha}) = \left(\alpha_{(i,j)}\right)_{i\geq 0, 1\leq j\leq \pi_i}$ to denote elements in $k^{\pi}.$ Similarly, define the monoid $\mathbb{N}^{\pi} = \prod_{i\geq 0}{\mathbb{N}}^{\pi_i}$ and use the indices ${\boldsymbol{m}} = \left(\boldsymbol{m}_{(i,j)}\right)_{i\geq 0, 1\leq j\leq \pi_i}\in \mathbb{N}^{\pi}.$ 
\end{defn} 

As earlier, entry-wise scaling action of $(\tup{\alpha})\in k^{\pi}$ on $\left(\mathbb{S}\-\mathsf{Vect}_{}\right)^{\pi}$ induces a decomposition of functors $F^{\pi} = \bigoplus_{\boldsymbol{m}\in \mathbb{N}^{\pi}} u_{\boldsymbol{m}}$ such that $$F^{\pi}(\tup{\alpha}) = \sum_{\boldsymbol{m}\in \mathbb{N}^{\pi}}u_{\boldsymbol{m}}\prod_{i\geq 0}\prod_{j=1}^{\pi_i} \alpha_{(i, j)}^{\boldsymbol{m}_{(i,j)}}.$$

\begin{defn}
    Let $F\in \mathbf{P}_\pi[\mathbb{S}\-\mathsf{Vect}_{}, \mathsf{Vect}_{}].$ Define $L_F: \left(\mathbb{S}\-\mathsf{Vect}_{}\right)^{\pi}\to \mathsf{Vect}_{}$ as the homogeneous term $u_{\boldsymbol{1}}$ corresponding to the multi-index having all entries equal to one.

    Let $\Delta: \mathbb{S}\-\mathsf{Vect}_{}\to \left(\mathbb{S}\-\mathsf{Vect}_{}\right)^{\pi}$ be the functor given by \[\mathcal{V} = (\mathcal{V}(i))_{i\geq 0}\mapsto (\underbrace{\mathcal{V}(i),\dots, \mathcal{V}(i)}_{\pi_i}).\] Define $L_F^{(\pi)}: \mathbb{S}\-\mathsf{Vect}_{}\to \mathsf{Vect}_{}$ as the composition $L_F^{(\pi)}:= L_F \circ \Delta.$
\end{defn}

\begin{lem}
    The functor $L_F^{(\pi)}$ admits a $\prod_{i\geq 0}S_{\pi_i}$-action.
\end{lem}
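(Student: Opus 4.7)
The plan is to exhibit the $\prod_{i \geq 0} S_{\pi_i}$-action on $L_F^{(\pi)}$ as coming from the permutation symmetries of the product category $(\mathbb{S}\-\mathsf{Vect})^\pi$, in direct analogy with Macdonald's argument \cite[\S5]{MacDonald1980}. First, I would define the action of $\boldsymbol{\sigma} = (\sigma_i)_{i \geq 0} \in \prod_{i \geq 0} S_{\pi_i}$ on $(\mathbb{S}\-\mathsf{Vect})^\pi = \prod_{i \geq 0} (S_i\-\mathsf{Vect})^{\pi_i}$ by permuting the copies of $S_i\-\mathsf{Vect}$ within each factor, i.e.\ sending $\boldsymbol{\mathcal{V}} = (\mathcal{V}(i)^{(j)})_{i \geq 0, 1 \leq j \leq \pi_i}$ to $\boldsymbol{\sigma} \cdot \boldsymbol{\mathcal{V}} := (\mathcal{V}(i)^{(\sigma_i^{-1}(j))})_{i \geq 0, 1 \leq j \leq \pi_i}$. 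This upgrades $\boldsymbol{\sigma}$ to an endofunctor of $(\mathbb{S}\-\mathsf{Vect})^\pi$.

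Second, I would observe that the assembly functor $P^\pi : (\mathbb{S}\-\mathsf{Vect})^\pi \to \mathbb{S}\-\mathsf{Vect}$ carries a natural isomorphism $\eta_{\boldsymbol{\sigma}}: P^\pi \Rightarrow P^\pi \circ \boldsymbol{\sigma}$ for every $\boldsymbol{\sigma}$, induced componentwise by the canonical symmetry of direct sums $\bigoplus_{j = 1}^{\pi_i} \mathcal{V}(i)^{(j)} \xrightarrow{\sim} \bigoplus_{j = 1}^{\pi_i} \mathcal{V}(i)^{(\sigma_i^{-1}(j))}$ in $S_i\-\mathsf{Vect}$. These isomorphisms are strictly compatible with composition in $\prod_i S_{\pi_i}$. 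Post-composing with $F$ yields natural isomorphisms $F \eta_{\boldsymbol{\sigma}}: F^\pi \Rightarrow F^\pi \circ \boldsymbol{\sigma}$.

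Third, I would check that these natural isomorphisms restrict to the multilinear summand $L_F = u_{\boldsymbol{1}} \subset F^\pi$. The key point is that the action of $\boldsymbol{\sigma}$ on $(\mathbb{S}\-\mathsf{Vect})^\pi$ intertwines the scaling action of $k^{\pi}$ with the corresponding permutation action on multi-indices: for $(\underline{\alpha}) \in k^\pi$, we have $\boldsymbol{\sigma} \circ \underline{\alpha} = (\boldsymbol{\sigma} \cdot \underline{\alpha}) \circ \boldsymbol{\sigma}$, where $\boldsymbol{\sigma}$ permutes the coordinates of $\underline{\alpha}$. Since the multi-index $\boldsymbol{1}$ (all entries equal to one) is fixed under coordinate permutations, equating coefficients in the polynomial identity obtained by applying $F$ shows that $u_{\boldsymbol{1}}$ is preserved, so $F \eta_{\boldsymbol{\sigma}}$ restricts to a natural isomorphism $L_F \Rightarrow L_F \circ \boldsymbol{\sigma}$. (The idempotents $u_{\boldsymbol{m}}$ for other multi-indices $\boldsymbol{m}$ are also permuted among themselves, but only $\boldsymbol{1}$ is fixed, which is why $L_F$ specifically carries the action.)

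Finally, I would precompose with the diagonal $\Delta : \mathbb{S}\-\mathsf{Vect} \to (\mathbb{S}\-\mathsf{Vect})^\pi$. The crucial observation is that $\boldsymbol{\sigma} \circ \Delta = \Delta$ on the nose (not merely up to isomorphism), since all $\pi_i$ copies of $\mathcal{V}(i)$ in $\Delta(\mathcal{V})$ are literally equal. Consequently, whiskering the natural isomorphism $L_F \Rightarrow L_F \circ \boldsymbol{\sigma}$ with $\Delta$ produces a natural automorphism $L_F^{(\pi)} = L_F \circ \Delta \Rightarrow L_F \circ \boldsymbol{\sigma} \circ \Delta = L_F^{(\pi)}$, and the strict compatibility of the $\eta_{\boldsymbol{\sigma}}$ yields the cocycle condition, giving a genuine $\prod_{i \geq 0} S_{\pi_i}$-action. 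The main subtlety is the third step, keeping careful track of how the homogeneous decomposition of $F^\pi$ transforms under permutation of coordinates; everything else is a clean categorical manipulation that follows Macdonald's template with $S_n$ replaced by $\prod_i S_{\pi_i}$.
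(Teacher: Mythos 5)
Your proof is correct and follows essentially the same line as the paper's argument: both define the $\prod_{i\geq 0}S_{\pi_i}$-action by permuting the $\pi_i$ copies in $P^\pi\circ\Delta(\mathcal V)(i)\cong\mathcal V(i)^{\oplus\pi_i}$, then use the intertwining relation between permutation and entrywise scaling together with the fact that the multi-index $\boldsymbol 1$ is fixed by permutation to conclude that the multilinear idempotent $u_{\boldsymbol 1}$ is preserved. Your version spells out the categorical bookkeeping more explicitly (natural isomorphisms $\eta_{\boldsymbol\sigma}$ of $P^\pi$, whiskering with $\Delta$, and the useful observation that $\boldsymbol\sigma\circ\Delta=\Delta$ strictly), whereas the paper works directly with $P^\pi\circ\Delta$, but the mathematical content is the same.
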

\begin{proof}
     Let $\mathcal{V}$ be an $\bbS$-module. We observe that $\prod_{i\geq 0}S_{\pi_i}$ acts on the $\bbS$-module $P^\pi\circ \Delta (\mathcal{V})$, which is given by $i\mapsto \mathcal{V}(i)^{\oplus \pi_i}$, by permuting entries. The action is compatible with morphisms of $\mathbb{S}$-modules, so $\prod_{i\geq 0}S_{\pi_i}$ acts on the functor $P^\pi\circ \Delta.$
    
    Let $k^{\pi}\ni (\tup{\alpha}): P^\pi \circ \Delta \to P^\pi\circ \Delta $ be the entry-wise multiplication map, and let $\boldsymbol{\gamma} =\left(\gamma_i\right)_{i\geq 0}\in \prod_{i\geq 0}S_{\pi_i}$. We note that $\prod_{i\geq 0}S_{\pi_i}$ acts on $k^\pi$ by permuting the coordinates and denote $(\tup{\alpha})\mapsto \boldsymbol{\gamma}\cdot (\tup{\alpha})$ by the action. From an explicit calculation, $\boldsymbol{\gamma}\circ (\tup{\alpha}) = (\boldsymbol{\gamma}\cdot (\tup{\alpha}))\circ \boldsymbol{\gamma}\in \End\left(P^\pi\circ \Delta (\mathcal{V}) \right).$ Applying the functor $F$ and expanding, there is $$F(\boldsymbol{\gamma})\sum_{\boldsymbol{m}\in \mathbb{N}^{\pi}}u_{\boldsymbol{m}}\prod_{i\geq 0}\prod_{j=1}^{\pi_i} \alpha_{(i, j)}^{\boldsymbol{m}_{(i,j)}} = \sum_{\boldsymbol{m}\in \mathbb{N}^{\pi}}u_{\boldsymbol{m}}\prod_{i\geq 0}\prod_{j=1}^{\pi_i} \alpha_{(i, j)}^{\boldsymbol{m}_{(i,\gamma_i(j))}} F(\boldsymbol{\gamma}).$$
        
     This calculation implies that the multilinear part of $F$ is preserved under the $\prod_{i\geq 0}S_{\pi_i}$-action, as the monomial $\prod_{i}\prod_{j=1}^{\pi_i}\alpha_{(i,j)}$ is preserved by the permutation. Therefore, the functor $L_F^{(\pi)}$ is $\prod_{i\geq 0}S_{\pi_i}$-invariant.
\end{proof}

\begin{rem}\label{rem:wract}
    In fact, an adaptation of the above calculation shows that for an $\bbS$-module $\mathcal{V}$, the vector space $L_F^{(\pi)}(\mathcal{V})$ admits a $\mathbb{S}_{\pi} = \prod_{i\geq 0}S_i\wr S_{\pi_i}$-action, in line with the fact that each item $P^\pi \circ\Delta (\mathcal{V})(i)$ is an $S_i\wr S_{\pi_i}$-representation.
\end{rem}

We relate the linearization $L_F^{(\pi)}$ back to $F$ via the following constructions. To begin with, recall that they are both functors from $\mathbb{S}$-modules to vector spaces and that $L_F^{(\pi)}$ is the multilinear summand\footnote{A priori, we only know that $L_F$ is the multilinear summand of $F^\pi$, and the statement follows from the fact that $\Delta$ is linear and preserves multidegrees.} of $F^\pi\circ \Delta.$

\begin{defn}
    Let $j: L_F^{(\pi)}\to F^\pi\circ \Delta$ and $q: F^\pi\circ \Delta\to L_F^{(\pi)}$ be the inclusion and projection to the multi-linear summand, respectively. The composition $v:=jq: F^\pi\circ \Delta\to  F^\pi\circ \Delta$ retains the summand of $L_F^{(\pi)}$ and sets all the rest to zero.

    Define $p: P^\pi\circ \Delta\to \mathrm{id}_{\mathbb{S}\-\mathsf{Vect}_{}}$ as the natural transformation given by entry-wise addition: for an $\mathbb{S}$-module $\mathcal{V}$, this is given in coordinates by \[P^\pi\circ \Delta(\mathcal{V})\ni(w_{(i,j)})_{i\geq 0, 1\leq j\leq \pi_i}\mapsto \left(\sum_{j=1}^{\pi_i} w_{(i,j)}\right)_{i\geq 0}.\] In the other direction, let $i: \mathrm{id}_{\mathbb{S}\-\mathsf{Vect}_{}}\to P^\pi\circ \Delta$ be the diagonal map.
    
    Define $\tilde{\xi}:=q\circ F(i): F\to L_F^{(\pi)}$ and $\tilde{\eta}:=F(p)\circ j: L_F^{(\pi)}\to F$.
\end{defn}

\begin{lem}
    The map $v: F^\pi\circ \Delta\to  F^\pi\circ \Delta$ is $\prod_{i\geq 0}S_{\pi_i}$-equivariant.
\end{lem}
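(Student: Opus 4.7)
The plan is to extract the needed equivariance from the calculation carried out in the proof of the preceding lemma, which already analyzed how $F(\boldsymbol{\gamma})$ interacts with the homogeneous decomposition of $F^\pi$. Concretely, recall from that calculation that expanding the identity
\[ F(\boldsymbol{\gamma})\circ F^\pi(\tup{\alpha}) \;=\; F^\pi(\boldsymbol{\gamma}\cdot\tup{\alpha})\circ F(\boldsymbol{\gamma}) \]
as polynomials in the coordinates $\alpha_{(i,j)}$ and equating coefficients monomial-by-monomial yields the commutation relation
\[ F(\boldsymbol{\gamma}) \circ u_{\boldsymbol{m}} \;=\; u_{\boldsymbol{\gamma}\cdot\boldsymbol{m}} \circ F(\boldsymbol{\gamma}) \]
in $\End(F^\pi\circ\Delta(\mathcal{V}))$ for every multi-index $\boldsymbol{m}\in \N^\pi$. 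This is the same calculation that was used to identify the multilinear summand with an $\mathbb{S}_\pi$-representation in Remark \ref{rem:wract}; here I just need its shadow for the subgroup $\prod_i S_{\pi_i}$.

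Next I would observe that, by construction, $v = jq$ is precisely the idempotent endomorphism of $F^\pi\circ\Delta$ whose image is the multilinear summand $L_F^{(\pi)}=u_{\boldsymbol{1}}$. Thus as an endomorphism of $F^\pi\circ\Delta(\mathcal{V})$, the map $v$ agrees with $u_{\boldsymbol{1}}$. Since the action of $\boldsymbol{\gamma} = (\gamma_i)_{i\geq 0}\in \prod_{i\geq 0} S_{\pi_i}$ on the index set of $\N^\pi$ simply permutes the labels $(i,j)$ within each block of size $\pi_i$, it sends the all-ones multi-index $\boldsymbol{1}$ to itself. Substituting $\boldsymbol{m}=\boldsymbol{1}$ into the commutation identity above therefore gives
\[ F(\boldsymbol{\gamma})\circ v \;=\; F(\boldsymbol{\gamma})\circ u_{\boldsymbol{1}} \;=\; u_{\boldsymbol{1}}\circ F(\boldsymbol{\gamma}) \;=\; v\circ F(\boldsymbol{\gamma}), \]
which is exactly the desired $\prod_{i\geq 0} S_{\pi_i}$-equivariance on the object $\mathcal{V}$. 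Naturality in $\mathcal{V}$ is automatic because $u_{\boldsymbol{1}}$ was already shown to be a natural transformation of functors in the previous lemma, and the action of $\boldsymbol{\gamma}$ is natural by construction.

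The main (and essentially only) obstacle is bookkeeping: ensuring that the two possible sources of a $\prod_i S_{\pi_i}$-action on $F^\pi\circ\Delta(\mathcal{V})$ — the one descending from the permutation action on $(\bbS\text{-}\mathsf{Vect})^\pi$ via $P^\pi\circ\Delta$ and then $F$, and the one visible from Remark \ref{rem:wract} as a subgroup of $\mathbb{S}_\pi$ — genuinely coincide, so that the commutation relation above really expresses equivariance in the intended sense. Once this identification is spelled out, the argument reduces to the one-line observation that $\boldsymbol{\gamma}$ fixes $\boldsymbol{1}\in \N^\pi$.
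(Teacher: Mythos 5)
Your proof is correct and follows essentially the same line as the paper: the paper's one-line proof invokes that $L_F^{(\pi)}$ is an invariant summand of $F^\pi\circ\Delta$, and your argument makes that invariance explicit by extracting the commutation relation $F(\boldsymbol{\gamma})\circ u_{\boldsymbol{m}} = u_{\boldsymbol{\gamma}\cdot\boldsymbol{m}}\circ F(\boldsymbol{\gamma})$ from the preceding lemma and specializing to the $\boldsymbol{\gamma}$-fixed multi-index $\boldsymbol{m}=\boldsymbol{1}$, which is precisely where $v$ lives. The only point worth double-checking carefully when writing this up is the identification $v = u_{\boldsymbol{1}}$ as an endomorphism of $F^\pi\circ\Delta$, which holds because $\Delta$ is linear and preserves multidegrees so that the idempotent decomposition of $F^\pi$ restricts along the diagonal.
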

\begin{proof}
    This follows from the fact that $L_F^{(\pi)}$ is an $\prod_{i\geq 0}S_{\pi_i}$-invariant summand of $F^\pi\circ \Delta.$
\end{proof}

We recall that $\boldsymbol{\sigma}\in \prod_{i\geq 0}S_{\pi_i}$ acts on the functor $P^{\pi}\circ \Delta$. Applying $F$ gives an action $F(\boldsymbol{\sigma})$ on the functor $F^{\pi}\circ \Delta.$

\begin{lem}
    There is an equality of natural transformations from $F^\pi\circ \Delta$ to itself $$v\circ F(i\circ p)\circ v = \sum_{\boldsymbol{\sigma}\in \prod_{i\geq 0}S_{\pi_i}} F(\boldsymbol{\sigma})\circ v.$$ 
\end{lem}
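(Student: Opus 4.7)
The plan is to reduce the asserted identity to an equality of endomorphisms of the multilinear summand $L_F^{(\pi)}(\mathcal{V})$ and then verify it by explicit computation in the universal tensor case.

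First, observe that both sides of the asserted identity, viewed as natural endomorphisms of $F^\pi\circ\Delta(\mathcal{V})$, vanish on the complement of the multilinear summand because of the rightmost factor $v$, and both have image contained in $L_F^{(\pi)}(\mathcal{V})$: this is immediate on the LHS from the outermost $v$, and on the RHS it follows from the fact that each $F(\boldsymbol{\sigma})$ preserves the multidegree decomposition of $F^\pi\circ\Delta(\mathcal{V})$. Indeed, $\boldsymbol{\sigma}$ permutes the coordinates within each level, so it fixes the multidegree $\mathbf{m} = \mathbf{1}$ corresponding to the multilinear summand; equivalently, $F(\boldsymbol{\sigma})\circ v = v\circ F(\boldsymbol{\sigma})\circ v$. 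Hence it suffices to verify the reduced identity $q\circ F(i\circ p)\circ j = \sum_{\boldsymbol{\sigma}} q\circ F(\boldsymbol{\sigma})\circ j$ as endomorphisms of $L_F^{(\pi)}(\mathcal{V})$.

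Second, both sides of this reduced identity are natural in $\mathcal{V}$ and functorial in $F$, and hence respect direct sums and $\prod_i S_{\pi_i}$-equivariant subfunctor inclusions. By an adaptation of Macdonald's construction (Theorem \ref{thm:maclin}), every homogeneous polynomial functor of degree $\pi$ embeds as a $\prod_i S_{\pi_i}$-equivariant summand of the tensor functor $T^\pi(\mathcal{V}) := \bigotimes_i \mathcal{V}(i)^{\otimes \pi_i}$, so it is enough to verify the identity for $F = T^\pi$. In this case, $(T^\pi)^\pi\circ\Delta(\mathcal{V}) = \bigotimes_i \bigl(\mathcal{V}(i)^{\oplus \pi_i}\bigr)^{\otimes \pi_i}$ decomposes as a direct sum indexed by tuples of maps $\phi_i\colon [\pi_i]\to[\pi_i]$, with the multilinear summand corresponding to tuples of bijections $\boldsymbol{\sigma}$. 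A direct expansion shows that $F(i\circ p) = \bigotimes_i A_i^{\otimes \pi_i}$ (with $A_i$ the all-ones matrix on $\mathcal{V}(i)^{\oplus \pi_i}$) sends each multilinear $\boldsymbol{\tau}$-component to the sum of all multilinear $\boldsymbol{\sigma}'$-components, while $F(\boldsymbol{\sigma}) = \bigotimes_i \sigma_i^{\otimes \pi_i}$ sends the $\boldsymbol{\tau}$-component to the $(\boldsymbol{\sigma}\boldsymbol{\tau})$-component; summing over $\boldsymbol{\sigma}$ yields the same result after reindexing $\boldsymbol{\sigma}' = \boldsymbol{\sigma}\boldsymbol{\tau}$.

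The main obstacle is the embedding step in the second paragraph, which threatens circularity since it resembles what we are trying to build in the appendix. The cleanest way around this will likely be to bypass the reduction to tensor functors altogether and argue via polynomial expansion in scaling parameters: introduce $\tup{\alpha}, \tup{\beta} \in k^\pi$, consider the families of endomorphisms $\tup{\beta}\circ(i\circ p)\circ\tup{\alpha}$ and $\tup{\beta}\circ\boldsymbol{\sigma}\circ\tup{\alpha}$, apply $F$ to each, and expand using the polynomiality of $F$ together with the decomposition $F(\tup{\gamma}\,\cdot) = \sum_{\mathbf{m}} u_{\mathbf{m}} \tup{\gamma}^{\mathbf{m}}$ of the scaling action. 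Extracting the coefficient of the multilinear monomial $\prod_{(i,j)}\alpha_{(i,j)}\beta_{(i,j)}$ on both sides reduces the claim to matching two explicit polynomial expressions in the matrix entries, which can be checked directly and simultaneously produces the multilinear polarization one needs for the full proof of Theorem \ref{thm:s2polyfun}.
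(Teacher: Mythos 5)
Your first paragraph matches the paper's opening move: both establish that $F(\boldsymbol{\sigma})\circ v = v\circ F(\boldsymbol{\sigma})\circ v$ so the whole identity lives inside the multilinear summand. Your route to this fact (the $\prod_i S_{\pi_i}$-action on $F^\pi\circ\Delta$ preserves the multidegree decomposition only up to permutation, but fixes the all-ones multidegree) is correct and essentially the same as the paper's appeal to $v^2 = v$ together with the equivariance of $v$.

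You are also right to be suspicious of the second paragraph. The embedding of a homogeneous degree-$\pi$ polynomial functor as an equivariant summand of $T^\pi$ is exactly what the final lemma of the appendix establishes via $F \cong \left(L_F^{(\pi)}(k[\mathbb{S}])\otimes_{\mathbb{S}_\pi}\mathcal{V}^{\otimes\pi}\right)$, and the present lemma sits upstream of that isomorphism in the logical order. So that route is genuinely circular and should be discarded, which you recognized. That said, the \emph{content} of your $T^\pi$ computation is not wasted: writing $F(i\circ p)$ as a tensor power of the all-ones matrix and $F(\boldsymbol{\sigma})$ as a tensor power of permutation matrices is precisely the shape of the general argument, once one replaces matrix units by the natural transformations $\boldsymbol{i}_{\boldsymbol{m}}$ and $\boldsymbol{p}_{\boldsymbol{m}}$.

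The remaining gap is that the final step is gestured at ("can be checked directly") rather than carried out, and that check is the substance of the lemma. To close it one has to do what the paper does: decompose $i\circ p = \sum_{\boldsymbol{m}^{(1)},\boldsymbol{m}^{(2)}}\boldsymbol{i}_{\boldsymbol{m}^{(1)}}\boldsymbol{p}_{\boldsymbol{m}^{(2)}}$ and $\boldsymbol{\sigma} = \sum_{\boldsymbol{m}}\boldsymbol{i}_{\boldsymbol{\sigma}\cdot\boldsymbol{m}}\boldsymbol{p}_{\boldsymbol{m}}$, conjugate by the scaling maps $\tup{\alpha},\tup{\beta}\in k^\pi$ to record which matrix unit appears via scalar weights, apply $F$, and extract the coefficient of $\prod_{(i,j)}\alpha_{(i,j)}\beta_{(i,j)}$ on both sides. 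The identity then follows from the bijection $\boldsymbol{m}^{(1)} = \boldsymbol{\sigma}\cdot\boldsymbol{m}^{(2)}$ between the index sets after summing over $\boldsymbol{\sigma}\in\prod_i S_{\pi_i}$. Note one subtlety your sketch should not gloss over: you cannot naively distribute $F$ over the sum $\sum_{\boldsymbol{m}^{(1)},\boldsymbol{m}^{(2)}}\boldsymbol{i}_{\boldsymbol{m}^{(1)}}\boldsymbol{p}_{\boldsymbol{m}^{(2)}}$ because $F$ is not additive; it is precisely the polynomial expansion in the auxiliary scalars, together with the extraction of the multilinear coefficient, that does the distributing for you.
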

\begin{proof}
    Because $v$ is projection to the multilinear summand, it is idempotent, so $v^2 = v.$ Thus $F(\boldsymbol{\sigma})\circ v = F(\boldsymbol{\sigma})\circ v \circ v.$ Applying $\prod_{i\geq 0}S_{\pi_i}$-equivariance from the previous lemma, $F(\boldsymbol{\sigma})\circ v \circ v = v\circ F(\boldsymbol{\sigma})\circ v,$ which we calculate. 
    
    From how $v$ is defined, $v\circ F(\boldsymbol{\sigma})\circ v$ is equal to the coefficient of $(\tup{\alpha})^{\boldsymbol{1}}(\tup{\beta})^{\boldsymbol{1}} = \prod_{i\geq 0}\left(\prod_{j=1}^{\pi_i}\alpha_{(i,j)}\beta_{(i,j)}\right)$ in $F(\tup{\alpha})\circ F(\boldsymbol{\sigma})\circ F(\tup{\beta}) = F(\boldsymbol{\sigma}\cdot \tup{\alpha})F(\tup{\beta})F(\boldsymbol{\sigma}).$

    In parallel, $v\circ F(i\circ p)\circ v$ is equal to the coefficient of $(\tup{\alpha})^{\boldsymbol{1}}(\tup{\beta})^{\boldsymbol{1}} $ in $F(\tup{\alpha})\circ F(i\circ p)\circ F(\tup{\beta}),$ where $i\circ p$ is given by the formula $$(w_{(i,j)})_{i\geq 0, 1\leq j\leq \pi_i}\mapsto \left(\nu_{(i,j)}:=\sum_{\ell=1}^{\pi_i}w_{(i, \ell)}\right)_{i\geq 0, 1\leq j\leq \pi_i}.$$

    It is helpful to spell out the maps $\boldsymbol{\sigma}: P^{\pi}\circ \Delta\to P^{\pi}\circ \Delta$ and $i\circ p$ more explicitly. For a sequence $\boldsymbol{m}=(m_i)_{i\geq 0}$ of integers $1\leq m_i\leq \pi_i,$ there are natural transformations $\boldsymbol{i}_{\boldsymbol{m}}: \mathrm{id}_{\mathbb{S}\-\mathsf{Vect}_{}}\to P^\pi\circ \Delta$, that, for an $\mathbb{S}$-module $\mathcal{V}$, includes each $\mathcal{V}(i)$ into the $m_i$-th summand of $\mathcal{V}(i)^{\oplus \pi_i} = (P^\pi\circ \Delta(\mathcal{V}))(i)$ and assigns zero to all the other summands. Similarly, there are projections $\boldsymbol{p}_{\boldsymbol{m}}: P^\pi\circ \Delta\to \mathrm{id}_{\mathbb{S}\-\mathsf{Vect}_{}}$ along the summands specified by $\boldsymbol{m}.$ We may also project the scalars 
    
    With the notation, $\boldsymbol{\sigma} = \sum_{\boldsymbol{m}}\boldsymbol{i}_{(\boldsymbol{\sigma}\cdot \boldsymbol{m})}\boldsymbol{p}_{\boldsymbol{m}}$ and $i\circ p = \sum_{\boldsymbol{m}^{(1)}, \boldsymbol{m}^{(2)}}\boldsymbol{i}_{\boldsymbol{m}^{(1)}}\boldsymbol{p}_{\boldsymbol{m}^{(2)}}.$ 

    Applying $F$ to the two formulas, $v\circ F(\boldsymbol{\sigma})\circ v$ and $v\circ F(i\circ p)\circ v$ are the $(\tup{\alpha})^{\boldsymbol{1}}(\tup{\beta})^{\boldsymbol{1}}$-coefficients of $$F\left(\sum_{\boldsymbol{m}}\left(\prod_{i\geq 0}\alpha_{(i, \boldsymbol{\sigma}(m_i))}\beta_{(i, m_i)}\right)\boldsymbol{i}_{(\boldsymbol{\sigma}\cdot \boldsymbol{m})}\boldsymbol{p}_{\boldsymbol{m}}\right)\text{ and }F\left(\sum_{\boldsymbol{m}^{(1)}, \boldsymbol{m}^{(2)}}\left(\prod_{i\geq 0}\alpha_{(i, m_i^{(1)})}\beta_{(i, m_i^{(2)})}\right)\boldsymbol{i}_{\boldsymbol{m}^{(1)}}\boldsymbol{p}_{\boldsymbol{m}^{(2)}}\right).$$ Matching $\{\boldsymbol{m}^{(1)}\}$ with $\{(\boldsymbol{\sigma}\circ \boldsymbol{m}^{(2)})\},$ we see that $v\circ F(i\circ p)\circ v = \sum_{\boldsymbol{\sigma}\in \prod_{i\geq 0}S_{\pi_i}} v\circ F(\boldsymbol{\sigma})\circ v$ as desired.
\end{proof}

\begin{defn}
    For $\boldsymbol{s}\in \prod_{i\geq 0}S_{\pi_i}$, restrict let $\tilde{F}(\boldsymbol{s}): L_F^{(\pi)}\to L_F^{(\pi)}$ be its action. As this is the restriction of the $\boldsymbol{s}\in \prod_{i\geq 0}S_{\pi_i}$-action on $F^{\pi}\circ\Delta$, we have $\tilde{F}(\boldsymbol{s})  = q F(\boldsymbol{s}) j.$
\end{defn}

\begin{lem}
     The compositions of the natural transformations satisfy $$\tilde{\eta}\tilde{\xi} = \prod_{i\geq 0}\left(\pi_i!\right), \text{ and }\tilde{\xi}\tilde{\eta} = \sum_{\boldsymbol{s}\in \prod_{i\geq 0}S_{\pi_i}} \tilde{F}(\boldsymbol{s}).$$
\end{lem}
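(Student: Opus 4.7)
The plan is to reduce both identities to the coefficient-extraction technique used in the proof of the preceding lemma, so that they become applications of the multinomial theorem and the idempotence relations $qj = \mathrm{id}_{L_F^{(\pi)}}$, $jq = v$, and $v^2 = v$.

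For the first identity, I would begin by rewriting
\[\tilde{\eta}\tilde{\xi} = F(p)\circ j\circ q\circ F(i) = F(p)\circ v\circ F(i).\]
By the definition of $L_F$ as the multilinear summand of $F^\pi\circ\Delta$, the projection $v$ coincides with the coefficient of $(\tup{\alpha})^{\boldsymbol{1}} = \prod_{i,j}\alpha_{(i,j)}$ in the polynomial expansion of $F(\tup{\alpha}) \in \mathrm{End}(F(P^\pi\Delta\calV))$, where $\tup{\alpha}\in k^\pi$ acts by entry-wise scaling on $P^\pi\circ\Delta$. Consequently $\tilde{\eta}\tilde{\xi}$ equals the same coefficient extracted from $F(p\circ\tup{\alpha}\circ i)$. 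A pointwise calculation on each $\calV(i)$ shows that $p\circ\tup{\alpha}\circ i$ is entry-wise scaling of the identity functor by $\bigl(\sum_{j=1}^{\pi_i}\alpha_{(i,j)}\bigr)_{i\geq 0}\in k^{\mathbb{Z}_{\geq 0}}$. Since $F$ is homogeneous of degree $\pi$, applying $F$ yields
\[F(p\circ\tup{\alpha}\circ i) = \prod_{i\geq 0}\Bigl(\sum_{j=1}^{\pi_i}\alpha_{(i,j)}\Bigr)^{\pi_i}\cdot \mathrm{id}_{F\calV},\]
and the multinomial theorem extracts the coefficient $\prod_{i\geq 0}\pi_i!$, giving the first claim.

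For the second identity, I plan to use $qj = \mathrm{id}_{L_F^{(\pi)}}$ to sandwich $v = jq$ around the middle factor:
\[\tilde{\xi}\tilde{\eta} = qF(i\circ p)j = (qj)\cdot qF(i\circ p)j\cdot (qj) = q\bigl(v\circ F(i\circ p)\circ v\bigr)j.\]
Invoking the previous lemma to replace $v\circ F(i\circ p)\circ v$ by $\sum_{\boldsymbol{s}\in \prod_i S_{\pi_i}} F(\boldsymbol{s})\circ v$, and then using $vj = jqj = j$, this collapses to $\sum_{\boldsymbol{s}} qF(\boldsymbol{s})j = \sum_{\boldsymbol{s}}\tilde{F}(\boldsymbol{s})$.

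The one step requiring care is the identification of $v$ with the multilinear coefficient of $F(\tup{\alpha})$: this is essentially baked into the construction of $L_F$ via the $k^\pi$-grading, but must be invoked precisely so that $F(p)\circ v\circ F(i)$ lines up with the multinomial expansion above. Apart from that verification, everything else is routine bookkeeping with the idempotence of $v$ and the definition $\tilde{F}(\boldsymbol{s}) = qF(\boldsymbol{s})j$, so I do not anticipate further obstacles.
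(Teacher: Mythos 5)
Your proof is correct and takes essentially the same route as the paper: coefficient extraction for $\tilde{\eta}\tilde{\xi}$ via the multinomial expansion of $F(p\circ\tup{\alpha}\circ i)$, and for $\tilde{\xi}\tilde{\eta}$ the sandwiching trick with $qj = \mathrm{id}$ combined with the preceding lemma's identity $v\circ F(i\circ p)\circ v = \sum_{\boldsymbol{s}} F(\boldsymbol{s})\circ v$. The only cosmetic difference is that you close the second computation by the observation $vj = jqj = j$ rather than factoring out a final $(qj)$; these are interchangeable.
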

\begin{proof}
    The composition $\tilde{\eta}\tilde{\xi}$ is equal to $F(p)\circ jq\circ F(i)$. Recall that $jq = v$, which is the coefficient of $(\tup{\alpha})^{\mathbf{1}}=$ in the map $F^{\pi}\circ\Delta\circ  (\tup{\alpha})$. Thus, $F(p)jqF(i)$ is the coefficient of $(\tup{\alpha})^{\mathbf{1}}$ in $F(p)F(\tup{\alpha})F(i) = F(p \circ (\tup{\alpha}) \circ i).$ 
    
    Applying the formulas of the maps $i$ and $p$, the composition $p\circ (\tup{\alpha})\circ i: V\to V$ is given by $$(v_i)_{i\geq 0}\mapsto \left(\left(\sum_{j=1}^{\pi_i}\alpha_{(i,j)}\right)v_j\right)_{i\geq 0}.$$ Because $F$ is homogeneous of degree $\pi$, the map $F(p \circ (\tup{\alpha}) \circ i)$ is given by the scalar multiplication by $\prod_{i\geq 0} \left(\sum_{j=1}^{\pi_i}\alpha_{(i,j)}\right)^{\pi_i}$, of which the $(\tup{\alpha})^{\mathbf{1}}$-coefficient is given by $\prod_{j\geq 0}\left(\pi_j!\right).$

    In the other direction, we recall that $qj = \mathrm{id}_{L_F^{(\pi)}}$ and $v = jq.$ Applying the previous lemmas,\begin{align*}
        \tilde{\xi}\tilde{\eta} & = (qj)\circ \tilde{\xi}\tilde{\eta}\circ (qj) = q\circ (j \tilde{\xi}\tilde{\eta} q)\circ j \\
        & = q\left(\sum_{\boldsymbol{s}\in \prod_{i\geq 0}S_{\pi_i}}F(\boldsymbol{s})v\right)j = q\left(\sum_{\boldsymbol{s}\in \prod_{i\geq 0}S_{\pi_i}}F(\boldsymbol{s})j\right)\circ (qj) = \sum_{\boldsymbol{s}\in \prod_{i\geq 0}S_{\pi_i}}\tilde{F}(\boldsymbol{s}).
    \end{align*}    
\end{proof}

The formula $\sum_{\boldsymbol{s}\in \prod_{i\geq 0}S_{\pi_i}}\tilde{F}(\boldsymbol{s})$ is invariant under $\prod_{i\geq 0}S_{\pi_i}$-action, which allows us to relate $F$ to the $\prod_{i\geq 0}S_{\pi_i}$-invariant summand of $L_F^{(\pi)}.$

\begin{defn}
    The invariant $\left(L_F^{(\pi)}\right)^{\prod_{i\geq 0}S_{\pi_i}}$ is a direct summand of $L_F^{(\pi)}.$ Let $$\epsilon: \left(L_F^{(\pi)}\right)^{\prod_{i\geq 0}S_{\pi_i}}\to L_F^{(\pi)} \text{ and }\pi: L_F^{(\pi)}\to \left(L_F^{(\pi)}\right)^{\prod_{i\geq 0}S_{\pi_i}}$$ be the inclusion and projection respectively. Define $$\xi:= \pi \tilde{\xi}: F\to \left(L_F^{(\pi)}\right)^{\prod_{i\geq 0}S_{\pi_i}}\text{ and }\eta:= \tilde{\eta}\epsilon: \left(L_F^{(\pi)}\right)^{\prod_{i\geq 0}S_{\pi_i}}\to F.$$
\end{defn}

The following formula follows from general facts about finite groups acting on $k$-linear categories or functors for $k$ a field of characteristic zero.

\begin{lem}
    The composition $\epsilon\pi: L_F^{(\pi)}\to L_F^{(\pi)}$, which is projection to the invariant summand, is equal to $\frac{1}{\prod_{i\geq 0}(\pi_i!)}\sum_{\boldsymbol{s}\in \prod_{i\geq 0}S_{\pi_i}}\tilde{F}(\boldsymbol{s}).$
\end{lem}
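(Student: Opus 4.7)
The plan is to prove this by recognizing the right-hand side as the Reynolds operator for the action of the finite group $G := \prod_{i\geq 0} S_{\pi_i}$ on the $k$-linear object $L_F^{(\pi)}$ (in the relevant functor category, which is $k$-linear for $k$ of characteristic zero). Write $A := \frac{1}{|G|}\sum_{\boldsymbol{s}\in G} \tilde{F}(\boldsymbol{s})$. The claim is that $A$ and $\epsilon\pi$ are both equal to the canonical projection onto the invariant subobject.

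First I would verify that $A$ factors through the invariants. For any $\boldsymbol{t}\in G$, the change of variables $\boldsymbol{s}\mapsto \boldsymbol{t}\boldsymbol{s}$ in the summation together with the fact that $\tilde{F}$ is a group action (i.e.\ $\tilde{F}(\boldsymbol{t})\circ\tilde{F}(\boldsymbol{s}) = \tilde{F}(\boldsymbol{t}\boldsymbol{s})$, which follows from $F$ being a functor and $\boldsymbol{s}\mapsto F(\boldsymbol{s})$ being a group action on $F^\pi\circ\Delta$) gives $\tilde{F}(\boldsymbol{t})\circ A = A$. Hence $\mathrm{im}(A)\subseteq (L_F^{(\pi)})^G$, so $A$ factors uniquely as $A = \epsilon \circ \tilde{A}$ for some $\tilde{A}\colon L_F^{(\pi)} \to (L_F^{(\pi)})^G$. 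Next, on the invariant summand itself we have $\tilde{F}(\boldsymbol{s})\circ \epsilon = \epsilon$ for every $\boldsymbol{s}$, so $A\circ \epsilon = \frac{1}{|G|}\sum_{\boldsymbol{s}}\epsilon = \epsilon$. In particular $\tilde{A}\circ \epsilon = \mathrm{id}$, so $A = \epsilon\circ\tilde{A}$ is an idempotent with image exactly $(L_F^{(\pi)})^G$.

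It remains to check that $\tilde{A} = \pi$, i.e.\ that the projection to invariants is unique. This is where characteristic zero enters: because $G$ is finite and $k$ has characteristic zero, Maschke's theorem provides a $G$-stable complement $C$ to $(L_F^{(\pi)})^G$ inside $L_F^{(\pi)}$, and moreover $C$ contains no nonzero $G$-invariant subobject (anything invariant would be absorbed into $(L_F^{(\pi)})^G$). Any $G$-equivariant map $C \to (L_F^{(\pi)})^G$ has image in $(L_F^{(\pi)})^G$, which is a sum of trivial representations, and so by standard semisimplicity (or Schur-type) reasoning such a map must vanish. Therefore any $G$-equivariant retraction of $\epsilon$ is determined by its behaviour on $(L_F^{(\pi)})^G$ (where it must be the identity) and on $C$ (where it must be zero); both $\pi$ and $\tilde{A}$ satisfy these, so $\pi = \tilde{A}$ and hence $\epsilon\pi = \epsilon\tilde{A} = A$, as claimed.

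The only potentially subtle point is the last uniqueness step, which requires the ambient category to be genuinely $k$-linear semisimple for the action of $G$; since all the constructions ($F$, $L_F^{(\pi)}$, $\epsilon$, $\pi$) are pointwise in the target category $\mathsf{Vect}_k$ with $k$ of characteristic zero, this follows from classical Maschke. No estimates or computations are needed beyond this, so I expect no serious obstacles — the content of the lemma is simply the identification of $\epsilon\pi$ with the Reynolds averaging operator.
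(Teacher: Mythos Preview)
Your proposal is correct and takes essentially the same approach as the paper: the paper's proof consists of the single sentence that the formula ``follows from general facts about finite groups acting on $k$-linear categories or functors for $k$ a field of characteristic zero,'' and you have simply spelled out those general facts (the Reynolds operator is the projection onto invariants, via Maschke).
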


\begin{lem}
    The natural transformations $\xi: F\to \left(L_F^{(\pi)}\right)^{\prod_{i\geq 0}S_{\pi_i}}$ and $\eta: \left(L_F^{(\pi)}\right)^{\prod_{i\geq 0}S_{\pi_i}}\to F$ are isomorphisms such that $\xi\eta = \prod_{i\geq 0}(\pi_i!)$ and $\eta\xi = \prod_{i\geq 0}(\pi_i!).$
\end{lem}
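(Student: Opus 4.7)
The plan is to compute both compositions $\eta \xi$ and $\xi \eta$ directly using the identities established in the preceding lemmas, and show that each equals scalar multiplication by $\prod_{i \geq 0}(\pi_i!)$. Since we are working over a field of characteristic zero, this scalar is invertible, so it follows immediately that $\xi$ and $\eta$ are mutually inverse up to this scalar factor, and hence both are isomorphisms. The strategy is to reduce everything to the two previously established formulas $\tilde{\eta}\tilde{\xi} = \prod_i (\pi_i!)$ and $\tilde{\xi}\tilde{\eta} = \sum_{\boldsymbol{s}} \tilde{F}(\boldsymbol{s})$, together with the projection-inclusion identities $\pi\epsilon = \mathrm{id}$ and $\epsilon\pi = \frac{1}{\prod_i(\pi_i!)}\sum_{\boldsymbol{s}}\tilde{F}(\boldsymbol{s})$.

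For $\xi\eta$, the computation is quick: unpacking definitions,
\[ \xi\eta = \pi\tilde{\xi}\tilde{\eta}\epsilon = \pi\left(\sum_{\boldsymbol{s}}\tilde{F}(\boldsymbol{s})\right)\epsilon = \pi \cdot \prod_i(\pi_i!)\cdot(\epsilon\pi)\cdot\epsilon = \prod_i(\pi_i!)\cdot(\pi\epsilon)(\pi\epsilon) = \prod_i(\pi_i!)\cdot\mathrm{id}. \]
Here the second equality uses the formula for $\tilde{\xi}\tilde{\eta}$, the third uses $\sum_{\boldsymbol{s}}\tilde{F}(\boldsymbol{s}) = \prod_i(\pi_i!)\cdot\epsilon\pi$, and the last uses that $\pi\epsilon$ is the identity on the invariant subfunctor.

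For $\eta\xi$, the key step is to show that the off-diagonal ``twisted'' terms $\tilde{\eta}\tilde{F}(\boldsymbol{s})\tilde{\xi}$ are each equal to $\prod_i(\pi_i!)$ on $F$. Unpacking, $\tilde{\eta}\tilde{F}(\boldsymbol{s})\tilde{\xi} = F(p)\cdot v\cdot F(\boldsymbol{s})\cdot v\cdot F(i)$. The equivariance identity $vF(\boldsymbol{s})v = F(\boldsymbol{s})v$ established in the earlier lemma lets us rewrite this as $F(p\circ\boldsymbol{s})\cdot v\cdot F(i)$. The crucial observation is that $p\circ\boldsymbol{s} = p$: the natural transformation $p\colon P^\pi\circ\Delta \to \mathrm{id}$ is given by entry-wise summation over the $\pi_i$ duplicated copies at each degree $i$, which is manifestly invariant under the coordinate permutation $\boldsymbol{s}\in\prod_iS_{\pi_i}$. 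Hence $\tilde{\eta}\tilde{F}(\boldsymbol{s})\tilde{\xi} = F(p)\cdot v\cdot F(i) = \tilde{\eta}\tilde{\xi} = \prod_i(\pi_i!)$. Summing over all $\prod_i(\pi_i!)$ elements of $\prod_iS_{\pi_i}$ and then dividing by $\prod_i(\pi_i!)$,
\[ \eta\xi = \tilde{\eta}\epsilon\pi\tilde{\xi} = \frac{1}{\prod_i(\pi_i!)}\tilde{\eta}\left(\sum_{\boldsymbol{s}}\tilde{F}(\boldsymbol{s})\right)\tilde{\xi} = \frac{1}{\prod_i(\pi_i!)}\cdot\prod_i(\pi_i!)\cdot\prod_i(\pi_i!) = \prod_i(\pi_i!). \]

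The main (minor) obstacle is verifying the identity $vF(\boldsymbol{s})v = F(\boldsymbol{s})v$, but this is precisely the content of the $\prod_iS_{\pi_i}$-equivariance of $v$, which is already in hand from the lemma that constructs the group action on $L_F^{(\pi)}$. Everything else is a bookkeeping exercise. The whole argument is purely formal manipulation of natural transformations, and the only place characteristic zero enters is in dividing by $\prod_i(\pi_i!)$; this is unavoidable, as it reflects the size of the symmetrization over $\prod_iS_{\pi_i}$.
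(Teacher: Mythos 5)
Your computation of $\xi\eta$ is identical to the paper's, so let me focus on $\eta\xi$, where you take a genuinely different route. The paper substitutes the previously established identity $\sum_{\boldsymbol{s}} \tilde{F}(\boldsymbol{s}) = \tilde{\xi}\tilde{\eta}$ into $\tilde{\eta}\,\epsilon\pi\,\tilde{\xi}$, reducing everything to $\frac{1}{\prod_i(\pi_i!)}\,\tilde{\eta}\tilde{\xi}\tilde{\eta}\tilde{\xi}$, and then applies $\tilde{\eta}\tilde{\xi} = \prod_i(\pi_i!)$ twice. You instead evaluate each conjugated term $\tilde{\eta}\tilde{F}(\boldsymbol{s})\tilde{\xi}$ directly, using functoriality to collapse $F(p)\,F(\boldsymbol{s})$ to $F(p\circ\boldsymbol{s})$ and then the observation that the summation map $p$ is $\prod_i S_{\pi_i}$-invariant (so $p\circ\boldsymbol{s}=p$). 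Both arguments are correct; the small accounting checks out (the sum has $\prod_i(\pi_i!)$ terms, each equal to $\prod_i(\pi_i!)$, so dividing by the normalizing factor $\prod_i(\pi_i!)$ gives the answer). Your route is slightly more hands-on: it isolates the explanatory fact that every conjugate $\tilde{\eta}\tilde{F}(\boldsymbol{s})\tilde{\xi}$ of $\tilde{\eta}\tilde{\xi}$ is already the same scalar, which makes the structure of the invariant-projection more visible. The paper's route is more economical, reusing the identity $\sum_{\boldsymbol{s}}\tilde{F}(\boldsymbol{s})=\tilde{\xi}\tilde{\eta}$ that is anyway needed for the $\xi\eta$ half. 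One cosmetic note: what you call "the equivariance identity $vF(\boldsymbol{s})v = F(\boldsymbol{s})v$" is a two-step deduction (equivariance gives $vF(\boldsymbol{s})=F(\boldsymbol{s})v$, idempotence gives $v^2=v$); this is fine, but worth saying explicitly.
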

\begin{proof}
    Using the previous lemmas, we compute 
    $$\xi\eta  = \pi \tilde{\xi} \tilde{\eta}\epsilon  = \pi \left(\sum_{\boldsymbol{s}\in \prod_{i\geq 0}S_{\pi_i}} \tilde{F}(\boldsymbol{s})\right)\epsilon   = \pi\left(\prod_{i\geq 0}(\pi_i!)\epsilon\pi\right) \epsilon = \prod_{i\geq 0}(\pi_i!),$$
    $$\eta \xi =  \tilde{\eta}\epsilon \pi \tilde{\xi}  = \tilde{\eta} \left(\frac{1}{\prod_{i\geq 0}(\pi_i!)} \sum_{\boldsymbol{s}\in \prod_{i\geq 0}S_{\pi_i}} \tilde{F}(\boldsymbol{s})\right) \tilde{\xi} = \frac{1}{\prod_{i\geq 0}(\pi_i!)} \tilde{\eta}\tilde{\xi}\tilde{\eta}\tilde{\xi} = \prod_{i\geq 0}(\pi_i!).$$
\end{proof}

To finish, we construct the $\mathbb{S}_\pi$-representations corresponding to the homogeneous degree $\pi$ polynomial functors.


Recall that $L_F: (\mathbb{S}\-\mathsf{Vect})^{\pi}\to \mathsf{Vect}$ is defined as the multilinear component of $F^\pi = F\circ P^\pi.$ Applying the multilinear functor to the standard formulas $k[S_i]\otimes_{S_i} \mathcal{V}(i)^{(j)} = \mathcal{V}(i)^{(j)},$ we have that for all $\boldsymbol{\mathcal{V}} = \left(\mathcal{V}(i)^{(1)},\dots, \mathcal{V}(i)^{(\pi_i)}\right)_{i\geq 0}\in \Ob((\mathbb{S}\-\mathsf{Vect})^{\pi}),$ $$L_F(\boldsymbol{\mathcal{V}}) = L^{(\pi)}_F(k[\mathbb{S}])\otimes_{\prod_{i\geq 0}S_i^{\pi_i}} \left(\bigotimes_{i\geq 0}\mathcal{V}(i)^{(1)}\otimes\cdots \otimes \mathcal{V}(i)^{(\pi_i)}\right).$$

\begin{defn}
    Let $k[\mathbb{S}]$ denote the $\mathbb{S}$-module given by the group rings $i\mapsto k[S_i].$ From remark \ref{rem:wract}, $L_F^{(\pi)}(k[\mathbb{S}])$ is an $\mathbb{S}_\pi$-representation.
\end{defn}

The proof of theorem \ref{thm:s2polyfun} is finished by the following lemma.

\begin{lem}
    The functor $L^{(\pi)}:\mathbf{P}_\pi[\mathbb{S}\-\mathsf{Vect}, \mathsf{Vect}]\to \mathbb{S}_\pi\-\mathsf{Vect}$ given by $F\mapsto L_F^{(\pi)}(k[\mathbb{S}])$ is an inverse to $P: \mathbb{S}_\pi\-\mathsf{Vect}\to \mathbf{P}_\pi[\mathbb{S}\-\mathsf{Vect}, \mathsf{Vect}]$ given by $W\mapsto P_W:= \left( \mathcal{V}\mapsto W\otimes_{\mathbb{S}_\pi}\mathcal{V}^{\otimes \pi}\right).$
\end{lem}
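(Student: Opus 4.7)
The strategy is to construct mutually inverse natural isomorphisms $L^{(\pi)} \circ P \cong \mathrm{id}$ and $P \circ L^{(\pi)} \cong \mathrm{id}$ by exploiting the representability formula
\[ L_F(\boldsymbol{\calV}) = L_F^{(\pi)}(k[\bbS]) \otimes_{\prod_{i \geq 0} S_i^{\pi_i}} \bigotimes_{i \geq 0} \calV(i)^{(1)} \otimes \cdots \otimes \calV(i)^{(\pi_i)} \]
stated immediately before the lemma, together with the isomorphism $\eta : (L_F^{(\pi)})^{\prod_i S_{\pi_i}} \xrightarrow{\sim} F$ from the preceding lemma. Both constructions are manifestly functorial, so once the two composition identities are verified on objects the equivalence of categories follows.

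\textbf{Recovering $F$ from $L_F^{(\pi)}(k[\bbS])$.} Fix $F \in \mathbf{P}_\pi[\bbS\-\cat{Vect}, \cat{Vect}]$ and set $W_F := L_F^{(\pi)}(k[\bbS])$. Evaluating the displayed formula on the diagonal $\boldsymbol{\calV} = \Delta(\calV)$ yields
\[ L_F^{(\pi)}(\calV) \cong W_F \otimes_{\prod_i S_i^{\pi_i}} \calV^{\otimes \pi} \]
as $\prod_i S_{\pi_i}$-representations. Using the semidirect product decomposition $\bbS_\pi = \left(\prod_i S_i^{\pi_i}\right) \rtimes \prod_i S_{\pi_i}$ and the fact that, in characteristic zero, $\prod_i S_{\pi_i}$-invariants coincide with $\prod_i S_{\pi_i}$-coinvariants via the averaging idempotent, passing to $\prod_i S_{\pi_i}$-fixed points collapses the above to
\[ \left(L_F^{(\pi)}(\calV)\right)^{\prod_i S_{\pi_i}} \cong W_F \otimes_{\bbS_\pi} \calV^{\otimes \pi} = P_{W_F}(\calV). \]
Composing with $\eta$ then produces a natural isomorphism $P_{W_F} \xrightarrow{\sim} F$ of polynomial functors.

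\textbf{Recovering $W$ from $L^{(\pi)}(P_W)$.} Conversely, I would directly compute $L_{P_W}^{(\pi)}(k[\bbS])$ by unwinding $P_W \circ P^\pi$ on a test object $\boldsymbol{\calV}$. Expanding
\[ \left(P^\pi(\boldsymbol{\calV})(i)\right)^{\otimes \pi_i} = \left(\bigoplus_{j=1}^{\pi_i} \calV(i)^{(j)}\right)^{\otimes \pi_i} \]
and isolating the summand multi-linear in each $\calV(i)^{(j)}$ picks out exactly the terms indexed by tuples of bijections $(\sigma_i) \in \prod_i S_{\pi_i}$, which assemble into $\Ind_{\prod_i S_i^{\pi_i}}^{\bbS_\pi} \bigotimes_{i,j} \calV(i)^{(j)}$. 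Applying Frobenius reciprocity yields
\[ L_{P_W}(\boldsymbol{\calV}) \cong W \otimes_{\prod_i S_i^{\pi_i}} \bigotimes_{i, j} \calV(i)^{(j)}. \]
Specializing to $\boldsymbol{\calV} = \Delta(k[\bbS])$ and using that $\bigotimes_i k[S_i]^{\otimes \pi_i}$ is the regular representation of $\prod_i S_i^{\pi_i}$ gives $L_{P_W}^{(\pi)}(k[\bbS]) \cong W$ as $k$-vector spaces, and a direct check shows the residual $\prod_i S_{\pi_i}$-action from Remark \ref{rem:wract} agrees with the wreath-product action on $W$, upgrading this to an isomorphism of $\bbS_\pi$-representations.

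\textbf{Main obstacle.} The principal technical difficulty is tracking the compatibility of the three symmetric group actions in play: the factor-wise $\prod_i S_i^{\pi_i}$-action implicit in $\otimes_{\bbS_\pi}$, the permutation $\prod_i S_{\pi_i}$-action on the linearization from Remark \ref{rem:wract}, and their semidirect combination into the full $\bbS_\pi = \prod_i S_i \wr S_{\pi_i}$. In particular, showing that the isomorphism in the second step respects the full wreath action (rather than only the two commuting subgroup actions) requires carefully matching the permutation of multi-linear summands on the linearization side with the wreath action on $W$. Once this compatibility is verified, naturality of both constructions in $F$ and $W$ follows from the naturality of $\xi, \eta$ and the functoriality of the tensor constructions, completing the equivalence of categories.
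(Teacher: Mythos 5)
Your proposal is correct and follows essentially the same two-step verification as the paper: you compute $P\circ L^{(\pi)}(F)$ via the representability formula for $L_F$ together with the short exact sequence $1\to\prod_i S_i^{\pi_i}\to\bbS_\pi\to\prod_i S_{\pi_i}\to 1$ and the isomorphism $\eta$, and you compute $L^{(\pi)}(P_W)$ by isolating the multilinear summand of $P_W\circ P^\pi$ and specializing to $k[\bbS]$. The only cosmetic difference is that in the second direction you pass through $W\otimes_{\prod_i S_i^{\pi_i}}\bigotimes_{i,j}\calV(i)^{(j)}$ via the projection formula and then track the residual $\prod_i S_{\pi_i}$-action separately, whereas the paper keeps the coefficients over the full $\bbS_\pi$ by observing directly that $\bigoplus_{\sigma\in S_{\pi_i}}\bigotimes_j k[S_i]\cong k[S_i\wr S_{\pi_i}]$, so that the final step reads $W\otimes_{\bbS_\pi}k[\bbS_\pi]\cong W$ with the wreath action visible at once; this bypasses the "direct check" you flag as the main obstacle.
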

\begin{proof}
    Let $F\in \Ob(\mathbf{P}_\pi[\mathbb{S}\-\mathsf{Vect}, \mathsf{Vect}]),$ we check that $P \circ L^{(\pi)}(F)$ is the functor given by $$\mathcal{V}\mapsto L_F^{(\pi)}(k[\mathbb{S}])\otimes_{\mathbb{S}_\pi}\mathcal{V}^{\otimes \pi} = \left(L_F^{(\pi)}(k[\mathbb{S}])\otimes_{\prod_{i\geq 0}S_{i}^{\pi_i}}\mathcal{V}^{\otimes \pi}\right)^{\prod_{i\geq 0}S_{\pi_i}} = \left(L_F^{(\pi)}(\mathcal{V})\right)^{\prod_{i\geq 0}S_{\pi_i}} = F(\mathcal{V}),$$
    where the first equality uses the short exact sequence $1\to \prod_{i\geq 0}S_{i}^{\pi_i}\to \mathbb{S}_\pi\to \prod_{i\geq 0}S_{\pi_i}\to 1.$

    In the other direction, let $W$ be a $\mathbb{S}_\pi$-representation, so that $P_W(\mathcal{V}) = W\otimes_{\mathbb{S}_{\pi}} \bigotimes_{i\geq 0} \mathcal{V}(i)^{\otimes \pi_i}.$ The composition $P_W\circ P^{\pi}(\boldsymbol{\mathcal{V}})$ is given by $W\otimes_{\mathbb{S}_{\pi}} \bigotimes_{i\geq 0} \left(\bigoplus_{j=1}^{\pi_i} \mathcal{V}(i)^{(j)}\right)^{\otimes \pi_i}.$ The scalar $(\tup{\alpha})\in k^{\pi}$ acts on each $ \mathcal{V}(i)^{(j)}$ via scalar multiplication by $\alpha_{(i,j)},$ which induces its action on $P_W\circ P^{\pi}(\boldsymbol{\mathcal{V}}).$ Therefore, we may explicitly calculate the multilinear part $L_{P_W}$ as $\boldsymbol{\mathcal{V}}\mapsto W\otimes_{\mathbb{S}_{\pi}} \bigotimes_{i\geq 0}\left(\bigoplus_{\sigma\in S_{\pi_i}}\bigotimes_{j=1}^{\pi_i}\mathcal{V}(i)^{(\sigma(j))}\right).$ Now we calculate $$L_{P_W}\circ \Delta (k[\mathbb{S}]) =  W\otimes_{\mathbb{S}_{\pi}} \bigotimes_{i\geq 0}\left(\bigoplus_{\sigma\in S_{\pi_i}}\bigotimes_{j=1}^{\pi_i}k[S_i]\right)$$ We note that there is an $S_i\wr S_{\pi_i}$-isomorphism $\bigoplus_{\sigma\in S_{\pi_i}}\bigotimes_{j=1}^{\pi_i}\mathcal{V}(i)^{(\sigma(j))}\cong k[S_i\wr S_{\pi_i}].$ Hence the above is isomorphic to $W\otimes_{\mathbb{S}_{\pi}} k[\mathbb{S}_\pi] = W$ as desired. 
    \end{proof}
\begin{rem}
    Let $\mathsf{A}$ be a $k$-linear abelian category, which is symmetric monoidal, admits finite colimits, and contains a unit object. All of the constructions in this section so far can be adapted to polynomial functors from the category of $\mathbb{S}$-modules in $\mathsf{A}$ to $\mathsf{A}.$
\end{rem}
\subsection{Proof of Proposition \ref{prop:SSpoly}}\label{sec:pfSSpoly}

    As before, a polynomial functor $F\in \mathbf{P}[\mathsf{Vect}_{}, \bbS\-\mathsf{Vect}_{}]$ admits a unique direct sum decomposition $F = \sum_{i\geq 0} F_i$ where each $F_i\in \mathbf{P}_i[\mathsf{Vect}_{}, \bbS\-\mathsf{Vect}_{}]$ is homogeneous of degree $i.$ The definitions are similar to the previous section and to \cite{MacDonald1980}, so we omit them for brevity.

    Because $\bbS\-\mathsf{Vect}_{}$ is a $k$-linear abelian category, the general discussion in \cite{MacDonald1980} applies. In particular, Theorem 5.7 in loc. lit., which we recalled as Theorem \ref{thm:maclin}, applies to the setting. The theorem implies that for $F\in \mathbf{P}_n[\mathsf{Vect}_{}, \bbS\-\mathsf{Vect}_{}],$ the assignment $$P\mapsto \left(L_F^{(n)}(k)\otimes_{k} P^{\otimes n}\right)^{S_n}$$ gives an isomorphism of functors. By construction, the linearization $L_F^{(n)}(k)$ is an $\mathbb{S}$-module with commuting $S_n$-action, hence $L_F^{(n)}(k)$ is an $S_n\times \mathbb{S}$-module. By taking the direct sum over homogeneous components of $F$, we obtain a $(\bbS \times \bbS)$-module, which fits into the statement of the proposition.

    Finally, the condition that $F$ has finite degree as a polynomial functor is equivalent to asking that  $L_F^{(n)}(k) = 0$ for all but finitely many $n$, which is equivalent to the definition of (1)-boundedness as in Definition \ref{defn:boundedSS}.
\subsection{Proof of Lemma \ref{lem:tcirc}}\label{sec:pftcirc}
Unwinding the definitions of polynomial functors, the composition is the polynomial functor given by the formula $$V\mapsto \bigoplus_{\nu\in \Part} \mathcal{W}(\nu)\otimes_{\bbS_{\nu}} \left[\bigotimes_{\hspace{1pt}i\geq 0}\left(\bigoplus_{m\geq 0}\mathcal{V}(i,m)\otimes_{S_m} V^{\otimes m}\right)^{\otimes \nu_i}\right].$$

We explicitly linearize this formula. Firstly, the homongeneous degree $n$ part of the formula is given by $$V\mapsto \bigoplus_{\nu\in \Part} \mathcal{W}(\nu)\otimes_{\bbS_{\nu}}\left(\bigoplus_{\substack{(n_{(i,j)}\mid i\geq 0, 1\leq j\leq \nu_i):\\ \sum_{i\geq 0}\sum_{j = 1}^{\nu_i}n_{(i,j)}=n}}\left(\bigotimes_{i\geq 0}\bigotimes_{j = 1}^{\nu_i} \mathcal{V}(i, n_{(i,j)})\otimes_{S_{n_{(i,j)}}}V^{\otimes n_{(i,j)}}\right)\right).$$ The multi-linear part of the formula is given by plugging in $V = V_1\oplus\cdots\oplus V_n$ and extracting the multilinear terms, which sends $V$ to $$\bigoplus_{\nu\in \Part} \mathcal{W}(\nu)\otimes_{\bbS_{\nu}}\left[\bigoplus_{\substack{(n_{(i,j)}\mid i\geq 0, 1\leq j\leq \nu_i):\\ \sum_{i\geq 0}\sum_{j = 1}^{\nu_i}n_{(i,j)}=n}}\left(\bigoplus_{\substack{f:\{1,\dots,n\}\to \\ \{(i,j)\mid i\geq 0, 1\leq j\leq \nu_j\}\\ |f^{-1}(i,j)| = n_{(i,j)}}}\bigotimes_{i\geq 0}\bigotimes_{j = 1}^{\nu_i} \mathcal{V}(i, n_{(i,j)})\otimes\left(\bigotimes_{k\in f^{-1}(i,j)}V_k\right)\right)^{\prod_{i\geq 0}\prod_{j=1}^{\nu_i}S_{n_{(i,j)}}}\right].$$

Plugging in $V_1 = \cdots = V_n = k,$ the $S_n$-module corresponding to the degree $m$ homogeneous part of the polynomial functor is given by 
$$\bigoplus_{\nu\in \Part} \mathcal{W}(\nu)\otimes_{\bbS_{\nu}}\left[\bigoplus_{\substack{(n_{(i,j)}\mid i\geq 0, 1\leq j\leq \nu_i):\\ \sum_{i\geq 0}\sum_{j = 1}^{\nu_i}n_{(i,j)}=n}}\left(\bigoplus_{\substack{f:\{1,\dots,n\}\to \\ \{(i,j)\mid i\geq 0, 1\leq j\leq \nu_j\}\\ |f^{-1}(i,j)| = n_{(i,j)}}}\bigotimes_{i\geq 0}\bigotimes_{j = 1}^{\nu_i} \mathcal{V}(i, n_{(i,j)})\right)^{\prod_{i\geq 0}\prod_{j=1}^{\nu_i}S_{n_{(i,j)}}}\right]$$
$$ = \bigoplus_{\nu\in \Part} \mathcal{W}(\nu)\otimes_{\bbS_{\nu}}\left[\bigoplus_{\substack{(n_{(i,j)}\mid i\geq 0, 1\leq j\leq \nu_i):\\ \sum_{i\geq 0}\sum_{j = 1}^{\nu_i}n_{(i,j)}=n}}\Ind_{\prod_{i\geq 0}\prod_{j=1}^{\nu_i}S_{n_{(i,j)}}}^{S_n} \bigotimes_{i\geq 0}\bigotimes_{j = 1}^{\nu_i} \mathcal{V}(i, n_{(i,j)})\right],$$ which agrees with the degree $n$ homogeneous part of the $\mathbb{S}$-module $$\bigoplus_{\nu\in \Part} \mathcal{W}(\nu)\otimes_{\bbS_{\nu}} \left(\midboxtimes_{\hspace{1pt}i\geq 0}\mathcal{V}_i^{\boxtimes \nu_i}\right)$$ as claimed in the Lemma.

\bibliographystyle{amsalpha}
\bibliography{reference}

\end{document}